\documentclass{tac}

\usepackage{xy}

\usepackage{amsfonts}
\usepackage{amssymb}
 \usepackage{amsmath}
\usepackage{hyperref}
\input diagxy
\bibliographystyle{plain}

\thirdleveltheorems

\author {Robert Par\'e}

\address{Department of Mathematics and Statistics, Dalhousie University\\
 Halifax, NS, Canada, B3H 4R2\\}

\title{Taut functors and the difference operator}

\copyrightyear{2024}

\keywords{Taut functor, polynomial, reduced power, analytic functor,
Dirichlet series, difference operator}
\amsclass{Primary: 18A22, 12H10; secondary: 18C15, 18F50}

\eaddress{pare@mathstat.dal.ca}


\mathrmdef{id}
\mathrmdef{Id}

\mathrmdef{Ob}

\newcommand{\ov}[1]{{\bar{#1}}}

\newcommand{\todd}[2]{\xymatrix@1{\ar[r]|\bb^{#1}_{#2}&}}

\newtheorem{theorem}{Theorem}[section]
\newtheorem{proposition}{Proposition}[section]
\newtheorem{lemma}{Lemma}[section]
\newtheorem{corollary}{Corollary}[section]

\newtheoremrm{definition}{Definition}[section]
\newtheoremrm{remark}{Remark}[section]
\newtheoremrm{example}{Example}[section]
\newtheoremrm{examples}{Examples}[section]

\input xy
\xyoption{all}

\mathrmdef{Hom}
\mathbfdef{Set}

\def\liml{\varprojlim}
\def\limr{\varinjlim}

\def\Cat{{\cal C}{\it at}}
\def\Taut{{\cal T\!\!}{\it aut}}

\newbox\bbox
\setbox\bbox = \hbox to 0pt{\hss $\scriptstyle\bullet$\hss}
\def\bb{\usebox{\bbox}}

\begin{document}

\maketitle

\begin{abstract}

We establish a calculus of differences for taut endofunctors of the
category of sets, analogous to the classical calculus of finite differences
for real valued functions. We study how the difference operator
interacts with limits and colimits as categorical versions, of the usual
product and sum rules. The first main result is a lax chain rule which
has no counterpart for mere functions.
We also show that many important classes of functors (polynomials,
analytic functors, reduced powers, ...) are taut, and calculate
explicit formulas for their differences. Covariant Dirichlet series
are introduced and studied. The second main result is a Newton
summation formula expressed as an adjoint to the difference
operator.

\end{abstract}

\setcounter{tocdepth}{2}
\tableofcontents
\section*{Introduction}

The category of endofunctors of $ {\bf Set} $, the category of
sets, contains many interesting subcategories. Consider, for example,
the recent work on polynomial functors, spearheaded by Spivak
(see \cite{NiuSpi23} and the references there), building on
previous work of Kock chronicled in his arXiv paper \cite{Koc09}
(see also \cite{GamKoc13} which deals with polynomial monads).
Before that there were the analytic functors of
Joyal \cite{Joy81} stemming from his categorical treatment of
combinatorics, and developed extensively since then
(see \cite{GamJoy17} and its extensive bibliography).
But the study of endofunctors of $ {\bf Set} $
goes back to the early days of category theory.
Questions of rank for monads arose, showing the necessity of
looking more deeply into the structure of these endofunctors.

This suggests that the study of $ {\bf Set} $ endofunctors
goes to the very foundation of set theory. For example, a
non-principal ultrafilter, whose existence is well-known to
require some form of the axiom of choice, produces an
interesting endofunctor, the ultra-power functor. It is
left exact and preserves finite coproducts, and so it is
isomorphic to the identity on finite sets but not infinite ones.
In \cite{Trn71B} Trnkov\'a, and independently Blass \cite{Bla77A, Bla77B}
showed that the existence of a
non-trivial {\em exact} endofunctor of $ {\bf Set} $ is
equivalent to the existence of a measurable cardinal. Also
see \cite{Rei71} in this connection.

These considerations, among others, indicate that a
systematic study of the structure of endofunctors of
$ {\bf Set} $ might be desirable. And indeed, such a
study was initiated in \cite{Trn71A, Trn71B} by
Trnkov\'a, where she made an exhaustive study of
their preservation properties.

The present work revisits this project from a different
perspective taking into account the many developments
of the last century.  We develop a difference calculus
 for a rather large class of functors which parallels the
 classical finite difference calculus for real valued functions.
 The class of functors we consider are the taut functors
 introduced by Manes \cite{Man02}, who was motivated
 by theoretical computer science considerations. (Taut
 functors were also considered in \cite{Trn71A} under
 the name ``preimage preserving functors''.)

Section~\ref{Sec-Taut} begins by recalling Manes' definitions
of taut functor and taut natural transformation and
recording some of their basic properties to be referred to later.
Then follows a detailed study of the stability of tautness
under limits and colimits. The main result here is a
characterization of those colimits that preserve tautness,
Theorem~\ref{Thm-Confl}, which to our knowledge, has
never appeared in print.

In Section~\ref{Sec-SpecCl} we consider some known
classes of endofunctors: polynomials, divided power
series, analytic functors, reduced powers, all of which
are taut. We also consider taut monads. We introduce
covariant Dirichlet functors, and what we call sequential
Dirichlet functors, which are taut too.

It is in Section~\ref{Sec-DiffOp} that we introduce the
difference operator. We study its functorial properties
and then how it interacts with limits and colimits. As a
special case we get a product (Leibniz) rule. Half of the
section is taken up by the proof of one of the main results of
the paper, the lax chain rule and its properties.

In Section~\ref{Sec-SpecClDiff} we return to the special classes
considered in Section~\ref{Sec-SpecCl} and obtain
explicit formulas for the differences.

We round out the paper with a version of the Newton
summation formula, a discrete version of Taylor series.
This is meant to recover nice taut functors, analytic say,
from their iterated differences at $ 0 $. It is given by
a left adjoint to the difference functor whose unit is
an isomorphism for analytic functors.

Except for a few comments in passing, we mostly
ignore questions of size. The ``category'' of endofunctors
is an illegitimate one. Its homs may be proper classes.
None of our results depends on this, but the purist will
have no difficulty legitimizing things using standard
techniques -- Grothendieck universes or G\"{o}del-Bernays
set theory (with classes), for example.

We would like to thank Andreas Blass, Aaron Fairbanks,
Theo Johnson-Freyd, Deni Salja and Peter Selinger for
helpful comments.


\section{Taut functors}
\label{Sec-Taut}

The notions of taut functor and natural transformation were
introduced by Manes \cite{Man02}. These are precisely what
we need to develop our difference calculus of functors.

Everything that follows centres around pullbacks in which
one leg is a monomorphism. Following Manes, we call them
{\em inverse image diagrams}.

\subsection{Definitions and functorial properties}
\label{SSec-Defs}

\begin{definition}[Manes \cite{Man02}]
\label{Def-Taut}

A functor is {\em taut} if it preserves inverse image diagrams.
A natural transformation is {\em taut} if the naturality squares
corresponding to monomorphisms are pullbacks.

\end{definition}

We record some of the general properties of tautness.

\begin{proposition}
\label{Prop-Taut}

\begin{itemize}

	\item[(1)] Taut functors preserve monos.
	
	\item[(2)] The composite of taut functors is taut.
	
	\item[(3)] If $ t \colon F \to G $ is taut and $ H $ is taut, then
	so is $ H t \colon HF \to HG $.
	
	\item[(4)] If $ t \colon F \to G $ is taut and $ K $ preserves monos,
	then $ t K \colon FK \to GK $ is taut.
	
	\item[(5)] If $ t \colon F \to G $ and $ u \colon G \to L $ are taut,
	then the vertical composite $ u t \colon F \to L $ is taut.
	
	\item[(6)] If $ t \colon F \to G $ is taut and $ G $ is taut, then
	so is $ F $.

\end{itemize}

\end{proposition}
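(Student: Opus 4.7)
The plan is to dispatch the six items by applying routine pullback manipulations to naturality squares, using tautness at monos wherever needed. For (1), I would note that $m \colon A \to B$ is a mono if and only if the square with identities on top and left, and $m$ on bottom and right, is a pullback; this is an inverse image diagram, so the taut functor $F$ preserves it, and reading off the result exhibits $Fm$ as a mono.

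Items (2) and (5) are clean composition arguments. For (2), applying $G$ (taut) to an inverse image diagram gives a pullback whose mono leg is still a mono by (1), so the result is again an inverse image diagram, which $H$ (taut) then preserves. For (5), the naturality square of $ut$ at a mono $m$ is the horizontal pasting of the naturality squares of $t$ and $u$ at $m$, each a pullback by tautness, hence a pullback by the pasting law.

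For (3) and (4), whiskering reduces the naturality square of the composite to something already understood. The naturality square of $Ht$ at a mono $m$ is simply $H$ applied to the naturality square of $t$ at $m$; the latter is a pullback by tautness of $t$, and an inverse image diagram because one of its vertical legs ($Fm$ or $Gm$) is a mono, so the taut functor $H$ preserves it. Dually, the naturality square of $tK$ at $m$ is exactly the naturality square of $t$ at $Km$, and $Km$ is mono because $K$ preserves monos by hypothesis, so the square is a pullback by tautness of $t$.

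Item (6) is where I expect the main obstacle. Take an inverse image diagram with $f \colon X \to Z$ mono, pullback corner $P$, and projections $p \colon P \to X$, $q \colon P \to Y$. The key observation is that $q$ is also mono, by pullback-stability of monos. Then tautness of $G$ gives $GP \cong GY \times_{GZ} GX$; tautness of $t$ at $f$ gives $FX \cong FZ \times_{GZ} GX$; and tautness of $t$ at the mono $q$ gives $FP \cong FY \times_{GY} GP$. Substituting and using the standard pullback cancellation $(A \times_B C) \times_C D \cong A \times_B D$ produces
\[
FP \cong FY \times_{GY} (GY \times_{GZ} GX) \cong FY \times_{GZ} GX \cong FY \times_{FZ} (FZ \times_{GZ} GX) \cong FY \times_{FZ} FX,
\]
which is exactly the condition that $F$ preserves the original inverse image diagram. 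The key insight is noticing at the outset that $q$ is mono so that tautness of $t$ is available at both $f$ and $q$; after that, the pullback calculus is mechanical.
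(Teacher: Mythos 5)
Your items (1), (2), (4), (5) are fine, and your treatment of (6) is essentially the paper's own proof: the three pullbacks you invoke --- the naturality squares of $t$ at the two monos $f$ and $q$, together with $G$ applied to the inverse image diagram --- are exactly the squares the paper pastes, and your isomorphism chain is that pasting-and-cancellation argument written out object-wise (and, read with the canonical comparison maps, it does identify the induced map with $(Fp,Fq)$, so the conclusion is the right one).

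The genuine gap is in (3). You assert that the naturality square of $t$ at $m$ is an inverse image diagram ``because one of its vertical legs ($Fm$ or $Gm$) is a mono,'' but nothing in the stated hypotheses gives this: in (3) neither $F$ nor $G$ is assumed taut, nor even to preserve monos, and tautness of $t$ only says the square is a pullback. Note also that what is needed is specifically that a cospan leg --- $Gm \colon GA \to GB$ (or $tB$) --- be monic; $Fm$ being monic would not exhibit the square as a pullback along a mono, so even the disjunction ``$Fm$ or $Gm$'' is not the right condition. Without some such hypothesis the step fails outright: a taut $H$ preserves only pullbacks along monos, not arbitrary pullbacks (the paper's covariant power-set example), and an arbitrary pullback square in $ {\bf Set} $ can be realized as the naturality square at a mono of a taut transformation between functors out of the poset $ \{a \to b\} $, where every arrow is monic. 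The repair is immediate and is surely what is intended: in every use of (3) (for instance the 2-category of Corollary~\ref{Cor-Taut}) $G$ is itself taut, hence preserves monos by part (1), so $Gm$ is monic and your argument goes through; but you should invoke that hypothesis explicitly rather than assert the mono claim from nothing.
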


\begin{proof}
Perhaps the only part that is not completely straightforward is (6).
Let
$$
\bfig
\square/ >->`>`>` >->/[A`B`C`D;m`f`g`n]

\efig
$$
be an inverse image diagram. Then we have
$$
\bfig
\square/`>`>` >->/[FC`FD`GC`GD;
`tC`tD`Gn]

\place(250,250)[(2)]

\square(0,500)/>`>`>`>/[FA`FB`FC`FD;
Fm`Ff`Fg`Fn]

\place(250,750)[(1)]

\place(900,500)[=]

\square(1300,0)/`>`>` >->/[GA`GB`GC`GD;
`Gf`Gg`Gn]

\place(1550,250)[(4)]

\square(1300,500)/>`>`>` >->/[FA`FB`GA`GB;
Fm`tA`tB`Gm]

\place(1550,750)[(3)]

\efig
$$
where (2), (3), (4) are pullbacks, so (1) is too.
\end{proof}

\begin{corollary}
\label{Cor-Taut}

Categories with inverse images, taut functors and taut
natural transformations give a sub-$2$-category
$\Taut $ of $ \Cat $, the $2$-category of
categories.

\end{corollary}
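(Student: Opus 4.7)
The plan is to verify the three closure conditions that define a sub-$2$-category of $\Cat$: closure of $1$-cells under composition and identities, and closure of $2$-cells under identities together with both vertical and horizontal composition. Most of these are immediate from Proposition~\ref{Prop-Taut}, so the argument is essentially a matter of bookkeeping.

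For the $1$-cells, closure under composition is Proposition~\ref{Prop-Taut}(2), and identity functors clearly preserve inverse image diagrams, hence are taut. For an identity $2$-cell $\id_F$, the naturality square at any monomorphism $m$ has both horizontal sides equal to $Fm$ and both vertical sides identities (or vice versa), which is trivially a pullback. Closure of taut $2$-cells under vertical composition is Proposition~\ref{Prop-Taut}(5).

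The only point requiring a small argument is horizontal composition. Suppose $t\colon F \to G$ and $s\colon H \to K$ are taut $2$-cells in $\Taut$, so that each of $F, G, H, K$ is taut by definition. The horizontal composite $s \ast t \colon HF \to KG$ factors as the vertical composite
$$s \ast t \;=\; sG \cdot Ht.$$
By Proposition~\ref{Prop-Taut}(3), $Ht$ is taut because $H$ and $t$ are. To apply Proposition~\ref{Prop-Taut}(4) to $sG$ one needs $G$ to preserve monos, and this is where Proposition~\ref{Prop-Taut}(1) enters: every taut functor preserves monos, so $sG$ is taut. A final appeal to Proposition~\ref{Prop-Taut}(5) shows that $s \ast t$ is taut.

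The one (minor) conceptual step is noticing that the right whiskering $sG$ requires $G$ merely to preserve monos, not to be taut; part~(1) of Proposition~\ref{Prop-Taut} supplies this, after which the remaining clauses assemble the conclusion without further effort.
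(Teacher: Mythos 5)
Your proof is correct and follows exactly the route the paper intends: the corollary is stated as an immediate consequence of Proposition~\ref{Prop-Taut}, and your verification (identities trivially taut, vertical composition by part~(5), horizontal composition via the factorization $s \ast t = sG \cdot Ht$ using parts~(1), (3), (4)) is just the bookkeeping the paper leaves implicit.
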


\subsection{Limits of taut functors}
\label{SSec-LimTaut}

Limits of taut functors are again taut. This is just
a case of limits commuting with limits but some attention
must be paid as to where the diagrams and limits
are taken. If $ {\bf A} $ and $ {\bf B} $ are categories
with inverse images, we have the $ \Cat $ functor
category $ \Cat ({\bf A}, {\bf B}) $ of all functors from
$ {\bf A} $ to $ {\bf B} $ and all natural transformations,
and the subcategory $ \Taut ({\bf A}, {\bf B}) $
of all taut functors and taut natural transformations. And,
we also have the full image of $ \Taut ({\bf A},
{\bf B}) $ in $ \Cat ({\bf A}, {\bf B}) $, $ \Taut_{full}
({\bf A}, {\bf B}) $, of taut functors and all natural
transformations.

\begin{proposition}
\label{Prop-LimTaut}

Assume that $ {\bf B} $ has $ {\bf I} $-limits. Then $ \Taut_{full}
({\bf A}, {\bf B}) $ is closed under $ {\bf I} $-limits in
$ \Cat ({\bf A}, {\bf B}) $. If $ t \colon \Phi \to \Psi \colon
{\bf I} \to \Taut_{full} ({\bf A}, {\bf B}) $, and $ t I $ is taut for every
$ I $, then $ \liml_I t (I) \colon \liml_I \Phi (I)
\to \liml_I \Psi (I) $ is also taut.

\end{proposition}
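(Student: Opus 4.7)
The plan is the classic ``limits commute with limits'' argument, keeping in mind that limits in $\Cat({\bf A},{\bf B})$ are computed pointwise and that the subcategory $\Taut_{full}$ is \emph{full}, so we need not worry about tautness of the transformations in the diagram $\Phi$ itself --- only of the objects (functors) $\Phi(I)$.

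First, I would fix an inverse image diagram
$$
\bfig
\square/ >->`>`>` >->/[A`B`C`D;m`f`g`n]
\efig
$$
in ${\bf A}$ and apply the pointwise limit $\liml_I \Phi(I)$ to it. Since ${\bf B}$ has ${\bf I}$-limits, for each object $X$ of ${\bf A}$ we have $(\liml_I \Phi(I))(X) = \liml_I \Phi(I)(X)$, with functoriality induced in the obvious way. The square obtained by applying $\liml_I \Phi(I)$ is then canonically isomorphic to the ${\bf I}$-limit, in the functor category $\Cat({\bf 2}\times{\bf 2},{\bf B})$, of the squares $\Phi(I)(A)\to\Phi(I)(B)\to\Phi(I)(D)\leftarrow\Phi(I)(C)$. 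Tautness of each $\Phi(I)$ gives that every such square is a pullback, and since pullbacks commute with ${\bf I}$-limits in ${\bf B}$ (limits commute with limits), the limiting square is again a pullback. The left leg is a mono because taut functors preserve monos (Proposition~\ref{Prop-Taut}(1)) and monos are stable under limits, so the resulting square is an inverse image diagram. Hence $\liml_I \Phi(I)$ is taut.

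For the second assertion, I would take a mono $m\colon A\rightarrowtail B$ in ${\bf A}$ and consider the naturality square of $\liml_I t(I)$ at $m$. Since the vertical arrows of $\liml_I t(I)$ are pointwise limits of the components $t(I)_A$, $t(I)_B$, this naturality square is the ${\bf I}$-limit of the naturality squares of the $t(I)$ at $m$. By hypothesis each $t(I)$ is taut, so each of those squares is a pullback; invoking once more the commutation of pullbacks with ${\bf I}$-limits in ${\bf B}$ gives that the limit square is a pullback, establishing tautness of $\liml_I t(I)$.

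The only real subtlety --- and the step I would pay the most attention to --- is the claim that the pointwise limit of the image squares coincides (up to the obvious canonical isomorphism) with the image of the original square under $\liml_I \Phi(I)$. This is the standard pointwise computation of limits in functor categories, but it needs to be invoked cleanly so that the reduction to ``${\bf I}$-limits commute with pullbacks in ${\bf B}$'' is unambiguous; once that reduction is in place both parts of the proposition follow in one stroke.
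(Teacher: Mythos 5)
Your proposal is correct and follows essentially the same route as the paper: both parts reduce to the fact that limits commute with limits (pointwise limits of the pullback squares supplied by tautness of the $\Phi(I)$, respectively of the $t(I)$, are again pullbacks). The extra remarks about pointwise computation of limits and preservation of monos are just details the paper leaves implicit.
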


\begin{proof}

We use the fact that limits commute with limits, applied to
the inverse image diagrams
$$
\bfig
\square/>->`>`>`>->/<600,500>[\Phi(I)(B_0)\ `\Phi(I)(B)
`\Phi(I)(A_0)\ `\Phi(I)(A);
```]

\place(300,250)[\framebox{\scriptsize Pb}]

\place(1000,250)[\mbox{and}]

\square(1600,0)/>->`>`>`>->/<600,500>[\Phi(I)(A_0)\ `\Phi(I)(A)
`\Psi (I)(A_0)\ `\Psi (I)(A);
`t(I)(A_0)`t(I)(A)`]

\place(1900,250)[\framebox{\scriptsize Pb}]

\efig
$$
respectively, for the inverse image diagram
$$
\bfig
\square/>->`>`>`>->/[B_0\ \ `B`A_0\ \ `A\rlap{\ \ .};
```]

\place(250,250)[\framebox{\scriptsize Pb}]

\efig
$$
\end{proof}

\begin{corollary}
\label{Cor-TautPB}

If $ {\bf B} $ has pullbacks, then the pullback of a
taut transformation along any natural transformation
is again taut.

\end{corollary}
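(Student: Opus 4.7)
The plan is to derive this corollary by two applications of the pullback pasting lemma, using the tautness of the pulled-back transformation exactly once. Write the taut transformation as $t \colon F \to G$ and the other one as $s \colon H \to G$, and form their pullback $P$ in $\Cat(\mathbf{A}, \mathbf{B})$ with projections $p \colon P \to F$ and $q \colon P \to H$. Since $\mathbf{B}$ has pullbacks, $P$ is computed pointwise, so for each object $X$ of $\mathbf{A}$, $P(X)$ is a pullback of $t_X$ and $s_X$ in $\mathbf{B}$. The claim is that $q$ is taut, i.e.\ that for every monomorphism $m \colon A_0 \to A$ in $\mathbf{A}$ the naturality square of $q$ at $m$ is a pullback. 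Notice that neither $F$, $G$, nor $H$ is required to be taut as a functor, so Proposition~\ref{Prop-LimTaut} does not apply directly; the argument instead has to work with the naturality squares themselves.

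Fix such an $m$. First, paste the pointwise pullback square defining $P(A_0)$ (whose right edge is $t_{A_0}$) horizontally onto the naturality square of $t$ at $m$ (whose left edge is $t_{A_0}$). The first square is a pullback by construction, the second because $t$ is taut. Hence the outer rectangle, with corners $P(A_0), F(A), H(A_0), G(A)$ and right edge $t_A$, is a pullback.

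Second, express this same outer rectangle as a different horizontal pasting: the naturality square of $q$ at $m$ (whose right edge is $q_A$) composed with the pointwise pullback square defining $P(A)$ (whose left edge is $q_A$ and right edge is $t_A$). The right inner square is a pullback by construction, so since the outer rectangle is a pullback, the converse direction of the pasting lemma forces the left inner square -- the naturality square of $q$ at $m$ -- to be a pullback, as required. The only routine check is that the two pastings produce the same outer rectangle on the nose; this follows from naturality of $t, s, p, q$ together with the defining equation $tp = sq$. I do not expect any real obstacle: once tautness of $t$ is invoked, the rest is bookkeeping around the pasting lemma.
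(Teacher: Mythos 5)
Your proof is correct, but it follows a genuinely different route from the paper's. The paper derives the corollary from Proposition~\ref{Prop-LimTaut}: it regards the two cospans $H \to G \leftarrow F$ and $H \to G \leftarrow G$ as diagrams and maps the first to the second by the componentwise-taut transformation $(\mathrm{id}_H, \mathrm{id}_G, t)$; the proposition then gives that the induced map on limits, $\mathrm{id}\times_{\mathrm{id}} t \colon H\times_G F \to H\times_G G$, is taut, and since this map is the projection to $H$ followed by the canonical isomorphism $H\times_G G \cong H$, the projection is taut. You instead work pointwise with the pasting lemma: pasting the defining pullback square of $P(A_0)$ onto the naturality square of $t$ at $m$ (a pullback by tautness of $t$) yields a pullback rectangle, which you re-read as the naturality square of $q$ at $m$ pasted with the defining pullback square of $P(A)$, and the converse direction of the pasting lemma finishes the argument; as you say, the identification of the two outer rectangles needs only naturality of $p$ and $s$. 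Your version is self-contained and makes explicit that tautness of the functors $F$, $G$, $H$ is never used, only tautness of $t$, which matches the corollary's literal wording (the paper's proof speaks of a pullback of taut functors, and Proposition~\ref{Prop-LimTaut} is stated for diagrams in $\Taut_{full}$, even though its statement about transformations does not really need that hypothesis). The paper's version, in exchange, is shorter given the machinery already in place and exhibits the corollary as a direct instance of the general commutation of limits.
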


\begin{proof}

Consider the pullback of taut functors
$$
\bfig
\square[H\times_G F`F`H`G;
p_2`p_1`t`u]

\place(250,250)[\framebox{\scriptsize Pb}]

\efig
$$
with $ t $ taut. Apply the previous proposition to the
morphism of diagrams
$$
\bfig
\node a(-100,300)[H]
\node b(450,300)[G]
\node c(450,0)[H]
\node d(850,0)[G]
\node e(450,600)[F]
\node f(850,350)[G]

\arrow|l|/>/[a`c;\id]
\arrow|b|/>/[a`b;u]
\arrow|r|/>/[b`d;\id]
\arrow|b|/>/[c`d;u]
\arrow|r|/>/[e`b;t]
\arrow|r|/>/[e`f;t]
\arrow|r|/>/[f`d;\id]

\efig
$$
to get that
$$
H \times_G F \to^{\id \times_\id t} H \times_G G
$$
is taut. But $ \id \times_\id t $ is $ p_1 $ followed by
an iso.
\end{proof}

We emphasize that we are not assuming that the
 transformations $ \Gamma (i) \colon \Gamma (I)
\to \Gamma (J) $ are taut, but even if we did we still would not
get limits in $ \Taut ({\bf A}, {\bf B}) $. The projections are not taut.
For example, if $ {\bf B} $ has finite products, then the
product of two taut functors is taut but the projection
$$
F \times G \to F
$$
is not. For a mono $ m \colon A_0\  \to/>->/ A $, the
naturality square
$$
\bfig
\square/>->`>`>`>->/<1000,500>[F(A_0) \times G(A_0)\ `F(A) \times G(A)
`F(A_0)\ `F(A);
Fm \times Gm`p_1`p_1`Fm]

\efig
$$
is not usually a pullback; the pullback is $ F(A_0)
\times G(A) $. 

A simpler example is that the unique map
$ F \to 1 $ is not taut. However this is the only obstruction
as we see below (\ref{Prop-ConnLim}).

\begin{remark}
\label{Rem-LimTaut}

We assumed that $ {\bf B} $ had all $ {\bf I} $-limits but
the proposition holds for any limits that exist in
$ \Cat ({\bf A}, {\bf B}) $ as long as they are pointwise,
i.e.~calculated in $ {\bf B} $.

\end{remark}

\begin{remark}
\label{Rem-DoubLim}

We can't help pointing out that what we are dealing
with are double limits, taken in the double category
$ {\mathbb T}{\rm aut} ({\bf A}, {\bf B}) $ whose objects are taut functors,
horizontal arrows arbitrary natural transformations,
vertical arrows taut transformations, and commutative
squares as cells. This may be worth pursuing but here
is not the place.

\end{remark}

\begin{proposition}
\label{Prop-TautCone}

Let $ {\bf I} $ be a non-empty category, $ \Phi \colon
{\bf I} \to \Taut_{full} ({\bf A}, {\bf B}) $ a diagram of
taut functors, $ F \colon {\bf A} \to {\bf B} $ a taut functor,
and $ \gamma \colon F \to \Phi $ a cone on $ \Phi $
with each $ \gamma I \colon F \to \Phi I $ a taut
transformation. Then the induced transformation
$$
\langle \gamma I \rangle \colon F \to \liml_I \Phi I
$$
is also taut.

\end{proposition}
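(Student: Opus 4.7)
The plan is to verify directly that the naturality square of $\langle \gamma I \rangle$ at each mono $m \colon A_0 \rightarrowtail A$ is a pullback, by checking the universal property. Given a test object $X$ with maps $g \colon X \to F(A)$ and $h \colon X \to \liml_I \Phi I(A_0)$ satisfying $\langle \gamma I(A) \rangle \cdot g = (\liml_I \Phi I(m)) \cdot h$, I need to produce a unique $z \colon X \to F(A_0)$ with $Fm \cdot z = g$ and $\langle \gamma I(A_0) \rangle \cdot z = h$.

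For each $I \in \mathbf{I}$, composing $h$ with the projection $\pi_I \colon \liml_I \Phi I(A_0) \to \Phi I(A_0)$ yields a cone $(g, \pi_I h)$ over the cospan $F(A) \to \Phi I(A) \leftarrow \Phi I(A_0)$. By tautness of $\gamma I$, this cospan is an inverse image diagram with apex $F(A_0)$ and legs $Fm$ and $\gamma I(A_0)$, so there is a unique $z_I \colon X \to F(A_0)$ satisfying $Fm \cdot z_I = g$ and $\gamma I(A_0) \cdot z_I = \pi_I h$. Non-emptiness of $\mathbf{I}$ supplies at least one such $z_I$.

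The main step is to see that the various $z_I$ coincide. Since $F$ is taut, Proposition~\ref{Prop-Taut}(1) makes $Fm$ a monomorphism, so the single equation $Fm \cdot z_I = g$ determines $z_I$ independently of $I$. Writing $z$ for this common value, the relations $\gamma I(A_0) \cdot z = \pi_I h$ then hold for every $I$ simultaneously and assemble, by the universal property of $\liml_I \Phi I(A_0)$, into $\langle \gamma I(A_0) \rangle \cdot z = h$; uniqueness of $z$ is immediate from $Fm$ mono.

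The only subtle point -- and the reason $\mathbf{I}$ must be non-empty -- is the need for at least one component $\gamma I$ to supply a $z_I$. If $\mathbf{I}$ were empty, $\liml_I \Phi I$ would be terminal and the map $F \to 1$ would not generally be taut, matching the counterexample recorded immediately before the proposition.
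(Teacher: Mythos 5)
Your proof is correct. The verification is sound: the compatibility $\gamma I(A)\cdot g = \Phi I(m)\cdot \pi_I h$ follows from composing the given equation with the (pointwise) limit projections, tautness of $\gamma I$ gives the fill-in $z_I$, monicity of $Fm$ (from $F$ taut, Proposition~\ref{Prop-Taut}(1)) forces all $z_I$ to coincide and gives uniqueness, non-emptiness of $\mathbf{I}$ supplies at least one $z_I$, and joint monicity of the projections turns the family of equations $\gamma I(A_0)\cdot z = \pi_I h$ into $\langle\gamma I(A_0)\rangle\cdot z = h$.

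The route differs from the paper's in its organization, though the key tricks coincide. The paper factors $\langle\gamma I\rangle$ as the diagonal $\Delta\colon F \to F^{\pi_0(\mathbf{I})} = \liml_I F$ followed by $\liml_I \gamma I$, quotes Proposition~\ref{Prop-LimTaut} to see that the second map is taut, and then proves separately that $\Delta$ is taut --- and that last step is exactly your argument in miniature: choose any index in the non-empty set, use tautness there to get the candidate factorization, and use monicity of $(Fm)^J$ to see the remaining triangles commute. Your version inlines everything into a single universal-property check at the naturality square of $\langle\gamma I\rangle$, so it never needs the intermediate object $F^{\pi_0(\mathbf{I})}$ or Proposition~\ref{Prop-LimTaut}; this makes it more self-contained and elementary. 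What the paper's decomposition buys is reuse of established machinery and an isolated statement (tautness of the diagonal into a constant-diagram limit) that clarifies where non-emptiness enters; what yours buys is a shorter chain of dependencies and a proof that reads directly off the pullback definition, with the only hypotheses actually touched being tautness of a single $\gamma I$, monicity of $Fm$, and pointwiseness of the limit.
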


\begin{proof}

$ \gamma $ is a natural transformation from the constant
diagram with value $ F $ to $ \Phi $, and by Proposition~\ref{Prop-LimTaut}
we get that
$$
\liml_I \gamma I \colon \liml_I F \to
\liml_I \Phi I
$$
is taut. Now, $ \liml_I F = F^{\pi_0 ({\bf I})} $,
the product of $ F $'s, one for each component of $ {\bf I} $,
and $ \langle \gamma I \rangle $ is the composite
$$
F \to^{\Delta} F^{\pi_0 ({\bf I})} \to^{\liml_I \gamma I}
\liml_I \Phi I \ .
$$
So we only have to show that $ \Delta $ is taut.

Let $ J $ be a non-empty set and $ m \colon 
A_0\ \  \to/>->/ A $ a mono in $ {\bf A} $. Let $ b $ and
$ \langle b_{0j} \rangle $ make
$$
\bfig

\node a(-300,700)[B]
\node b(0,0)[(FA_0)^J\ ]
\node c(0,500)[FA_0\ ]
\node d(550,500)[FA]
\node e(550,0)[(FA)^J]

\arrow|l|/{@{>}@/_1.6em/}/[a`b;\langle b_{0j} \rangle]
\arrow|l|/>/[c`b;\Delta_{FA_0}]
\arrow/..>/[a`c;]
\arrow|a|/{@{>}@/^1.1em/}/[a`d;b]
\arrow/>->/[c`d;]
\arrow|b|/>->/[b`e;(Fm)^J]
\arrow|r|/>/[d`e;\Delta_{FA}]

\efig
$$
commute, i.e.~$ F(m) b_{0j} = b $ for every $ j $. Choose
any $ j \in J $. Then $ b_{0j} \colon B \to FA_0 $
(dotted arrow) makes the top triangle commute. But
the left triangle also commutes because $ (Fm)^J $ is
monic. Thus the square is a pullback and $ \Delta $ is
taut.
\end{proof}

\begin{remark}

Note that $ \Delta $ is taut but not cartesian. For example,
the pullback of $ F f \times F f $ along $ \Delta $ is the
kernel pair $ K $ of $ Ff $
$$
\bfig
\square/>` >->` >->`>/<900,600>[K`FA`F A_0 \times FA_0`FA \times FA;
```Ff \times Ff]

\place(450,300)[\framebox{Pb}]

\efig
$$
which will be $ FA_0 $ only if $ Ff $ is monic.

\end{remark}

So every cone in $ \Taut ({\bf A}, {\bf B}) $ gives a unique
taut transformation into the limit but this still doesn't 
make it into a limit in $ \Taut ({\bf A}, {\bf B}) $, because,
as we said, the projections are
not necessarily taut. But for non-empty connected limits,
pullbacks or equalizers for example, it does.

\begin{proposition}
\label{Prop-ConnLim}

Suppose $ {\bf B} $ has $ {\bf I} $-limits and that $ {\bf I} $
is non-empty and connected, then $ \Taut ({\bf A}, {\bf B}) $
is closed under $ {\bf I} $-limits in $ \Cat ({\bf A}, {\bf B}) $.

\end{proposition}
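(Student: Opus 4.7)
The plan is to reduce the claim to showing that each projection $\pi_{I_0} \colon \liml_I \Phi I \to \Phi I_0$ is a taut transformation. Propositions~\ref{Prop-LimTaut} and~\ref{Prop-TautCone} already supply everything else: the pointwise limit $\liml_I \Phi I$ is a taut functor, and any cone of taut transformations from a taut functor to $\Phi$ induces a taut transformation into it. So the universal property of the pointwise limit in $\Cat({\bf A}, {\bf B})$ restricts to a limit in $\Taut({\bf A}, {\bf B})$ as soon as the projections themselves are known to be taut.

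Fix $I_0 \in {\bf I}$ and a monomorphism $m \colon A_0 \to A$ in ${\bf A}$. Since ${\bf I}$-limits in $\Cat({\bf A}, {\bf B})$ are pointwise, an element of the pullback of $\Phi I_0(m)$ along $\pi_{I_0}(A)$ consists of an $x \in \Phi I_0(A_0)$ together with a compatible family $(y_I) \in \liml_I \Phi I(A)$ satisfying $\Phi I_0(m)(x) = y_{I_0}$; I must produce a compatible family $(x_I) \in \liml_I \Phi I(A_0)$ with $x_{I_0} = x$ and $\Phi I(m)(x_I) = y_I$ for every $I$. By Proposition~\ref{Prop-Taut}(1), each $\Phi I(m)$ is monic, so $x_I$ is uniquely determined by $y_I$ whenever it exists; consequently the only real task is existence, since the cone compatibility $\Phi j(A_0)(x_J) = x_K$ for $j \colon J \to K$ will follow automatically by canceling the mono $\Phi K(m)$ against the given compatibility of $(y_I)$.

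Existence is where the connectedness of ${\bf I}$ is essential. For each $I \in {\bf I}$, choose a zigzag of morphisms linking $I_0$ to $I$ and propagate $x = x_{I_0}$ along it step by step. A forward step $j \colon J \to K$ needs no tautness of $\Phi j$: set $x_K := \Phi j(A_0)(x_J)$, and naturality of $\Phi j$ at $m$ together with $\Phi j(A)(y_J) = y_K$ gives $\Phi K(m)(x_K) = y_K$. A backward step $j \colon K \to J$ is where I use the hypothesis that $\Phi$ factors through $\Taut$ rather than $\Taut_{full}$: tautness of $\Phi j$ makes its naturality square at $m$ a pullback, so from the compatible data $y_K$ and $x_J$ with $\Phi j(A)(y_K) = y_J = \Phi J(m)(x_J)$ I extract a unique $x_K$ with $\Phi K(m)(x_K) = y_K$. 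The main obstacle is essentially bookkeeping: checking that the $x_I$ so produced is independent of the zigzag chosen (which is immediate because $\Phi I(m)$ is monic) and that the resulting family is a genuine cone, both of which reduce to mono-cancellation against the compatibility of $(y_I)$.
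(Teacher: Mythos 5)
Your proof is correct and takes essentially the same route as the paper: reduce the claim to tautness of the projections, then propagate the factorization through $\Phi I(m)$ outward from $I_0$ along zigzags, using plain naturality for forward steps, tautness of $\Phi(j)$ (the pullback property of its naturality square) for backward steps, and cancellation of the monos $\Phi I(m)$ for uniqueness and cone-compatibility --- the paper organizes exactly this propagation as the statement that the full subcategory of objects admitting a factorization contains $I_0$, is closed under both ends of a morphism, and hence is all of ${\bf I}$ by connectedness. The one caveat is that since ${\bf B}$ is an arbitrary category your ``elements'' must be read as generalized elements $B \to (\liml_I \Phi I)(A)$, which is how the paper phrases it, and with that reading your argument goes through verbatim.
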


\begin{proof}

Let $ \Phi \colon {\bf I} \to \Taut ({\bf A}, {\bf B}) $ be an
$ {\bf I} $-diagram and $ I_0 \in {\bf I} $. We want to show
that the projection
$$
p_0 \colon \liml \Phi \to \Phi I_0
$$
is taut. To this end, let $ m \colon A_0 \ \ \to/>->/ A $
be a mono in $ {\bf A} $ and consider the commutative
diagram in $ {\bf B} $
$$
\bfig
\node a(300,0)[(\Phi I_0)(A_0)]
\node b(1100,0)[(\Phi I_0)(A)]
\node c(300,550)[(\liml \Phi)(A_0)]
\node d(1100,550)[(\liml \Phi)(A)]
\node e(0,850)[B]

\arrow/ >->/[a`b;]
\arrow/ >->/[c`d;]
\arrow|l|/>/[c`a;p_0 A_0]
\arrow|r|/>/[d`b;p_0 A]
\arrow|l|/{@{>}@/_1.8em/}/[e`a;b_0]
\arrow|r|/..>/[e`c;?]
\arrow|a|/{@{>}@/^1.3em/}/[e`d;\langle b_I \rangle]

\efig
$$
where $ \langle b_I \rangle $ is the morphism induced by a cone
for $ \Phi $, and which we wish to show factors through
$ (\liml \Phi) (A_0) $. This will happen if
each $ b_I \colon B \to \Phi (I)(A) $ factors through
$ \Phi (I) (A_0)\ \to/>->/ \Phi (I) (A) $. Let $ {\bf J}
\subseteq {\bf I} $ be the full subcategory of $ {\bf I} $
determined by those $ J $ for which we have
$ b'_J $ such that
$$
\bfig
\btriangle/..>`>` >->/<800,400>[B`\Phi (J)(A_0)`(\Phi J)(A)\rlap{\ .};
b'_J`b_J`\Phi(J)(m)]

\efig
$$
If $ J \in {\bf J} $ and $ j \colon J \to K $ then $ K $
is in $ {\bf J} $ because
$$
\bfig
\node a(0,0)[\Phi (K)(A_0)]
\node b(800,0)[\Phi (K) (A)]
\node c(0,500)[\Phi (J)(A_0)]
\node d(800,500)[\Phi (J) (A)]
\node e(0,900)[B]

\arrow|b|/ >->/[a`b;\Phi (K)(m)]
\arrow|b|/ >->/[c`d;\Phi (J)(m)]
\arrow|l|/>/[c`a;\Phi(j)(A_0)]
\arrow|r|/>/[d`b;\Phi (j)(A)]
\arrow|l|/>/[e`c;b'_J]
\arrow|r|[e`d;b_J]

\efig
$$
commutes and the composite on the right is
$ b_K $, so $ b'_K = \Phi (j)(A_0) \cdot b'_J $.
Conversely, as $ \Phi (j) $ is taut, the above
square is a pullback, so if we have a $ b'_K $,
then there exists a unique $ b'_J $. This means
that if $ j \colon J \to K $ and $ K \in {\bf J} $
then so is $ J $. We're given that $ I_0 \in {\bf J} $
and as anything connected to it will also be in
$ {\bf J} $, we have $ {\bf J} = {\bf I} $.
\end{proof}

\subsection{Colimits of taut functors}
\label{SSec-ColimTaut}

We will only be concerned with $ {\bf Set} $ valued
taut functors, and for these, there are special colimit
commutations. The main concept is the following.

\begin{definition}
\label{Def-Confl}

A category $ {\bf I} $ is {\em confluent} if for every
pair of arrows with the same domain, $ \alpha_i \colon
I \to I_i \ (i = 1, 2) $ there exist $ \beta_i \colon I_i \to J $
making $ \beta_1 \alpha_1 = \beta_2 \alpha_2 $
$$
\bfig
\node a(300,0)[I_2]
\node b(0,200)[I]
\node c(300,400)[I_1]
\node d(600,200)[J]

\arrow|l|/>/[b`a;\alpha_2]
\arrow|l|/>/[b`c;\alpha_1]
\arrow|r|/>/[c`d;\beta_1]
\arrow|r|/>/[a`d;\beta_2]

\place(700,0)[.]

\efig
$$

\end{definition}

\begin{theorem}
\label{Thm-Confl}

$ {\bf I} $-colimits commute with inverse images
in $ {\bf Set} $ iff $ {\bf I} $ is confluent.
\end{theorem}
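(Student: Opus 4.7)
My plan is to prove both directions from the concrete description of $ {\bf Set} $-valued colimits as quotients of a disjoint union. The crux is a normal form lemma: when $ {\bf I} $ is confluent, two elements $ [i,x] $ and $ [j,y] $ become equal in $ \limr_{\bf I} D $ iff there is a single V, $ \phi \colon i \to k $ and $ \psi \colon j \to k $, with $ D\phi(x) = D\psi(y) $. Reflexivity and symmetry of this V-relation are obvious; transitivity is exactly where confluence enters -- given two such V's sharing a middle vertex, apply confluence to their inner legs and compose to obtain a single V connecting the outer endpoints. This lemma drives both implications.

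For sufficiency, assume $ {\bf I} $ is confluent and consider an inverse image square $ P = M_0 \times_M E $ of functors $ {\bf I} \to {\bf Set} $ with $ m \colon M_0 \hookrightarrow M $ a pointwise mono and $ f \colon E \to M $ arbitrary. I would check that the canonical map $ \limr P \to \limr M_0 \times_{\limr M} \limr E $ is a bijection. For surjectivity, given $ [j,x_0] $ and $ [i,e] $ whose images agree in $ \limr M $, the lemma produces $ \alpha \colon i \to k $ and $ \beta \colon j \to k $ with $ f_k(E\alpha(e)) = m_k(M_0 \beta(x_0)) $ in $ M(k) $, and the pointwise pullback supplies the required element of $ P(k) $. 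For injectivity, if $ [k,p] $ and $ [l,p'] $ agree under both projections, apply the lemma to the $ E $-components to obtain $ \alpha \colon k \to n $ and $ \alpha' \colon l \to n $ with $ E\alpha(p_E) = E\alpha'(p'_E) $ in $ E(n) $; the $ M_0 $-components at $ n $ must then also agree, because $ m_n $ is injective and both project to the same element of $ M(n) $, so $ P\alpha(p) = P\alpha'(p') $ in $ P(n) $ and hence $ [k,p] = [l,p'] $.

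For necessity, suppose $ {\bf I} $ fails to be confluent at some cospan $ \alpha_i \colon I \to I_i $ and test the commutation on the representable $ h^I = \Hom(I, -) \colon {\bf I} \to {\bf Set} $ together with its two subfunctors $ D_i \subseteq h^I $, where $ D_i(K) $ consists of the morphisms $ I \to K $ factoring through $ \alpha_i $ (equivalently, the image of the induced $ h^{I_i} \to h^I $). Non-confluence of $ (\alpha_1, \alpha_2) $ is precisely the statement that $ D_1(K) \cap D_2(K) = \emptyset $ for every $ K $, so the pullback $ D_1 \times_{h^I} D_2 $ is the empty subfunctor and $ \limr(D_1 \times_{h^I} D_2) = \emptyset $. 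But $ \limr h^I $ is a singleton (the coslice $ I/{\bf I} $ is connected, having $ (I, \id_I) $ as initial object) and each $ \limr D_i $ contains $ [I_i, \alpha_i] $, so $ \limr D_1 \times_{\limr h^I} \limr D_2 $ is nonempty -- contradicting the assumed commutation.

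The main obstacle is the normal form lemma: proving transitivity of the V-relation requires applying confluence to the inner legs of two V's in a way that leaves the outer endpoints undisturbed. After that, both directions amount to diagram chases. A subtlety worth flagging in sufficiency is that injectivity uses \emph{only} the coincidence of the $ E $-components -- the $ M_0 $-coincidence is then forced by the mono $ m_n $ -- which reflects the one-sided asymmetry of an inverse image diagram.
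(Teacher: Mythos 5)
Your proposal is correct and follows essentially the same route as the paper: the same normal-form lemma (the paper's Lemma~\ref{Lem-Confl}) drives the sufficiency direction, and necessity is tested on representables built from a non-confluent span, just as in the paper. The only differences are cosmetic packaging --- you verify bijectivity of the comparison map into the pullback directly where the paper routes this through Lemma~\ref{Lem-ConflSub}, and for necessity you intersect the two image subfunctors $D_1, D_2$ inside $\Hom(I,-)$ rather than taking the inverse image of one image under $\Hom(\alpha_1,-)$ as the paper does (note also that $\alpha_1,\alpha_2$ form a span, not a cospan).
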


Some preliminaries before we prove this. Recall that
for a diagram $ \Gamma \colon {\bf I} \to {\bf Set} $,
$\limr\Gamma $ can be computed as the
set of equivalence class of pairs $ (I, x \in \Gamma I) $
where the equivalence relation is generated by
$$
(I, x) \sim (I', \Gamma (\alpha)(x))
$$
for $ \alpha \colon I \to I' $. So $ (I, x) \sim (I', x') $ iff
there exists a zigzag path
$$
I = I_0 \to^{\alpha_1} I_1 \to/<-/^{\alpha_2} I_2 \to^{\alpha_3}
\cdots \to/<-/^{\alpha_2n} I_{2n} = I'
$$
and $ x_i \in \Gamma I_i $ such that
$$
x_0 = x, \ x_{2n} = x'
$$
$$
\Gamma (\alpha_{2i - 1}) (x_{2i - 2}) = x_{2i - 1} =
\Gamma (\alpha_{2i})(x_{2i}) \ \ i = 1, \cdots , n
$$

$$
\bfig\scalefactor{.8}
\node a(0,0)[\Gamma I]
\node b(500,0)[\Gamma I_0]
\node c(1000,0)[\Gamma I_1]
\node d(1500,0)[\Gamma I_2]
\node e(2000,0)[]
\node f(2500,0)[]
\node g(3000,0)[\Gamma I_{2n}]
\node h(3500,0)[\Gamma I'\rlap{\ .}]

\node a'(0,400)[1]
\node b'(500,400)[1]
\node c'(1000,400)[1]
\node d'(1500,400)[1]
\node e'(2000,400)[]
\node f'(2500,400)[]
\node g'(3000,400)[1]
\node h'(3500,400)[1]

\arrow|l|/>/[a'`a;x]
\arrow|r|/>/[b'`b;x_0]
\arrow|r|/>/[c'`c;x_1]
\arrow|r|/>/[d'`d;x_2]
\arrow|r|/>/[g'`g;x_{2n}]
\arrow|r|/>/[h'`h;x']

\arrow/=/[a`b;]
\arrow|b|/>/[b`c;\Gamma \alpha_1]
\arrow|b|/<-/[c`d;\Gamma \alpha_2]
\arrow|b|/>/[d`e;\Gamma \alpha_3]
\arrow//[e`f;\dots]
\arrow|b|/<-/[f`g;\Gamma \alpha_{2n}]
\arrow/=/[g`h;]

\arrow/=/[a'`b';]
\arrow/=/[b'`c';]
\arrow/=/[c'`d';]
\arrow/=/[d'`e';]
\arrow//[e'`f';\dots]
\arrow/=/[f'`g';]
\arrow/=/[g'`h';]

\efig
$$

\begin{lemma}
\label{Lem-Confl}

If $ {\bf I} $ is confluent, then $ n $ can be taken to
be $ 1 $ in the above description, i.e.~$ (I, x) \sim (I', x') $
iff there exist $ \beta \colon I \to J $ and $ \beta' \colon I' \to J $
with
$$
\Gamma (\beta) (x) = \Gamma (\beta')(x') \ .
$$

\end{lemma}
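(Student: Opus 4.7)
The plan is to show directly that the binary relation $R$ on pairs defined by
$$
(I,x)\mathrel{R}(I',x') \iff \exists\,\beta\colon I\to J,\ \beta'\colon I'\to J \text{ with } \Gamma(\beta)(x)=\Gamma(\beta')(x')
$$
is already an equivalence relation containing the generating relation of the usual colimit construction. Since the zigzag equivalence is by definition the \emph{smallest} equivalence relation containing the generators, this forces $R$ to coincide with it, which is exactly the claim.

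First I would check that $R$ contains the generating relation: given $\alpha\colon I\to I'$, take $\beta=\alpha$ and $\beta'=\id_{I'}$; then $\Gamma(\beta)(x)=\Gamma(\alpha)(x)=\Gamma(\beta')\bigl(\Gamma(\alpha)(x)\bigr)$, so $(I,x)\mathrel{R}(I',\Gamma(\alpha)(x))$. Reflexivity (take $J=I$, $\beta=\beta'=\id$) and symmetry are immediate from the definition.

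The one real step is transitivity, and this is where confluence enters. Suppose $(I,x)\mathrel{R}(I',x')$ via $\beta_1\colon I\to J$, $\beta'_1\colon I'\to J$, and $(I',x')\mathrel{R}(I'',x'')$ via $\beta_2\colon I'\to K$, $\beta'_2\colon I''\to K$. Apply confluence to the cospan of arrows $\beta'_1\colon I'\to J$ and $\beta_2\colon I'\to K$ out of $I'$ to obtain $\gamma\colon J\to L$ and $\gamma'\colon K\to L$ with $\gamma\beta'_1=\gamma'\beta_2$. Then
$$
\Gamma(\gamma\beta_1)(x)=\Gamma(\gamma)\Gamma(\beta_1)(x)=\Gamma(\gamma)\Gamma(\beta'_1)(x')=\Gamma(\gamma')\Gamma(\beta_2)(x')=\Gamma(\gamma')\Gamma(\beta'_2)(x''),
$$
witnessing $(I,x)\mathrel{R}(I'',x'')$ via the cospan $\gamma\beta_1\colon I\to L$, $\gamma'\beta'_2\colon I''\to L$.

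The main (and only) obstacle is having transitivity at hand, which confluence delivers in exactly the shape needed: two cospans sharing a middle object get merged into a single cospan by coequalizing the two legs over $I'$. Once $R$ is an equivalence relation containing the generators, it contains the zigzag equivalence; the converse containment is trivial, since a single cospan is itself a length-one zigzag (take $x_1=\Gamma(\beta)(x)=\Gamma(\beta')(x')$). This completes the proof.
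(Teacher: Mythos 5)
Your proof is correct and rests on exactly the same key step as the paper's: using confluence to merge two cospans sharing a middle object (your transitivity argument is precisely the paper's replacement of $I_1 \leftarrow I_2 \to I_3$ by a single cocone). The only difference is packaging — the paper shortens a minimal-length zigzag to reach a contradiction, while you verify that the one-cospan relation is already an equivalence relation containing the generators, which amounts to the same argument.
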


\begin{proof}

The ``if'' part is obvious.

On the other hand, if the shortest path joining
$ (I, x) $ to $ (I', x) $ had $ n > 1 $, then we could
take $ \beta, \beta' $ s.t.~$ \beta \alpha_2 =
\beta' \alpha_3 $
$$
\bfig
\node a(400,0)[J]
\node b(0,300)[I_1]
\node c(800,300)[I_3]
\node d(400,600)[I_2]

\arrow|l|/>/[b`a;\beta]
\arrow|l|/>/[d`b;\alpha_2]
\arrow|r|/>/[d`c;\alpha_3]
\arrow|r|/>/[c`a;\beta']

\efig
$$
and replace
$ I_0 \to^{\alpha_1} I_1 \to/<-/^{\alpha_2} I_2 \to^{\alpha_3}
I_3 \to/<-/^{\alpha_4} I_4 $ by
$$
I_0 \to^{\beta \alpha_1} J \to/<-/^{\beta' \alpha_4} I_4 \ .
$$
Then a simple calculation shows that
$ \Gamma (\beta \alpha_1) (x_0) =
\Gamma (\beta' \alpha_4) (x_2) $, so we take 
$ y \in \Gamma J $ to be that common value and we get
a shorter path.
\end{proof}

\begin{lemma}
\label{Lem-ConflSub}

Let $ {\bf I} $ be confluent and $ \Gamma_0 $ a subdiagram
of $ \Gamma $, 
$ \Gamma_0 \subseteq \Gamma \colon {\bf I} \to {\bf Set} $.
Then limit $ \limr \Gamma_0 $ is isomorphic to
the set of elements $ [I, x] $ in $ \limr \Gamma $
such that there exists $ \beta \colon I \to J $ with
$ \Gamma (\beta) (x) \in \Gamma_0 J $.

\end{lemma}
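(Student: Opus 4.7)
The plan is to build the evident comparison map from the inclusion of diagrams and show it is injective with image exactly the described set $S \subseteq \varinjlim \Gamma$. Concretely, I would define $\iota \colon \varinjlim \Gamma_0 \to \varinjlim \Gamma$ by $\iota[I, x_0]_0 = [I, x_0]$ for $x_0 \in \Gamma_0(I)$. Well-definedness is immediate: any zigzag in $\Gamma_0$ witnessing an equivalence is a fortiori a zigzag in $\Gamma$. The goal then splits cleanly into (a) checking that $\mathrm{im}(\iota) = S$ and (b) checking that $\iota$ is injective.

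Part (a) is routine and does not really use confluence. For one containment, $\iota[I, x_0]_0 \in S$ using $\beta = \mathrm{id}_I$. For the other, if $[I, x] \in S$ with witness $\beta \colon I \to J$ and $\Gamma(\beta)(x) \in \Gamma_0(J)$, then the element $[J, \Gamma(\beta)(x)]_0 \in \varinjlim \Gamma_0$ maps to $[J, \Gamma(\beta)(x)] = [I, x]$ in $\varinjlim \Gamma$.

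The heart of the argument is injectivity, which is where Lemma~\ref{Lem-Confl} is exactly tailored to help. Suppose $\iota[I, x_0]_0 = \iota[I', x'_0]_0$, so $[I, x_0] = [I', x'_0]$ in $\varinjlim \Gamma$. By Lemma~\ref{Lem-Confl} applied to $\Gamma$ over the confluent category $\mathbf{I}$, there exist $\beta \colon I \to J$ and $\beta' \colon I' \to J$ with $\Gamma(\beta)(x_0) = \Gamma(\beta')(x'_0)$. Now the subdiagram hypothesis kicks in: since $x_0 \in \Gamma_0(I)$ and $\Gamma(\beta)$ restricts to $\Gamma_0(\beta) \colon \Gamma_0(I) \to \Gamma_0(J)$, the common value lies in $\Gamma_0(J)$. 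Hence, working in $\varinjlim \Gamma_0$, we get $[I, x_0]_0 = [J, \Gamma_0(\beta)(x_0)]_0 = [I', x'_0]_0$, as required.

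The main obstacle, and the reason the statement fails for general $\mathbf{I}$, is precisely the one that Lemma~\ref{Lem-Confl} removes. A generic zigzag equivalence in $\varinjlim \Gamma$ could weave through elements of $\Gamma(K) \setminus \Gamma_0(K)$ with no obvious way to reroute it inside $\Gamma_0$. The confluence reduction to a single span $\beta, \beta'$ lets the subdiagram property transport things into $\Gamma_0$ in one step; without that collapsing, there is nothing to hang the argument on.
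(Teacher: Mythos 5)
Your proof is correct and follows essentially the same route as the paper: the same comparison map $\iota$, with Lemma~\ref{Lem-Confl} supplying exactly the injectivity step --- this is the paper's parenthetical ``crucial point'' that the span-form equivalence relations for $\Gamma$ and $\Gamma_0$ coincide, which you spell out via the subdiagram property. One caveat: your claim that identifying the image ``does not really use confluence'' is only true if the defining condition on $[I,x]$ is read as ``some representative of the class admits such a $\beta$''; as the lemma is actually applied later (e.g.\ in the proofs of Theorem~\ref{Thm-Confl} and Proposition~\ref{Prop-TautCl}) one starts from a \emph{given} representative $(I,x)$ and needs a $\beta$ out of that $I$, and this representative-independence does use confluence --- by the same span argument you already deploy for injectivity, which is how the paper's proof phrases the image characterization.
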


\begin{proof}

An element of $ \limr \Gamma_0 $ is an
equivalence class of pairs $ (I, x) $ with $ x \in \Gamma_0 I $,
which for now we denote $ [I, x]_0 $. To it there is a
well-defined element $ [I, x] $ of $ \limr \Gamma $,
whether $ {\bf I} $ is confluent or not, but if $ {\bf I} $ is
confluent the equivalence relations as described in Lemma~\ref{Lem-Confl}
are the same for $ \Gamma $ and $ \Gamma_0 $ (crucial point!), so
the function $ [I, x]_0 \mapsto [I, x] $
is one-to-one. The classes are not the same -- $ [I, x] $
may contain more elements -- but we can identify
$ \limr \Gamma_0 $ with those elements of
$ \limr \Gamma $ that have a representative
in $ \Gamma_0 $. So $ [I, x] $ is in the image of
$ \limr \Gamma_0 $ iff there exist
$ x' \in \Gamma_0 I' $, $ \beta \colon I \to J $,
$ \beta' \colon I' \to J $ with
$
\Gamma (\beta) (x) = \Gamma (\beta') (x') \in \Gamma_0 J $.
Then, if $ x $ is in the image of $ \Gamma_0 $, there does exist
a $ \beta $ with $ \Gamma(\beta) (x) \in \Gamma_0 J $,
and conversely, if there is such a $ \beta $ we can always
take $ \beta' = 1_J $.
\end{proof}

From now on, we will identify $ \limr \Gamma_0 $
with its image in $ \limr \Gamma $.

\begin{proof}[of theorem]

First of all, let $ {\bf I} $ be confluent and
$$
\bfig
\square/ >->`>`>` >->/[\Phi_0`\Phi`\Gamma_0`\Gamma;
`t_0`t`]

\efig
$$
be an inverse image diagram in $ {\bf Set}^{\bf I} $,
i.e.~$ \Phi_0 (I) = t(I)^{-1} \Gamma_0 (I) $, and take
the colimits
$$
\bfig
\square/ >->`>`>` >->/<550,500>[\limr\Phi_0`\limr\Phi
`\limr \Gamma_0`\limr \Gamma\rlap{\ .};
`\limr t_0`\limr t`]

\efig
$$
So $ \limr \Phi_0 $ is contained in the inverse image of
$ \limr \Gamma_0 $. An element in that inverse image is
$ [x, I] \in \limr \Phi $ such that $ [t(I) (x), I ] $ is in
$ \limr \Gamma_0 $, i.e.~there is $ \beta \colon I \to J $
such that $ \Gamma (\beta) t (I) (x) \in \Gamma_0 J $.
But $ \Gamma (\beta) t (I) (x) = t (J) \Phi (\beta) (x) $
and as it is in $ \Gamma_0 J $, $ \Phi (\beta) (x) \in
\Phi_0 J $, which implies $ [x, I] \in \limr \Phi_0 $. Thus
the inverse image of $ \limr \Gamma_0 $ is equal to
$ \limr \Phi_0 $, and confluent colimits commute with
inverse images in $ {\bf Set} $.

Let $ {\bf I} $ be any small category and
$$
\bfig
\Ctriangle/<-``>/<350,250>[I_1`I`I_2;\alpha_1``\alpha_2]

\efig
$$
be arrows in $ {\bf I} $. Consider the following inverse
image diagram
$$
\bfig
\morphism(0,0)/->>/<500,0>[{\bf I} (I_2, -)`\Gamma_0;]

\square(500,0)/ >->`>`>` >->/[\Phi_0`{\bf I}(I_1,-)
`\Gamma_0`{\bf I} (I,-)\rlap{\ .};
``{\bf I} (\alpha_1,-)`]

\efig
$$
Here $ \Gamma_0 $ is the image of the natural
transformation $ {\bf I} (\alpha_2, -) $ and $ \Phi_0 $
the inverse image of $ \Gamma_0 $ under
$ {\bf I} (\alpha_1, -) $. The colimit of a representable
is always $ 1 $, so taking colimits we get
$$
\bfig
\morphism(0,0)/->>/<500,0>[1`\limr\Gamma_0;]

\square(500,0)[\limr \Phi_0`1`\limr\Gamma_0`1\rlap{\ .};```]

\efig
$$
Then $ \limr \Gamma_0 $ must be $ 1 $ also, and
if $ {\bf I} $-colimits commute with inverse images,
$ \limr \Phi_0 $ also has to be $ 1 $. Now, $ \Phi_0 (J) $
is the set of all morphisms $ \beta_1 \colon I_1 \to J $ such
that there exists $ \beta_2 \colon I_2 \to J $ with
$ \beta_1 \alpha_1 = \beta_2 \alpha_2 $,
$$
\bfig
\Ctriangle/<-``>/<350,250>[I_1`I`I_2;\alpha_1``\alpha_2]

\Dtriangle(350,0)/`>`<-/<350,250>[I_1`J`I_2;`\beta_1`\beta_2]

\place(700,0)[.]

\efig
$$
So if $ \limr \Phi_0 $ is to be $ 1 $ there has to exist at
least one such pair $ (\beta_1, \beta_2) $. I.e.~$ {\bf I} $
is confluent.
\end{proof}

\begin{remark}

Saying that $ {\bf I} $-colimits commute with inverse images
means exactly that the colimit functor
$$
\limr \colon {\bf Set}^{\bf I} \to {\bf Set}
$$
is taut.

\end{remark}

\begin{proposition}
\label{Prop-ColimTaut}

$ \Taut_{full} ({\bf A}, {\bf Set}) $ is closed under confluent
colimits. If $ t $ is a natural transformation of confluent diagrams in
$ \Taut_{full} ({\bf A}, {\bf B}) $, 
and all the values of $ t $ are taut, then the induced
morphism $ \limr  t $ is also taut.

\end{proposition}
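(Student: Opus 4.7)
The plan is to deduce both statements from Theorem~\ref{Thm-Confl}, the confluent-colimit analogue of the ``limits commute with limits'' principle that powers Proposition~\ref{Prop-LimTaut}. The governing observation is that colimits in $\Cat({\bf A}, {\bf Set})$ are computed pointwise in ${\bf Set}$, so $(\limr \Phi)(A) = \limr_I \Phi(I)(A)$ and likewise $(\limr t)(A) = \limr_I (tI)(A)$; in particular, whatever we prove about colimit squares in ${\bf Set}$ will transport directly to the functor category.

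For the closure statement I would fix an inverse image diagram in ${\bf A}$, namely a pullback of a mono $m \colon A_0 \rightarrowtail A$ along some $f \colon B \to A$, giving $B_0 \rightarrowtail B$. For each $I \in {\bf I}$, tautness of $\Phi(I)$ turns this into an inverse image square in ${\bf Set}$. Viewing the assignment $I \mapsto$ (this square) as an ${\bf I}$-indexed diagram of squares in ${\bf Set}$, its four corners lie in ${\bf Set}^{\bf I}$ and its horizontal legs are inclusions of subdiagrams. Theorem~\ref{Thm-Confl} says that $\limr \colon {\bf Set}^{\bf I} \to {\bf Set}$ preserves inverse image diagrams, and Lemma~\ref{Lem-ConflSub} ensures that the colimits of those horizontal legs are genuine monomorphisms; together these identify the colimit square as an inverse image diagram in ${\bf Set}$. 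By the pointwise formula this colimit square is exactly $\limr \Phi$ applied to the original diagram, so $\limr \Phi$ is taut.

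For the second part I would run the identical argument with the ${\bf I}$-diagram of squares replaced by the naturality square of $tI$ at a mono $m \colon A_0 \rightarrowtail A$. Tautness of each $tI$ makes every such square a pullback in ${\bf Set}$, the horizontal legs are again subdiagram inclusions whose colimits are monic by Lemma~\ref{Lem-ConflSub}, and Theorem~\ref{Thm-Confl} yields a pullback after taking $\limr$. Pointwise computation identifies that pullback with the naturality square of $\limr t$ at $m$, proving $\limr t$ taut.

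I do not expect any serious obstacle: the argument is essentially a repackaging of Theorem~\ref{Thm-Confl} mirroring the proof of Proposition~\ref{Prop-LimTaut}. The only point requiring care is that closure under colimits demands the top and bottom legs of the colimit square to be honest monomorphisms rather than merely compatible cones, and that is exactly what Lemma~\ref{Lem-ConflSub} provides via its identification of $\limr \Gamma_0$ with a subset of $\limr \Gamma$.
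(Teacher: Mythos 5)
Your proof is correct and follows essentially the same route as the paper: the paper simply says the proof of Proposition~\ref{Prop-LimTaut} carries over verbatim, replacing ``limits commute with limits'' by Theorem~\ref{Thm-Confl}, which is exactly your argument applied to the same two families of inverse image squares (and your explicit appeal to Lemma~\ref{Lem-ConflSub} for monicity of the colimit legs is the same identification the paper makes just after that lemma).
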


\begin{proof}

The proof of Proposition~\ref{Prop-LimTaut} carries over
verbatim, only using that confluent colimits commute with
inverse images in $ {\bf Set} $.
\end{proof}

Proposition~\ref{Prop-TautCone} also ``dualizes'' to
confluent colimits with some modifications.

\begin{proposition}
\label{Prop-TautColim}

Let $ {\bf I} $ be confluent and $ \Phi \colon {\bf I}
\to \Taut_{full} ({\bf A}, {\bf Set}) $ a diagram of taut
functors, $ F \colon {\bf A} \to {\bf Set} $ a taut
functor, $ \gamma \colon \Phi \to F $ a cocone with
each $ \gamma I $ taut, then the induced
transformation
$$
[\gamma I] \colon \limr_I \Phi I \to F
$$
is also taut.

\end{proposition}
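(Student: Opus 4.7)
The plan is to dualize the argument of Proposition~\ref{Prop-TautCone}. Regard $ F $ as the constant diagram $ \Delta F \colon {\bf I} \to \Taut_{full} ({\bf A}, {\bf Set}) $ on $ {\bf I} $; then the cocone $ \gamma $ becomes a natural transformation $ \gamma \colon \Phi \to \Delta F $ whose components $ \gamma I $ are all taut by hypothesis. Since $ {\bf I} $ is confluent, Proposition~\ref{Prop-ColimTaut} applies, and the induced morphism
$$
\limr_I \gamma I \colon \limr_I \Phi I \longrightarrow \limr_I \Delta F
$$
is taut.

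Computed pointwise in $ {\bf Set} $, $ \limr_I \Delta F $ is the copower $ \pi_0 ({\bf I}) \cdot F $, one copy of $ F $ for each connected component of $ {\bf I} $. Under this identification, the induced transformation $ [\gamma I] $ factors as
$$
\limr_I \Phi I \xrightarrow{\limr_I \gamma I} \pi_0 ({\bf I}) \cdot F \xrightarrow{\nabla} F,
$$
where $ \nabla $ is the codiagonal. By Proposition~\ref{Prop-Taut}(5) it therefore suffices to show that $ \nabla $ is taut.

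For a mono $ m \colon A_0 \to A $ in $ {\bf A} $, the naturality square of $ \nabla $ at $ m $ is computed pointwise in $ {\bf Set} $. The pullback of $ Fm \colon F(A_0) \to F(A) $ along $ \nabla_A \colon \pi_0 ({\bf I}) \cdot F(A) \to F(A) $ consists of pairs $ (y, (s, x)) $ with $ Fm(y) = x $, which is canonically $ \pi_0 ({\bf I}) \cdot F(A_0) $ since coproducts are stable under pullback in $ {\bf Set} $. Hence the square is a pullback and $ \nabla $ is taut.

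The only non-automatic step is the tautness of the codiagonal, which is the colimit counterpart of the tautness of the diagonal in Proposition~\ref{Prop-TautCone}. A small subtlety worth flagging is that a confluent $ {\bf I} $ need not be connected (a disjoint union of confluent categories is again confluent), so $ \pi_0 ({\bf I}) $ cannot in general be assumed to be a singleton, and the codiagonal step is genuinely required.
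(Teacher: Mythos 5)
Your proposal is correct and follows essentially the same route as the paper: apply Proposition~\ref{Prop-ColimTaut} to the cocone viewed as a transformation into the constant diagram, identify $\limr_I F$ with the coproduct of $\pi_0({\bf I})$ copies of $F$, and reduce to the tautness of the codiagonal $\nabla$, verified by the same pullback square. Your extra remarks (the explicit factorization through $\nabla$ and the observation that $\pi_0({\bf I})$ need not be a singleton) only spell out details the paper leaves implicit.
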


\begin{proof}

By Proposition~\ref{Prop-ColimTaut} we get that
$$
\limr_I \gamma I \colon \limr_I \Phi I \to \limr_I F
$$
is taut, and $ \limr_I F = \pi_0 {\bf I} \times F $, the
coproduct of $ \pi_0 {\bf I} $ copies of $ F $. So it
will be sufficient to show that the codiagonal
$ \nabla \colon \pi_0 {\bf I} \times F \to F $
is taut, which is now clear from the pullback diagram
$$
\bfig
\square/ >->`>`>` >->/<800,500>[\pi_0 {\bf I} \times F(A_0)`
\pi_0 {\bf I} \times F (A)`FA_0`FA\rlap{\ .};
`\nabla`\nabla`]

\efig
$$
\end{proof}

\begin{proposition}
\label{Prop-TautCl}

$ \Taut ({\bf A}, {\bf Set}) $ is closed under confluent
colimits in $ \Cat ({\bf A}, {\bf Set}) $.

\end{proposition}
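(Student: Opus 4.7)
The plan is to piece the statement together from the work already done. Proposition~\ref{Prop-ColimTaut} tells us that for a confluent diagram $\Phi \colon {\bf I} \to \Taut({\bf A}, {\bf Set})$ the colimit functor $\limr_I \Phi I$ is taut, and Proposition~\ref{Prop-TautColim} supplies the induced taut morphism out of $\limr_I \Phi I$ whenever a cocone of taut transformations into a taut functor is given. What remains, and what makes the difference between the $\Taut_{full}$ closure of Proposition~\ref{Prop-ColimTaut} and the stronger $\Taut$ closure claimed here, is to show that each coprojection $\kappa_{I_0} \colon \Phi I_0 \to \limr_I \Phi I$ of the colimit cocone is itself a taut natural transformation.

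To verify this, I would fix $I_0 \in {\bf I}$ and a mono $m \colon A_0 \hookrightarrow A$, and test the naturality square of $\kappa_{I_0}$ at $m$ for being a pullback in ${\bf Set}$. An element of the prospective pullback is a pair $(x, [J, y])$ with $x \in \Phi I_0(A)$, $y \in \Phi J(A_0)$, and $[I_0, x] = [J, \Phi J(m)(y)]$ in $(\limr \Phi)(A)$. The role of confluence here is to apply Lemma~\ref{Lem-Confl} and collapse this identity of equivalence classes to a single pair of arrows $\beta \colon I_0 \to K$ and $\beta' \colon J \to K$ satisfying $\Phi\beta(A)(x) = \Phi K(m)(\Phi\beta'(A_0)(y))$.

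The key step, where the proposition really uses $\Taut$ and not only $\Taut_{full}$, is that $\Phi\beta$ is now a taut natural transformation, so its naturality square at $m$ is a pullback. Feeding in $x$ together with the element $\Phi\beta'(A_0)(y)$ produces a unique $\tilde x \in \Phi I_0(A_0)$ with $\Phi I_0(m)(\tilde x) = x$ and $\Phi\beta(A_0)(\tilde x) = \Phi\beta'(A_0)(y)$. A routine identification of equivalence classes then gives $\kappa_{I_0}(A_0)(\tilde x) = [I_0, \tilde x] = [K, \Phi\beta(A_0)(\tilde x)] = [J, y]$, and uniqueness of $\tilde x$ among elements of $\Phi I_0(A_0)$ mapping to $x$ is automatic because $\Phi I_0(m)$ is monic by tautness of $\Phi I_0$.

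The main obstacle I anticipate is bookkeeping: keeping straight which piece of data is taut for which reason. Confluence via Lemma~\ref{Lem-Confl} is what produces the single $K$ where the two representatives of the colimit class meet, and it is tautness of $\Phi\beta \colon \Phi I_0 \to \Phi K$, rather than mere tautness of the individual functors $\Phi I_0$ and $\Phi K$, that delivers the crucial pullback at $K$ into which both $x$ and the chosen lift of $y$ can be pulled back together. Once those two invocations are aligned, the rest of the argument, together with the already-established Propositions~\ref{Prop-ColimTaut} and \ref{Prop-TautColim}, completes the closure statement.
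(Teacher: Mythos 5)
Your proposal is correct and follows essentially the same route as the paper: reduce (via Propositions~\ref{Prop-ColimTaut} and \ref{Prop-TautColim}) to showing the colimit injections are taut, use confluence to bring the two representatives together at a single object, and then use tautness of the transition transformation $\Phi\beta$ to pull the element back along the mono. The only cosmetic difference is that you invoke Lemma~\ref{Lem-Confl} directly and check the equality of classes in $\limr_I \Phi(I)(A_0)$ by hand, whereas the paper packages that step through Lemma~\ref{Lem-ConflSub}'s identification of $\limr_I \Phi(I)(A_0)$ with a subset of $\limr_I \Phi(I)(A)$.
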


\begin{proof}

In view of the previous proposition, we only have to show
that the colimit injections
$$
j = j I_0 \colon \Phi (I_0) \to \limr_I \Phi I
$$
are taut. Let $ A_0   \ \ \to/>->/ \ A $ be a mono and
consider the commutative square, which we want to
show is a pullback
$$
\bfig
\square/ >->`>`>` >->/<800,500>[\Phi (I_0)(A_0)
`\Phi (I_0)(A)`\limr_I \Phi (I) (A_0)`\limr_I \Phi (I)(A)\rlap{\ .};
`j A_0`j A`]

\efig
$$
The elements of $ \limr_I \Phi (I) (A) $ are equivalence
classes $ [I, x \in \Phi (I) (A)] $ and from
Lemma~\ref{Lem-ConflSub} we can identify
$ \limr_I \Phi (I) (A_0) $ with those classes for which
there exist $ J $ and $ \beta \colon I \to J $ such that
$ \Phi (\beta) (A) (x) \in \Phi (J) (A_0) $. Let
$ x_0 \in \Phi (I_0) (A) $ be such that 
$ j (A) (x_0) \in \limr_I \Phi (I) (A_0) $. $ j(A) (x) =
[I_0, x_0] $ so there exists $ \beta \colon I_0 \to J $
with $ \Phi (\beta) (A) (x_0) \in \Phi (J) (A_0) $.
$ \Phi (\beta) $ is taut so the following is a pullback
$$
\bfig
\square/ >->`>`>` >->/<700,500>[\Phi (I_0) (A_0)`\Phi (I_0)(A)
`\Phi (J) (A_0)`\Phi (J) (A)\rlap{\ .};
`\Phi (\beta)(A_0)`\Phi (\beta)(A)`]

\efig
$$
So $ \Phi (\beta)(A) (x_0) \in \Phi (J)(A_0) $ implies
$ x_0 \in \Phi (I_0)(A_0) $, which shows that our original
square is a pullback, completing the proof.
\end{proof}

\begin{corollary}
\label{Cor-FiltTaut}

Filtered colimits of taut functors into $ {\bf Set} $ are taut.

\end{corollary}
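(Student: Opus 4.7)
The plan is to deduce the corollary immediately from Proposition~\ref{Prop-TautCl} by showing that every filtered category is confluent. Once confluence is established, Proposition~\ref{Prop-TautCl} says $\Taut ({\bf A}, {\bf Set}) $ is closed under the corresponding colimits in $ \Cat ({\bf A}, {\bf Set}) $, which is exactly the statement of the corollary.

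So the one thing to verify is: if $ {\bf I} $ is filtered, then for any pair $ \alpha_1 \colon I \to I_1 $, $ \alpha_2 \colon I \to I_2 $ there exist $ \beta_i \colon I_i \to J $ with $ \beta_1 \alpha_1 = \beta_2 \alpha_2 $. First I would use the cospan axiom of filteredness to get some $ J' $ together with arrows $ \gamma_1 \colon I_1 \to J' $ and $ \gamma_2 \colon I_2 \to J' $. These need not equalize $ \alpha_1 $ and $ \alpha_2 $, but then I would invoke the coequalization axiom applied to the parallel pair $ \gamma_1 \alpha_1, \gamma_2 \alpha_2 \colon I \to J' $: there exists $ \delta \colon J' \to J $ with $ \delta \gamma_1 \alpha_1 = \delta \gamma_2 \alpha_2 $. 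Setting $ \beta_i := \delta \gamma_i $ gives exactly the required commutation.

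There is no real obstacle; the only substantive content is the standard reduction of filteredness to its two component axioms. Having verified that filtered implies confluent, the corollary follows by citing Proposition~\ref{Prop-TautCl}.
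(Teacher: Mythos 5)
Your proposal is correct and is exactly the argument the paper intends: the corollary is stated as an immediate consequence of Proposition~\ref{Prop-TautCl}, the only point being that every filtered category is confluent, which your reduction via the cospan and coequalization axioms establishes correctly.
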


\begin{corollary}
\label{Cor-GrQuoTaut}

The quotient of a taut functor into $ {\bf Set} $ by a group
action is taut.

\end{corollary}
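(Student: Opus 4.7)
The plan is to exhibit $F/G$ as a confluent colimit of a diagram whose single value is the taut functor $F$, and then apply Proposition~\ref{Prop-TautCl}.

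An action of a group $G$ on $F \colon {\bf A} \to {\bf Set}$ is the same thing as a functor $\Phi \colon BG \to {\bf Set}^{\bf A}$ from the one-object delooping category $BG$, whose value on the unique object is $F$ and whose value on a group element $g$ is the corresponding natural automorphism of $F$. The quotient $F/G$ is then precisely the pointwise colimit $\limr \Phi$, since colimits in ${\bf Set}^{\bf A}$ are computed objectwise and $(F/G)(A) = F(A)/G$ is the colimit of the $G$-action on the set $F(A)$. The diagram $\Phi$ lands in $\Taut_{full}({\bf A}, {\bf Set})$ because every value of $\Phi$ is the taut functor $F$, regardless of whether the transitions (the natural automorphisms) are themselves taut.

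The only real verification is that $BG$ is confluent. Given two parallel arrows $g_1, g_2 \colon \ast \to \ast$ in $BG$, we must produce $h_1, h_2$ with $h_1 g_1 = h_2 g_2$; taking $h_1 = 1$ and $h_2 = g_1 g_2^{-1}$ works since $h_2 g_2 = g_1 = h_1 g_1$. So $BG$ is confluent.

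Consequently Proposition~\ref{Prop-TautCl} applies and $\limr \Phi = F/G$ is taut. There is no real obstacle here once one spots that a $G$-action on $F$ is literally a $BG$-shaped diagram in $\Taut_{full}({\bf A}, {\bf Set})$; the content of the corollary is the combinatorial fact that $BG$ is confluent.
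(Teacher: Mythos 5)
Your proof is correct and is exactly the argument the paper intends: a $G$-action on $F$ is a diagram over the one-object category $BG$, which is confluent, and the quotient is its pointwise colimit. The only slight mismatch is the citation: Proposition~\ref{Prop-TautCl} is stated for diagrams in $\Taut({\bf A},{\bf Set})$, i.e.\ with taut transition morphisms, so you should either invoke Proposition~\ref{Prop-ColimTaut} (for which landing in $\Taut_{full}({\bf A},{\bf Set})$ is enough) or observe that your transitions are natural isomorphisms, whose naturality squares are always pullbacks, hence taut, after which Proposition~\ref{Prop-TautCl} applies verbatim.
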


Coproducts of set-valued functors will play a central role
in what follows so we end the section on colimits with 
some results specifically about them.

\begin{proposition}
\label{Prop-CoprodTaut}

A coproduct of functors into $ {\bf Set} $ is taut
if and only if each summand is taut.

\end{proposition}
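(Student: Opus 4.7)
The plan is to handle the two implications separately, each reducing to machinery already developed earlier in the section.

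For the \emph{only if} direction, I would exhibit each coproduct injection $i_\alpha \colon F_\alpha \to F$ (where $F = \coprod_\alpha F_\alpha$) as a taut natural transformation, and then invoke Proposition~\ref{Prop-Taut}(6), which says precisely that if $i_\alpha$ is taut and $F$ is taut, then so is $F_\alpha$. The only nontrivial content is verifying that $i_\alpha$ itself is taut: given a mono $m \colon A_0 \hookrightarrow A$ in $\mathbf{A}$, the naturality square compares $F_\alpha(m)$ with $F(m) = \coprod_\beta F_\beta(m)$, and I would check that it is a pullback by observing that an element of $F(A_0)$ whose image under $F(m)$ lands in the $\alpha$-summand of $F(A)$ must already lie in the $\alpha$-summand of $F(A_0)$. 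This is just the disjointness of coproducts in $\mathbf{Set}$ combined with the fact that $F(m)$ respects the coproduct decomposition, so no real calculation is required.

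For the \emph{if} direction, the most economical route is through the colimit machinery of Subsection~\ref{SSec-ColimTaut}. A coproduct is a colimit over a discrete indexing category, and discrete categories are vacuously confluent in the sense of Definition~\ref{Def-Confl}, since the only arrows out of any object are identities and the defining diagram can be closed by identities. Proposition~\ref{Prop-TautCl} then yields immediately that $\coprod_\alpha F_\alpha$ is taut whenever each summand is. A direct alternative, avoiding any appeal to confluence, is simply to note that coproducts in $\mathbf{Set}$ commute with pullbacks, so applying $F$ to an inverse image diagram produces the coproduct of the pullback squares produced by applying each $F_\alpha$, and that is again a pullback.

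There is no serious obstacle: both implications come from the same basic fact, namely that the coproduct in $\mathbf{Set}$ is disjoint and stable under pullback. The main thing to keep straight in writing it up is which coproduct is in play at each step --- the external indexed sum of functors versus the pointwise coproduct of sets --- but these agree because colimits in $\Cat(\mathbf{A}, \mathbf{Set})$ are computed pointwise, so the identification is automatic.
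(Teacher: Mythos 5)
Your proof is correct, and the ``if'' half is essentially the paper's: a discrete category is confluent, so Proposition~\ref{Prop-TautCl} applies (your direct alternative, that inverse images commute with coproducts of squares because coproducts in ${\bf Set}$ are disjoint, is also fine, since pullbacks are connected limits). Where you genuinely diverge is the ``only if'' half. The paper exhibits each summand $F_i$ as the pullback of the taut functor $\sum_i F_i \to \sum_i 1$ along the point $i \colon 1 \to \sum_i 1$, i.e.\ a pointwise pullback of taut functors, and concludes by closure of $\Taut_{full}({\bf A},{\bf Set})$ under pointwise limits (Proposition~\ref{Prop-LimTaut}); you instead verify directly that each coproduct injection $j_\alpha \colon F_\alpha \to \sum F_\beta$ is a taut transformation (the naturality square at a mono is a pullback by disjointness of the summands) and then invoke Proposition~\ref{Prop-Taut}(6). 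Both arguments ultimately rest on the same fact, disjointness and stability of coproducts in ${\bf Set}$; yours trades the limit machinery for an explicit element check on the injections (which, incidentally, is the same observation that makes the injections taut in the proof of Proposition~\ref{Prop-TautCl}), while the paper's version avoids checking any transformation is taut at the cost of quoting the closure-under-limits result. Either write-up is complete; just keep the remark you already make about the pointwise computation of the coproduct of functors, since that is what licenses the element-level argument.
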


\begin{proof}

A discrete category is confluent so a coproduct of
taut functors is taut, by Proposition~\ref{Prop-TautCl}
e.g. This is also easy to see directly.

Conversely, if $ \sum_{i \in I} F_i $ is taut, then so is
each $ F_i $ as it is a pullback of taut functors
$$
\bfig
\square<550,500>[F_i`\sum_{i \in I} F_i`1`\sum_{i \in I} 1\rlap{\ .};
```\ j_i]

\place(250,250)[\framebox{Pb}]

\efig
$$
\end{proof}

It is well-known that for a small category $ {\bf A} $,
every functor $ F \colon {\bf A} \to {\bf Set} $ is a
coproduct of indecomposable functors indexed by a
set $ \pi_0 F $ called the connected components of
$ F $ (see e.g.~\cite{BarPar}). $ \pi_0 $ is left adjoint
to the diagonal functor $ D \colon {\bf Set} \to {\bf Set}^{\bf A} $,
i.e.~it is the colimit functor. $ F $ is {\em connected} if
$ \pi_0 F = 1 $. Here we are concerned with the case
where $ {\bf A} = {\bf Set} $, which is, of course, not small.
If $ {\bf A} $ is a large category, then
$ \pi_0 F $ may well be a proper class, and things
fall apart. We don't get the adjointness to $ D $
for example. But, if $ {\bf A} $ has a terminal object,
everything is much nicer.

\begin{proposition}
\label{Prop-SumDecomp}

Let $ {\bf A} $ be a category with terminal object $ 1 $.
Then for a functor $ F \colon {\bf A} \to {\bf Set} $,
\begin{itemize}
	\item[(1)] $ \pi_0 F \cong F 1 $
	
	\item[(2)] $ F $ is connected (indecomposable)
	if and only if $ F 1 = 1 $
	
	\item[(3)] Every $ F $ is a coproduct of connected
	functors.

\end{itemize}

\end{proposition}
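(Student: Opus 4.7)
The plan is to use the terminal object to produce an explicit coproduct decomposition, from which all three statements follow. For each $A$ the unique map $!_A \colon A \to 1$ gives $F(!_A) \colon FA \to F1$, and for each $s \in F1$ I would define a subfunctor $F_s \subseteq F$ by taking $F_s(A) = F(!_A)^{-1}(\{s\})$; functoriality on any $f \colon A \to B$ is automatic since $!_B \circ f = !_A$. The fibres of $F(!_A)$ partition $FA$ naturally in $A$, so $F = \coprod_{s \in F1} F_s$ in $\mathbf{Set}^{\mathbf{A}}$.

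For the ``if'' direction of (2), suppose $F1 = 1$ and $F = G \sqcup H$. Evaluating at $1$ gives $1 = G1 \sqcup H1$, so without loss of generality $G1 = 1$ and $H1 = \emptyset$. Then for every $A$ the map $H(!_A) \colon HA \to H1 = \emptyset$ forces $HA = \emptyset$, so $H = 0$ and $F = G$. Since $F \neq 0$ as well (because $F1 \neq \emptyset$), this shows $F$ is connected.

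Applying this to each summand $F_s$, note $F_s(1) = F(\mathrm{id}_1)^{-1}(\{s\}) = \{s\}$ is a singleton, so each $F_s$ is connected; this establishes (3). Combined with the decomposition, it also gives (1): a natural transformation $F \to DX$ into a constant functor is the same as a family of natural transformations $(F_s \to DX)_{s \in F1}$, each of which, because $F_s$ is connected, is determined by a single element of $X$, yielding a bijection with $\mathrm{Hom}_{\mathbf{Set}}(F1, X)$. Hence $\pi_0 F$ exists as an honest set and is canonically $F1$. The ``only if'' direction of (2) is then immediate: if $F$ is connected, $F1 = \pi_0 F = 1$.

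The main subtlety, flagged in the paragraph preceding the proposition, is that $\mathbf{A}$ is large, so a priori $\pi_0 F$ might be a proper class; the terminal object collapses the class of components to $F1$, which is exactly what the hypothesis is designed to achieve. No other step is hard: once the decomposition $F = \coprod_{s \in F1} F_s$ is in hand, everything reduces to the single observation that a functor with singleton value at $1$ cannot split nontrivially.
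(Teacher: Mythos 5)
Your proof is correct, and it uses the same underlying decomposition as the paper (your fibres $F_s(A)=F(!_A)^{-1}(s)$ are exactly the paper's pullbacks of $F$ along $s\colon 1\to F1$), but the supporting arguments run in a different order and rest on different facts. The paper proves (1) first, by asserting that $F\tau\colon F\to F1$ (the image under $F$ of the cocone $\tau\colon \id\to 1$) is a colimit cocone, deduces (2) from (1), and then gets connectivity of the summands in (3) from stability of colimits under pullback, so that $\varinjlim F_s\cong 1$. You instead start from the decomposition, prove the ``if'' half of (2) by the elementary evaluation-at-$1$ argument ($H1=\emptyset$ forces $H=0$), conclude each $F_s$ is indecomposable, and then recover (1) by computing natural transformations into constant functors: a map $F_s\to DX$ decomposes $F_s$ into fibres, so indecomposability forces it to be constant, giving $\mathrm{Nat}(F,DX)\cong X^{F1}\cong \mathrm{Hom}(F1,X)$ naturally in $X$. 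What your route buys is that it avoids both unproved colimit facts (the cocone claim and pullback-stability) and makes everything element-wise; it also makes explicit the bridge between ``indecomposable'' and ``$\pi_0=1$'', which the paper elides by treating the two as synonymous. What the paper's route buys is brevity once one grants the standard colimit machinery, and it yields $\pi_0 F\cong F1$ directly without passing through the hom-set computation. The only point you should spell out slightly more is the step ``because $F_s$ is connected, a map $F_s\to DX$ is determined by a single element of $X$'': what you have established at that point is indecomposability, and the justification is precisely the fibre-decomposition observation you invoke elsewhere, so the argument closes, but as written it leans on the word ``connected'' before $\pi_0 F_s=1$ has been proved.
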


\begin{proof}

The unique morphism $ \tau X \colon X \to 1 $ for
every $ X $ gives a natural transformation from the
identity on $ {\bf Set} $ to the constant functor $ 1 $,
$ \tau \colon \id_{\bf Set} \to 1 $. If we apply $ F $ to
it we get a natural transformation $ F \tau \colon F \to F1 $,
which is easily seen to be a colimit cocone, which
gives (1). (2) is a trivial consequence of (1). Finally,
$ F 1 = \sum_{i \in F 1}  1 $ which gives a decomposition
of $ F $ into a coproduct $ F \cong \sum_{i \in F1} F_i $
where $ F_i $ is the pullback along $ i \colon 1 \to F1 $
$$
\bfig
\square[F_i`F`1`\sum_{i \in F1} 1\rlap{\ .};
```i]

\place(250,250)[\framebox{Pb}]

\efig
$$
As colimits are stable under pullback, we get
$ \limr F_i \cong 1 $. This is the decomposition
claimed in (3).
\end{proof}

\begin{corollary}
\label{Cor-SumDecomp}

If $ {\bf A} $ has a terminal object, then every
taut functor from $ {\bf A} $ to $ {\bf Set} $ is
a coproduct of connected taut functors.

\end{corollary}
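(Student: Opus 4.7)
The plan is to combine the decomposition from Proposition~\ref{Prop-SumDecomp}(3) with the coproduct criterion of Proposition~\ref{Prop-CoprodTaut}. The result then falls out essentially for free, so this is a short proof rather than one with a substantial obstacle.

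First, given a taut functor $F \colon {\bf A} \to {\bf Set}$, I would invoke Proposition~\ref{Prop-SumDecomp}(3) to obtain the canonical decomposition
$$
F \ \cong \ \sum_{i \in F1} F_i
$$
where each $F_i$ is the pullback of $F$ along $i \colon 1 \to \sum_{i \in F1} 1$. By part~(2) of that proposition, each $F_i$ is connected (since $F_i (1) = 1$).

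Next, I would observe that tautness is clearly invariant under isomorphism of functors, since the condition ``sends inverse image diagrams to pullbacks'' transfers across the componentwise iso $F \cong \sum_{i \in F1} F_i$. Hence $\sum_{i \in F1} F_i$ is itself taut. Applying Proposition~\ref{Prop-CoprodTaut} to this coproduct then yields that each summand $F_i$ is taut.

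Putting the two steps together, $F$ is exhibited as a coproduct of connected taut functors, which is exactly the assertion of the corollary. There is no real obstacle here: the work has already been done in the preceding two propositions, and the corollary is just the conjunction of ``decomposes as coproduct of connected pieces'' with ``the summands of a taut coproduct are taut.''
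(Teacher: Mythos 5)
Your proof is correct and is exactly the argument the paper intends: the corollary is stated as an immediate consequence of Proposition~\ref{Prop-SumDecomp}(3) (decomposition into connected summands) combined with Proposition~\ref{Prop-CoprodTaut} (summands of a taut coproduct are taut). Nothing more is needed.
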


There is a cancellation property
for functors into $ {\bf Set} $ which will be used in
the following sections. It is not deep but a little
delicate and best stated explicitly. Coproduct of
functors into $ {\bf Set} $ is not cancellative
$$
F + G \cong F + H \not\Rightarrow G \cong H \ .
$$
If we take $ {\bf A} = 1 $, we have $ {\mathbb N} + 1
\cong {\mathbb N} + 2 $ but $ 1 \not\cong 2 $, and
we can promote this to endofunctors of $ {\bf Set} $,
which is where we will be using it, by taking
constant functors with values $ {\mathbb N}, 1, 2 $.
However it is cancellative if we take injections into
account.

\begin{proposition}
\label{Prop-Cancellation}

Let $ F, G, H \colon {\bf A} \to  {\bf Set} $ be functors
and assume we have an isomorphism $ \phi $
commuting with injections
$$
\bfig
\Vtriangle/>`<-`<-/<350,450>[F + G`F+ H`F;
\phi`j`j]

\place(700,0)[,]

\efig
$$
then $ \phi $ restricts to an isomorphism
$ \psi \colon G \to H $.

\end{proposition}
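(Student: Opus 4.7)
The plan is to construct $\psi$ pointwise, exploiting the fact that coproducts in $\mathbf{Set}$ are disjoint unions. Fix an object $A$ of $\mathbf{A}$. Then $\phi A \colon FA + GA \to FA + HA$ is a bijection of sets, and the hypothesis $\phi \cdot j = j$ says precisely that $\phi A$ restricts to the identity on the summand $FA$. Since a bijection carries the complement of a subset bijectively onto the complement of its image, $\phi A$ restricts to a bijection
$$
\psi A \colon GA = (FA + GA) \setminus FA \ \longrightarrow\ (FA + HA) \setminus FA = HA.
$$
Equivalently, for any $x \in GA$, the element $\phi A(x)$ must lie in the $HA$ summand (otherwise injectivity of $\phi A$ would be violated, since $\phi A$ is already surjective onto $FA$ via the identity on $FA$), so $\psi A$ is well-defined, and the inverse of $\phi A$ supplies its inverse.

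Next I would check naturality of the family $(\psi A)_A$. Given $f \colon A \to B$ in $\mathbf{A}$, the naturality square for $\phi$ reads
$$
\phi B \cdot (Ff + Gf) \ =\ (Ff + Hf) \cdot \phi A.
$$
Restrict both sides to the subset $GA \subseteq FA + GA$. On the left, $Ff + Gf$ sends $GA$ into the summand $GB$ (the second injection is natural), on which $\phi B$ agrees with $\psi B$; so the left side restricts to $\psi B \cdot Gf$. Similarly, the right side restricts to $Hf \cdot \psi A$, since $\phi A$ sends $GA$ into $HA$, on which $Ff + Hf$ equals $Hf$. Hence $\psi B \cdot Gf = Hf \cdot \psi A$, giving naturality, and since each $\psi A$ is a bijection, $\psi$ is a natural isomorphism.

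There is no real obstacle here; the only delicate point is the bookkeeping that coproducts in $\mathbf{Set}^{\mathbf{A}}$ are computed pointwise and that each summand $GA$ is stably preserved by the coproduct of the transition maps $Ff + Gf$. That done, everything reduces to the elementary observation that a bijection between disjoint unions which fixes a common summand must exchange the remaining summands.
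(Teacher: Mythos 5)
Your proof is correct and takes essentially the same route as the paper: both arguments use that $\phi$ is invertible and fixes the summand $F$ to conclude that each $\phi A$ must carry $GA$ into $HA$ (the paper phrases this via $\phi^{-1}$, you via injectivity plus surjectivity onto $FA$), after which the restriction is a natural isomorphism. Your explicit naturality check is just a spelled-out version of what the paper leaves implicit in its restriction diagram.
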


\begin{proof}

For any $ A $, if $ x \in GA $ then $ \phi (A)(x) $
is in $ H (A) $ for if it weren't it would be in $ FA $
(on the right) and so $ x = \phi^{-1} (A) \phi (A) $
would be in $ FA $ on the left which it is not. So
$ \phi $ restricts to
$$
\bfig
\square/>` >->` >->`>/[G`H`F + G`F + H\rlap{\ .};
\psi`j'`j'`\phi]

\efig
$$
The same argument applied to $ \phi^{-1} $
gives $ \psi^{-1} $.
\end{proof}

The proof uses that $ \phi $ is invertible. Clearly
an arbitrary natural transformation would not
restrict. It is also specific to functors into
$ {\bf Set} $. It is false for $ {\bf Set}^{op} $
for example.


\section{Some special classes of taut functors}
\label{Sec-SpecCl}

The class of taut functors is quite large as the
following examples will show.

\subsection{Polynomials}
\label{SSec-Poly}

Classically a polynomial is an expression of the form
\begin{equation}\tag{*}
P(X) = C_0 + C_1 X + C_2 X^2 + \cdots
+ C_d X^d = \sum^d_{n\,=\,0} C_n X^n
\end{equation}
which, if we want, can be interpreted in any category
with finite sums and products, and will produce a
polynomial endofunctor on that category. Of course,
the quality of these functors will depend on whether
the category in question has good properties,
e.g.~products distribute over sums. We're concerned
with the category of sets which has all
the properties we want and more.

Given sets $ C_0, C_1, \cdots, C_d $, (*) defines a
(finitary) polynomial functor
$$
P \colon {\bf Set} \to {\bf Set}
$$
where for a set $ X $, $ X^n $ is the set of $ n $-tuples
in $ X $ and $ C_n X^n $ is the cartesian product
$ C_n \times X^n $, and $ + $ is coproduct (disjoint
union). So $ X $ represents the identity functor
$ \Id (X) = X $ and $ X^n $ is the product of
$ n $-copies of $ X $. As $ \Id $ is taut and products
and sums of taut functors are taut we get the
following.

\begin{proposition}
\label{Prop-FinPolyTaut}

Any finitary polynomial functor is taut.

\end{proposition}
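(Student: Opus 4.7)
The plan is to exhibit a finitary polynomial $P(X) = \sum_{n=0}^{d} C_n X^n$ as built from taut atoms by taut-preserving operations, and then simply cite the closure results proved in Section~\ref{Sec-Taut}. The atoms are the identity functor $\Id$ and the constant functors $X \mapsto C$; the operations are finite products (to form $X^n$ and $C_n \times X^n$) and finite coproducts (to sum over $n$).

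First I would dispose of the atomic cases. The identity functor preserves all pullbacks and is in particular taut. A constant functor with value $C$ sends every commutative square, in particular every inverse image diagram, to the identity square on $C$, which is trivially a pullback; so constant functors are taut.

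Next I would invoke Proposition~\ref{Prop-LimTaut} with a finite discrete indexing category to conclude that finite pointwise products of taut functors are taut. This gives tautness of $\Id^n$ (as a product of $n$ copies of $\Id$) and then of $C_n \times \Id^n$ (as a binary product of two taut functors). Finally, Proposition~\ref{Prop-CoprodTaut} shows that the finite coproduct $\sum_{n=0}^{d} C_n \times \Id^n = P$ is taut.

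There is no genuine obstacle here: the argument is a pure assembly of the closure properties already in hand. The only point that might momentarily give pause is recognising that constant functors are taut, and that reduces at once to the triviality that the identity square is a pullback.
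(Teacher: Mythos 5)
Your proof is correct and is essentially the paper's own argument: the paper likewise observes that $\Id$ is taut and that products and sums of taut functors are taut, and assembles $P(X)=\sum_{n=0}^{d}C_nX^n$ from these pieces. You merely make explicit the citations (Proposition~\ref{Prop-LimTaut} for finite products, Proposition~\ref{Prop-CoprodTaut} for coproducts) and the easy tautness of constant functors, which the paper leaves implicit.
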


Unencumbered by questions of convergence we can
define power series to be
\begin{equation}\tag{**}
P (X) = \sum^\infty_{n\, = \, 0} C_n X^n  
\end{equation}

giving rise to power series functors
$$
P \colon {\bf Set} \to {\bf Set} \ .
$$

\begin{proposition}
\label{Prop-PowerTaut}

Power series functors are taut.

\end{proposition}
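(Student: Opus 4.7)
The plan is to reduce the infinite case to the finitary case already handled in Proposition~\ref{Prop-FinPolyTaut}, using the closure properties of taut functors under coproducts that have just been established. Specifically, the power series $P(X) = \sum_{n=0}^\infty C_n X^n$ is literally a coproduct, indexed by the set $\mathbb{N}$, of the monomial functors $M_n(X) = C_n X^n$.

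First I would note that each summand $M_n$ is a finitary polynomial (in fact a monomial of degree $n$ with coefficient $C_n$), and hence taut by Proposition~\ref{Prop-FinPolyTaut}. Then I would invoke Proposition~\ref{Prop-CoprodTaut}, which asserts that a coproduct of functors into $\mathbf{Set}$ is taut precisely when every summand is. Since the indexing of the coproduct there is not restricted to be finite — the proof went through Proposition~\ref{Prop-TautCl} and the confluence of a discrete category, both of which make no finiteness assumption — this immediately gives tautness of $P$.

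The only thing that might be worth saying explicitly is that the infinite discrete category $\mathbb{N}$ is trivially confluent (no parallel cospans to fill in beyond the identity case), so Proposition~\ref{Prop-CoprodTaut} applies verbatim. There is really no main obstacle: the work has been done in setting up the coproduct machinery for $\mathbf{Set}$-valued functors. If one preferred an alternative route, one could equally well realise $P$ as the filtered colimit over $d \in \mathbb{N}$ of the partial sums $\sum_{n=0}^d C_n X^n$, each of which is a finitary polynomial, and then apply Corollary~\ref{Cor-FiltTaut}; but the direct coproduct argument is shorter and matches the form of the definition (**) more transparently.
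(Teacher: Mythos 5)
Your argument is correct and is essentially the paper's own: the paper proves this by observing that each monomial $C_n X^n$ is taut (being a product of taut functors) and that arbitrary coproducts of taut $\mathbf{Set}$-valued functors are taut, exactly the reduction via Proposition~\ref{Prop-FinPolyTaut} and Proposition~\ref{Prop-CoprodTaut} that you give.
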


While we're at it, we may as well let the powers be
any sets and the sum also indexed by a set. This is
indeed the natural notion if we're thinking of
$ {\bf Set} $ as a categorified ring with coproduct
as addition and product as multiplication. Now it's more
natural not to collect like powers and simply allow
repetitions.

\begin{definition}
\label{Def-Poly}

A {\em polynomial} is an expression of the
form
$$
P (X) = \sum_{i \,\in\, I} X^{A_i}
$$
where $ I $ is an index set and
$ \langle A_i \rangle_{i\,\in\,I} $ is a family of sets
indexed by $ I $. This determines a {\em polynomial functor}
$$
P \colon {\bf Set} \to {\bf Set}
$$
given by the above formula.

\end{definition}

Again we have:

\begin{proposition}
\label{Prop-PolyTaut}

Polynomial functors are taut.

\end{proposition}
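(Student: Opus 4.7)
The plan is to assemble $P$ from the identity functor by two operations, each already shown to preserve tautness. First, the identity functor $\Id\colon \mathbf{Set}\to\mathbf{Set}$ is trivially taut, since an inverse image diagram is mapped to itself.

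Next, for each $i \in I$, the functor $X \mapsto X^{A_i}$ is the $A_i$-fold product of $\Id$ with itself, i.e.\ the limit of the constant diagram on $\Id$ indexed by the discrete category $A_i$. Since $\mathbf{Set}$ has all small products, Proposition~\ref{Prop-LimTaut} (with $\mathbf{I} = A_i$) applies and gives that $X^{A_i}$ is taut. (This is essentially just the statement that monos are closed under arbitrary products and pullbacks commute with products in $\mathbf{Set}$, but invoking the earlier proposition makes it a one-liner.)

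Finally, $P = \sum_{i \in I} X^{A_i}$ is a coproduct of taut functors into $\mathbf{Set}$, so Proposition~\ref{Prop-CoprodTaut} gives that $P$ is taut. There is no real obstacle: the whole point of developing the closure results in Section~\ref{Sec-Taut} is that facts like this reduce to a three-line bookkeeping exercise. The only thing worth flagging is the size issue if one insists on letting the index set $I$ or the exponents $A_i$ be proper classes, but as remarked in the introduction we ignore such questions.
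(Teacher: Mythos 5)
Your proof is correct and follows essentially the same route the paper indicates (the paper gives no separate proof, relying on the remark that $\Id$ is taut and that products and sums of taut functors are taut, i.e.\ Proposition~\ref{Prop-LimTaut} and Proposition~\ref{Prop-CoprodTaut}, exactly as you do). Since $I$ and the $A_i$ are sets by Definition~\ref{Def-Poly}, even the size caveat is not needed.
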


There is an extensive body of work on polynomial
functors. They have been around for a long time in
various settings and at different levels of abstraction
and are still an active area of research. It is beyond
the scope of this paper, and our competence, to give
a comprehensive and accurate history of the subject.
We mention only three names -- Andr\'e Joyal,
Joachim Kock, and David Spivak -- and three
references -- \cite{KocJoyBatMas10, GamKoc13, NiuSpi23}, --
with apologies to all those not mentioned. The
reader is referred to the historical comments and
references in these works.

A particular feature of polynomial functors is that
they have morphisms in contrast to polynomials
over a field or ring. Morphisms of polynomial functors
are simply natural transformations. We've seen that
the projection $ p_1 \colon X^2 \to X $ is not taut
so not all morphisms are taut.

As a polynomial functor is a sum of powers and
powers are just representable functors it is easy to
analyze morphisms in terms of the families of
powers. If $ P (X) = \sum_{i\,\in\, I} X^{A_i} $ and
$ Q (X) = \sum_{j\,\in \, J} X^{B_j} $, we have the
following bijections

\begin{center}
\begin{tabular}{c} 
$ t \colon P \to Q $ \\[3pt]  \hline \\[-12pt]
$ t \colon \sum_{i \in I} X^{A_i} \to \sum_{j\in J} X^{B_j} $  \\[3pt] \hline \\[-12pt]
$ \langle X^{A_i} \to \sum_{j\in J} X^{B_j} \rangle_{i \in I} $ \\[3pt] \hline \\[-12pt] 
$ \alpha \colon I \to J \ \&\  \langle X^{A_i} \to X^{B_{\alpha (i)}} \rangle_{i\in I} $ \\[3pt] \hline \\[-12pt]
$ \alpha \colon I \to J \ \&\  \langle f_i \colon B_{\alpha (i)} \to A_i \rangle . $
\end{tabular}
\end{center}

\noindent The third bijection is because $ X^{A_i} = {\bf Set} (A_i, -) $
is freely generated by a single element, $ 1_{A_i} $, so the
transformation must factor through some injection, the $ \alpha (i)^{th} $.
The fourth bijection is the Yoneda lemma.

Working back up the bijections we see that given
$ (\alpha, \langle f_i \rangle) $ as above, the natural
transformation
$$
t \colon P \to G
$$
is given as follows. An element of $ P (X) $ is a pair
$ (i, \phi) $ where $ \phi \colon A_i \to X $. $ t $ then
sends $ (i, \alpha) $ to $ (\alpha (i), \phi f_i) $ in
$ Q (X) $.

In \cite{NiuSpi23} a morphism $ (\alpha, \langle f_i \rangle) $ is
called {\em vertical} if $ \alpha $ is an isomorphism,
and it is {\em cartesian} if all $ f_i $ are isomorphisms.
Here we have something weaker than cartesian.

\begin{proposition}
\label{Prop-MorPolyTaut}

A morphism $ (\alpha, \langle f_i \rangle) $ of polynomials
is taut if and only if each of the $ f_i $ is an
epimorphism.

\end{proposition}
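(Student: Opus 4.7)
The plan is to reduce the statement to the well-known characterization that precomposition with a map $f \colon B \to A$ induces a taut natural transformation $(-)^A \to (-)^B$ exactly when $f$ is epi. First I would translate what tautness of $(\alpha,\langle f_i\rangle)$ means elementwise. Given a mono $m \colon X_0 \ \to/>->/\ X$, an element of the pullback $P(X)\times_{Q(X)} Q(X_0)$ is a pair $((i,\phi),\,(j,\psi))$ with $\phi\colon A_i\to X$, $\psi\colon B_j\to X_0$, satisfying $j=\alpha(i)$ and $\phi f_i=m\psi$. Meanwhile $P(X_0)=\sum_i X_0^{A_i}$ maps to this by $(i,\phi_0)\mapsto((i,m\phi_0),(\alpha(i),\phi_0 f_i))$. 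Because $m$ is mono, the map is injective, so tautness reduces to surjectivity: for every $i$ and every $\phi\colon A_i\to X$ such that $\phi f_i$ factors through $m$, the map $\phi$ itself must factor through $m$.

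Next I would exploit that $\mathbf{Set}$-valued coproducts commute with pullbacks to decompose the naturality square indexwise. Because $t$ sends the $i$-th summand of $P$ into the $\alpha(i)$-th summand of $Q$, the square splits as the coproduct over $i\in I$ of the representable squares
$$
\bfig
\square<800,500>[X_0^{A_i}`X^{A_i}`X_0^{B_{\alpha(i)}}`X^{B_{\alpha(i)}};m\circ -`-\circ f_i`-\circ f_i`m\circ -]
\efig
$$
and it is a pullback iff each summand square is. So the question becomes: for which $f_i$ is this square a pullback for every mono $m$?

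For the ``if'' direction, suppose $f_i$ is surjective and $\phi\colon A_i\to X$ satisfies $\phi f_i = m\psi$ for some $\psi\colon B_{\alpha(i)}\to X_0$. Then $\phi(A_i)=\phi f_i(B_{\alpha(i)})=m\psi(B_{\alpha(i)})\subseteq m(X_0)$, so $\phi$ factors (uniquely, since $m$ is mono) through $m$, giving the required lift in $X_0^{A_i}$. For the ``only if'' direction, suppose $f_i$ is not surjective and pick $a\in A_i\setminus f_i(B_{\alpha(i)})$. Take $X=2$, $X_0=1$ with $m$ the inclusion picking out $0$, and define $\phi\colon A_i\to 2$ by $\phi(a)=1$ and $\phi(x)=0$ for $x\neq a$. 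Then $\phi f_i$ is the constant map $0$, so it factors through $X_0$ via the unique $\psi\colon B_{\alpha(i)}\to 1$, yet $\phi$ does not factor through $X_0$. Hence the square fails to be a pullback.

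There is no real obstacle here; the only thing to be careful about is the decomposition by components, where one should check that precisely because the component-indexing map $\alpha$ matches the $i$-th summand of $P$ with the $\alpha(i)$-th summand of $Q$, the global pullback splits as the disjoint union of the component pullbacks (rather than mixing summands). Once that bookkeeping is recorded, the argument collapses to the representable case above.
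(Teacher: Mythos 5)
Your proof is correct and follows essentially the paper's route: both reduce tautness of $(\alpha,\langle f_i\rangle)$ to the component-wise condition that whenever $\phi f_i$ factors through a mono $m$, so must $\phi$ — that is, orthogonality of each $f_i$ against all monos. The only difference is one of packaging: where the paper invokes the diagonal fill-in/orthogonality property of the epi--mono factorization system in ${\bf Set}$ for both directions, you verify the two implications by hand (the image computation for ``if'', and the explicit test mono $1 \hookrightarrow 2$ for ``only if''), which is a perfectly sound, more elementary rendering of the same argument.
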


\begin{proof}

Let $ t \colon P \to Q $ be given by $ (\alpha, \langle f_i \rangle) $
as above and assume each of the $ f_i $ is an epimorphism.
Let $ X_0 \ \ \to/>->/ \ X $ be a monomorphism and consider
the commutative diagram
$$
\bfig
\square/ >->`>`>` >->/<750,500>[\sum_{i\,\in \, I} X_0^{A_i}
`\sum_{i\,\in \, I} X^{A_i}`\sum_{j\,\in \, J} X_0^{B_j}
`\sum_{j\,\in \, J} X^{B_j}\rlap{\ .};
`t(X_0)`t(X)`]

\efig
$$
Let $ (i, \phi) \in \sum_{i\,\in\,I} X^{A_i} $ be such that
$ t(X)(i, \phi) \in \sum_{j\,\in\,J} X_0^{B_j} $, i.e.~$ \phi f_i $
factors through $ X_0 $, giving $ \psi $ and a commutative
square
$$
\bfig
\square/->>`>`>` >->/[B_{\alpha(1)}`A_i`X_0`X\rlap{\ .};
f_i`\psi`\phi`]

\morphism(500,500)/-->/<-500,-500>[A_i`X_0;]

\efig
$$
By the diagonal fill-in property of factorizations we
have the dotted arrow above, i.e.~$ (i, \phi) \in
\sum_{i\,\in\,I} X_0^{A_i} $, showing that the original
square is a pullback. Thus $ t $ is taut.

Conversely, if $ t $ is taut, then for every mono
$ X_0 \to/ >->/ X $ and $ \phi $, $ \psi $ as above
we have the fill-in, so by the orthogonality property
of factorizations, the $ f_i $ are epis.
\end{proof}

\subsection{Divided powers}
\label{SSec-DivPow}

Divided power series are expressions of the form
$$
\sum^\infty_{n\,=\,0} a_n x^{[n]}
$$
where $ x^{[n]} $ is to be thought of as $ x / n! $. They
form a ring with componentwise addition, and
multiplication given by the bilinear extension of
$$
x^{[n]} x^{[m]} = \binom{n + m}{n}
 x^{[n + m]}
 $$
 as the intended meaning of $ x^{[n]} $ would suggest.
 Over a field of characteristic $ 0 $, we get a ring
 isomorphic to the ring of formal power series, but
 over a field of finite characteristic, or a ring, we get
 something different which often has better properties.
 
 In keeping with our program of extending ring
 theoretic concepts to $ {\bf Set} $ we can define
 the divided power $ X^{[n]} $ as follows. The
 symmetric group $ S_n $ acts on the right on
 $ X^n $,
 $$
 (x_1 \dots x_n)^\sigma = (x_{\sigma 1}, x_{\sigma 2},
 \dots , x_{\sigma n})
 $$
 and we take the quotient by this action
 $$
 X^{[n]} = X^n/S_n \rlap{\ .}
 $$
 If $ {\bf S}_n $ is the symmetric group considered as
 a one object category, a right action corresponds to
 a functor
 $$
 {\bf S}^{op}_n \to {\bf Set}
 $$
 and the quotient $ X^{[n]} $ is its colimit. So by
 Proposition~\ref{Prop-TautCl}, $ F (X) = X^{[n]} $
 gives a taut functor $ {\bf Set} \to {\bf Set} $.
 
 A {\em divided power series functor} $ F \colon
 {\bf Set} \to {\bf Set} $ is one given by
 $$
 F X = \sum_{n\,=\,0}^\infty C_n X^{[n]} \rlap{\ .}
 $$
 From the discussion above we get immediately
 the following.

 \begin{proposition}
 \label{Prop-DivPowSer}
 
 Any divided power series is taut.
 
 \end{proposition}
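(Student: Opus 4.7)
The plan is to build the result by decomposing a divided power series into pieces each of which has already been shown to be taut, and then assembling them back using the closure properties of the preceding subsection.

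First I would handle a single divided power $X \mapsto X^{[n]}$. The identity functor on $\mathbf{Set}$ is taut, so by closure of $\Taut_{full}(\mathbf{Set},\mathbf{Set})$ under limits (Proposition~\ref{Prop-LimTaut}), the $n$-fold product functor $X \mapsto X^n$ is taut. The author has already observed that $X^{[n]} = X^n/S_n$ is the colimit of the functor $\mathbf{S}_n^{op}\to\mathbf{Set}$ given by the $S_n$-action on $X^n$, so Corollary~\ref{Cor-GrQuoTaut} (quotient of a taut set-valued functor by a group action is taut) gives the tautness of $X \mapsto X^{[n]}$.

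Next I would promote this to $X \mapsto C_n X^{[n]}$. Writing $C_n X^{[n]}$ as the coproduct $\sum_{c \in C_n} X^{[n]}$, this is a coproduct of taut functors, hence taut by Proposition~\ref{Prop-CoprodTaut}. Finally, the divided power series $FX = \sum_{n=0}^\infty C_n X^{[n]}$ is itself a coproduct (indexed by $\mathbb{N}$) of the previously-constructed taut functors, so a second application of Proposition~\ref{Prop-CoprodTaut} yields tautness of $F$.

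There is really no main obstacle here: every step is a direct appeal to a result already established in Section~\ref{Sec-Taut}. The only point worth emphasizing is that the category $\mathbf{S}_n^{op}$ is not confluent in general, so the use of Corollary~\ref{Cor-GrQuoTaut} rather than the general confluent-colimit result (Proposition~\ref{Prop-TautCl}) is important; this is exactly what that corollary was stated to cover.
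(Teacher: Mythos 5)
Your argument is correct and follows essentially the same route as the paper: the paper also obtains tautness of $X\mapsto X^{[n]}$ by viewing it as the colimit of the $\mathbf{S}_n^{op}$-diagram given by the $S_n$-action on $X^n$ (citing Proposition~\ref{Prop-TautCl} directly), and then the series is a coproduct of taut functors. One correction to your closing remark, though it does not affect the validity of the proof: a group regarded as a one-object category \emph{is} confluent --- given $\alpha_1,\alpha_2$ with the common domain, take $\beta_1=\alpha_1^{-1}$ and $\beta_2=\alpha_2^{-1}$, so $\beta_1\alpha_1=\mathrm{id}=\beta_2\alpha_2$ --- and more generally every groupoid is confluent (this is exactly how the paper later handles analytic functors). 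Consequently Corollary~\ref{Cor-GrQuoTaut} is an instance of Proposition~\ref{Prop-TautCl}, not a separate result covering a non-confluent case, and citing either one is equally legitimate here.
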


 Divided power series functors are not polynomial
 functors except in the affine case
 $$
 C_0 + C_1 X^{[1]} \rlap{\ .}
 $$
 They don't preserve pullbacks. E.g.~$ F X = X^{[2]} $
 takes the pullback below to a commutative diagram
 which is clearly not a pullback
 $$
 \bfig
 \square[4`2`2`1;```]
 
 \place(250,250)[\framebox{\scriptsize Pb}]
 
 \place(900,250)[\longmapsto]
 
 \square(1400,0)[10`3`3`1\rlap{\ .};```]
 
  \efig
 $$
 But they provide more examples of taut functors.
 
 The analogy with real divided power series is
 nice but only goes so far. Note, for example,
 that if $ X $ is a finite cardinal $ k $ then the
 cardinality of $ X^n $ is $ k^n $ but the cardinality
 of $ X^{[n]} $ is not $ k^n/n! $, which is not even
 an integer in general. Instead, its cardinality is
 \begin{equation}\tag{*}
 \frac{k^{\uparrow n}}{n!} \ =\  \frac{k(k + 1) \dots (k + n - 1)}{n!} 
 \end{equation}
 which is, of course, an integer, a binomial coefficient
 in fact.
 
 More importantly, they are not closed under
 binary products. We definitely don't have
 $$
 X^{[n]} \times X^{[n]} \cong
 \left( \begin{array}{c}
 n + m\\
 n
 \end{array} \right) X^{[n + m]}
 $$
 which is obvious if we let $ X = 1 $. Even if we
 thought it might be a series
 $$
 X^{[n]} \times X^{[n]} \cong
 \sum_{i\,=\,0}^\infty C_i X^{[i]} \rlap{\ ,}
 $$
 by letting $ X = 1 $ we see that all the $ C_i $
 must be $ 0 $ except for one of them, $ C_l = 1 $, so that
 we would have
 $$
 X^{[n]} \times X^{[n]} \cong X^{[l]} \rlap{\ .}
 $$
 This is impossible just on the grounds of cardinality: 
 (*) is a polynomial in $ k $ with leading term
 $ k^n/n! $ so we would need
 $$
 k^n/n! \cdot   k^m/m! = k^l/l!
 $$
 for all $ k $, which we can't have.
 
 What we do have is
 $$
 X^n/S_n \times X^m/S_m \cong X^{n + m}/S_n \times S_m
 $$
 where $ S_n \times S_m $ acts on $ X^{n + m} \cong
 X^n \times X^m $ in the obvious way, i.e.~componentwise.
 This leads to taking quotients of $ X^n $ by a subgroup
 $ G $ of $ S_n $ rather than the full $ S_n $. Then
 $ G $ acts on the left on $ X^n $ and we can take
 the quotient
 $$
 X^n/G \rlap{\ .}
 $$
 Now we can redefine our divided power series as follows.

 \begin{definition}
 \label{Def-ExDivPow}
 
 An {\em extended divided power series} is an expression
 of the form
 $$
 \sum_{n\,=\,0}^\infty \left( \sum_{i\,\in\,I_n} X^n/G_i \right)
 $$
 where, for each $ n $, $ \langle G_i \rangle_{i\,\in\,I_n} $
 is a family of subgroups of $ S_n $.
 
 \end{definition}

 \begin{proposition}
 \label{Prop-ExDivPow}
 
 Extended divided power series define taut functors.
 
 \end{proposition}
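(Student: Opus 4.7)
The plan is to assemble the result from three ingredients already established in the paper: (i) each power functor $X^n$ is taut, (ii) the quotient $X^n/G_i$ by a subgroup action $G_i \subseteq S_n$ is taut, and (iii) arbitrary coproducts of taut functors into $\mathbf{Set}$ are taut. No new combinatorial idea is needed; the work is simply to check that each layer of the definition fits one of these closure properties.

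First I would note that the identity functor on $\mathbf{Set}$ is trivially taut, and that the $n$-fold product $X^n$ is a finite limit of copies of the identity. By Proposition~\ref{Prop-LimTaut}, $\Taut_{full}(\mathbf{Set}, \mathbf{Set})$ is closed under pointwise limits, so each $X^n$ is taut. Next, the left action of $G_i$ on $X^n$ can be regarded as a functor $\mathbf{G}_i \to \mathbf{Set}$ (with $\mathbf{G}_i$ the one-object category) whose value is a taut functor in $X$, and the quotient $X^n/G_i$ is its colimit. By Corollary~\ref{Cor-GrQuoTaut}, this quotient is taut as a functor of $X$.

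Third, I would apply Proposition~\ref{Prop-CoprodTaut} twice: once to conclude that for each fixed $n$, the inner coproduct $\sum_{i \in I_n} X^n/G_i$ is taut, and then again to the outer coproduct over $n$, obtaining that the full expression
$$
F(X) = \sum_{n=0}^\infty \Bigl( \sum_{i \in I_n} X^n/G_i \Bigr)
$$
is taut.

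There is really no serious obstacle here; the only mild subtlety is making sure the closure properties are being invoked at the level of functors (natural in $X$) rather than individual sets, but that is built into the statements of Propositions~\ref{Prop-LimTaut}, \ref{Prop-TautCl}, and \ref{Prop-CoprodTaut}. The proof will accordingly be short, essentially a three-line citation of these results in sequence.
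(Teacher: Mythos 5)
Your proof is correct and follows essentially the same route the paper takes (the paper leaves this proposition without an explicit proof, relying on the surrounding discussion): each $X^n$ is taut since products of taut functors are taut, the quotient $X^n/G_i$ is a colimit over a one-object groupoid and hence taut by Proposition~\ref{Prop-TautCl} (equivalently Corollary~\ref{Cor-GrQuoTaut}), and the coproduct of taut functors is taut by Proposition~\ref{Prop-CoprodTaut}. Your citations are the right ones and the argument is complete.
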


 Dare we write $ \sum_{i\,\in\,I_n} X^n/G_i $ as
 $ \sum_{i\,\in\,I_n} \left( 1/G_i\right) X^n $ and
 think of $ \sum1/G_i $ as some kind of rational
 number?
 
 Note that power series are now special cases of
 extended divided power series, by taking all of the
 $ G_i $ to be trivial.
 
 We can replace finite powers by arbitrary ones. For
 a set $ A $, $ S_A $ is the group of bijections
 $ A \to A $. $ S_A $ acts on the right on $ X^A $
 and we take the quotient by that action to get a
 taut functor
 $$
 F X = X^A/S_A \rlap{\ .}
 $$
 As before, no need to take the full symmetric group.
 We can restrict to a subgroup $ G \leq S_A $ and get
 $$
 F X = X^A/G \rlap{\ .}
 $$
 A bit more canonically, we can take any group $ G $
 and $ A $ a left $ G $-set. Then $ G $ will act on the
 right on $ X^A $ and we get again a taut functor.
 
 A {\em generalized divided power series functor} is one
 of the form
 $$
 F X = \sum_{i\,\in\,I} X^{A_i}/G_i
 $$
 where $ I $ is a set, $ \langle G_i \rangle $ a family
 of groups and $ \langle A_i \rangle $ a family of
 left $ G_i $-sets. $ F $ is taut.
 
 If we take all the $ G_i $ to be trivial we get
 polynomial functors and if we take $ S_n $,
 $ n \in {\mathbb N} $ we get the ``classical''
 divided power series.

 \subsection{Analytic functors}
 \label{SSec-Analytic}
 
 Joyal's original definition of {\em analytic functor}
 \cite{Joy81} was in the context of his reformulation
 of combinatorics in categorical terms and was strictly
 finitary. They have since been generalized in many directions
 (see \cite{GamJoy17} and references there).
 
 In \cite{Joy81}, a {\em species (of structure)} was
 defined as a functor $ F \colon {\bf Bij} \to {\bf Bij} $,
 where $ {\bf Bij} $ is the category of finite cardinals
 and bijections, but
 for general purposes there is no problem in taking
 $ F \colon {\bf Bij} \to {\bf Set} $. So $ F $ is a sequence of left
 $ S_n $-sets, $ Fn $, one for each $ n \in {\mathbb N} $.
 
 A species $ F $ determines an {\em analytic functor
 $ \widetilde{F} $} as the left Kan extension of $ F $
 along the inclusion of $ {\bf Bij} $ in $ {\bf Set} $
 $$
 \bfig
 \ptriangle/>` >->`<-/<600,500>[{\bf Bij}`{\bf Set}`{\bf Set};
 F``\widetilde{F}]
 
 \morphism(120,400)/=>/<100,-100>[`;]
 
 \place(600,0)[.]
 
 \efig
 $$
 $ \widetilde{F} $ is given by the formula
 $$
 \widetilde{F} X = \int^{n\in {\mathbb N}} X^n \times Fn
 $$
 which can be calculated as a colimit
 $$
 \widetilde{F} X = \limr_{a\in Fn} X^n
 $$
 taken over the category of elements of $ F $,
 $ {\bf El}(F) $. As $ {\bf Bij} $ is a groupoid,
 so is $ {\bf El} (F) $, and therefore $ \widetilde{F} $
 is taut.
 
 \begin{proposition}
 \label{Prop-AnalyticTaut}
 
 The classical analytic functors are taut.
 
 \end{proposition}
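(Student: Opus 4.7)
The plan is to view $\widetilde{F}$ as a confluent colimit of taut functors in $\Taut(\mathbf{Set},\mathbf{Set})$ and invoke Proposition~\ref{Prop-TautCl}. Concretely, I would work with the coend/colimit presentation already recorded,
$$
\widetilde{F}\,X \;=\; \liml \cdots \text{no, colimit:}\quad \widetilde{F}\,X \;=\; \limr_{(n,a)\in \mathbf{El}(F)} X^n,
$$
and exhibit the indexing diagram $D\colon \mathbf{El}(F)\to \mathbf{Set}^{\mathbf{Set}}$, $(n,a)\mapsto(X\mapsto X^n)$, as a diagram valued in taut functors, with taut transition transformations.

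First I would check that each object of the diagram is taut. The functor $X\mapsto X^n$ is the representable $\mathbf{Set}(n,-)$, hence a (monomial) polynomial functor; so it is taut by Proposition~\ref{Prop-PolyTaut}. Next I would check that each transition arrow of $D$ is a taut natural transformation. A morphism $(n,a)\to(m,b)$ in $\mathbf{El}(F)$ is a bijection $\sigma\colon n\to m$ with $F(\sigma)(a)=b$; it is sent by $D$ to the natural transformation $X^{\sigma}\colon X^n\to X^m$. Since $\sigma$ is invertible, $X^\sigma$ is a natural isomorphism, and every naturality square for an isomorphism is automatically a pullback, so $X^\sigma$ is taut.

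Then I would verify that $\mathbf{El}(F)$ is confluent. Because $\mathbf{Bij}$ is a groupoid, $\mathbf{El}(F)$ is a groupoid as well, and any groupoid is confluent: given $\alpha_1\colon I\to I_1$ and $\alpha_2\colon I\to I_2$, set $J=I_1$, $\beta_1=\id_{I_1}$ and $\beta_2=\alpha_1\alpha_2^{-1}$; then $\beta_1\alpha_1=\alpha_1=\beta_2\alpha_2$ as required by Definition~\ref{Def-Confl}.

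With these three pieces in place, Proposition~\ref{Prop-TautCl} applies: $\Taut(\mathbf{Set},\mathbf{Set})$ is closed in $\Cat(\mathbf{Set},\mathbf{Set})$ under colimits indexed by confluent categories (computed pointwise in $\mathbf{Set}$, which is the case for left Kan extensions). Hence $\widetilde{F}=\limr D$ is taut. There is no real obstacle; the only conceptual point is recognizing the analytic functor as a confluent (in fact, groupoid-indexed) colimit of representables, after which the machinery of Section~\ref{SSec-ColimTaut} does the work.
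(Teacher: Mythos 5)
Your proposal is correct and is essentially the paper's own argument: the paper likewise observes that $\widetilde{F}$ is the (pointwise) colimit over the groupoid $\mathbf{El}(F)$ of the taut representables $X^n$, and that groupoid-indexed colimits are confluent colimits of taut functors along taut (iso) transitions, so Proposition~\ref{Prop-TautCl} gives tautness. Your write-up just makes explicit the confluence check and the tautness of the transition isomorphisms, which the paper leaves implicit.
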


 Let's reformulate this a bit in order to compare
 it with power series and divided power series. Just
 like any colimit, a colimit over a groupoid is the
 coproduct of the individual colimits over the
 connected components of the groupoid. If we
 choose a representative object in each component,
 then these colimits are colimits taken over groups,
 the automorphism groups of the chosen objects.
 So an analytic functor can be written as
 $$
 \sum^\infty_{n\,=\, 0} X^n \otimes_{S_n} C_n
 $$
 for a family of left $ S_n $-sets $ C_n $. The
 $ C_n $ are the $ Fn $ from above, the notation
 chosen meant to suggest ``coefficients'', and to
 agree more with the previous sections.
 
 Just to be clear, $ X^n \otimes_{S_n} C_n $, which
 is the coend or colimit from above, is explicitly
 described as the set of all equivalences classes
 $ [x_1, \dots, x_n; c] = \langle x_i \rangle \otimes c $
 for $ x_i \in X $ and $ c \in C $. The equivalence
 relation is
 $$
 (x_1, \dots, x_n; c) \sim (y_1, \dots, y_n; d)\ 
 \mbox{iff}\  \exists \sigma \in S_n (\bigwedge_i y_i =
 x_{\sigma i}) \wedge (c = \sigma d)
 $$
 or put differently
 $$
 \langle x_i \rangle \otimes \sigma d =
 \langle x_{\sigma i} \rangle \otimes d \rlap{\ .}
 $$
 
 Now, $ X^n \cong X^n \otimes_{S_n} S_n $ so
 power series are analytic, and $ X^{[n]} \cong
 X^n \otimes_{S_n} 1 $ so divided power series
 are also analytic. In fact, the extended divided
 power series are analytic functors. For any subgroup
 $ G $ of $ S_n $, the left cosets of $ G $, $ S/G $
 form a left $ S_n $-set as usual, and
 $$
 X^n \otimes_{S_n} S_n/G \cong X^n/G \rlap{\ .}
 $$
 
 Note in passing that
 $$
 X^n/S_n \times X^m/S_m \cong
 X^{n+m}/(S_n \times S_m) \cong X^{n+m}
 \otimes_{S_{n+m}} (S_{n+m}/S_n \times S_m)
 $$
 which is the proper way of generalizing the equation
 $$
 x^{[n]} x^{[m]} = \binom{n + m}{n} x^{[n+m]} \rlap{\ .}
 $$
 The cardinality of $ S_{n+m}/S_n \times S_m $ is
 indeed $ \binom{n + m}{n} $.
 
 But analytic functors can also be presented as
 extended divided power series, so determine the
 same class of taut functors just presented
 differently. Indeed, if $ C $ is a left $ S_n $-set, it is a sum
 of indecomposables and these are isomorphic to
 left cosets of some subgroup of $ S_n $. So
 $$
 C = \sum_{i\,\in\,I} S_n/G_i
 $$
 and then
 $$
 X^n \otimes_{S_n} C \cong \sum_{i\,\in\,I} X^n/G_i \rlap{\ .}
 $$
 
 So our fanciful $ \sum 1/G_i $ from the previous section
 is just $ \sum S_n/G_i $ in the language of analytic
 functors.
 
 We summarize the above discussion in the following.

 \begin{proposition}
 \label{Prop-DivPowRAn}
 
 Extended divided power series and analytic functors
 determine the same class of taut functors.
 
 \end{proposition}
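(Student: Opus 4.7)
The plan is to establish the two directions of the equivalence separately, both of which are essentially indicated in the discussion preceding the proposition. The key technical input is the isomorphism
$$
X^n \otimes_{S_n} S_n/G \ \cong\  X^n/G
$$
for any subgroup $G \leq S_n$, which I would verify first by unwinding the coend description of $\otimes_{S_n}$: elements $\langle x_i\rangle \otimes [gG]$ are in bijection with the orbits of $X^n$ under the action of $G$ (via choice of coset representatives), and this bijection is natural in $X$.

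For the direction that extended divided power series are analytic, I would simply combine this isomorphism with distributivity of $X^n \otimes_{S_n} (-)$ over coproducts of $S_n$-sets. Given an extended divided power series $\sum_n \sum_{i \in I_n} X^n/G_i$, set $C_n = \sum_{i \in I_n} S_n/G_i$, which is a left $S_n$-set, and then
$$
\sum_n \sum_{i\in I_n} X^n/G_i \ \cong\ \sum_n \sum_{i\in I_n} X^n \otimes_{S_n} S_n/G_i \ \cong\ \sum_n X^n \otimes_{S_n} C_n,
$$
exhibiting $F$ as analytic.

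For the converse, I would use the standard orbit decomposition of $G$-sets: any left $S_n$-set $C_n$ is the disjoint union of its orbits, and each orbit is isomorphic (as an $S_n$-set) to $S_n/G_i$ for $G_i$ the stabilizer of a chosen point. So $C_n \cong \sum_{i \in I_n} S_n/G_i$ for some indexing set $I_n$ and family of subgroups $G_i \leq S_n$. Applying $X^n \otimes_{S_n} (-)$ and using distributivity again together with the key isomorphism then converts an analytic functor $\sum_n X^n \otimes_{S_n} C_n$ into the extended divided power series form $\sum_n \sum_{i \in I_n} X^n/G_i$.

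There is no real obstacle here, since both constructions are formally inverse and the proof is essentially bookkeeping; the only subtle point is to be careful about the distributivity $X^n \otimes_{S_n} (\sum_i D_i) \cong \sum_i (X^n \otimes_{S_n} D_i)$, which holds because $X^n \otimes_{S_n} (-)$ is a left adjoint (being defined as a colimit) and hence preserves coproducts. Together the two directions show that the classes of functors realized by the two presentations coincide, which is exactly what the proposition asserts.
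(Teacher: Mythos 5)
Your proposal is correct and follows essentially the same route as the paper, which proves this proposition via the preceding discussion: the isomorphism $X^n \otimes_{S_n} S_n/G \cong X^n/G$, the decomposition of a left $S_n$-set into orbits isomorphic to coset spaces $S_n/G_i$, and compatibility of $X^n \otimes_{S_n}(-)$ with coproducts. Nothing essential is missing.
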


 We can generalize analytic functors, as we did for
 divided powers, allowing arbitrary powers not just
 finite ones.
 
 \begin{definition}
 \label{Def-GenAnFun}
 
 A {\em generalized analytic functor} is given by
 $$
 F X = \sum_{i\,\in\,I} X^{A_i} \otimes_{G_i} C_i
 $$
 where $ \langle G_i \rangle_{i\,\in\,I} $ is a family
 of groups, and $ \langle A_i \rangle_{i\,\in\,I} $
 and $ \langle C_i \rangle_{i\,\in\,I} $ are families
 of left $ G_i $-sets.
 
 \end{definition}

 As in the finitary case we have the following.
 
 \begin{proposition}
 
 \begin{itemize}
 
 	\item[(1)] Generalized analytic functors are taut.
	
	\item[(2)] Polynomial functors are generalized
	analytic.
	
	\item[(3)] Generalized divided power series give the
	same class of taut functors as generalized analytic
	functors.
 
 \end{itemize}
 
 \end{proposition}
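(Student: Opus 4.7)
My plan is to treat the three parts separately, largely by lifting the arguments used in the finitary case to arbitrary exponents, and relying on the colimit results of Section~\ref{SSec-ColimTaut}.

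For (1), I would first observe that each representable power $X^{A_i} = {\bf Set}(A_i, X)$ is taut (powers preserve all limits, so in particular inverse image diagrams), and that the left $G_i$-action on $A_i$ induces a right $G_i$-action on $X^{A_i}$. The summand $X^{A_i} \otimes_{G_i} C_i$ is then the colimit of the diagram of shape ${\bf El}(C_i)$ (the action groupoid of the left $G_i$-set $C_i$) sending each element of $C_i$ to $X^{A_i}$ and each arrow $g \colon c \to gc$ to the action of $g$. Since ${\bf El}(C_i)$ is a groupoid, it is confluent. By Proposition~\ref{Prop-TautCl} the colimit $X^{A_i} \otimes_{G_i} C_i$ is taut, and by Proposition~\ref{Prop-CoprodTaut} the coproduct $\sum_i X^{A_i} \otimes_{G_i} C_i$ is taut.

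For (2), I simply exhibit any polynomial $\sum_{i \in I} X^{A_i}$ as a generalized analytic functor by taking $G_i = 1$ (the trivial group) for each $i$ with trivial action on $A_i$, and $C_i = 1$. Then $X^{A_i} \otimes_{1} 1 \cong X^{A_i}$ and summation matches.

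For (3), I would prove two inclusions, both via the orbit decomposition of $G$-sets. In one direction, a generalized divided power series $\sum_i X^{A_i}/G_i$ is rewritten as $\sum_i X^{A_i} \otimes_{G_i} 1$ (with $1$ the terminal $G_i$-set), exhibiting it as a generalized analytic functor. In the other direction, given $F X = \sum_i X^{A_i} \otimes_{G_i} C_i$, I decompose each left $G_i$-set $C_i$ as a coproduct of orbits $C_i \cong \sum_{j \in J_i} G_i/H_{ij}$ for subgroups $H_{ij} \leq G_i$. Then the standard isomorphism $X^{A_i} \otimes_{G_i} (G_i/H_{ij}) \cong X^{A_i}/H_{ij}$ (where $H_{ij}$ acts on $X^{A_i}$ by restriction of the $G_i$-action on $A_i$) lets me rewrite $F X \cong \sum_i \sum_{j \in J_i} X^{A_i}/H_{ij}$, which is a generalized divided power series on the reindexed family.

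The only mildly delicate step is verifying $X^{A_i} \otimes_{G_i} (G_i/H_{ij}) \cong X^{A_i}/H_{ij}$ in the generalized setting, but this is a routine coend calculation that does not depend on finiteness of $A_i$ or $G_i$. Everything else is bookkeeping and appeal to the already-established closure results for taut functors under confluent colimits and coproducts.
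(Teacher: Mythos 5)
Your proposal is correct and follows essentially the same route the paper intends: the statement is left without explicit proof precisely because the finitary arguments of Subsections~\ref{SSec-DivPow} and~\ref{SSec-Analytic} (tautness via colimits over groupoids as in Proposition~\ref{Prop-AnalyticTaut} together with Propositions~\ref{Prop-TautCl} and~\ref{Prop-CoprodTaut}, the trivial-group specialization for polynomials, and the orbit decomposition $C_i \cong \sum_j G_i/H_{ij}$ with $X^{A_i} \otimes_{G_i} (G_i/H_{ij}) \cong X^{A_i}/H_{ij}$) carry over verbatim to arbitrary groups and exponents, which is exactly what you spell out.
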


 \subsection{Reduced powers}
 \label{SSec-RedPow}
 
 So far all of our examples, interesting in their own
 right, have been special cases of generalized
 symmetric power series functors, which brings us
 to reduced powers, which will give a new class.
 
 Reduced powers are a lot like symmetric powers
 in that they are (with one exception) quotients of powers,
 i.e.~can be defined as equivalence classes of functions from a fixed
 exponent into a variable set, but in fact give us a
 completely new class of taut functors as we will
 see below. The reader is referred to Blass' paper
 \cite{Bla76}, mentioned in the introduction, where
 he proves, among other things, that every left
 exact endofunctor is a directed union of reduced
 powers.

 \begin{definition}
 \label{Def-Filt}
 
 Let $ A $ be a set. A {\em filter} $ {\cal F} $ on $ A $ is
 a set of subsets of $ A $, $ {\cal F} \subseteq 2^A $,
 such that
\begin{itemize}
	
	\item[(1)] $ {\cal F} $ is closed under finite
	intersections, i.e.~$ A \in {\cal F} $ (empty
	intersection) and $ A_1, A_2 \in {\cal F}
	\Rightarrow A_1 \cap A_2 \in {\cal F} $.
	
	\item[(2)] $ {\cal F} $ is up-closed $ A_1 \in
	{\cal F} \ \&\ A_1 \subseteq A_2 \Rightarrow
	A_2 \in {\cal F} $.

\end{itemize}
\( {\cal F} \) is {\em proper} if \( \emptyset \notin {\cal F} \).
 
 \end{definition}
 
\begin{examples}
 
	\begin{itemize}
 
 		\item[(1)] The set of cofinite (finite complement)
		subsets of any infinite set, e.g.~$ {\mathbb N} $.
	
		\item[(2)] The set of subsets \( A \subseteq {\mathbb N} \)
		such that there exists \( n_0 \in A \) with \( k \geq n_0
		\Rightarrow k \in A \).
		
		\item[(3)] If $ A_0 \subseteq A $ is a non-empty
		subset, then the set of all $ A_1 $ containing $ A_0 $
		 is a {\em principal filter}.
 
 \end{itemize}
 
\end{examples}

\begin{definition}
\label{Def-}

Let \( A \) be a set and \( {\cal F} \) a filter on
\( A \). The {\em reduced power} \( X^{\cal F} \)
is the colimit
\[
X^{\cal F} = \limr_{B \in {\cal F}} X^B .
\]

\end{definition}

\noindent As \( X^{\cal F} \) is a filtered colimit
of representables we immediately get the following.

\begin{proposition}

For any filter \( {\cal F} \), the reduced power
\( X^{\cal F} \) gives a left exact endofunctor of
\( {\bf Set} \), in particular it is taut.

\end{proposition}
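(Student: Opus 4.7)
The plan is to exploit the hint in the text: view the colimit defining $X^{\cal F}$ as a filtered colimit of representables, and then combine two standard facts—that representables are left exact and that filtered colimits in ${\bf Set}$ commute with finite limits.

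First I would set up the indexing. Regard the filter $ {\cal F} $ as a poset under reverse inclusion, so that a containment $ B \supseteq B' $ (with both in $ {\cal F} $) gives a restriction morphism $ X^B \to X^{B'} $, making $ X^{(-)} \colon {\cal F} \to {\bf Set} $ a genuine diagram. The closure of $ {\cal F} $ under finite intersections then says precisely that any finite subfamily $ B_1, \dots, B_n \in {\cal F} $ has $ B_1 \cap \cdots \cap B_n $ as a common upper bound in the reverse-inclusion order. Thus $ {\cal F} $ is a filtered (in fact directed) category. Each $ X^B = {\bf Set}(B, X) $ is representable in $ X $, and the restriction maps are natural in $ X $, so we have an honest filtered diagram of endofunctors of $ {\bf Set} $ whose colimit is $ X^{\cal F} $.

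Next I would assemble left exactness. Representable functors $ {\bf Set}(B, -) $ preserve all limits, in particular finite ones, so each $ X \mapsto X^B $ is left exact. A classical theorem in ${\bf Set}$ says that filtered colimits commute with finite limits; evaluating the diagram pointwise at the vertices of a finite limit diagram and taking colimits therefore yields a finite limit. Hence $ X^{\cal F} $ is left exact. Since every inverse image diagram is in particular a pullback, left exactness immediately gives tautness. (Alternatively one may invoke Corollary \ref{Cor-FiltTaut} for the tautness statement, since each $X^B$ is taut and $ {\cal F} $ is filtered hence confluent.)

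There is no real obstacle: the only point that requires a second's attention is the direction of the arrows in the indexing (reverse inclusion, not inclusion), and the verification that ``closure under finite intersection'' is exactly the filteredness condition needed to apply the ${\bf Set}$-theoretic interchange of filtered colimits with finite limits. Everything else is an invocation of standard facts already in place by the time this proposition is stated.
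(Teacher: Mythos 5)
Your argument is exactly the paper's: the proposition is stated immediately after the colimit definition with the remark that $X^{\cal F}$ is a filtered colimit of representables, and left exactness (hence tautness) follows since representables preserve limits and filtered colimits commute with finite limits in ${\bf Set}$. Your only additions—spelling out the reverse-inclusion indexing and noting the alternative appeal to Corollary~\ref{Cor-FiltTaut}—are correct and consistent with the text.
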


Just from the definition as a colimit we see that
\( X^{\cal F} \) consists of equivalence classes of
partial functions from \( A \) into \( X \),
\[
\bfig
\Atriangle/ >->`>`/<300,350>[B`A`X \rlap{\ ,};`f`]

\efig
\]
with \( B \in {\cal F} \). The equivalence relation is
generated by restriction
\[
\bfig
\place(0,0)[f \sim g\quad \mbox{if}]
\Atriangle(600,0)/ >->`>`/<300,300>[B`A`X;`f`]

\Vtriangle(600,-300)/`<-< `<-/<300,300>[A`X`C;``g]

\morphism(900,-100)/>->/<0,300>[`;]

\place(1200,-300)[.]

\efig
\]

Because the colimit is filtered, the equivalence
relation can be described as
\[
f \sim g \Leftrightarrow \mbox{\ \  there exists \ \ }
A \to/<-< /<200> D \to^h X \mbox{\ \ such that \ }
D \in {\cal F} \mbox{\quad and}
\]
\[
\bfig
\node a(400,0)[C]
\node b(0,300)[A]
\node c(400,300)[D]
\node d(800,300)[X]
\node e(400,600)[B]

\arrow/>/[a`b;]
\arrow|r|/>/[a`d;g]
\arrow/<-< /[b`c;]
\arrow|a|/>/[c`d;h]
\arrow/<-< /[b`e;]
\arrow|r|/>/[e`d;f]
\arrow/ >->/[c`e;]
\arrow/ >->/[c`a;]

\place(800,0)[.]

\efig
\]
An even more amenable description is that
\[
f \sim g\quad  \Leftrightarrow \quad \{ a \in B \cap C \ |\  f a = g a \} \in {\cal F} \rlap{.}
\]

The reduced power \( X^{\cal F} \) is often defined as follows:
\[
X^{\cal F} = \{ f \colon A \to X \}/\sim = X^A/\sim
\]
where \( f \sim g \) iff \( \{ a \in A \ |\ f a = ga \} \in {\cal F} \).
This is a lot easier to handle and justifies the term
``reduced power''. It is equivalent to the colimit
definition except when \( {\cal F} \) is trivial, i.e.~contains
\( \emptyset \), and so all subsets. In that case, the colimit
definition gives
\[
X^{\cal F} = 1
\]
the constant functor with value \( 1 \), whereas the
quotient of the representable gives
\[
X^A/\sim \  = \left\{ \begin{array}{ll}
                    \emptyset & \mbox{if \(X = 0\)}\\
                    1 & \mbox{otherwise.}
                    \end{array}
                    \right.
\]
This functor is not taut as, e.g.
\[
\bfig
\square/ >->` >->` >->` >->/<400,400>[0`1`1`2;``1`0]

\place(200,200)[\framebox{\scriptsize Pb}]

\place(700,200)[\longmapsto]

\square(1000,0)<400,400>[0`1`1`1\rlap{\ .};```]

\efig
\]

We don't want to exclude the improper filter
in our definition but we nevertheless work with
the reduced power definition, which we find
easier, and check the degenerate case
separately.

If $ {\cal F} $ is a principal filter generated by
 $ A_0 \subseteq A $, then it is easily seen that
 $ X^{\cal F} \cong X^{A_0} $. But we have:
 
 \begin{proposition}
 
 If $ {\cal F} $ is a non-principal filter, then the
 reduced power functor $ X^{\cal F} $ is not a
 generalized symmetric power series.
  
 \end{proposition}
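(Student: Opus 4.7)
The plan is to argue by contradiction in two stages: first reduce to the case where $X^{\cal F}$ is representable, and then rule this out using non-principality of ${\cal F}$.

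Suppose $X^{\cal F}$ is a generalized symmetric power series $\sum_{i\in I} X^{A_i}/G_i$. Evaluating at $X = 1$, the left side is the set of $\sim$-classes of maps $A \to 1$, which is a singleton, while the right side equals $\sum_i 1^{A_i}/G_i = |I|$; by Proposition~\ref{Prop-SumDecomp} this forces $|I| = 1$, so $X^{\cal F} \cong X^B/G$ for some $G$-set $B$. Now $X^{\cal F}$, being a filtered colimit of representables, is left exact and in particular preserves binary products. A direct computation comparing $(Y \times Z)^B/G$ (with $G$ acting diagonally) to $Y^B/G \times Z^B/G$ (with $G \times G$ acting componentwise) shows that this preservation forces $G$ to act trivially on $B$, whereupon $X^{\cal F} \cong X^B$ is representable.

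For the second stage, assume $X^{\cal F} \cong X^C$. By the Yoneda lemma this isomorphism is classified by a universal element $\eta = [f] \in C^{\cal F}$ with $f \colon A \to C$, subject to: for every $X$ and every $h \colon A \to X$ there is a unique $g \colon C \to X$ with $g \circ f \sim h$. Existence with $X = A$ and $h = \id_A$ produces $g_0$ and a set $S_0 = \{a : g_0(f(a)) = a\} \in {\cal F}$ on which $f$ is injective. Uniqueness, probed for each proper subset $D \subsetneq C$ by the pair $g_1 = 0$ and $g_2 = \chi_{C \setminus D}$ into $2$ (so that $\{a : g_1(f(a)) = g_2(f(a))\} = f^{-1}(D)$), forces $f^{-1}(D) \notin {\cal F}$. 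Equivalently, every $T \in {\cal F}$ meets $f^{-1}(c)$ for every $c \in C$.

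Putting these together: for any $T \in {\cal F}$ and $c \in C$, the set $T \cap S_0 \cap f^{-1}(c)$ is nonempty (apply the previous sentence to $T \cap S_0 \in {\cal F}$) and of cardinality at most one (by injectivity of $f|_{S_0}$), so it is a singleton $\{a_c\}$ independent of $T$. Therefore $a_c \in \bigcap {\cal F}$ for every $c$, and so $S_0 = \{a_c : c \in C\} \subseteq \bigcap {\cal F} \subseteq S_0$, giving $S_0 = \bigcap {\cal F} \in {\cal F}$, which makes ${\cal F}$ principal and contradicts our hypothesis. The main obstacle I anticipate is cleanly extracting the uniqueness condition in the form ``$f^{-1}(D) \in {\cal F} \Rightarrow D = C$'' from the abstract isomorphism, which requires choosing the right separating test pair $(g_1, g_2)$; the binary-product computation in the first stage likewise needs care to pin down triviality of the $G$-action on $B$.
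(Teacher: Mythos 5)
Your proof is correct, and it splits into two stages of which only the first coincides with the paper. Stage one (evaluate at $X=1$ to force a single summand, then use preservation of binary products to kill the group action) is exactly the paper's route; the ``direct computation'' you leave implicit is carried out there by testing the comparison $(B\times B)^B/G \to B^B/G \times B^B/G$ on the classes of $\langle \mathrm{id},\mathrm{id}\rangle$ and $\langle\sigma,\mathrm{id}\rangle$, which forces $\sigma=\mathrm{id}$ for every $\sigma$ in (the image of) $G$ in $S_B$ --- so your sketch is fillable, but note that the choice $Y=Z=B$ and of those particular test elements is where the content lies. Stage two is genuinely different: the paper argues that a representable $X^{\cal F}\cong X^B$ preserves \emph{all} small products, asserts that this makes ${\cal F}$ closed under arbitrary intersections, and concludes principality; you instead unwind the isomorphism via Yoneda into a universal element $[f]\in C^{\cal F}$, extract from surjectivity (at $X=A$, $h=\mathrm{id}$) a set $S_0\in{\cal F}$ on which $f$ is injective, and from injectivity (tested on $0$ versus $\chi_{C\setminus D}$) the condition $f^{-1}(D)\notin{\cal F}$ for proper $D\subsetneq C$, which together yield $S_0=\bigcap{\cal F}\in{\cal F}$. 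This is more elementary and more informative: it identifies the principal generator explicitly and in effect supplies a proof of the intersection-closure step that the paper only asserts, at the cost of a slightly longer combinatorial check. (Both arguments, yours and the paper's, quietly set aside the degenerate improper filter, for which the quotient and colimit definitions of $X^{\cal F}$ diverge; that gloss is already present in the paper, so it is not a defect specific to your write-up.)
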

 
 \begin{proof}
 Suppose
 $
 X^{\cal F} \cong \sum_{i\,\in\,I} X^{B_i}/G_i 
 $.
 If we let $ X = 1 $, we get $ 1 \cong I $, so there's
 only one term in the series $ X^B/G $ where $ B $
 is a left $ C $-set, $ X^{\cal F} \cong X^B/G $.
 Now, $ X^{\cal F} $ preserves finite products, so
 $ X^B/G $ will also. Consider the special case
 \[
 (B \times B)^B/G \cong B^B/G \times B^B/ G .
 \]

 An element on the left is an equivalence class
 of pairs $ [\langle \phi , \psi \rangle] $ for
 $ \phi , \psi \colon B \to B $, and two pairs are
 equivalent
 \begin{equation}\tag{1}
 \langle \phi, \psi \rangle \sim \langle \phi', \psi' \rangle
 \Leftrightarrow \exists \ \sigma \in G
 (\sigma \phi = \phi' \wedge \sigma \psi = \psi') 
 \rlap{\ .} 
 \end{equation}
 (Wlog we have assumed that $ G $ is a subgroup
 of $ S_B $.)
 
 An element on the right is a pair of equivalence
 classes $ \langle [\phi], [\psi] \rangle $ with two
 being equal
 \begin{equation}\tag{2}
 \langle [\phi], [\psi] \rangle = \langle [\phi'], [\psi] \rangle
 \Leftrightarrow \exists \  \sigma, \tau \in G
 (\sigma \phi= \phi' \wedge \tau \psi = \psi)
 \rlap{\ .} 
 \end{equation}
 
 Of course (1) $ \Rightarrow $ (2). That's the
 canonical product comparison morphism. If (2)
$  \Rightarrow $ (1) then for any $ \sigma \in G $,
we have $ \langle [\id], [\id] \rangle =
\langle [\sigma], [\id] \rangle $ so
$ [\langle \id, \id \rangle ] = [ \langle \sigma,
\id \rangle ] $ i.e.~there is $ \sigma' \in G $ such
that
$$
\sigma' \cdot \id = \sigma \mbox{\quad and\quad}
\sigma' \cdot \id = \id
$$
so $ \sigma = \id $, i.e.~$ G $ is trivial.

So now we're reduced to $ X^{\cal F} = X^B $,
a representable. This means that $ X^{\cal F} $
preserves all products, which implies that
$ {\cal F} $ is closed under arbitrary intersections
and that means $ {\cal F} $ is principal, contrary
to our assumption.
\end{proof}

Non trivial reduced powers are quotients of
representables which are projective, so if \( {\cal F} \)
is a filter on \( A \) and \( {\cal G} \) one on \( B \),
a natural transformation
\( t \colon X^{\cal F} \to X^{\cal G} \) lifts
\[
\bfig
\square/-->`->>`->>`>/[X^A`X^B`X^{\cal F}`X^{\cal G};
\ov{t}```t]

\efig
\]
which is equivalent to a function \( \phi \colon B \to A \).
Not every \( \phi \) will give a natural transformation
which descends to the reduced powers, and two
\( \phi \)'s that do, may give the same reduced
transformation. For an element of \( X^{\cal F} \),
i.e.~an equivalence class \( [f] \) of functions
\( f \colon A \to X \), the induced \( t \) is given by
\[
t (X) [f] = [f \phi] \ .
\]

Some basic set theoretical calculations lead to
the following result, from \cite{Trn71A} Proposition~VI.3
(also contained in Theorem 3 of \cite{Bla76}).

\begin{proposition}
\label{Prop-RedTransf}

For filters \( {\cal F} \) and \( {\cal G} \) on \( A \)
and \( B \) respectively, the natural transformations
between the reduced power functors
\[
t \colon X^{\cal F} \to X^{\cal G}
\]
are in bijection with equivalence classes of
functions \( \phi \colon B \to A \) such that for
each \( A_0 \in {\cal F} \) we have
\( \phi^{-1} A_0 \in {\cal G} \). \( \phi \) is
equivalent to \( \psi \colon B \to A \) if
\[
\{ b \in B \ |\ \phi (b) = \psi (b) \} \in {\cal G}\ .
\]

\end{proposition}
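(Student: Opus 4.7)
The plan is to use the Yoneda lemma together with the fact that each reduced power $X^{\mathcal{F}}$ is a quotient of the representable $X^A$. Fix representatives and let $q \colon X^A \twoheadrightarrow X^{\mathcal{F}}$ and $r \colon X^B \twoheadrightarrow X^{\mathcal{G}}$ denote the quotient transformations. We first describe natural transformations $s \colon X^A \to X^{\mathcal{G}}$: by Yoneda, these are in bijection with elements of $X^{\mathcal{G}}(A) = A^{\mathcal{G}}$, i.e.\ equivalence classes $[\phi]$ of functions $\phi \colon B \to A$ (with $\phi \sim \psi$ iff $\{b : \phi(b) = \psi(b)\} \in \mathcal{G}$). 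Explicitly, $[\phi]$ corresponds to the natural transformation sending $f \in X^A$ to $[f \phi] \in X^{\mathcal{G}}$. Pick any representative $\phi$; this gives a lift $\bar{t} \colon X^A \to X^B$ defined by $\bar{t}(X)(f) = f\phi$, and we have $r \cdot \bar{t} = t \cdot q$.

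The main question is then: which classes $[\phi]$ arise from a $t \colon X^{\mathcal{F}} \to X^{\mathcal{G}}$, i.e.\ when does the natural transformation $s = t \cdot q$ factor through the quotient $q$? Since $q$ is a pointwise surjection, $s$ factors through $q$ iff for every $X$ and every pair $f, g \colon A \to X$ with $f \sim_{\mathcal{F}} g$ one has $f\phi \sim_{\mathcal{G}} g\phi$. Using the explicit description of $\sim_{\mathcal{F}}$ and $\sim_{\mathcal{G}}$, this says: whenever $\{a \in A : f(a) = g(a)\} \in \mathcal{F}$ then $\phi^{-1}\{a \in A : f(a) = g(a)\} = \{b : f\phi(b) = g\phi(b)\} \in \mathcal{G}$.

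The key observation, which I expect to be the main technical step, is that the sets of the form $\{a \in A : f(a) = g(a)\}$ realize exactly all $A_0 \subseteq A$: given any $A_0 \subseteq A$, take $X = 2$, let $f$ be constantly $0$, and let $g$ be the characteristic function of $A \setminus A_0$. Thus the descent condition is equivalent to the condition $\phi^{-1}(A_0) \in \mathcal{G}$ for every $A_0 \in \mathcal{F}$. Two representatives $\phi, \psi$ of the same class in $A^{\mathcal{G}}$ produce the same $s$, hence the same $t$, so we obtain a well-defined injection from equivalence classes $[\phi]$ satisfying the filter condition into natural transformations $t \colon X^{\mathcal{F}} \to X^{\mathcal{G}}$.

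To finish, observe that this assignment is surjective: any $t$ gives (via $t \cdot q$ and Yoneda) some $[\phi]$, and we have just shown $[\phi]$ must satisfy the filter condition. Naturality of $t$ (with respect to all maps $X \to X'$) is automatic from the formula $[f] \mapsto [f\phi]$. Finally, one must dispose of the degenerate case where $\mathcal{F}$ or $\mathcal{G}$ is the improper filter, but here $X^{\mathcal{F}} = 1$ (or $X^{\mathcal{G}} = 1$) and both sides of the claimed bijection collapse trivially, so the statement holds vacuously. This completes the proof.
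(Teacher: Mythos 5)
Your argument is correct, and it supplies a genuine proof where the paper gives none: the paper only sketches the setup (reduced powers are quotients of the projective representables $X^A$, so any $t$ lifts to some $\bar t\colon X^A\to X^B$, i.e.\ a function $\phi\colon B\to A$) and then quotes the result from Trnkov\'a (Prop.~VI.3) and Blass (Thm.~3). Your organization is slightly different and arguably cleaner: instead of lifting $t\cdot q$ through $X^B\twoheadrightarrow X^{\cal G}$ and then analysing when two lifts coincide, you apply Yoneda directly to $\mathrm{Nat}(X^A,(-)^{\cal G})\cong A^{\cal G}$, so the equivalence relation on the $\phi$'s is built in from the start, and the whole question reduces to when $s_\phi$ descends along the pointwise surjection $q\colon X^A\to X^{\cal F}$. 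Your key step --- that every subset $A_0\subseteq A$ is an equalizer set $\{a\mid f(a)=g(a)\}$ with $X=2$, so the descent condition is exactly $A_0\in{\cal F}\Rightarrow\phi^{-1}A_0\in{\cal G}$ --- is precisely the ``basic set theoretical calculation'' the paper alludes to, and the injectivity/surjectivity bookkeeping (using that $q$ is pointwise epi) is right.

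The only loose point is the last sentence. For proper filters the quotient and colimit definitions of $X^{\cal F}$ agree, so your proof covers the case of interest; but when ${\cal F}$ is improper the paper's convention is $X^{\cal F}=1$, and the check is not ``vacuous'': for ${\cal G}$ proper one must observe that $\emptyset^{\cal G}=\emptyset$, so there is no transformation $1\to X^{\cal G}$, matching the fact that no $\phi$ can satisfy $\phi^{-1}\emptyset=\emptyset\in{\cal G}$; and when both filters are improper there is exactly one transformation $1\to 1$, matched by exactly one class of $\phi$'s provided a function $B\to A$ exists at all (in the extreme corner $A=\emptyset\neq B$ the stated bijection actually fails, an edge case the paper glosses over as well). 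So replace ``holds vacuously'' by these two short checks, or simply assume the filters proper in the degenerate discussion.
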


We think of the elements of \( {\cal F} \) (or \( {\cal G} \))
as large subsets of \( A \) (resp.~\( B \)). Then natural
transformations between the reduced powers
correspond to equivalence classes of functions
which map large subsets of \( A \) to large subsets
of \( B \) by inverse image, much like continuous
functions do for opens. We now characterize those
classes \( [\phi] \) for which the corresponding
transformation is taut as those that map large subsets
of \( B \) to large subsets of \( A \) by direct image,
like open maps. First a lemma.

\begin{lemma}
\label{Lem-RedPowMonos}

Let \( z \colon Y \to/ >->/ X \) be a monomorphism.
An element \( [f] \) of \( X^{\cal F} \) is in (the image
of) \( Y^{\cal F} \) if and only if there is an
\( A_0 \in {\cal F} \) and a factorization
\[
\bfig
\square/>`<-< `<-< `-->/[A`X`A_0`Y\rlap{\ .};f``z`g]

\efig
\]

\end{lemma}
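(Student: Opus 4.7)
The plan is to unwind the quotient description $X^{\cal F} = X^A/\sim$, where $f \sim f'$ iff the agreement set $\{a \in A : f(a) = f'(a)\}$ lies in ${\cal F}$, and to check both directions of the biconditional by direct manipulation. Under this description, the induced transformation $z^{\cal F} \colon Y^{\cal F} \to X^{\cal F}$ sends $[h]$ to $[zh]$, so $[f]$ lies in the image of $Y^{\cal F}$ precisely when $f$ is equivalent in $X^A/\sim$ to a function of the form $zh$ for some $h \colon A \to Y$.

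For the ``only if'' direction, given $[f] = [zh]$ for some $h \colon A \to Y$, set $A_0 = \{a \in A : f(a) = zh(a)\}$, which lies in ${\cal F}$ by assumption, and let $g = h|_{A_0} \colon A_0 \to Y$. By construction $f|_{A_0} = zh|_{A_0} = zg$, giving the required factorization through $Y$ on the large subset $A_0$.

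For the ``if'' direction, suppose $A_0 \in {\cal F}$ and $g \colon A_0 \to Y$ satisfy $f|_{A_0} = zg$. The idea is to extend $g$ to a total map $h \colon A \to Y$ by picking any base point of $Y$ for the values off $A_0$; then $zh$ and $f$ agree on $A_0 \in {\cal F}$, so their agreement set lies in ${\cal F}$ by up-closure, whence $[f] = [zh] = z^{\cal F}[h]$ places $[f]$ in the image of $Y^{\cal F}$.

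The one delicate step is this extension, which requires $Y$ to be nonempty. If $Y = \emptyset$, the existence of $g \colon A_0 \to Y$ forces $A_0 = \emptyset$, which in turn forces ${\cal F}$ to be the improper filter---precisely the degenerate case the paper flags. In that case ${\cal F}$ contains every subset, so both sides of the biconditional are vacuously satisfied (taking $A_0 = \emptyset$ on the factorization side), and the statement still holds. This is the only real obstacle; everything else is just unwinding the equivalence relation defining the reduced power.
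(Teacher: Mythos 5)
Your main argument is exactly the paper's: for the forward direction restrict $\ov{g}$ (your $h$) to the agreement set $A_0 \in {\cal F}$, and for the converse extend $g$ from $A_0$ to all of $A$ and use that $z\ov{g}$ and $f$ agree on $A_0$, with up-closure giving $[f]=[z\ov{g}]$. That part is correct and needs no change.

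The last paragraph, however, is not right as stated. Working with the quotient description $X^{\cal F}=X^A/\!\sim$ as you do, suppose $Y=\emptyset$, $A\neq\emptyset$, $X\neq\emptyset$ and ${\cal F}$ is the improper filter. Then $Y^A=\emptyset$, so $Y^{\cal F}=\emptyset$ and $[f]$ is certainly \emph{not} in the image of $Y^{\cal F}$; yet the factorization side \emph{is} satisfied, taking $A_0=\emptyset\in{\cal F}$ and $g$ the empty map. So the two sides are not ``both vacuously satisfied'' --- under the quotient reading the biconditional actually fails in this corner, and your claim that the statement still holds there is justified only if one switches to the paper's official colimit definition, under which $Y^{\cal F}=1$ for the improper filter and the equivalence is restored. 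The clean fix is the one the paper uses implicitly: for a \emph{proper} filter, $Y=\emptyset$ forces both sides to be false (no $A_0\in{\cal F}$ can map into $\emptyset$, and $Y^{\cal F}=\emptyset$), so the lemma is trivially true; the improper filter should be handled with the colimit definition, not the quotient one. With that adjustment your proof agrees with the paper's in every respect.
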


\begin{proof}

If \( Y = \emptyset \) then \( Y^{\cal F} = \emptyset \) and
the result is trivial. Otherwise, if there are \( A_0 \) and
\( g \) as above, we can extend \( g \) to \( \ov{g} \colon
A \to Y \). Then \( z \ov{g} \) and \( f \) agree on \( A_0 \)
so \( [f] = [z \ov{g}] \), i.e.~\( [f] \) in \( Y^{\cal F} \).

In the other direction, if \( [f] \) is in \( Y^{\cal F} \),
i.e.~\( [f] = [z \ov{g}] \) for some \( \ov{g} \colon A \to Y \),
then \( f \) and \( z \ov{g} \) agree on some
\( A_0 \in {\cal F} \) and we take \( g = \ov{g} |_{A_0} \).
\end{proof}

\begin{proposition}
\label{Prop-RedPowTautTransf}

Let \( {\cal F} \) and \( {\cal G} \) be filters on \( A \) and \( B \)
respectively, and \( \phi \colon B \to A \) a function
mapping elements of \( {\cal G} \) to elements of
\( {\cal F} \) by inverse image. Then the induced
natural transformation
\[
t \colon X^{\cal F} \to X^{\cal G}
\]
\[
[f] \longmapsto [f \phi]
\]
is taut if and only if
\[
B_0 \in {\cal G} \Rightarrow \phi (B_0) \in {\cal F} .
\]
 
\end{proposition}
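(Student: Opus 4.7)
The plan is to reduce tautness to an explicit set-theoretic condition via Lemma~\ref{Lem-RedPowMonos}, then prove each direction by direct verification, using a small mono $\{1\} \hookrightarrow 2$ to force the direct-image condition in the converse.

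First I would unpack what tautness of $t$ means. For a mono $z \colon Y \to/>->/ X$, the naturality square for $t$ is a pullback iff the canonical inclusion $Y^{\cal F} \to X^{\cal F} \times_{X^{\cal G}} Y^{\cal G}$ is a bijection. Since $X^{(-)}$ preserves monos and $Y^{\cal F} \to X^{\cal F}$, $Y^{\cal G} \to X^{\cal G}$ are therefore inclusions of subsets, the only content is: for every $[f] \in X^{\cal F}$, if $t[f] = [f\phi] \in Y^{\cal G}$ then $[f] \in Y^{\cal F}$. Applying Lemma~\ref{Lem-RedPowMonos} to both sides translates this into the statement that for every $f \colon A \to X$, the existence of $B_0 \in {\cal G}$ with $f \phi (B_0) \subseteq Y$ implies the existence of $A_0 \in {\cal F}$ with $f(A_0) \subseteq Y$.

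For the ($\Leftarrow$) direction, assume $\phi$ sends $\cal G$ to $\cal F$ by direct image. Given such $B_0$ with $f \phi (B_0) \subseteq Y$, just set $A_0 := \phi(B_0)$, which lies in $\cal F$ by hypothesis and satisfies $f(A_0) \subseteq Y$ by construction. This gives tautness.

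For the ($\Rightarrow$) direction, suppose $t$ is taut and let $B_0 \in {\cal G}$. I would construct a specific test case: take $X = 2$, $Y = \{1\} \to/>->/ 2$, and define $f \colon A \to 2$ to be the characteristic function of $\phi(B_0)$, i.e.~$f(a) = 1$ iff $a \in \phi(B_0)$. Then $f\phi(b) = 1$ for every $b \in B_0$, so $f\phi$ restricts to $B_0 \to Y$ and hence $[f\phi] \in Y^{\cal G}$ by the lemma. Tautness of $t$ then forces $[f] \in Y^{\cal F}$, so by the lemma again there exists $A_0 \in {\cal F}$ with $f(A_0) \subseteq Y = \{1\}$, i.e.~$A_0 \subseteq \phi(B_0)$. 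Up-closure of $\cal F$ yields $\phi(B_0) \in {\cal F}$. The main (minor) subtlety is the degenerate case: if $\cal F$ is improper then every subset of $A$ lies in $\cal F$ so the direct-image condition is automatic; the construction above still goes through without modification.
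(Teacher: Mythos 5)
Your proof is correct and takes essentially the same route as the paper: both directions run through Lemma~\ref{Lem-RedPowMonos}, and your ``if'' direction is exactly the paper's argument (take $A_0 = \phi(B_0)$). The only cosmetic difference is in the ``only if'' direction, where the paper tests tautness at the mono $\phi(B_0) \hookrightarrow A$ using the element $[1_A] \in A^{\cal F}$, while you test at $\{1\} \hookrightarrow 2$ using the characteristic function of $\phi(B_0)$ --- the same argument transported along characteristic functions, and it handles the degenerate cases just as well.
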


\begin{proof}
First assume \( t \) is taut and let \( B_0 \subseteq B \)
Then we get \( \phi (B_0) \subseteq A \) and a
 pullback square, by tautness
\[
\bfig
\square/ >->`>`>` >->/[\phi (B_0)^{\cal F}`A^{\cal F}
`\phi (B_0)^{\cal G}`A^{\cal G}\rlap{\ .};
`t(\phi (B_0))`t A`]

\place(250,250)[\framebox{\scriptsize Pb}]

\efig
\]
Consider the class \( [1_A] \in A^{\cal F} \). It gets
sent to \( [\phi] \) in \( A^{\cal G} \) by \( t (A) \)
and \( [\phi] \) is in \( \phi (B_0)^{\cal G} \)
(as \( \phi \) factors through \( \phi (B_0) \)). So
\( [1_A] \) is in \( \phi(B_0)^{\cal F} \) and thus
there are \( A_0 \in {\cal F} \) and a restriction
\[
\bfig
\square/>`<-< `<-< `-->/[A`A`A_0`\phi (B_0)\rlap{\ .};
1_A```]

\efig
\]
So \( A_0 \subseteq \phi (B_0) \) and we get
\( \phi (B_0) \in {\cal F} \).

Now assume that \( \phi \) preserves large
sets (i.e., elements of the filter) and consider a mono \( Y\  \to/>->/ X \).
We wish to show that
\[
\bfig
\square/ >->`>`>` >->/[Y^{\cal F}`X^{\cal F}`Y^{\cal G}`X^{\cal G};
`t Y`t X`]

\efig
\]
is a pullback, so let \( [f] \in X^{\cal F} \) be such that
\( [f \phi] = t X [f] \in Y^{\cal G} \). By the lemma, there
are \( B_0 \in {\cal G} \) and a restriction \( g \)
\[
\bfig
\square/`<-< `<-< `{-->}/<1000,500>[B`X`B_0`Y;```g]

\morphism(0,500)/>/[B`A;\phi]

\morphism(500,500)/>/[A`X;f]

\efig
\]
and \( g \) factors through the image \( \phi (B_0) \)
\[
\bfig
\square/>`<-< `<-< `->>/[B`A`B_0`\phi(B_0);
f```]

\square(500,0)/>``<-< `>/[A`X`\phi(B_0)`Y\rlap{\ .};
f```]

\efig
\]
Because \( \phi (B_0) \in {\cal F} \) we get
\( [f] \in Y^{\cal F} \). This gives our pullback and
\( t \) is taut.
\end{proof}

There is a category of filters \( {\mathbb F} \)
introduced in \cite{Trn71A} and further studied in
\cite{Bla77B}, whose objects are pairs \( (A, {\cal F}) \)
with \( A \) a set and \( {\cal F} \) a filter on it and
whose morphisms are equivalences of functions
as in Proposition~\ref{Prop-RedTransf}. It is proved
in \cite{Trn71A} that the epimorphisms in \( {\mathbb F} \)
are precisely the \( [\phi] \) when \( \phi \) satisfies
the conditions of \ref{Prop-RedPowTautTransf}.

\subsection{Monads}
\label{SSec-Mon}

Another instance where endofunctors appear
naturally in category theory is in the theory of
monads and, in fact, that's what Manes was
studying when he introduced taut functors
\cite{Man02}. His interests lay in applications
to computer science (collection monads, e.g.)
and categorical topology ($ T_0 $ spaces, e.g.),
so many of his examples centred around lists,
e.g.~the free monoid monad, and around filters.
Among his examples were indeed the free
monoid or semigroup monads and the filter
monad $ {\mathbb F} $. $ {\mathbb F} (X) $
is the set of filters on $ X $ well-known to be
a monad. He showed that not only is $ {\mathbb F} $
taut (functor, unit and multiplication are taut) but
a monad $ {\mathbb T} $ is taut iff it admits a
taut morphism of monads $ {\mathbb T} \to
{\mathbb F} $ \cite{Man02}. He
also showed that any submonad of $ {\mathbb F} $
is taut (Theorem 3.12), so that the ultrafilter monad
$ \beta $ is taut. See {\em loc.~cit.} for more examples.

As just mentioned the free monoid monad is taut.
In fact it's a polynomial monad
$$
T X = 1 + X + X^2 + \dots
$$
The free commutative monoid monad is also taut
as it is a divided powers monad
$$
T X = 1 + X + X^2/S_2 + X^3/S_3 + \dots
$$
On the other hand the free group monad or the free
Abelian group monad are not taut. Consider, e.g., 
the free Abelian group monad and the following
inverse image diagram in $ {\bf Set} $
$$
\bfig
\morphism(0,0)/|->/<450,0>[0`0;]
\square(0,300)/ >->`>`>` >->/<450,450>[\emptyset`2`1`2;```]

\morphism(750,750)/|->/<0,-450>[0`1;]
\morphism(1050,750)/|->/<0,-450>[1`1;]

\place(1150,0)[.]

\efig
$$
If we apply $ T $ we get
$$
\bfig
\morphism(0,0)/|->/<450,0>[n`(n, 0);]
\square(0,300)/ >->`>`>` >->/<450,450>[1`{\mathbb Z} \oplus {\mathbb Z}
`{\mathbb Z}`{\mathbb Z} \oplus {\mathbb Z};```]

\morphism(1050,750)/|->/<0,-450>[(m, n)`(0, m+n);]

\efig
$$
which is not a pullback. The pullback is
$ \{(m, n) | m + n = 0 \} $. The difference between monoids
and groups is that in groups there are equations with
 different variables on either side, e.g.~$ x x^{-1} = 1 $.

In 1967, P{\l}onka \cite{Plo67} studied universal algebras
defined by what he termed regular equations, equations
that have the same variables on both sides. Szawiel and
Zawadowski \cite{SzaZaw15} have shown that finitary monads
are taut (which they call semi-analytic) iff they can be
presented by regular equations.

Other non-finitary monads considered by Manes are
the covariant power-set monad which is taut and the
double dualization monad which is not.

We examine the tautness of the covariant power-set
functor $ P $ as it comes up in the next section. Consider
the inverse image diagram in $ {\bf Set} $
$$
\bfig
\square/ >->`>`>` >->/[f^{-1} B`A`B`C\rlap{\ .};
``f`]

\efig
$$
The pullback of $ P B $ along $ P f $
$$
\bfig
\square/ >->`>`>` >->/<600,500>[(Pf)^{-1}(PB)`PA`PB`PC;
``Pf`]

\efig
$$
consists of the set of all subsets of $ A $,
$ A_0 \subseteq A $ such that $ f (A_0) \subseteq B $
which is equivalent to $ A_0\subseteq  f^{-1} B $, 
i.e.~$ (Pf)^{-1} (PB) = P(f^{-1} B) $, which means that
$ P $ is taut.

$ P $ doesn't preserve all pullbacks though. Just by
cardinality arguments we see that for cardinals
 $ m, n > 2 $, the pullback
 $$
 \bfig
 \square[mn`n`m`1;```]
 
 \efig
 $$
 is not preserved: the cardinality of $ Pm \times_{P1} Pn $
 is less than the cardinality of $ Pm \times Pn = 2^{m + n} $
 which itself is less than $ 2^{mn} $, the cardinality of
 $ P(mn) $.

 \subsection{Dirichlet series}
 \label{SSec-Dir}
 
 Classically, Dirichlet series are series of the form
 $$
 \sum^\infty_{n = 1} \frac{c_n}{n^s}
 $$
 which could be written as
 $$
 \sum^\infty_{n = 1} c_n \left( \frac{1}{n} \right)^s
 $$
 and generalize to $ {\bf Set} $
 \begin{equation}\tag{*}
 \sum_{i\,\in\,I} C_i L^X_i 
 \end{equation}
 for sets $ I, C_i, L_i, X $. This is the definition given in
 \cite{MyeSpi20}, a coproduct of contravariant representables.
 It would be nice to get an actual endofunctor of $ {\bf Set} $
 rather than a functor $ {\bf Set}^{op} \to {\bf Set} $. We
 already have an example of this, the covariant
 powerset functor $ P X = 2^X $. We can bootstrap this
 to get other examples, $ (2^A)^X \cong 2^{A \times X} $.
 We could also take $ 3^X $ whose elements are nested
 pairs of subsets to which direct image also applies. And
 so on.
 
 What makes this work is that the base $ L $ is a
 complete lattice, a sup lattice to be precise, and
 functoriality is given by left Kan extension.

 \begin{proposition}
 
 Let $ L $ be a sup complete lattice. Then left
 Kan extension makes $ L^X $ into a taut endofunctor
 of $ {\bf Set} $.
 
 \end{proposition}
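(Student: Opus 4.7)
The plan is to unwind what the functor $L^{(-)}$ does on monos and then use that a mono has only ``trivial'' fibers, so that the inverse image of the base of a pullback square of sets gets sent to the obvious inverse image in the image.

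First I would make the functoriality explicit. For $f \colon X \to Y$ and $\phi \colon X \to L$, the left Kan extension along $f$ is the function $L^f(\phi) \colon Y \to L$ given by $L^f(\phi)(y) = \sup\{\phi(x) : f(x) = y\}$, with $\sup \emptyset = \bot$ when a fiber is empty. Using that sup commutes with sup, this is functorial in $f$. The key special case is when $i \colon Y_0 \hookrightarrow Y$ is a monomorphism: every non-empty fiber of $i$ is a singleton, so $L^i(\phi_0)(y) = \phi_0(y)$ for $y \in Y_0$ and $L^i(\phi_0)(y) = \bot$ for $y \notin Y_0$. In other words, $L^i$ extends a function by $\bot$ off of $Y_0$, and its image is exactly the set of $\psi \colon Y \to L$ supported on $Y_0$ (meaning $\psi(y) = \bot$ whenever $y \notin Y_0$). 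Note also that $L^i$ is injective, so $L^{(-)}$ preserves monos.

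Next, given an inverse image diagram
$$
\bfig
\square/>->`>`>`>->/[X_0`X`Y_0`Y;j`f_0`f`i]
\efig
$$
with $X_0 = f^{-1}(Y_0)$, I would show that the applied square is a pullback in $\mathbf{Set}$ by computing its pullback directly. A compatible pair is $(\phi_0, \phi) \in L^{Y_0} \times L^X$ with $L^i(\phi_0) = L^f(\phi)$. By the explicit formula above, this equation says: for $y \in Y_0$, $\phi_0(y) = \sup_{f(x)=y} \phi(x)$, and for $y \notin Y_0$, $\sup_{f(x)=y} \phi(x) = \bot$. The second condition is equivalent to $\phi(x) = \bot$ for every $x \in X \setminus X_0$, since a sup in $L$ is $\bot$ iff every term is $\bot$. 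So $\phi$ is supported on $X_0$, hence of the form $L^j(\tilde\phi)$ for a unique $\tilde\phi \colon X_0 \to L$, and then the first condition reduces to $\phi_0 = L^{f_0}(\tilde\phi)$.

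This gives a bijection between compatible pairs and elements $\tilde\phi \in L^{X_0}$, via $\tilde\phi \mapsto (L^{f_0}(\tilde\phi), L^j(\tilde\phi))$; commutativity of the original square ensures this pair is compatible. The inverse is obtained by reading off $\tilde\phi$ as the restriction of $\phi$ to $X_0$, so the applied square is a pullback and $L^{(-)}$ is taut. The only place any real thinking is required is the step where one uses that $\bot$ is the sup of the empty set together with the characterization of when a sup equals $\bot$; once this is in place the verification is routine.
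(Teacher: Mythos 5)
Your proof is correct and follows essentially the same route as the paper: make the Kan extension formula explicit, observe that along a mono it is extension by $\bot$ so $L^{Y_0}$ sits inside $L^Y$ as the functions supported on $Y_0$, and then use that a sup is $\bot$ iff all its terms are $\bot$ to identify the pullback with $L^{X_0}$. The only cosmetic difference is that you compute the pullback as compatible pairs while the paper identifies $L^{Y_0}$ with its image and computes the inverse image directly, which amounts to the same verification.
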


 \begin{proof}
 
 Let $ f \colon A \to B $ be a function, then
 $$
 L^f \colon L^A \to L^B
 $$
 is given by
 $$
 L^f (\phi) (b) = \bigvee_{f(a) = b}\  \phi(a) \rlap{\ ,}
 $$
 the left Kan extension of $ \phi $ along $ f $
 $$
 \bfig
 \Vtriangle<350,400>[A`B`L;f`\phi`L^f_{(f)}]
 
 \place(350,250)[\leq]
 
 \place(700,0)[.]
 
 \efig
 $$
 It is well-known, and easily seen, that this makes
 $ L^{(\ )} $ into a functor $ {\bf Set} \to {\bf Set} $.
 
 Consider the following inverse image diagram in
 $ {\bf Set} $
 $$
 \bfig
 \square/ >->`>`>` >->/[A_0`A`B_0`B;
 m``f`n]
 
 \place(250,250)[\framebox{{\scriptsize Pb}}]
 
 \efig
 $$
 $ A_0 = f^{-1} B_0 $. First note that if $ \psi_0 \in L^{B_0} $ then its
 extension to $ B $ is given by
                       
\[
\begin{array}{lcl}
L^n (\psi_0) (b) & = &\bigvee_{\phi(b_0) =b} \psi_0 (b_0)\\
                                      & = &\left\{\begin{array}{ll}
                                   b  & \mbox{if $ b \in B_0$}\\
                                        \bot & \mbox{o.w.}
                                    \end{array} \right.
   \end{array}
\]
So we can identify $ L^{B_0} $ with its image in
$ L^B $, i.e.
$$
\{ \psi \colon B \to L | \psi (b) = \bot \mbox{\ for all\ } b \notin B_0 \}.
$$

Now take the pullback   
$$
\bfig
\square/>`>`>` >->/[Pb`L^A`L^{B_0}`L^B\rlap{\ .};``L^f`]
\place(250,250)[\framebox{{\scriptsize Pb}}]
 
\efig
$$
An element of $ Pb $ is $ \phi \colon A \to L $ such
that
$$
L^f (\phi) \in L^{B_0}
$$
i.e.
$$
\bigvee_{f(a) = b} \phi (a) = \bot
$$
for all $ b \notin B_0 $. This means $ \phi (a) = \bot $
for all $ a $ with $ f(a) \notin B_0 $, i.e.~all
$ a \notin f^{-1} B_0 = A_0 $. Thus $ Pb $ is
$ L^{A_0} $ and $ L^X $ is taut.
 \end{proof}

 Other than the few examples of monads mentioned
 in Section~\ref{SSec-Mon} all of our examples had
 rank, i.e.~were a small colimit of representables.
 The functors $ L^X $ (and $ L^{[X]} $ introduced
 below) are of unbounded rank (i.e.~have no rank),
 unless $ L = 1 $.
 
 To see this, define the support of $ \phi \colon A \to L $
 to be $ \sigma (\phi) = \{a| \phi(a) \neq \bot \} $. If
 $ f \colon A \to B $ and $ \psi = L^X (f) (\phi) $, i.e.
 $$
 \psi (b) = \bigvee_{f(a)=b} \phi (a)
 $$
 then $ \sigma (\psi) $ is the image of $ \sigma (\phi) $
 under $ f $ as is easily seen. Thus the cardinality of
 $ \sigma (\psi) $ is bounded by that of $ \sigma (\phi) $.
 This means that no set of elements can generate $ L^X $
 because any element with support bigger than the
 supports of all the supposed generators could never
 be attained.
 
 Examples of sup-complete lattices which may be of
 interest are the open sets of a topological space, the
 subgroups of a group and of special interest to us
 here, are the finite ordinals.
 
 The functor $ L^X $ is not connected. By
 Proposition~\ref{Prop-SumDecomp},
 $ \pi_0 (L^X) = L^1 = L $, and $ L^X $
 decomposes into a sum of connected functors,
 one for each $ l \in L $,
 $$
 L^X \cong \sum_{l\,\in\,L} F_l \rlap{\ .}
 $$
 The transformation $ L^X \to L $ is given by
 sup, so $ F_l = \{ f \colon X \to L\  |\  \bigvee f(x)
 = l \} $ which is the same as all functions into
 $ D (l) = \{ l' \in L\  |\  l' \leq l \} $, the down-set
 of $ l $, whose sup is the top element of
 $ D (l) $, i.e.~$ l $ itself. This leads to the
 following:

 \begin{definition}
 \label{Def-NormExp}
 
 Let $ L $ be a sup lattice. The {\em normalized
 exponential functor} with base $ L $ is
 $$
 L^{[X]} = \{ f \colon X \to L \ | \ \bigvee f(x) = \top \}
 $$
 where $ \top $ is the top element of $ L $.
 
 \end{definition}

 \begin{proposition}
 \label{Prop-NormExp}
 
 \begin{itemize}
 
 	\item[(1)] The normalized exponential is
	connected and taut.
	
	\item[(2)] \( L^X \cong \sum_{l\,\in\, L} D(l)^{{[}X{]}} \rlap{\ .} \)
		
	\item[(3)] If \( L_1, L_2 \) are sup lattices, then
	\[
	L_1^{[X]} \times L_2^{[X]} \cong (L_1 \times L_2)^{[X]} \rlap{\ .}
	\]
 
 \end{itemize}
 
 \end{proposition}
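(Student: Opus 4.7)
The plan is to prove (2) first by applying the coproduct decomposition of Proposition~\ref{Prop-SumDecomp}, then deduce both halves of (1) from it together with the previous proposition and Proposition~\ref{Prop-CoprodTaut}, and finally verify (3) by a direct computation with suprema.

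First I would check that $L^{[(-)]}$ is genuinely a subfunctor of $L^{(-)}$: given $f \colon X \to Y$, the formula $L^{f}(\phi)(y) = \bigvee_{f(x)=y} \phi(x)$ yields $\bigvee_{y} L^{f}(\phi)(y) = \bigvee_{x} \phi(x)$, so $L^{f}$ carries $L^{[X]}$ into $L^{[Y]}$. Specializing to the unique arrow $X \to 1$ identifies the canonical cocone component $L^{X} \to L^{1} = L$ (coming from $\tau \colon \id_{\bf Set} \to 1$) with the total supremum map $\phi \mapsto \bigvee_{x} \phi(x)$.

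For (2), apply Proposition~\ref{Prop-SumDecomp} to $F = L^{(-)}$: since $F 1 = L$, we obtain $L^{X} \cong \sum_{l \in L} F_{l}$, where $F_{l}$ is the pullback of the total supremum map along $l \colon 1 \to L$. By the identification just made, $F_{l}$ consists of those $\phi \colon X \to L$ with $\bigvee_{x} \phi(x) = l$. Such a $\phi$ automatically factors through the down-set $D(l)$, and there its supremum equals the top of $D(l)$, so $F_{l} \cong D(l)^{[X]}$, giving (2). Part (1) then falls out: for connectedness, the only function $1 \to L$ with supremum $\top$ is $\ast \mapsto \top$, so $L^{[1]} = 1$ and Proposition~\ref{Prop-SumDecomp}(2) applies; for tautness, the previous proposition says $L^{(-)}$ is taut, so by Proposition~\ref{Prop-CoprodTaut} every summand in (2) is taut, and taking $l = \top$ gives tautness of $L^{[X]} = D(\top)^{[X]}$.

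For (3), pairs $(\phi_{1}, \phi_{2}) \in L_{1}^{[X]} \times L_{2}^{[X]}$ biject with functions $\langle \phi_{1}, \phi_{2} \rangle \colon X \to L_{1} \times L_{2}$, and because suprema in a product of sup lattices are taken componentwise, the condition $\bigvee_{x} \langle \phi_{1}, \phi_{2}\rangle(x) = (\top_{1}, \top_{2})$ is equivalent to $\bigvee \phi_{i} = \top_{i}$ for $i = 1, 2$. This bijection is visibly natural in $X$. I do not expect any real obstacle: the one subtle point is the identification of the sup map with the canonical cocone into $L^{1}$, and once that is in hand everything else is essentially a rearrangement.
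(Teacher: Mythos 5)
Your proposal is correct and follows essentially the same route as the paper: the paper also obtains (1) and (2) from the canonical decomposition of $L^{X}$ over $\pi_0(L^X)=L^1=L$ (Proposition~\ref{Prop-SumDecomp} / Corollary~\ref{Cor-SumDecomp}), identifying the fibre of the sup map over $l$ with $D(l)^{[X]}$ and getting tautness of the summands from tautness of $L^{X}$, and proves (3) by noting that sup and top are computed componentwise. Your explicit checks (that $L^{[-]}$ is a subfunctor, that the cocone component $L^X\to L^1$ is the total sup, and that $L^{[1]}=1$) just spell out what the paper leaves implicit.
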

 
 \begin{proof}
 
 (1) and (2) are immediate by Corollary~\ref{Cor-SumDecomp}.
 (3) is simply that ``sup'' and ``top'' are component-wise in
 $ L_1 \times L_2 $.
 \end{proof}
 
 \begin{example}
 \label{Ex-RedPow}
 
 Let \( n = \{ 0, 1, \dots, n - 1 \} \) represent the \( n^{th} \)
 cardinal number and \( {\bf n} \) the corresponding
 ordinal \( 0 < 1 < \dots < n - 1 \). \( {\bf n} \) is
 a complete lattice and its down sets \( D(l) \) from above
 are \( {\bf 1}, {\bf 2}, \dots, {\bf n} \) so
 \[
 {\bf n}^X \cong {\bf 1}^{[X]} + {\bf 2}^{[X]} + \dots + {\bf n}^{[X]} \rlap{\ .}
 \]
 
 \end{example}
 
 The reason we are interested in connected functors
 is that it allows for a simple analysis of natural
 transformations between sums of them. If
 \( \langle F_i \rangle_{i \in I} \) and \( \langle G_j \rangle_{j \in J} \)
 are families of connected endofunctors of \( {\bf Set} \),
 then a natural transformation
 \[
 t \colon \sum_{i \in I} F_i \to \sum_{j \in J} G_j
 \]
 is given by a function \( \alpha \colon I \to J \)
 and a family of natural transformations
 \[
 \langle t_i \colon  F_i \to G_{\alpha(i)} \rangle_{i \in I} .
 \]
 So, if \( t \) were an isomorphism then so would
 \( \alpha \) and all the \( t_i \).
 
 If we were to define Dirichlet series as a coproduct
 \[
 \sum_{i \in I} L^X_i
 \]
 as suggested above, there could be different
 families \( \langle L_i \rangle \) giving isomorphic
 functors, which is not desirable. For example, if
 \( \langle n_i \rangle \) is an unbounded sequence
 of natural numbers, then the functor
 \[
 \sum_{i \in {\mathbb N}} {\bf n}^X_i
 \]
 will have infinitely many \( {\bf k}^{[X]} \) summads
 for each \( k \) and thus
 \[
 \sum_{i \in {\mathbb N}} {\bf n}^X_i \cong
 {\mathbb N} \times \sum_{k \in {\mathbb N}}
 {\bf k}^{[X]} .
 \]
 So any two unbounded sequences give
 isomorphic functors.
 
 This motivates the following.

 \begin{definition}
 \label{Def-DirFun}
 
 A {\em Dirichlet functor} is a coproduct of
 {\em normalized exponentials}
 $$
 F X = \sum_{i\,\in\, I} L_i^{[X]} \rlap{\ .}
 $$
 
 \end{definition}
 
 So, in particular, by (2) above, $ L^X $ defines a Dirichlet functor.
 
 For future reference we record the following:

 \begin{proposition}
 \label{Prop-DirTaut}
  
  Dirichlet functors are taut.
 
 \end{proposition}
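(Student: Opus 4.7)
The plan is to simply assemble two facts already established in the excerpt. By Definition~\ref{Def-DirFun} a Dirichlet functor has the form $F X = \sum_{i \in I} L_i^{[X]}$, so it is a coproduct of normalized exponentials. Proposition~\ref{Prop-NormExp}(1) tells us that each summand $L_i^{[X]}$ is taut (and connected, though connectedness is not needed here). Proposition~\ref{Prop-CoprodTaut} then gives that a coproduct of $\mathbf{Set}$-valued functors is taut iff each summand is taut, so $F$ is taut.

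Since both ingredients are already in place, there is no real obstacle; the whole proof is one or two lines citing Propositions~\ref{Prop-NormExp} and \ref{Prop-CoprodTaut}. If one preferred not to invoke Proposition~\ref{Prop-CoprodTaut}, one could verify tautness directly: for a mono $X_0 \rightarrowtail X$, the inverse image $\sum_i L_i^{[X_0]} \to \sum_i L_i^{[X]}$ is the disjoint union of the individual inverse image squares, and a pullback of a coproduct of monos in $\mathbf{Set}$ is computed componentwise, so the tautness of each $L_i^{[-]}$ transfers to the sum. But the cleaner route is just to cite the two prior propositions.
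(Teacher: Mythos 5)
Your proof is correct and is exactly the argument the paper intends: the paper records Proposition~\ref{Prop-DirTaut} without a written proof, the evident justification being that a Dirichlet functor is by Definition~\ref{Def-DirFun} a coproduct of normalized exponentials, each taut by Proposition~\ref{Prop-NormExp}(1), and coproducts of taut $\mathbf{Set}$-valued functors are taut by Proposition~\ref{Prop-CoprodTaut}. Nothing further is needed.
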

 
 Any natural transformation of Dirichlet functors
 \[
 t \colon \sum_{i \in I} L^{[X]}_i \to
 \sum_{j \in J} M^{[X]}_j
 \]
 must, because the \( L^{[X]}_i \) are connected,
 come from a function \( \alpha \colon I \to J \)
 and a family of natural transformations
 \[
 t_i \colon L^{[X]}_i \to M^{[X]}_{\alpha (i)} \rlap{\ .}
 \]
 In particular if
 \[
 \sum_{i \in I} L^{[X]}_i \cong
 \sum_{j \in J} M^{[X]}_j
 \]
 then there is a bijection \( \alpha \colon I \to J \)
 and isomorphisms
 \[
 L^{[X]}_i \cong M^{[X]}_{\alpha (i)} \rlap{\ .}
 \]
 This raises the question of whether the lattices
 \( L_i \) and \( M_{\alpha (i)} \) are themselves
 isomorphic. This is indeed the case but the proof
 is not that simple. In fact a similar statement for
 the non-normalized powers \( L^X \) is false.
 
 \begin{proposition}
 \label{Prop-TopPresFns}
 
 Any top preserving morphism of sup-lattices
 \( \phi \colon L \to M \) induces, by composition,
 a natural transformation
 \[
 t_\phi \colon L^{[X]} \to M^{[X]} \rlap{\ .}
 \]
 \( t_\phi \) is taut if and only if \( \phi \) reflects
 ``bottom''
 \[
 \phi(a) = \bot \Rightarrow a = \bot \rlap{\ .}
 \]
 
 \end{proposition}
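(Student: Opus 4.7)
The plan is to first verify that $t_\phi$ is a well-defined natural transformation, then translate the tautness condition at each mono into a concrete statement about $\phi$ acting on values $f(x)$, and finally check both implications. Step one is immediate: if $f \colon X \to L$ lies in $L^{[X]}$, so $\bigvee_{x} f(x) = \top_L$, then because $\phi$ preserves all sups and the top,
\[
\bigvee_{x} \phi(f(x)) \;=\; \phi\bigl(\bigvee_{x} f(x)\bigr) \;=\; \phi(\top_L) \;=\; \top_M,
\]
so $\phi \circ f \in M^{[X]}$. Naturality in $X$ is then trivial, since $t_\phi$ is just postcomposition with $\phi$.

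For step two, I would fix a mono $m \colon Y \hookrightarrow X$ and unfold the image of $L^{[m]}$. The functor $L^{(-)}$ is built by left Kan extension, which pads a function on $Y$ by $\bot_L$ on $X \setminus Y$ (empty sup); so the inclusion $L^{[Y]} \hookrightarrow L^{[X]}$ identifies $L^{[Y]}$ with those $f \in L^{[X]}$ satisfying $f(x) = \bot_L$ for every $x \notin Y$, and analogously for $M^{[Y]} \hookrightarrow M^{[X]}$. Consequently, the naturality square of $t_\phi$ at $m$ is a pullback exactly when, for every $f \in L^{[X]}$,
\[
\bigl(\forall x \notin Y:\ \phi(f(x)) = \bot_M\bigr) \;\Longrightarrow\; \bigl(\forall x \notin Y:\ f(x) = \bot_L\bigr).
\]

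Both implications now fall out quickly. If $\phi$ reflects $\bot$, this condition holds automatically for every mono, so $t_\phi$ is taut. Conversely, given $a \in L$ with $\phi(a) = \bot_M$ and $a \neq \bot_L$, I would take $X = \{x_0, x_1\}$, $Y = \{x_1\}$, $m$ the obvious inclusion, and define $f(x_0) = a$, $f(x_1) = \top_L$: then $f \in L^{[X]}$ because $\bigvee_{x} f(x) = \top_L$, and $\phi \circ f \in M^{[Y]}$ because $\phi(a) = \bot_M$ and $\phi(\top_L) = \top_M$, yet $f \notin L^{[Y]}$ since $f(x_0) = a \neq \bot_L$; this witnesses the failure of tautness at $m$. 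The only subtle point is the explicit description of the Kan-extended inclusion $L^{[Y]} \hookrightarrow L^{[X]}$ in terms of vanishing at $\bot$ outside $Y$, but this is exactly the calculation already used in proving that $L^{X}$ itself is taut, which I would simply cite.
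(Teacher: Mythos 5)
Your proposal is correct and follows essentially the same route as the paper: identify $L^{[Y]}$ and $M^{[Y]}$ with the functions vanishing (at $\bot$) outside $Y$, deduce tautness from reflection of $\bot$, and refute it with a two-point set when some $a \neq \bot$ has $\phi(a) = \bot$, exactly as the paper does with the mono $1 \rightarrowtail 2$ and the element $(\top, a)$. One small caution: naturality is not ``trivial because $t_\phi$ is postcomposition'' --- the functorial action $L^{[f]}$ takes sups over fibres, so the naturality square commutes precisely because $\phi$ preserves sups, a one-line check the paper spells out and which you should too.
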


 \begin{proof}
 
 For \(\lambda \colon X \to L\) in \(L^{[X]}\),
 \(t_\phi (\lambda)\) is  \(\phi \lambda \colon
 X \to M \). \[ \bigvee_{x \in X} \phi \lambda (x) =
 \phi (\bigvee_{x \in X} \lambda (x)) =
 \phi (\top) = \top\ , \] so \( \phi \lambda \) is in
 \( M^{[X]} \).
 
 For naturality, let \( f \colon Y \to X \) be any
 function. Then for \( \lambda \in L^{[Y]} \) and
 \( x \in X \), going across the top and then down
 in
 \[
 \bfig
 \square[L^{[Y]}`L^{[X]}`M^{[Y]}`M^{[X]};
 L^{[f]}`t_\phi (Y)`t_\phi (X)`M^{[f]}]
 
 \efig
 \]
 gives
 \[
 t_\phi (X) L^{[f]} (\lambda)(x) =
 \phi (L^{[f]} (\lambda)(x)) =
 \phi (\bigvee_{fy = x} \lambda y)
 \]
 and first going down and then over
 \[
 M^{[f]} t_\phi (Y) (\lambda) (x) =
 M^{[f]} (\phi \lambda) (x) =
 \bigvee_{f y = x} \phi \lambda (y)
 \]
 which are equal because \( \phi \) preserves sup.
 
 Now assume that \( \phi \) reflects \( \bot \) and let
 \( f \) be monic.
 We can identify \( L^{[Y]} \) with its image in
 \( L^{[X]} \), which consists of all \( \lambda \colon
 X \to L \) such that \( \lambda (x) = \bot \) for
 \( x \notin Y \). And similarly for \( M^{[Y]} \).
 
 If \( \lambda \in L^{[X]} \) is such that
 \( t_\phi (X) (\lambda) \) is in \( M^{[Y]} \), then
 for \( x \notin Y \)
 \[
 \phi \lambda (x) = \bot
 \]
 so
 \[
 \lambda (x) = 1
 \]
 and thus \( \lambda \) is in \( L^{[Y]} \). The square
 is a pullback and \( t_\phi \) is taut.
 
 Conversely, if \( t_\phi \) is taut, consider the mono
 \( 0 \colon 1\ \to/>->/ 2 \), giving a pullback
 \[
 \bfig
 \square/ >->`>`>` >->/[L^{[1]}`L^{[2]}`M^{[1]}`M^{[2]}\rlap{\ .};
 `t_\phi (1)`t_\phi (2)`]
 
 \place(250,250)[\framebox{\scriptsize Pb}]
 
 \efig
 \]
 The image of \( L^{[1]} \ \to/>->/ L^{[2]} \) is the singleton
 \( (\top, \bot) \) and similarly for \( M^{[1]} \ \to/>->/ M^{[2]} \).
 So if \( \phi (a) = \bot \), then \( t_\phi (\top, a) = (\top, \bot) \in
 M^{[1]} \) and then \( (\top, a) \in L^{[1]} \), i.e.~\( a = \bot \).
\end{proof}
 
 \begin{theorem}
 \label{Thm-RepOfTransf}
 
 Every natural transformation \( t \colon L^{[X]}
 \to M^{[X]} \) is of the form \( t_\phi \) for a unique
 top preserving sup-map
 \[
 \phi \colon L \to M \rlap{\ .}
 \]
 
 \end{theorem}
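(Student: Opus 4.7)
The plan is to define $\phi := t(L)(\id_L)$, where $\id_L \in L^{[L]}$ (valid since $\bigvee_{l \in L} l = \top$), and to show that this $\phi$ is a top-preserving sup-map for which $t = t_\phi$. Uniqueness is immediate, for $t_\phi(L)(\id_L) = \phi \circ \id_L = \phi$ always recovers $\phi$. The proof proceeds by first establishing top-preservation of $\phi$, then the pointwise identity $t(X)(\lambda)(x) = \phi(\lambda(x))$ (which ensures $\phi \circ \lambda \in M^{[X]}$ and hence that $t_\phi$ is well-defined in the sense of Proposition~\ref{Prop-TopPresFns}), and finally deducing sup-preservation of $\phi$ as a consequence.

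For top-preservation, apply naturality to the function $q \colon L \to 2$ with $q(\top) = 1$ and $q(l) = 0$ otherwise. Writing $b = \bigvee_{l \neq \top} l$, one computes $L^{[q]}(\id_L) = (b, \top) \in L^{[2]}$, hence $t(2)(b, \top) = (\bigvee_{l \neq \top} \phi(l), \phi(\top))$. Now apply naturality to the element $(b, \top, \top) \in L^{[3]}$ together with the maps $f_1, f_2 \colon 3 \to 2$ given by $(0, 0, 1)$ and $(0, 1, 1)$: $f_1$ collapses $(b, \top, \top)$ to $(\top, \top)$, so the readily checked $t(2)(\top, \top) = (\top, \top)$ pins down $t(3)(b, \top, \top)(2) = \top$; $f_2$ collapses to $(b, \top)$ and forces the second coordinate of $t(2)(b, \top)$ to equal $t(3)(b, \top, \top)(1) \vee t(3)(b, \top, \top)(2) \geq \top$. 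Comparing yields $\phi(\top) = \top$.

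For the pointwise identity, fix $\lambda \in L^{[X]}$ and $x_0 \in X$, with $a = \lambda(x_0)$ and $b = \bigvee_{x \neq x_0} \lambda(x)$. Naturality with the characteristic map $X \to 2$ separating $x_0$ reduces the claim to proving $t(2)(b, a)(1) = \phi(a)$ whenever $a \vee b = \top$. Naturality with the analogous map $L \to 2$ separating $a$, applied to $\id_L$, gives $t(2)(b_a, a) = (\bigvee_{l \neq a} \phi(l), \phi(a))$ where $b_a = \bigvee_{l \neq a} l$; for $a \neq \top$ one has $b_a = \top$, so $t(2)(\top, a)(1) = \phi(a)$, and the swap symmetry yields $t(2)(a, \top)(0) = \phi(a)$. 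Independence from the arbitrary $b$ is then secured by a three-element naturality argument on $(b, a, a) \in L^{[3]}$: the maps $3 \to 2$ with values $(0, 0, 1)$, $(0, 1, 1)$, and $(1, 0, 1)$ collapse $(b, a, a)$ to $(\top, a)$, $(b, a)$, and $(a, \top)$ respectively, forcing $t(3)(b, a, a)(2) = \phi(a)$, $t(3)(b, a, a)(1) = \phi(a)$, and therefore $t(2)(b, a)(1) = t(3)(b, a, a)(1) \vee t(3)(b, a, a)(2) = \phi(a)$. The case $a = \top$ is covered by the top-preservation step. Sup-preservation of $\phi$ follows as a corollary: the identity $t(X)(\lambda) = \phi \circ \lambda$, combined with $t(X)(\lambda) \in M^{[X]}$ for every $\lambda \in L^{[X]}$, forces $\phi$ to commute with any sup realized as $\bigvee_{f(x) = y} \lambda(x)$, which by a suitable choice of $X$, $\lambda$, and $f$ covers arbitrary families.

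The main technical obstacle is the $b$-independence of $t(2)(b, a)(1)$: the entire theorem hinges on transporting information known only at the single universal element $\id_L \in L^{[L]}$ to arbitrary pairs in $L^{[2]}$, effected in each case by a three-element naturality trick together with the swap symmetry.
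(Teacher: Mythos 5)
Your proof is correct, and it reaches the theorem by a recognizably different route than the paper. The paper works entirely from $t(2)$: naturality for the switch map packages $t(2)$ as a two-variable function $f \colon L^{[2]} \to M$ with $t(2)(a,b) = (f(a,b), f(b,a))$, naturality for the characteristic maps $X \to 2$ gives $t(X)(\lambda)(x) = f(\lambda x, \bigvee_{y \neq x} \lambda y)$, and a single master identity, obtained by evaluating naturality on $[\lambda, l] \in L^{[X+1]}$, delivers both sup-preservation of $\phi = f(-,\top)$ and the independence $f(a,c) = f(a,\top)$, hence $t = t_\phi$. You instead define $\phi$ by evaluating at the generic element $\mathrm{id}_L \in L^{[L]}$ (a Yoneda-flavoured move that is legitimate even though $L^{[-]}$ acts by sup-pushforward rather than by composition), which makes uniqueness a one-liner, and you transport the information from $\mathrm{id}_L$ to an arbitrary $(b,a) \in L^{[2]}$ via naturality for the maps $L \to 2$ separating $a$ (or $\top$), the three maps $3 \to 2$ applied to $(b,a,a)$, and the swap; your three-element trick is the exact counterpart of the paper's specialization $f(a \vee b, c) = f(a, c \vee b) \vee f(b, c \vee a)$. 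The points you leave compressed are all genuinely routine and fillable: $t(2)(\top,\top) = (\top,\top)$ follows from the swap plus $u \vee v = \top$ in $M^{[2]}$; the case $a = \top$ is the same $(b,\top,\top)$ computation you already performed, which works verbatim for arbitrary $b$ once $t(2)(\top,\top)=(\top,\top)$ and $\phi(\top)=\top$ are known; and sup-preservation (including the empty sup, so $\phi(\bot) = \bot$) follows by taking $X = I + 1$, $\lambda$ equal to $(l_i)_{i \in I}$ on $I$ and $\top$ on the added point, and $f \colon X \to 2$ collapsing $I$ to one point. What the paper's route buys is an explicit description (its formula (2)) of an arbitrary natural transformation in terms of $f$, with sup-preservation and independence dropping out of one identity; what yours buys is a cleaner intrinsic definition of $\phi$, an immediate uniqueness argument, and a decomposition of the work into a handful of concrete finite naturality squares.
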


 \begin{proof}
 
 Consider a natural transformation
 \( t \colon L^{[X]} \to M^{[X]} \). 
 \( t(2) \colon L^{[2]} \to M^{[2]} \) takes pairs
 \( (a, b) \) such that \( a \vee b = \top \) to pairs
 \( (f (a, b), f' (a, b)) \) and naturality of \( t \) for
 the switch map \( \sigma \colon 2 \to 2 \) shows
 that \( f' (a, b) = f (b, a) \). So
 \[
 t (2) (a, b) = (f (a, b), f (b, a))
 \]
 for some function \( f \colon L^{[2]} \to M \)
 with \( f (a, b) \vee f (b, a) = \top \).
 
 We'll show that \( \phi = f (-, \top) \colon
 L \to M \) is a top preserving sup-map and
 that \( t = t_\phi \).
 
 Consider \( t (X) \colon L^{[X]} \to M^{[X]} \). For
 \( x \in X \), define \( \alpha_x \colon X \to 2 \) by
 \begin{eqnarray*}
 \alpha_x (x)  & = & 0\\
 \alpha_x (y)  & =  & 1 \mbox{\ \ for \( y \neq x . \)}
 \end{eqnarray*}
 Then \( L^{[\alpha_x]} \colon L^{[X]} \to L^{[2]} \)
 takes \( \lambda \) to \( (\lambda x, \bigvee_{y \neq x} \lambda y) \)
 (same for \( M^{[\alpha_x]} \)). Naturality with respect
 to \( \alpha_x \)
 \[
 \bfig
 \square[L^{[X]}`M^{[X]}`L^{[2]}`M^{[2]};
 t(X)`L^{[\alpha_x]}`M^{[\lambda_x]}`t(2)]
 
 \efig
 \]
 gives for \( \lambda \in L^{[X]} \),
 \begin{equation}\tag{1}
 (t (X) (\lambda) (x), \bigvee_{y \neq x} t (X) (\lambda) (y))
 = (f (\lambda x, \bigvee_{y \neq x} \lambda y),
 f(\bigvee_{y \neq x} \lambda y, \lambda x))
 \end{equation}
 From which we get
 \begin{equation}\tag{2}
 t (X) (\lambda) (x) = f (\lambda x, \bigvee_{y \neq x} \lambda y)
  \end{equation}
  so that \( t \) is completely determined by \( f \),
  which will be the uniqueness part of our bijection
  once it's established.
  
  Let \( \lambda \colon X \to L \) be an arbitrary
  function and let \( l \in L \) be such that
  \( l \vee \bigvee_x \lambda x = \top \) so that
  \( [\lambda, l] \colon X + 1 \to L \) is in
  \( L^{[X + 1]} \).
  
  Equality of the second coordinates of (1), when  applied to
  \( [\lambda, l] \) with  \( y = 0 \), will give 
  \begin{equation*}
  f (\bigvee_x \lambda x, l)  =  \bigvee_x t (X + 1)[\lambda, l] (x)
  \end{equation*}
  and using (2), we get
  
  \begin{equation}\tag{3}
  f (\bigvee_x \lambda x, l) = \bigvee_x f (\lambda x, l \vee
  \bigvee_{y \neq x} \lambda y)
  \end{equation}
  If we take \( l = \top \), then we get
  \[
  f (\bigvee_x \lambda x, \top) = \bigvee_x f (\lambda x, \top)
  \]
  so \( \phi = f (-, \top) \) preserves \( \bigvee \). We also
  have \( \phi (\top) = \top \) as
  \[
  \phi (\top) = f (\top, \top) = f (\top, \top) \vee f (\top, \top) = \top .
  \]
 It remains only to show that \( t_\phi = t \). We have
 from (2)
 \[
 t (X) (\lambda) (x) = f (\lambda x, \bigvee_{y \neq x} \lambda y)
 \]
 and
 \[
 t_\phi (X) (\lambda) (x) = \phi \lambda (x) = f (\lambda x, \top) .
 \]
 
 A special case of (3) gives for \( a \vee b \vee c = \top \)
 \[
 f (a \vee b, c) = f (a, c \vee b) \vee f (b, c \vee a) .
 \]
 If \( a \vee c = \top \), and \( b = a \) we get
 \[
 \begin{array}{ll}
 f (a, c) & =  f (a \vee a, c)  =   f(a, c \vee a) \vee f (a, c \vee a)\\
            & =  f (a, \top) \vee f (a, \top)  = f (a, \top)
 \end{array}
 \]
 so that \( f (a, c) \) is independent of \( c \) and
 we do get \( t = t_\phi \).
 \end{proof}

 \begin{corollary}
 \label{Cor-RedPowIso}
 
 If \( L^{[X]} \cong M^{[X]} \) then the lattices \( L \)
 and \( M \) are isomorphic.
 
 \end{corollary}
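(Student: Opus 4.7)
The plan is to derive the corollary as an essentially formal consequence of Theorem~\ref{Thm-RepOfTransf}. That theorem already sets up a bijection between natural transformations $L^{[X]} \to M^{[X]}$ and top-preserving sup-maps $L \to M$, so the only thing left is to check that this bijection is compatible with composition, which lets one transport an isomorphism on the functor side back to an isomorphism on the lattice side.

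First I would verify that the assignment $\phi \mapsto t_\phi$ from Proposition~\ref{Prop-TopPresFns} is functorial in the evident sense: since $t_\phi(\lambda) = \phi \circ \lambda$, one has $t_{\id_L} = \id_{L^{[X]}}$ and $t_{\psi \phi} = t_\psi \circ t_\phi$ for any top-preserving sup-maps $L \to^\phi M \to^\psi N$. This is immediate from the definition by composition, and requires nothing beyond unwinding the formula.

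Next, given an iso $t \colon L^{[X]} \to M^{[X]}$ with inverse $t' \colon M^{[X]} \to L^{[X]}$, apply Theorem~\ref{Thm-RepOfTransf} on both sides to produce unique top-preserving sup-maps $\phi \colon L \to M$ and $\psi \colon M \to L$ with $t = t_\phi$ and $t' = t_\psi$. By the functoriality just noted, $t_{\psi \phi} = t_\psi t_\phi = t' t = \id_{L^{[X]}} = t_{\id_L}$, and similarly $t_{\phi \psi} = t_{\id_M}$. The uniqueness clause of Theorem~\ref{Thm-RepOfTransf} then forces $\psi \phi = \id_L$ and $\phi \psi = \id_M$, so $\phi$ is a bijection of underlying sets.

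Finally, I would observe that a bijective top-preserving sup-map between sup-lattices is automatically a sup-lattice isomorphism: its set-theoretic inverse $\psi$ is itself a top-preserving sup-map (already exhibited above), so $L \cong M$ as sup-lattices, which is the intended notion of lattice isomorphism in this context. There is no real obstacle here; the only thing one might worry about is whether ``isomorphism of lattices'' is being taken to require preservation of meets as well, but since sup-lattice maps between complete lattices determine the meet structure through adjoints, a sup-lattice iso is automatically a complete lattice iso, so the statement holds in either reading.
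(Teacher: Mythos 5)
Your proposal is correct and is essentially the paper's own argument: the paper's one-line proof ("the bijection $\phi \leftrightarrow t_\phi$ is functorial in $\phi$") is exactly the functoriality-plus-uniqueness reasoning you spell out, with your final remark about bijective sup-maps being lattice isomorphisms just making the conclusion explicit.
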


 \begin{proof}
 
 The bijection \( \phi \longleftrightarrow t_\phi \) is
 functorial in \( \phi \).
 \end{proof}
 
 Perhaps surprisingly, a similar result doesn't hold
 for the full powers \( L^X \). \( L^X \) can be decomposed
 into a coproduct of reduced powers
 \[
 L^X \cong \sum_{l \in L} D(l)^{[X]}
 \]
 with \( D(l) = \{ a \in L \ |\ a \leq l \} \). So a natural
 transformation
 \[
 t \colon L^X \to M^X
 \]
 is given by an arbitrary function \( \alpha \colon L \to M \)
and a family of natural transformations
\[
t_l \colon D(l)^{[X]} \to D (\alpha (l))^{[X]}
\]
which correspond to top preserving sup-maps
\[
\phi_l \colon D(l) \to D (\alpha (l)) .
\]
Thus \( L^X \cong M^X \) if and only if there is a
bijection \( \alpha \colon L \to M \) such that the
down sets \( D (l) \) and \( D (\alpha l) \) are
isomorphic. In particular, \( L = D (\top) \cong
D (\alpha \top) \) so \( L \) is isomorphic to a
sublattice of \( M \) and vice versa, and if
\( L \) and \( M \) are finite then \( L \cong M \),
but not in general. 

Let \( L \) be the closed subset of the unit interval
\( [0, 1] \) given by
\[
\{1\} \cup [1/2, 1/3] \cup \{1/4\} \cup [1/5, 1/6] \cup \dots
\cup \{0\}
\]
and \( M \) given by
\[
[1, 1/2] \cup \{1/3\} \cup [1/4, 1/5] \cup \dots
\cup \{0\} .
\]

\( L \) and \( M \) are sup complete because they are
closed and bound subsets of \( [0, 1] \). There are in
each case four types of down sets \( D (a) \).
\begin{itemize}
	\item[(1)] If \( a{\neq 0} \) comes from a singleton
	\( \{a\} \) in \( L \) or \( M \), then \( D (a) \cong L \)
	and there are countably many in each case.
	
	\item[(2)] If \( a \) is an interval \( [c, d) \) in
	\( L \) or \( M \), then \( D (a) \cong M \) and
	there are the power of the continuum of these
	for each of \( L \) and \( M \).
	
	\item[(3)]  If \( a \) is the right end point of an
	interval \( [c, a] \), then \( D (a) \cong L + \{\top\} \)
	and there are countably many of these for each of
	\( L \) and \( M \).
	
	\item[(4)] If \( a = 0 \) then \( D(a) = \{0\} \) in both cases.

\end{itemize}

We conclude that \( L^X \cong M^X \), but \( L \) is not
isomorphic to \( M \) because in \( L \) the top element,
\( 1 \), is isolated but it's not in \( M \).

 In a more speculative vein, suppose we want
 something resembling the familiar Dirichlet series,
 $$
 F X = \sum^\infty_{n = 1} C_n (1/n)^{[X]}
 $$
 we would need sup lattices $ n_* $ to play the
 role of $ 1/n $. We would like $ n_* \times m_*
 \cong (n m)_* $ so that our new Dirichlet functors
 multiply in the familiar way. We also want the
 nullary version $ 1_* \cong 1 $. This means that
 $ n_* $ is determined (up to isomorphism) by
 the prime factors of $ n $. In order for these new
 Dirichlet functors to be closed under the
 difference operator $ \Delta $, to be
 introduced in the next section, we would like
 the down sets of $ n_* $ to be of the same
 form. This more or less (though perhaps not
 quite) forces the following definition.

 \begin{definition}
 \label{Def-Star}
 
 \begin{itemize}
 
 	\item[(1)] If \( p \) is a prime number, then \( p_* \)
	is the totally ordered set
	\[
	p_* = \{ q \leq p \ |\ q \mbox{\ \ is prime} \},
	\]
	including \( p_0 = 1 \), the \( 0^{th} \) prime.
	
	\item[(2)] If $ n = p_1^{\alpha_1} p_2^{\alpha_2}
	\dots p_k^{\alpha_k} $ is the prime decomposition
	of $ n $, then $ n_* = (p_{1*})^{\alpha_1} \times
	(p_{2*})^{\alpha_2} \times \dots \times (p_{k*})^{\alpha_k} $.
 
 \end{itemize}
 
 \end{definition}
 
 So \( 1_* = \{1\}, \quad 2_* = \{1, 2\},  \quad 3_* = \{1, 2, 3\},  \quad 5_* = \{1, 2, 3, 5\}, \quad 
 7_* = \{1, 2, 3, 5, 7\}, \dots \) that is, we allocate ordinals in
 order to each prime, so we have
 \[
 1_* \cong {\bf 1},  \quad 2_* \cong {\bf 2}, \quad  3_* \cong {\bf 3}, \quad 
 5_* \cong {\bf 4},  \quad 7_* \cong {\bf 5}, \dots
 \]

 We then extend this to all $ n $ by cartesian
 product
 \[
 4_* = 2_* \times 2_*, \quad  6_* = 2_* \times 3_*,  \quad \dots \quad 
 12_* = 2_* \times 2_* \times 3_*  \quad \dots
 \]
 To be precise, we take the prime factors in
 increasing order, as illustrated above.
 
 The lattices $ n_* $ are all different, as one would
 hope.
 
 \begin{proposition}
 \label{Prop-nStarDiff}
 
 Let \( {\bf n}_1, \dots, {\bf n}_k \) and \( {\bf m}_1,
 \dots, {\bf m}_l \) be finite ordinals \( > 1 \). If the
 lattices \( \prod {\bf n}_i \) and \( \prod {\bf m}_j \)
 are isomorphic then \( k = l \) and there is a
 permutation of the subscripts \( \sigma \) such
 that \( {\bf n}_i = {\bf m}_{\sigma i} \) for all \( i \).
 
 \end{proposition}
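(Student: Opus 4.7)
The plan is to exploit the fact that each $\mathbf{n}_i$ is a finite chain, so $\prod \mathbf{n}_i$ is a finite distributive lattice. Finite distributive lattices are completely determined, up to isomorphism, by their poset of join-irreducible elements (Birkhoff's representation theorem), so I would aim to read the multiset $\{\mathbf{n}_1,\dots,\mathbf{n}_k\}$ off this poset.

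First I would identify the join-irreducibles of $\prod \mathbf{n}_i$. An element $(a_1,\dots,a_k)$ is join-irreducible exactly when precisely one coordinate $a_i$ is nonzero (write $\mathbf{n}_i = \{0<1<\dots<n_i-1\}$). Indeed, any element is the join of its ``coordinate pieces'' $(0,\dots,0,a_i,0,\dots,0)$, and each of these is itself join-irreducible in $\prod \mathbf{n}_i$ because the product order agrees coordinatewise. Thus the subposet of join-irreducibles is the disjoint sum of $k$ chains, the $i$-th chain having exactly $n_i-1$ elements (corresponding to $a_i \in \{1,\dots,n_i-1\}$). The hypothesis $\mathbf{n}_i > \mathbf{1}$ guarantees each of these chains is nonempty.

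Next I would invoke Birkhoff: any lattice isomorphism $\prod \mathbf{n}_i \cong \prod \mathbf{m}_j$ restricts to an isomorphism of the underlying posets of join-irreducibles, since the property of being join-irreducible is purely lattice-theoretic. This isomorphism is an isomorphism between a disjoint sum of $k$ nonempty chains of sizes $n_i-1$ and a disjoint sum of $l$ nonempty chains of sizes $m_j-1$. A finite poset that is a disjoint union of chains determines, up to relabeling, its multiset of chain sizes (the chains are the connected components), so $k=l$ and there is a permutation $\sigma$ with $n_i - 1 = m_{\sigma i} - 1$, hence $\mathbf{n}_i = \mathbf{m}_{\sigma i}$.

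I do not expect any real obstacle here: the only point that needs mild care is verifying that join-irreducibility in $\prod \mathbf{n}_i$ really does force exactly one coordinate to be nonzero, which is a direct calculation using the coordinatewise meet and join in a product of chains. Readers who prefer to avoid citing Birkhoff can instead replace the appeal to the theorem by observing directly that a lattice isomorphism preserves join-irreducibility and the covering relation, and then running the same component-counting argument on the resulting poset of join-irreducibles.
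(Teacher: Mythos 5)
Your proof is correct, and it takes a genuinely different route from the paper's. The paper works with \emph{atoms}: the atoms of $\prod {\bf n}_i$ are the standard unit vectors $e_i$, so an isomorphism $\phi$ induces a bijection $\sigma$ on subscripts (giving $k=l$), and then the lattice-definable subsets $A_i = \{ a \mid e_j \nleq a \text{ for } j \neq i \}$, which are literally copies of the chains ${\bf n}_i$ sitting inside the product, are carried by $\phi$ onto the corresponding $B_{\sigma i} \cong {\bf m}_{\sigma i}$. You instead use the poset of join-irreducibles: in a product of chains these are exactly the elements with a single nonzero coordinate, so the join-irreducible poset is a disjoint union of $k$ chains of sizes $n_i - 1$ (nonempty because ${\bf n}_i > {\bf 1}$), an isomorphism preserves this poset, and counting connected components and their sizes recovers the multiset $\{ {\bf n}_i \}$. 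The two arguments are close cousins (the atoms are the minimal join-irreducibles, and the paper's $A_i$ is your $i$-th chain with the bottom element adjoined), but your version is the more standard, Birkhoff-flavoured packaging and would apply verbatim to any question of reconstructing chain factors of a finite distributive lattice, while the paper's is more elementary and self-contained, never needing the notion of join-irreducible or the component-counting step. Your closing remark is also right that Birkhoff's theorem itself is not needed, only preservation of join-irreducibility and order, so there is no gap either way.
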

 
 \begin{proof}
 
 Let \( \phi \colon \prod {\bf n}_i \to \prod {\bf m}_i \)
 be a lattice isomorphism. The atoms in \( \prod {\bf n}_i \)
 are \( e_i = (0, \dots 0,1, 0 \dots 0) \) with a \( 1 \) in the
 \( i^{th} \) coordinate. Similarly for the atoms
 \( e'_j  \) in \( \prod {\bf m}_j \).
 \( \phi \) preserves (and reflects) atoms so we have
 a bijection \( \sigma \) on the subscripts such that
 \( \phi ({\bf e}_i) = {\bf e'}_{\sigma i} \). In particular,
 \( k = l \).
 
 For any \( i \), the set
 \[
 A_i = \{ a \in \prod {\bf n}_i \ | \ e_j \nleq a \mbox{\ for all \( j \neq i \)} \}
 \]
 is the set \( \{ (0, 0 \dots r \dots 0) \ |\ r \in {\bf n}_i \} \)
 which is isomorphic to \( {\bf n}_i \).
 
 If \( B_j \) is the similarly defined set for \( \prod {\bf m}_j \),
 then \( \phi \) restricts to an isomorphism
 \[
 A_i \to^\cong B_{\sigma (i)}
 \]
 and so \( {\bf n}_i = {\bf m}_{\sigma i} \) for all \( i \).
 \end{proof}
 
 With these $ n_* $ we can define what we call,
 for lack of a better name, sequential Dirichlet functors.
 
 \begin{definition}
 \label{Def-SeqDir}
 
 A {\em sequential Dirichlet functor} is one of the
 form
 $$
 F X = \sum^\infty_{n = 1} C_n n^{[X]}_*
 $$
 for any arbitrary sequence of sets $ C_n, n \in
 {\mathbb N}^+ $.
 
 \end{definition}

 \begin{proposition}
 \label{Prop-SeqDir}
 
 Let $ G X = \sum^\infty_{n = 1} D_n n_*^{[X]} $
 be another sequential Dirichlet functor. Then
 $ F \times G $ is also a sequential Dirichlet functor.
 In fact we have
 $$
 (F \times G) (X) \cong \sum^\infty_{n = 1}\ 
 \sum_{rs = n} (C_r \times D_s) n_*^{[X]} \rlap{\ .}
 $$
 
 \end{proposition}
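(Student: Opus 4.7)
The plan is to reduce the claim to a purely lattice-level identity and then do standard bookkeeping with coproducts. First I would simply distribute the binary product in $\mathbf{Set}$ over the two coproducts defining $F$ and $G$:
\[
F(X) \times G(X) \;=\; \Bigl(\sum_{r=1}^\infty C_r\, r_*^{[X]}\Bigr) \times \Bigl(\sum_{s=1}^\infty D_s\, s_*^{[X]}\Bigr)
\;\cong\; \sum_{r,s\ge 1} (C_r \times D_s)\,\bigl(r_*^{[X]} \times s_*^{[X]}\bigr).
\]
This uses only that $\mathbf{Set}$ is distributive; no tautness arguments are needed here.

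The heart of the proof is the identity $r_*^{[X]} \times s_*^{[X]} \cong (rs)_*^{[X]}$. For the outer normalized-exponential, Proposition~\ref{Prop-NormExp}(3) gives
\[
r_*^{[X]} \times s_*^{[X]} \;\cong\; (r_* \times s_*)^{[X]},
\]
so it suffices to establish the lattice isomorphism $r_* \times s_* \cong (rs)_*$. Writing the prime factorizations of $r$ and $s$ over a common ordered list of primes $p_1 < p_2 < \cdots$ (allowing zero exponents), say $r = \prod p_i^{\alpha_i}$ and $s = \prod p_i^{\beta_i}$, Definition~\ref{Def-Star} gives $r_* = \prod (p_{i*})^{\alpha_i}$ and $s_* = \prod (p_{i*})^{\beta_i}$. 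Associativity and commutativity of cartesian product in the category of sup-lattices then identify
\[
r_* \times s_* \;\cong\; \prod_i (p_{i*})^{\alpha_i + \beta_i},
\]
and since $rs = \prod p_i^{\alpha_i + \beta_i}$ this last product is precisely $(rs)_*$.

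Combining the two isomorphisms and reindexing the double sum over pairs $(r,s)$ by the factorizations $n = rs$ of $n \in \mathbb{N}^+$ yields
\[
(F \times G)(X) \;\cong\; \sum_{n=1}^\infty \Bigl(\sum_{rs\,=\,n} C_r \times D_s\Bigr)\, n_*^{[X]},
\]
which is exactly the stated formula and exhibits $F \times G$ as a sequential Dirichlet functor.

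The only delicate step is the lattice identity $r_* \times s_* \cong (rs)_*$: Definition~\ref{Def-Star} chooses a specific prime ordering, so one must observe that rearranging the factors of a cartesian product is a natural isomorphism of sup-lattices, not an equality. Once that is acknowledged, the rest is a routine manipulation; Proposition~\ref{Prop-nStarDiff} can serve as a sanity check that the resulting decomposition into an iterated $p_*$-power is well-defined up to canonical isomorphism.
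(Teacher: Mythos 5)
Your proposal is correct and follows essentially the same route as the paper, which proves the result by distributivity, the isomorphisms $r_*^{[X]} \times s_*^{[X]} \cong (rs)_*^{[X]}$, and collecting like terms; your extra detail (deducing that isomorphism from Proposition~\ref{Prop-NormExp}(3) together with the lattice identity $r_* \times s_* \cong (rs)_*$ from Definition~\ref{Def-Star}) is just a careful expansion of the step the paper leaves implicit.
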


 \begin{proof}
 
 This follows simply using distributivity, the
 isomorphisms $ r^{[X]}_* \times s^{[X]}_*
 \cong (r s)^{[X]}_* $, and then collecting like
 terms.
 \end{proof}

We end this section with even more speculation.

\begin{definition}
\label{Def-Zeta}

The sequential Dirichlet series
$$
Z  (X) = \sum^\infty_{n = 1} n_*^{[X]}
$$
is called the {\em zeta functor}.

\end{definition}

The Euler product formula is
$$
\sum_{n \in {\mathbb N}} {1 \over n^s}  =
\prod_{p\ prime} 
\ \ \sum_{k \in {\mathbb N}} \ {1 \over p^{ks}}\rlap{\ .}
$$
We get a similar formula for the Zeta functor
though we have to replace the infinite product by
the colimit of its finite factors.

Let $ P $ be a finite set of primes and $ P^* $
the set of all $ n $ whose prime factors lie in $ P $.

\begin{proposition}
\label{Prop-Euler}

\begin{itemize}

	\item[(1)] For $ P $ a finite set of primes, we
	have an isomorphism
	$$
	\sum_{n \in P^*}\ n^{[X]}_* \cong
	\prod_{p \in P} \ \sum_{k \in {\mathbb N}}\ 
	p^{k [X]}_*
	$$
	
	\item[(2)] 
	\[
	Z (X) \cong \limr
	\prod_{p \in P} 
	\sum_{k \in {\mathbb N}}
	p^{k [X]}_*
	\]
	where the colimit is taken over all finite
	sets of primes \( P \).
	\end{itemize}

\end{proposition}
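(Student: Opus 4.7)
The plan is twofold: prove (1) by distributing a finite cartesian product over coproducts and then repackaging using the multiplicativity law $r^{[X]}_* \times s^{[X]}_* \cong (rs)^{[X]}_*$ already used in Proposition~\ref{Prop-SeqDir}, and prove (2) by recognising $Z$ as the filtered colimit of its finite sub-coproducts indexed by the subsets $P^* \subseteq {\mathbb N}^+$.

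For (1), unique prime factorisation gives a bijection ${\mathbb N}^P \to P^*$ sending $(k_p)_{p \in P}$ to $\prod_{p \in P} p^{k_p}$. Starting from the right-hand side of (1) and distributing the finite product over the coproducts (pointwise in ${\bf Set}$) produces
$$\prod_{p \in P} \sum_{k \in {\mathbb N}} p^{k[X]}_* \;\cong\; \sum_{(k_p) \in {\mathbb N}^P} \prod_{p \in P} p^{k_p[X]}_*.$$
Iterating the isomorphism $r^{[X]}_* \times s^{[X]}_* \cong (rs)^{[X]}_*$ -- a consequence of $(L_1 \times L_2)^{[X]} \cong L_1^{[X]} \times L_2^{[X]}$ from Proposition~\ref{Prop-NormExp}(3) combined with the multiplicative definition of $n_*$ -- collapses each inner product to $(\prod_{p \in P} p^{k_p})^{[X]}_*$, and re-indexing along ${\mathbb N}^P \cong P^*$ yields $\sum_{n \in P^*} n^{[X]}_*$.

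For (2), order the finite sets of primes $P$ by inclusion; this is a filtered (in fact directed) poset, and $P \mapsto P^*$ is monotone with $\bigcup_P P^* = {\mathbb N}^+$, since every positive integer has only finitely many prime divisors. Hence ${\mathbb N}^+ \cong \limr_P P^*$ in ${\bf Set}$. Because coproducts of ${\bf Set}$-valued functors commute with filtered colimits (a pointwise statement about sets; equivalently, a special case of Proposition~\ref{Prop-ColimTaut}, since filtered categories are confluent), I obtain
$$Z(X) \;=\; \sum_{n \in {\mathbb N}^+} n^{[X]}_* \;\cong\; \limr_P \sum_{n \in P^*} n^{[X]}_*,$$
and applying (1) to each term on the right hand side gives the Euler-product form of (2).

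The argument is an assembly of previously established ingredients, so there is no single deep step. The only care needed is to match the two multiplicative structures on the nose: the product of lattices $\prod_{p \in P} (p_*)^{k_p}$ agrees with the lattice $(\prod_{p \in P} p^{k_p})_*$ by the very definition of $n_*$ (up to lattice isomorphism, which is all that matters here by Proposition~\ref{Prop-NormExp}(3)), and any residual ambiguity about the ordering of prime factors disappears once we pass to $(-)^{[X]}$.
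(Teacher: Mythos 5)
Your proposal is correct and follows essentially the same route as the paper: the paper's element-level bijection in (1) is exactly your distributivity-plus-Proposition~\ref{Prop-NormExp}(3) argument (a normalized map into $\prod_p p^{k_p}_*$ is the same as a tuple of normalized maps), re-indexed by unique factorization, and (2) is likewise obtained by passing to the colimit over finite sets of primes. Your writeup merely spells out the filteredness and the identification $Z(X)\cong\limr_P\sum_{n\in P^*}n^{[X]}_*$, which the paper asserts without comment.
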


\begin{proof}

\begin{itemize}

	\item[(1)] An element on the left is an 
	$ n = \prod_{p \in P} p^{k_p} $ and a function
	$$
	\phi \colon X \to \prod_{p \in P} \  p^{k_p}_* 
	$$
	whose sup is the top element.
	
	An element on the right is $ P $-tuple of functions
	$$
	\phi_p \colon X \to p^{k_p}_*
	$$
	whose sup is the top element, which corresponds
	bijectively to the $ \phi $ above.
	
	\item[(2)] Take $ \limr $ of the isos in (1), and note
	that $ Z (X) $ is the colimit of the left sides.

\end{itemize}
\end{proof}

\begin{remark}
\label{Rem-Euler}

If we list the primes in increasing order
$ p_1, p_2, \dots $, as usual, we can restrict
the colimit to the final subset of initial segments
and get
$$
Z (X) \cong \limr_n \ \prod_{p\  \leq\  p_n} \
p^{k[X]}_*
$$
which some may prefer.

\end{remark}


\section{The difference operator}
\label{Sec-DiffOp}

Given a functor $ F \colon {\bf Set} \to {\bf Set} $,
we wish to study how it grows as the input set
grows. Given a set $ A $, we want to perturb it a
bit $ A \leadsto A' $ and measure the change
$ F A \leadsto F A' $. The smallest perturbing of
$ A $ is simply adding a new element, and the
perturbation is the coproduct injection
$$
j \colon A \to A + 1 \rlap{\ .}
$$
We want to see what new elements $ F $ has
acquired in passing from $ A $ to $ A + 1 $,
i.e.~the elements in the set difference
$$
F (A + 1) \setminus  {\rm Im} F (j) \rlap{\ .}
$$
If $ A \neq 0 $ then $ j $ is a split mono so that
$ F (j) $ is also a mono and we can write, by
abuse of notation,
$$
F (A + 1) \setminus  F A \rlap{\ .}
$$
One shouldn't expect this to be functorial in
$ A $ but, perhaps somewhat surprisingly, it
is for taut functors.

\subsection{Definition and functorial properties}
\label{SSec-Def}

Let $ F \colon {\bf Set} \to {\bf Set} $ be a functor.
For any set $ A $ define
$$
\Delta{[}F{]} (A) = F (A + 1) \setminus  {\rm Im} (j)
$$
for $ j \colon A \to A + 1 $ the coproduct injection.

The $ \setminus $ is set difference, not something usually
considered by category theorists as it is not functorial.
But it is more functorial than one might think. The
following lemma will be pivotal in our discussion.

\begin{lemma}
\label{Lem-SetDiff}

Let $ f \colon A \to B $ be a function, $ A_0 \subseteq A $,
$ B_0 \subseteq B $ subsets, and assume $ f $ restricts to
$ f_0 \colon A_0 \to B_0 $
\begin{equation}\tag{*}
\bfig
\square/ >->`>`>` >->/[A_0`A`B_0`B;
`f_0`f`]

\efig
\end{equation}
Then $ f $ restricts to $ f  \setminus f_0 \colon
A \setminus A_0 \to B \setminus B_0 $
\begin{equation}\tag{**}
\bfig
\square/ >->`>`>` >->/[A\setminus A_0`A`B\setminus B_0`B;
`f\setminus f_0`f`]

\efig
\end{equation}
iff $ (*) $ is a pullback diagram, i.e.~$ A_0 = f^{-1} B_0 $. When
this is the case, $ (**) $ will also be a pullback diagram.

\end{lemma}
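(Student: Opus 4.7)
The plan is to unwind the conditions element-wise; everything here is set-theoretic and the argument is short, but the key is to identify precisely which inclusion corresponds to which half of the equivalence.

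First I would observe that the hypothesis that $f$ restricts to $f_0 \colon A_0 \to B_0$ is precisely the inclusion $A_0 \subseteq f^{-1}(B_0)$. So the pullback condition on $(*)$ amounts to the reverse inclusion $f^{-1}(B_0) \subseteq A_0$, i.e.\ $a \in A, f(a) \in B_0 \Rightarrow a \in A_0$. Taking contrapositives, this says $a \notin A_0 \Rightarrow f(a) \notin B_0$, which is exactly the assertion that $f$ sends $A \setminus A_0$ into $B \setminus B_0$, i.e.\ that $f$ admits the restriction $f \setminus f_0 \colon A \setminus A_0 \to B \setminus B_0$. This gives both implications of the ``iff'' in one stroke.

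For the final clause, suppose $(*)$ is a pullback, so $A_0 = f^{-1}(B_0)$. I would verify that $(**)$ is then also a pullback by checking $A \setminus A_0 = f^{-1}(B \setminus B_0)$: for $a \in A$,
\[
a \in f^{-1}(B \setminus B_0) \iff f(a) \notin B_0 \iff a \notin f^{-1}(B_0) = A_0 \iff a \in A \setminus A_0,
\]
using that preimage commutes with complements.

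There is no real obstacle here; the only thing to be careful about is keeping straight that the assumed commutativity of $(*)$ already gives one inclusion, so that the pullback hypothesis is equivalent to the single extra inclusion that makes the complementary restriction well-defined. The statement is really just the observation that, for a commutative square of inclusions along $f$, being a pullback is the same as the complementary square existing, and in that case the complementary square is automatically a pullback too.
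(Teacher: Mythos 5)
Your proof is correct and follows essentially the same route as the paper's: an element-wise/contrapositive argument showing that, given the commutativity inclusion $A_0 \subseteq f^{-1}(B_0)$, the pullback condition is the reverse inclusion, whose contrapositive is exactly the restriction to complements, with the pullback property of $(**)$ following from $f^{-1}(B \setminus B_0) = A \setminus f^{-1}(B_0)$. The paper states this slightly more tersely (pullback iff $a \in A_0 \Leftrightarrow fa \in B_0$, equivalently $a \notin A_0 \Leftrightarrow fa \notin B_0$), but the content is identical.
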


\begin{proof}

It is a triviality, though perhaps worth mentioning, that
a function $ f \colon A \to B $ restricts to $ A_0 \to B_0 $
iff
$$
a \in A_0 \Rightarrow f a \in B_0 \rlap{\ ,}
$$
and the resulting square is a pullback iff
$$
a \in A_0 \Leftrightarrow f a \in B_0 \rlap{\ .}
$$
This is equivalent to
$$
a \notin A_0 \Leftrightarrow f a \notin B_0
$$
whence the lemma.
\end{proof}

We will find it useful to have a name for the functor
$ X \longmapsto X + 1 $. Let's call it $ S $ for
{\em successor} 
$$
S X = X + 1 \rlap{\ .}
$$
$ S $ could also stand for {\em shift} as it will
be used for precomposing, as in the proposition
below.

\begin{proposition}
\label{Prop-DeltaTaut}

If $ F \colon {\bf Set} \to {\bf Set} $ is taut, then
$ \Delta{[}F{]} $ is a taut subfunctor of $ F S $.

\end{proposition}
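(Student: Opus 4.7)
The plan hinges on Lemma~\ref{Lem-SetDiff}. For any $ f \colon A \to B $ in $ {\bf Set} $, the naturality square of $ j \colon \id \to S $ at $ f $,
\[
\xymatrix{
A \ar[r]^f \ar[d]_{j_A} & B \ar[d]^{j_B} \\
A + 1 \ar[r]_{f + 1} & B + 1
}
\]
is an inverse image diagram, since $ (f + 1)^{-1} (B) = A $. Because $ F $ is taut, applying $ F $ keeps it a pullback, so Lemma~\ref{Lem-SetDiff} says that $ FS(f) = F(f + 1) $ restricts to a function $ F(A + 1) \setminus {\rm Im}(F j_A) \longrightarrow F(B + 1) \setminus {\rm Im}(F j_B) $, i.e.\ to a map $ \Delta[F](A) \to \Delta[F](B) $. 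Preservation of identities and composites follows from that of $ F $, so $ \Delta[F] $ is a functor and the componentwise inclusion $ \Delta[F] \hookrightarrow FS $ is natural.

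To check that this inclusion is taut, let $ m \colon A_0 \to A $ be a mono and specialise the previous step to $ f = m $. The last clause of Lemma~\ref{Lem-SetDiff} then furnishes a second pullback, which is precisely the naturality square of $ \Delta[F] \hookrightarrow FS $ at $ m $. Hence the inclusion is taut.

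Finally, the successor functor $ S $ is itself taut: given an inverse image diagram with $ A_0 = f^{-1}(B_0) \subseteq A \to B \supseteq B_0 $ in $ {\bf Set} $, one verifies $ (f + 1)^{-1}(B_0 + 1) = A_0 + 1 $, so $ S $ preserves it. Thus $ FS $ is taut by Proposition~\ref{Prop-Taut}(2), and then by Proposition~\ref{Prop-Taut}(6), tautness of the inclusion $ \Delta[F] \hookrightarrow FS $ together with tautness of $ FS $ propagates to $ \Delta[F] $.

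The main obstacle, such as it is, is purely conceptual: the naively non-functorial ``$ \setminus\ {\rm Im} $'' construction is rescued by Lemma~\ref{Lem-SetDiff}, which in one stroke yields functoriality of $ \Delta[F] $, naturality of the inclusion, and its tautness. Once the right pullback square in $ {\bf Set} $ is identified, there is nothing left to compute.
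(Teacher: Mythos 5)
Your proof is correct and follows essentially the same route as the paper: apply $F$ to the inverse image square formed by $j_A$, $j_B$, $f$, $f+1$, invoke Lemma~\ref{Lem-SetDiff} to get both the restriction (hence functoriality of $\Delta[F]$ and naturality of the inclusion) and the pullback property (hence tautness of the inclusion), and conclude via Proposition~\ref{Prop-Taut}(6) using tautness of $FS$. The only difference is that you spell out explicitly that $S$ is taut, which the paper leaves implicit.
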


\begin{proof}

Let $ f \colon A \to B $ be any function. Then, by 
tautness of $ F $
$$
\bfig
\square/ >->`>`>` >->/<650,500>[FA`F(A + 1)`FB`F(B + 1);
Fj_A`Ff`F(f + 1)`Fj_B]

\efig
$$
is a pullback, so by Lemma~\ref{Lem-SetDiff},
$ F (f + 1) $ restricts to
$$
\bfig
\square/ >->`>`>` >->/<1000,500>[F(A + 1)\setminus {\rm Im}(Fj_A)
`F(A + 1)`F (B + 1)\setminus {\rm Im} (Fj_B)`F(B + 1);
``F(f + 1)`]

\efig
$$
which makes $ \Delta{[}F{]} $ into a subfunctor of $ F S $.
Furthermore, this square is also a pullback so the
inclusion
$$
\Delta{[}F{]}\  \to/>->/ F S
$$
is taut, and by Proposition~\ref{Prop-Taut}, part (6),
$ \Delta{[}F{]} $ is also taut.
\end{proof}

\begin{corollary}
\label{Cor-Sum}

For $ F $ taut, the transformation induced by
$ F j $ and the inclusion
$$
F + \Delta{[}F{]} \to F S
$$
is an isomorphism.

\end{corollary}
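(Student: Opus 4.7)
The plan is to verify the claim pointwise at each set $A$ and then appeal to naturality.

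First, I would check that $Fj_A \colon FA \to F(A+1)$ is a monomorphism. The coproduct injection $j_A \colon A \to A+1$ is always a mono (split when $A$ is nonempty, vacuously when $A=\emptyset$), and by Proposition~\ref{Prop-Taut}(1), taut functors preserve monos. So $Fj_A$ is mono and its image is canonically isomorphic to $FA$.

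Next, by the very definition of $\Delta[F](A) = F(A+1)\setminus \mathrm{Im}(Fj_A)$, we have the tautological set-theoretic decomposition
\[
F(A+1) \;\cong\; \mathrm{Im}(Fj_A) \,+\, \bigl(F(A+1)\setminus\mathrm{Im}(Fj_A)\bigr) \;\cong\; FA + \Delta[F](A),
\]
and this is precisely the map induced by $Fj_A$ on the first summand and by the inclusion $\Delta[F](A)\hookrightarrow F(A+1)=FSA$ on the second. So pointwise the transformation is an isomorphism.

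Finally, I would note that both constituent transformations are natural: $Fj \colon F \to FS$ comes from the natural transformation $j \colon \id \to S$, and the inclusion $\Delta[F]\ \to/>->/ FS$ was established to be natural in Proposition~\ref{Prop-DeltaTaut}. Their copairing into $FS$ is therefore a well-defined natural transformation $F+\Delta[F]\to FS$, and being a pointwise isomorphism, it is an isomorphism. The only subtle point is the injectivity of $Fj_A$, which is where the tautness hypothesis on $F$ is genuinely used; the rest is the formal observation that $Y + (X\setminus Y) \cong X$ for any inclusion $Y\subseteq X$.
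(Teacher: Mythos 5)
Your proof is correct and matches the paper's (implicit) argument: the corollary is an immediate consequence of Proposition~\ref{Prop-DeltaTaut}, since $\Delta[F](A)$ is by definition the complement of the image of the mono $Fj_A$, so the copairing is a pointwise bijection, and naturality of both $Fj$ and the inclusion gives the isomorphism of functors. The only point where tautness is needed is indeed the injectivity of $Fj_A$, exactly as you note.
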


\begin{definition}

$ \Delta{[}F{]} $ is called the {\em difference functor} of $ F $.

\end{definition}

\noindent {\bf Notation:} Taut functors preserve monos so
$ F (j) $ is monic, and we will identify $ F X $ with its
image in $ F (X + 1) $, so $ \Delta{[}F{]} (X) = F (X + 1)
\setminus F X $.

\begin{proposition}
\label{Prop-TautTransf}

If $ t \colon F \to G $ is a taut transformation, then
$ t S \colon F S \to G S $ restricts to a taut
transformation $ \Delta t \colon \Delta{[}F{]} \to \Delta{[}G{]} $
$$
\bfig
\square/ >->`>`>` >->/[\Delta{[}F{]}`F S`\Delta{[}G{]}`G S\rlap{\ .};
`\Delta {[}t{]}`t S`]

\efig
$$

\end{proposition}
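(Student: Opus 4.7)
The plan is to verify three things in turn: (i) for each set $A$, the component $t_{A+1}$ really does restrict to a function $\Delta t_A \colon \Delta[F](A) \to \Delta[G](A)$; (ii) these components assemble into a natural transformation; and (iii) the naturality square of $\Delta t$ at any monomorphism is a pullback. Parts (i) and (ii) will be routine applications of Lemma~\ref{Lem-SetDiff}; part (iii) is where the content lies.

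For (i), note that the coproduct injection $j_A \colon A \to A+1$ is a monomorphism, so tautness of $t$ makes the naturality square of $t$ at $j_A$ a pullback, i.e.\ $FA = t_{A+1}^{-1}(GA)$ as subsets of $F(A+1)$. Lemma~\ref{Lem-SetDiff} then tells us $t_{A+1}$ restricts to a function $F(A+1)\setminus FA \to G(A+1)\setminus GA$, which by definition is $\Delta t_A \colon \Delta[F](A) \to \Delta[G](A)$, and moreover the resulting complement square is itself a pullback --- this is precisely the displayed square in the statement. For (ii), given $f \colon A \to B$, naturality of $t$ at $f+1$ commutes; since $\Delta[F](f)$ and $\Delta[G](f)$ are by construction the restrictions of $F(f+1)$ and $G(f+1)$ to the complements, commutativity passes to those restrictions.

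For (iii), let $m \colon A_0\to/>->/ A$ be a mono. Then $m+1 \colon A_0+1\to/>->/ A+1$ is again monic, so by tautness of $t$ the naturality square of $t$ at $m+1$ is a pullback in $\mathbf{Set}$. Take $x \in \Delta[F](A) \subseteq F(A+1)$ and suppose $\Delta t_A(x) = t_{A+1}(x)$ lies in $\Delta[G](A_0) \subseteq G(A_0+1) \subseteq G(A+1)$. The pullback at $m+1$ produces a unique $\bar x \in F(A_0+1)$ with $F(m+1)(\bar x) = x$ and $t_{A_0+1}(\bar x) = t_{A+1}(x)$. It remains to check that $\bar x \notin FA_0$: if it were, functoriality ($F(m+1)\circ Fj_{A_0} = Fj_A \circ Fm$) would give $x = F(m+1)(\bar x) \in FA$, contradicting $x \in \Delta[F](A)$. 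So $\bar x \in \Delta[F](A_0)$, and uniqueness is inherited from the pullback at $m+1$.

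The main obstacle is simply bookkeeping --- keeping straight which subsets of which $F$- and $G$-values one is identifying, and making sure the pullback at $m+1$, combined with the defining identity $\Delta[F](A) = F(A+1)\setminus FA$, yields the desired complement pullback. Once that is organized, the proof reduces to two invocations of Lemma~\ref{Lem-SetDiff} plus the short element chase above; the essential conceptual input is that the pullback squares supplied by tautness of $t$ at the two monomorphisms $j_A$ and $m+1$ are exactly what is needed to propagate ``pullback-ness'' to the complements.
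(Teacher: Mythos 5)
Your proposal is correct and follows essentially the same route as the paper: tautness of $t$ at $j_A$ plus Lemma~\ref{Lem-SetDiff} gives the restriction and the displayed pullback, naturality is inherited by restriction, and tautness is deduced from the pullback supplied by tautness of $t$ at the mono $m+1$. The only difference is cosmetic: where the paper pastes the naturality square of $\Delta[t]$ against the pullback squares from Lemma~\ref{Lem-SetDiff} and the square at $m+1$ and invokes pullback cancellation, you carry out the same argument as an explicit element chase, using the complement description of $\Delta[F]$ to see that the lifted element $\bar x$ avoids $FA_0$.
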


\begin{proof}

$ t $ is taut so
$$
\bfig
\square/ >->`>`>` >->/<650,500>[F X`F (X+1)`G X
`G (X + 1);F j`t X`t(X +1)`G j]

\efig
$$
is a pullback, so by Lemma~\ref{Lem-SetDiff}, 
$ t (X + 1) $ restricts to
$$
\bfig
\square/ >->`-->`>` >->/<850,500>[F(X + 1)\setminus FX
`F(X + 1)`G(X + 1) \setminus G X`G(X + 1);
``t(X + 1)`]

\efig
$$
giving $ \Delta [t](X) $ and a pullback square. $ \Delta [t] $
is automatically natural.

For any mono $ A\ \ \to/>->/ B $ we have
$$
\bfig
\node a(0,0)[\Delta{[}G{]}A]
\node b(600,0)[\Delta{[}G{]}B]
\node c(1200,0)[G(B+1)]

\node d(2000,0)[\Delta{[}G{]}A]
\node e(2600,0)[G(A+1)]
\node f(3300,0)[G(B+1)\rlap{\ .}]

\node g(0,500)[\Delta{[}F{]}A]
\node h(600,500)[\Delta{[}F{]}B]
\node i(1200,500)[F(B+1)]

\node j(2000,500)[\Delta{[}F{]}A]
\node k(2600,500)[F(A+1)]
\node l(3300,500)[F(B+1)]

\arrow/ >->/[a`b;]
\arrow/ >->/[b`c;]
\arrow/ >->/[d`e;]
\arrow/ >->/[e`f;]

\arrow/ >->/[g`h;]
\arrow/ >->/[h`i;]
\arrow/ >->/[j`k;]
\arrow/ >->/[k`l;]

\arrow|l|/>/[g`a;\Delta {[}t{]} A]
\arrow|r|/>/[h`b;\Delta {[}t{]} B]
\arrow|r|/>/[i`c;t(B+1)]

\arrow|l|/>/[j`d;\Delta {[}t{]} A]
\arrow|r|/>/[k`e;t(A+1)]
\arrow|r|/>/[l`f;t(B+1)]

\place(1650,250)[=]

\efig
$$
The second, third and fourth squares are pullbacks so
the first square is also a pullback. Thus $ \Delta [t] $
is taut.
\end{proof}

This defines the {\em difference functor}
$$
\Delta \colon {\bf Taut} \to {\bf Taut}
$$
on the category of taut endofunctors of $ {\bf Set} $ and taut natural 
transformations.

\subsection{Commutation properties}
\label{SSec-Comm}

We give functorial analogues of the usual
properties of finite differences.

\begin{proposition}
\label{Prop-Const}

Let $ C \colon {\bf Set} \to {\bf Set} $ be the constant
functor with value $ C $, then $ \Delta [C] = 0 $.

\end{proposition}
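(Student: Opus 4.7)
The plan is very short because the statement is essentially a direct computation, but one needs to first verify that the hypothesis of Proposition~\ref{Prop-DeltaTaut} is satisfied so that $\Delta[C]$ is even defined in the sense intended. First I would check that the constant functor $C$ is itself taut: applied to any inverse image diagram in ${\bf Set}$, $C$ produces the square with every vertex equal to $C$ and every arrow the identity, which is trivially a pullback. Hence $\Delta[C]$ is a well-defined taut subfunctor of $CS$ by Proposition~\ref{Prop-DeltaTaut}.

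Next I would unwind the definition pointwise. For any set $A$, we have $C(A+1)=C$ and $CA=C$, and the map $C(j)\colon CA\to C(A+1)$ induced by $j\colon A\to A+1$ is the identity on $C$. Therefore $\mathrm{Im}(Cj)=C=C(A+1)$, so
\[
\Delta[C](A)\;=\;C(A+1)\setminus\mathrm{Im}(Cj)\;=\;C\setminus C\;=\;\emptyset.
\]
Since this holds naturally in $A$, $\Delta[C]$ is the constant functor with value $\emptyset$, i.e.\ the zero functor $0$.

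There is no real obstacle here; the only thing worth flagging is the mild abuse in the notation $F(A+1)\setminus FA$ used immediately after Corollary~\ref{Cor-Sum}, which relies on identifying $FA$ with its image under $Fj$. For the constant functor this identification is harmless (indeed tautological), so the computation above is unambiguous.
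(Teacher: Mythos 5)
Your computation is correct and is the evident one; the paper states this proposition without proof precisely because $\Delta[C](A)=C(A+1)\setminus\mathrm{Im}(Cj)=C\setminus C=\emptyset$ is immediate, and your preliminary check that the constant functor is taut (identities form a pullback square) is a reasonable, if routine, addition. Nothing differs in substance from what the paper leaves implicit.
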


\begin{proposition}
\label{Prop-Identity}

For $ F X = X $, the identity functor $ {\bf Set} \to {\bf Set} $,
the difference is $ \Delta [X] = 1 $.

\end{proposition}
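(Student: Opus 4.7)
The plan is to unwind the definition of $\Delta$ directly on the identity functor, since everything reduces to a trivial set-theoretic computation. For $F = \mathrm{Id}$, we have $F(A + 1) = A + 1$ and $F(j) = j \colon A \to A + 1$ is literally the coproduct injection. Under the notational convention introduced just after Corollary~\ref{Cor-Sum} (identifying $FX$ with its image in $F(X+1)$), this gives
\[
\Delta[\mathrm{Id}](A) = (A + 1) \setminus A = 1,
\]
namely the second summand, which is a one-element set.

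Next I would verify functoriality of this assignment. For any $f \colon A \to B$, the map $f + 1 \colon A + 1 \to B + 1$ restricts along the pullback square of Lemma~\ref{Lem-SetDiff} (applied with $A_0 = A$, $B_0 = B$, noting that $A = j_A^{-1}(B)$ via $j_B \circ f = (f + 1) \circ j_A$) to a function $\Delta[\mathrm{Id}](f) \colon 1 \to 1$ between the complements. Since $1$ is terminal in $\mathbf{Set}$, this map is forced to be the identity, so $\Delta[\mathrm{Id}]$ is indeed naturally isomorphic to the constant functor $1$.

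There is no real obstacle here — the statement is essentially a sanity check that the difference operator behaves on $\mathrm{Id}$ the way the classical finite difference behaves on the function $x \mapsto x$, namely $\Delta(x) = (x + 1) - x = 1$. The only thing to be careful about is the abuse of notation identifying $A$ with its image under $j$ inside $A + 1$, which is licensed by Proposition~\ref{Prop-DeltaTaut} and the remark following it.
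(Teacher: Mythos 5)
Your proposal is correct and is exactly the direct computation the paper has in mind (it states this proposition without proof, as an immediate consequence of the definition of $\Delta$ together with Proposition~\ref{Prop-DeltaTaut}): $\Delta[\mathrm{Id}](A)=(A+1)\setminus A=1$, and functoriality/naturality is forced since $1$ is terminal. The only cosmetic slip is in your invocation of Lemma~\ref{Lem-SetDiff}: the pullback condition should read $A=(f+1)^{-1}(B)$ inside $A+1$, not $A=j_A^{-1}(B)$, but this does not affect the argument.
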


\begin{remark}

There is a (unique) natural transformation
$ X \to 1 $ but if we take differences we get
$ 1 $ and $ 0 $ and there is no transformation
$ 1 \to 0 $. This shows that the tautness
condition in Proposition~\ref{Prop-TautTransf}
is necessary.

\end{remark}

For any functor $ F \colon {\bf Set} \to {\bf Set} $
we write  $ C F $ for the product of the constant
functor $ C $ with $ F $, which is isomorphic to the
coproduct of \( C \) copies of \( F \).

\begin{proposition}
\label{Prop-ConstMult}

$ \Delta [C F] \cong C \Delta{[}F{]} $.

\end{proposition}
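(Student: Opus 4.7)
The plan is to unfold the definitions and use the fact that set difference commutes with disjoint unions. Write $CF \cong \sum_{c \in C} F$ as the coproduct of $C$ copies of $F$, so
\[
(CF)(X+1) \;\cong\; \sum_{c \in C} F(X+1) \qquad \text{and} \qquad (CF)(X) \;\cong\; \sum_{c \in C} F(X) .
\]
Under these identifications, the mono $(CF)(j) \colon (CF)(X) \to (CF)(X+1)$ induced by $j \colon X \to X+1$ is just the coproduct $\sum_{c \in C} F(j)$ of the component inclusions.

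Next I would observe that set difference distributes over disjoint unions: if $\langle B_c \rangle_{c \in C}$ is a family of sets with subsets $A_c \subseteq B_c$, then
\[
\left( \sum_{c \in C} B_c \right) \setminus \left( \sum_{c \in C} A_c \right) \;=\; \sum_{c \in C} (B_c \setminus A_c) .
\]
Applying this to $B_c = F(X+1)$, $A_c = F(X)$ (using the identification of $FX$ with its image under $F(j)$ inside $F(X+1)$ as in the notation convention following the corollary to Proposition~\ref{Prop-DeltaTaut}), we get
\[
\Delta[CF](X) \;=\; (CF)(X+1) \setminus (CF)(X) \;\cong\; \sum_{c \in C} \bigl( F(X+1) \setminus F(X) \bigr) \;=\; C \cdot \Delta[F](X) .
\]

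Finally I would check naturality in $X$: for a map $f \colon X \to Y$, both $(CF)(f+1)$ and the coproduct $\sum_{c \in C} F(f+1)$ are identified under the isomorphism above, and each restricts componentwise to $\Delta[F](f)$ by Proposition~\ref{Prop-TautTransf} applied to the identity transformation of $F$ (or directly by Lemma~\ref{Lem-SetDiff}), so the isomorphism assembles into a natural iso $\Delta[CF] \cong C\Delta[F]$. There is no real obstacle here; the only mild subtlety is making sure the identification of the coproduct of images with the image of the coproduct is compatible with the chosen embedding $FX \hookrightarrow F(X+1)$, which is immediate from the universal property of coproducts.
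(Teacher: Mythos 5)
Your argument is correct, and it is essentially the ``easy to prove directly'' route that the paper alludes to but does not write out: decompose $CF$ as the coproduct of $C$ copies of $F$, note that the image of $(CF)(j)$ decomposes accordingly, use that set difference distributes over disjoint unions, and check naturality componentwise via Lemma~\ref{Lem-SetDiff}. The paper instead obtains the proposition as a special case of Theorem~\ref{Thm-ConflColim} ($\Delta$ commutes with confluent colimits of taut diagrams), taking ${\bf I}$ to be the discrete category with $C$ objects and the constant diagram with value $F$; the proof of that theorem never touches elements, but rather passes through the isomorphism $\Delta[F] + F \cong F\circ S$ of Corollary~\ref{Cor-Sum}, commutes the colimit with $+$ and with precomposition by $S$, and then invokes the cancellation Proposition~\ref{Prop-Cancellation}. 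Your direct computation is more self-contained and exhibits the isomorphism explicitly, at the cost of the bookkeeping about images and componentwise restriction that you rightly flag; the paper's route buys uniformity, since the same theorem simultaneously yields Proposition~\ref{Prop-Sum}, arbitrary coproducts, filtered colimits and group quotients, with the set-theoretic subtleties isolated once and for all in the cancellation lemma.
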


\begin{proposition}
\label{Prop-Sum}

$ \Delta [F + G] \cong \Delta{[}F{]} + \Delta{[}G{]} $.

\end{proposition}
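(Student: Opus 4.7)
The plan is to unwind the definition of $\Delta$ and exploit the disjointness of coproducts in $\mathbf{Set}$. First I would observe that since $F+G$ is taut (by Proposition~\ref{Prop-CoprodTaut}), $\Delta[F+G]$ is defined, and by construction
\[
\Delta[F+G](X) = (F+G)(X+1) \setminus \mathrm{Im}\bigl((F+G)(j)\bigr).
\]
Now $(F+G)(X+1) = F(X+1) + G(X+1)$ and $(F+G)(j) = Fj + Gj$, so the image of the latter is the disjoint union of $\mathrm{Im}(Fj)$ (viewed in $F(X+1)$) and $\mathrm{Im}(Gj)$ (viewed in $G(X+1)$) as subsets of $F(X+1) + G(X+1)$.

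Next I would use the elementary fact that for a disjoint union, set difference distributes: if $A_0 \subseteq A$ and $B_0 \subseteq B$, then inside $A+B$ we have $(A+B) \setminus (A_0 + B_0) = (A \setminus A_0) + (B \setminus B_0)$. Applying this with $A = F(X+1)$, $A_0 = \mathrm{Im}(Fj)$, $B = G(X+1)$, $B_0 = \mathrm{Im}(Gj)$ yields
\[
\Delta[F+G](X) \cong \bigl(F(X+1) \setminus FX\bigr) + \bigl(G(X+1) \setminus GX\bigr) = \Delta[F](X) + \Delta[G](X).
\]
Naturality is automatic: each morphism $F(f+1) + G(f+1)$ is by Lemma~\ref{Lem-SetDiff} the disjoint union of the restrictions computing $\Delta[F](f)$ and $\Delta[G](f)$.

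Alternatively, I could give a purely formal proof using the decomposition established in Corollary~\ref{Cor-Sum}: applying it to $F+G$, $F$, and $G$ separately gives
\[
(F+G) + \Delta[F+G] \;\cong\; (F+G)S \;\cong\; FS + GS \;\cong\; (F+\Delta[F]) + (G+\Delta[G]) \;\cong\; (F+G) + (\Delta[F]+\Delta[G]),
\]
where a quick check shows the composite isomorphism commutes with the injection from $F+G$ (both sides restrict to $(F+G)j$). Then Proposition~\ref{Prop-Cancellation} cancels the $F+G$ summand and yields $\Delta[F+G] \cong \Delta[F] + \Delta[G]$. The only delicate point in either approach is verifying that the identifications of $FX$ and $GX$ with subsets of $F(X+1) + G(X+1)$ are genuinely disjoint, which is immediate from the disjointness of coproducts in $\mathbf{Set}$; so no real obstacle arises.
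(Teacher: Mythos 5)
Your proof is correct. Note that the paper does not write out a standalone proof of this proposition at all: it remarks that it is ``easy to prove directly'' and then derives it as the special case ${\bf I}=2$ of Theorem~\ref{Thm-ConflColim}, whose proof is exactly your second, formal argument -- the isomorphism $\Delta[F]+F\cong FS$ of Corollary~\ref{Cor-Sum}, commutation of coproducts with coproducts and with precomposition by $S$, and then cancellation via Proposition~\ref{Prop-Cancellation}. So your alternative route coincides with the paper's, and you rightly flag the one delicate point there, namely that Proposition~\ref{Prop-Cancellation} needs the composite isomorphism to commute with the injections of the common summand $F+G$, a check the paper leaves implicit. Your primary argument is the direct pointwise one the paper alludes to but omits: since $(F+G)(X+1)=F(X+1)+G(X+1)$ and $\mathrm{Im}((F+G)(j))=\mathrm{Im}(Fj)+\mathrm{Im}(Gj)$, set difference distributes over the disjoint union, and naturality follows because $\Delta[F+G](f)$ is the restriction of $F(f+1)+G(f+1)$, which Lemma~\ref{Lem-SetDiff} identifies componentwise with $\Delta[F](f)+\Delta[G](f)$. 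The direct route is more elementary and self-contained (though it only proves this one case), while the cancellation route is the one that scales to arbitrary confluent colimits, which is why the paper organizes things that way.
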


The two previous propositions, easy to prove
directly, are special cases of a much more general
result, namely that $ \Delta $ commutes confluent
colimits of taut diagrams.

\begin{theorem}
\label{Thm-ConflColim}

Let $ {\bf I} $ be a small confluent category and
$ \Gamma \colon {\bf I} \to {\bf Taut} $, then we
have an isomorphism
$$
\limr_I \Delta{[}\Gamma I{]} \cong \Delta \bigl[\limr_I \Gamma (I)\bigr]\rlap{\ .}
$$

\end{theorem}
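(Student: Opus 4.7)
The plan is to leverage the decomposition $FS \cong F + \Delta[F]$ established in Corollary~\ref{Cor-Sum}, convert the problem about $\Delta$ into a problem about coproducts and the shift functor $S$, and then cancel a common summand using Proposition~\ref{Prop-Cancellation}.

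Concretely, I would proceed as follows. First, since $\Gamma$ lands in $\Taut$, each $\Gamma(I)$ is taut and each $\Gamma(i)\colon \Gamma(I)\to \Gamma(J)$ is a taut transformation, so $\Delta[\Gamma(-)]$ is a well-defined diagram in $\Taut$ by Proposition~\ref{Prop-TautTransf}. Moreover, by Proposition~\ref{Prop-ColimTaut} (applied via confluence of $\bf I$), the colimit $\limr_I \Gamma(I)$ is itself taut, so $\Delta\bigl[\limr_I \Gamma(I)\bigr]$ makes sense. Next, apply Corollary~\ref{Cor-Sum} pointwise to obtain a natural (in $I$) isomorphism
$$
\Gamma(I) + \Delta[\Gamma(I)]\ \cong\ \Gamma(I)\,S,
$$
compatible with the injection of $\Gamma(I)$ on the left and with $\Gamma(I)(j)\colon \Gamma(I)\to \Gamma(I)S$ on the right. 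Take $\limr_I$ on both sides. Since colimits commute with coproducts and since precomposition with $S$ is computed pointwise and therefore commutes with colimits, the result is
$$
\limr_I \Gamma(I)\ +\ \limr_I \Delta[\Gamma(I)]\ \cong\ \bigl(\limr_I \Gamma(I)\bigr)\,S.
$$

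Now apply Corollary~\ref{Cor-Sum} once more, this time to the taut functor $\limr_I \Gamma(I)$, to get
$$
\limr_I \Gamma(I)\ +\ \Delta\bigl[\limr_I \Gamma(I)\bigr]\ \cong\ \bigl(\limr_I \Gamma(I)\bigr)\,S.
$$
Composing these two isomorphisms yields an isomorphism
$$
\limr_I \Gamma(I)\ +\ \limr_I \Delta[\Gamma(I)]\ \cong\ \limr_I \Gamma(I)\ +\ \Delta\bigl[\limr_I \Gamma(I)\bigr]
$$
in the category $\Cat(\mathbf{Set},\mathbf{Set})$.

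To finish via Proposition~\ref{Prop-Cancellation}, I must check that this composite isomorphism commutes with the canonical injections of the common summand $\limr_I \Gamma(I)$ on both sides. This is the main technical point, and it is the one I expect to require the most care. Tracing through the construction, the injection on the left factors as $\limr_I \Gamma(I) \to \limr_I(\Gamma(I)+\Delta[\Gamma(I)]) \cong \limr_I(\Gamma(I)\,S)$, which under the colimit--$S$ interchange becomes $\Gamma(I)(j)$ colimited, i.e.\ $(\limr_I \Gamma(I))(j)$; the injection on the right is already $(\limr_I \Gamma(I))(j)$ by the second application of Corollary~\ref{Cor-Sum}. Once this compatibility is verified, Proposition~\ref{Prop-Cancellation} applies and cancels the common $\limr_I \Gamma(I)$, yielding the desired isomorphism
$$
\limr_I \Delta[\Gamma(I)]\ \cong\ \Delta\bigl[\limr_I \Gamma(I)\bigr].
$$
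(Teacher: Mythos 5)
Your proposal is correct and follows essentially the same route as the paper: decompose $\Gamma(I)S \cong \Gamma(I) + \Delta[\Gamma(I)]$ via Corollary~\ref{Cor-Sum}, pass to colimits using commutation with coproducts and with precomposition by $S$, and cancel the common summand via Proposition~\ref{Prop-Cancellation}. Your explicit check that the composite isomorphism commutes with the injections of $\limr_I \Gamma(I)$ is exactly the hypothesis needed for the cancellation, a point the paper leaves implicit.
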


\begin{proof}

If $ \alpha \colon I \to J $ in $ {\bf I} $, then by
assumption $ \Gamma (\alpha) $ is a taut
transformation $ \Gamma (I) \to \Gamma (J) $
and so $ \Gamma (\alpha) S $ restricts to a
taut natural transformation $ \Delta [\Gamma (\alpha)]
\colon \Delta{[}\Gamma I{]} \to \Delta{[}\Gamma J{]} $
$$
\bfig
\square/ >->`>`>` >->/<600,500>[\Delta{[}\Gamma I{]}`\Gamma (I)S
`\Delta{[}\Gamma J{]}`\Gamma (J) S\rlap{\ .};
`\Delta \Gamma (\alpha)`\Gamma(\alpha) S`]

\efig
$$
This makes $ \Delta \Gamma $ into another diagram
$ {\bf I} \to {\bf Taut} $ and we have a natural
isomorphism
$$
\Delta{[}\Gamma I{]} + \Gamma (I) \to^\cong 
\Gamma (I) \circ S \rlap{ .}
$$
Colimits commute with coproducts, and
precomposing with $ S $, so
$$
\limr_I \Delta{[}\Gamma I{]} + \limr_I \Gamma (I)
\to^\cong \bigl(\limr_I \Gamma\bigr) \circ S
$$
which by Proposition~\ref{Prop-Cancellation}
means that
$$
\limr_I \Delta{[}\Gamma I{]} \cong \Delta \bigl[\limr_I \Gamma (I)\bigr]
\rlap{\,.}
$$
\end{proof}

Proposition~\ref{Prop-ConstMult} is a special case
with $ {\bf I} $ a discrete category with $ C $ elements
and $ \Gamma \colon {\bf I} \to {\bf Taut} $ the
constant diagram with value $ F $, and
Proposition~\ref{Prop-Sum} with $ {\bf I} = 2 $.
Of course, more generally
$$
\Delta \Bigl[\sum_i F_i \Bigr] \cong \sum_i \Delta{[}F{]}_i \rlap{\ .}
$$

There is a product rule for finite differences.

\begin{theorem}
\label{Thm-ProdRule}

If $ F, G \colon {\bf Set} \to {\bf Set} $ are taut,
then
$$
\Delta [F \times G] \cong (\Delta{[}F{]} \times G) +
(F \times \Delta{[}G{]}) + (\Delta{[}F{]} \times \Delta{[}G{]})
\rlap{\ .}
$$

\end{theorem}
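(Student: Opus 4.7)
The plan is to reduce the product rule to the distributive law in $\mathbf{Set}$ applied to the decompositions $F(X+1) \cong FX + \Delta[F](X)$ and $G(X+1) \cong GX + \Delta[G](X)$ supplied by Corollary~\ref{Cor-Sum}, and then cancel the common $FX \times GX$ summand using Proposition~\ref{Prop-Cancellation}.

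First I would note that $F \times G$ is taut: it is a limit of taut functors, so Proposition~\ref{Prop-LimTaut} applies (or directly, the product of two inverse image diagrams in $\mathbf{Set}$ is again one). Therefore Corollary~\ref{Cor-Sum} gives a natural isomorphism
\[
(F \times G)(X+1) \;\cong\; (F \times G)(X) + \Delta[F \times G](X),
\]
in which the injection of the first summand is $(F \times G)(j) = Fj \times Gj$. On the other hand, applying Corollary~\ref{Cor-Sum} to each factor and using that products distribute over coproducts in $\mathbf{Set}$,
\[
(F \times G)(X+1) \;\cong\; \bigl(FX + \Delta[F](X)\bigr) \times \bigl(GX + \Delta[G](X)\bigr)
\]
\[
\;\cong\; FX \times GX \;+\; FX \times \Delta[G](X) \;+\; \Delta[F](X) \times GX \;+\; \Delta[F](X) \times \Delta[G](X).
\]
Under this chain of isomorphisms, the inclusion $FX \times GX \hookrightarrow F(X+1) \times G(X+1)$ is precisely $Fj \times Gj$ again (the two $Fj$'s and $Gj$'s are literally the coproduct injections of the decompositions of $F(X+1)$ and $G(X+1)$), so it agrees with $(F \times G)(j)$.

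Combining the two descriptions of $(F \times G)(X+1)$, we obtain an isomorphism
\[
(F \times G)(X) + \Delta[F \times G](X) \;\cong\; (F \times G)(X) + \bigl(\Delta[F] \times G + F \times \Delta[G] + \Delta[F] \times \Delta[G]\bigr)(X)
\]
which is natural in $X$ and commutes with the $(F \times G)(X)$ injections on either side. Proposition~\ref{Prop-Cancellation} then restricts this isomorphism to the required
\[
\Delta[F \times G] \;\cong\; \Delta[F] \times G \;+\; F \times \Delta[G] \;+\; \Delta[F] \times \Delta[G].
\]
The one subtlety worth being careful about --- really the only obstacle --- is checking that the isomorphism built from the four-fold distributivity decomposition identifies the $FX \times GX$ summand with the image of $(F \times G)(j)$, so that Proposition~\ref{Prop-Cancellation} is applicable; but this is immediate from the fact that the inclusion $FX \hookrightarrow F(X+1)$ provided by Corollary~\ref{Cor-Sum} is by definition $Fj$, and similarly for $G$.
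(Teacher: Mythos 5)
Your proposal is correct and follows essentially the same route as the paper: take the product of the two isomorphisms $F + \Delta[F] \cong FS$ and $G + \Delta[G] \cong GS$, distribute product over coproduct, and cancel the $F \times G$ summand via Proposition~\ref{Prop-Cancellation}. Your explicit check that the $FX \times GX$ summand sits inside $(F\times G)(X+1)$ via $(F\times G)(j)=Fj\times Gj$ is exactly the compatibility-with-injections hypothesis the paper leaves implicit.
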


\begin{proof}

If we take the product of the two isomorphisms
$$
\Delta{[}F{]} + F \to^\cong FS \quad \mbox{and}\quad
\Delta{[}G{]} + G \to^\cong GS
$$
and use distributivity of product over sum we get
$$
(\Delta{[}F{]} \times \Delta{[}G{]}) + (\Delta{[}F{]} \times G) 
+ (F \times \Delta{[}G{]}) + (F \times G) \to^\cong
F S \times G S = (F \times G) S
$$
and the result follows by Lemma~\ref{Prop-Cancellation}.
\end{proof}

This is purely a set theoretical result clearly
illustrated by

\begin{center}
 \setlength{\unitlength}{1mm}
 \begin{picture}(40,35)
\put(0,10){\line(1,0){40}}
\put(0,28){\line(1,0){40}}
\put(0,35){\line(1,0){40}}
\put(0,10){\line(0,1){25}}
\put(30,10){\line(0,1){25}}
\put(40,10){\line(0,1){25}}

\put(-15,35){\line(1,0){5}}
\put(-15,10){\line(1,0){5}}
\put(1,31){$\scriptstyle{\Delta{[}G{]} A}$}
\put(1,19){$\scriptstyle{GA}$}
\put(15,11.5){$\scriptstyle{FA}$}
\put(31,11.5){$\scriptstyle{\Delta{[}F{]}A}$}
\put(-12.5,27){\vector(0,1){7}}
\put(-12.5,18){\vector(0,-1){7}}
\put(-18,22){$\scriptstyle{G(A+1)}$}
\put(0,0){\line(0,1){5}}
\put(40,0){\line(0,1){5}}
\put(12,2){\vector(-1,0){10}}
\put(28,2){\vector(1,0){10}}
\put(16,2){$\scriptstyle{F(A+1)}$}
\put(44,0){.}

 \end{picture}
 \end{center}

 The reader can easily write down, or just imagine, the
 seven terms for $ \Delta [F \times G \times H] $ gotten
 by applying the theorem several times. In fact, easy
 set theoretical techniques will reveal the formula for
 infinite products.

\begin{theorem}
 
Given a set $ I $ and a family $ \langle F_i \rangle $
of taut functors $ {\bf Set} \to {\bf Set} $, we have
$$
\Delta \Bigl[\prod_{i\,\in\,I} F_i \Bigr] \cong
\sum_{J\subsetneq I} \Bigl( \prod_{j\in J} F_j \Bigr) \times
\Bigl( \prod_{k\notin J} \Delta{[}F_k{]} \Bigr) ,
$$ 
(the sum is taken over proper subsets $ J $ of $ I $).
 
\end{theorem}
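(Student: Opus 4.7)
The plan is to iterate the binary construction used in Theorem~\ref{Thm-ProdRule} but carry it through the set-indexed product all at once, rather than trying to induct on $|I|$ (which would fail for infinite $I$). The three ingredients are: the pointwise decomposition $F_i S \cong F_i + \Delta[F_i]$ from Corollary~\ref{Cor-Sum}, distributivity of $\mathbf{Set}$-valued products over sums (which holds for arbitrary indexing, not just finite), and the cancellation principle of Proposition~\ref{Prop-Cancellation}.

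First, I would take the product over $i \in I$ of the isomorphisms $F_i + \Delta[F_i] \to^{\cong} F_i S$. Since products commute with precomposition by $S$, the right-hand side becomes $\bigl(\prod_{i \in I} F_i\bigr) \circ S$. To expand the left-hand side, I would apply full distributivity in $\mathbf{Set}$:
$$\prod_{i \in I} (F_i + \Delta[F_i]) \;\cong\; \sum_{J \subseteq I} \Bigl( \prod_{j \in J} F_j \Bigr) \times \Bigl( \prod_{k \notin J} \Delta[F_k] \Bigr),$$
where the summand indexed by $J$ corresponds to choosing the left-hand factor $F_i$ precisely when $i \in J$ and $\Delta[F_i]$ otherwise. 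Combining these gives an isomorphism $\sum_{J \subseteq I} (\prod_{j \in J} F_j) \times (\prod_{k \notin J} \Delta[F_k]) \to^{\cong} (\prod_i F_i) S$.

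Next, I would isolate the single summand $J = I$, which is $\prod_{j \in I} F_j = \prod_i F_i$, from the rest, and check that under the isomorphism above it is mapped to $(\prod_i F_i) \circ S$ via the natural inclusion $(\prod_i F_i)(j) \colon \prod_i F_i \to (\prod_i F_i) S$. This is where one has to chase through the definitions: the $J=I$ summand picks the $F_i$-summand of each $F_i + \Delta[F_i]$, and under Corollary~\ref{Cor-Sum} the inclusion $F_i \hookrightarrow F_i + \Delta[F_i] \cong F_i S$ is precisely $F_i(j)$; taking the product of these over $i$ gives the inclusion of $\prod_i F_i$ into $(\prod_i F_i) S$. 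So we obtain a commuting triangle whose hypotenuse is the displayed isomorphism and whose two legs are the $J=I$ injection and the inclusion $(\prod_i F_i)(j)$.

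Finally, I would compare this with the canonical splitting $(\prod_i F_i) S \cong \prod_i F_i + \Delta[\prod_i F_i]$ from Corollary~\ref{Cor-Sum} applied to $F = \prod_i F_i$ (which is taut as a limit of taut functors, by Proposition~\ref{Prop-LimTaut}). Both the big sum and this two-term sum contain $\prod_i F_i$ as the distinguished summand compatibly with the inclusion $(\prod_i F_i)(j)$, so Proposition~\ref{Prop-Cancellation} cancels this common summand and leaves the desired isomorphism
$$\sum_{J \subsetneq I} \Bigl( \prod_{j \in J} F_j \Bigr) \times \Bigl( \prod_{k \notin J} \Delta[F_k] \Bigr) \;\cong\; \Delta\Bigl[\prod_i F_i\Bigr].$$
The main obstacle is the compatibility check in the previous paragraph: one must verify that the $J=I$ injection really does correspond to $(\prod_i F_i)(j)$ so that Proposition~\ref{Prop-Cancellation} applies; everything else is formal manipulation of (co)products.
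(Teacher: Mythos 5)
Your proof is correct. The paper in fact states this theorem without proof, remarking only that ``easy set theoretical techniques'' yield the formula; what you have written is precisely the natural generalization of the paper's own proof of the binary case (Theorem~\ref{Thm-ProdRule}): take the product over $i\in I$ of the isomorphisms $F_i+\Delta[F_i]\cong F_iS$ of Corollary~\ref{Cor-Sum}, expand by the complete distributivity of products over coproducts in ${\bf Set}$, and cancel the $J=I$ summand against $\prod_i F_i$ inside $(\prod_i F_i)S$ using Proposition~\ref{Prop-Cancellation}. You correctly identify and discharge the one point that is not purely formal, namely that Proposition~\ref{Prop-Cancellation} needs the isomorphism to commute with the injections of the common summand, which holds because the $J=I$ injection is the product of the maps $F_i(j)$ and hence equals $(\prod_i F_i)(j)$, the injection appearing in Corollary~\ref{Cor-Sum} for the taut functor $\prod_i F_i$ (taut by Proposition~\ref{Prop-LimTaut}).
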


To complete the commutativity/distributivity properties
of $ \Delta $ with limits, we have the following.

\begin{theorem}

Let $ {\bf I} $ be non-empty and connected, and
$ \Gamma \colon {\bf I} \to {\bf Taut} $ a taut diagram.
Then
$$
\Delta \bigl[ \liml_I \Gamma (I)\bigr] \cong
\liml_I \Delta{[}\Gamma I{]} \rlap{\ .}
$$

\end{theorem}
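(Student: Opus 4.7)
The strategy parallels the proof of Theorem~\ref{Thm-ConflColim}, with cancellation applied to the sum decomposition from Corollary~\ref{Cor-Sum}. Since $ \Gamma $ is a diagram in $ \Taut $, each $ \Gamma(\alpha) $ is a taut natural transformation, so Proposition~\ref{Prop-TautTransf} makes $ \Delta \Gamma \colon {\bf I} \to \Taut $ into a taut diagram as well, and the isomorphism $ \Gamma(I) + \Delta{[}\Gamma I{]} \cong \Gamma(I) \circ S $ of Corollary~\ref{Cor-Sum} is natural in $ I $. Because $ {\bf I} $ is non-empty and connected, Proposition~\ref{Prop-ConnLim} guarantees that the limits $ \liml_I \Gamma(I) $ and $ \liml_I \Delta{[}\Gamma I{]} $ are taut and computed pointwise in $ {\bf Set} $; moreover precomposition with $ S $ commutes with these limits.

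The crux is to establish the decomposition
$$
\liml_I \Gamma(I) \, + \, \liml_I \Delta{[}\Gamma I{]} \,\cong\, \liml_I \Gamma(I) S.
$$
Unlike the colimit case this is not automatic, since binary coproducts do not generally commute with limits, so this step genuinely uses both hypotheses. Evaluating at a set $ A $, an element of $ \liml_I \Gamma(I)(A+1) $ is a compatible family $ (x_I)_I $, and the question is whether every $ x_I $ lies in $ \Gamma(I)(A) $ or every $ x_I $ lies in $ \Delta{[}\Gamma I{]}(A) $. Since $ \Gamma(\alpha) $ is taut, the naturality square for $ j \colon A \to A+1 $ is a pullback, so by Lemma~\ref{Lem-SetDiff} we have, for each $ \alpha \colon I \to J $,
$$
x_I \in \Gamma(I)(A) \,\Longleftrightarrow\, \Gamma(\alpha)(A+1)(x_I) = x_J \in \Gamma(J)(A).
$$
Walking along any zigzag joining two objects of $ {\bf I} $, the truth of ``$ x_{(-)} \in \Gamma(-)(A) $'' is invariant; connectedness of $ {\bf I} $ then forces it to be constant across all $ I $, giving the decomposition.

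With this in hand, the proof concludes by cancellation. Combining the decomposition above with Corollary~\ref{Cor-Sum} applied to the taut functor $ \liml_I \Gamma(I) $ yields
$$
\liml_I \Gamma(I) \, + \, \liml_I \Delta{[}\Gamma I{]} \,\cong\, \liml_I \Gamma(I) S \,\cong\, \bigl(\liml_I \Gamma(I)\bigr) S \,\cong\, \liml_I \Gamma(I) \, + \, \Delta\bigl[\liml_I \Gamma(I)\bigr],
$$
where in both decompositions the injection from $ \liml_I \Gamma(I) $ is the canonical one induced by $ F j $. Proposition~\ref{Prop-Cancellation} then gives $ \liml_I \Delta{[}\Gamma I{]} \cong \Delta\bigl[\liml_I \Gamma(I)\bigr] $. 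The main obstacle is the coproduct--limit interchange in the crux step, which is precisely where tautness of each $ \Gamma(\alpha) $ combines with connectedness of $ {\bf I} $ to do the essential work.
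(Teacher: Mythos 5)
Your proof is correct and follows essentially the same route as the paper: the natural decomposition $\Delta{[}\Gamma I{]} + \Gamma(I) \cong \Gamma(I)\circ S$, an interchange of the connected limit with this binary coproduct, and then Proposition~\ref{Prop-Cancellation}. The only difference is that where the paper simply cites the standard fact that non-empty connected limits commute with coproducts in $ {\bf Set} $, you prove that interchange by hand via the zigzag-invariance argument (which is fine, and correct).
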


\begin{proof}

Because $ \Gamma $ takes its values in $ {\bf Taut} $,
we get a diagram $ \Delta \Gamma \colon {\bf I} \to
{\bf Taut} $ such that
$$
\Delta{[}\Gamma I{]} + \Gamma (I) \to^\cong
\Gamma (I) \circ S \rlap{\ ,}
$$
just like in the proof of Theorem~\ref{Thm-ConflColim}.
In $ {\bf Set} $, non-empty connected limits commute
with coproducts, so
$$
\liml_I \Delta{[}\Gamma I{]} + \liml_I \Gamma (I) \to^\cong
\liml \Gamma (I) \circ S
$$
and the result follows by Proposition~\ref{Prop-Cancellation}.
\end{proof}

Although arbitrary limits of taut functors are taut,
for the theorem it is necessary that the transition
morphisms $ \Gamma (\alpha) \colon
\Gamma (I) \to \Gamma (J) $ be taut. One sees
the problem immediately when attempting to
apply $ \Delta $ to the pullback
$$
\bfig
\square[F \times G`G`F`1\rlap{\ .};```]

\efig
$$

\subsection{The lax chain rule}
\label{SSec-ChainRule}

Generally speaking there is no good chain rule
for finite differences, notwithstanding the work
of Alvarez-Picallo and Pacaud-Lemay \cite{AlvLem20}, which deals with a
different situation. Ideally, we would have
$$
\Delta{[}G\circ F{]} \cong (\Delta{[}G{]} \circ F) \times \Delta{[}F{]}
$$
but this fails even for such simple functors as
$ F (X) = G (X) = X^2 $. Indeed, an easy calculation
shows that in this case
$$
\Delta{[}G\circ F{]} (X) \cong 4 X^3 + 6 X^2 + 4 X + 1
$$
whereas
$$
(\Delta{[}G{]} \circ F (X)) \times \Delta{[}F{]} (X) \cong
4 X^3 + 2 X^2 + 2 X + 1 \rlap{\ .}
$$

However, for functors, there is a lot of extra room
to maneuver and we get a comparison morphism,
which will be an isomorphism only in the simplest
of cases as the above example shows, but with
good properties nonetheless.

We will sometimes write \( \circ \) for composition
of functors, where we think it makes things clearer.

\begin{theorem}
\label{Thm-ChainRule}

For taut functors $ F, G \colon {\bf Set} \to {\bf Set} $
we have a natural comparison
$$
\gamma \colon (\Delta{[}G{]} \circ F) \times \Delta{[}F{]} \to
\Delta{[}G\circ F{]}
$$
the {\em chain rule transformation}. $ \gamma $ is
taut and monic.

\end{theorem}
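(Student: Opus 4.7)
The plan is to define $\gamma$ pointwise via a copairing construction. For a set $X$ and $y \in \Delta[F](X) = F(X+1) \setminus FX$, the canonical injection $Fj \colon FX \to F(X+1)$ together with $y$ determines a copairing $[Fj,y] \colon FX + 1 \to F(X+1)$, which is monic precisely because $y \notin \mathrm{Im}(Fj)$. Applying the mono-preserving functor $G$ gives $G[Fj,y] \colon G(FX+1) \to G(F(X+1))$, and I set
$$\gamma(X)(z,y) \ = \ G[Fj,y](z) \qquad \text{for } z \in \Delta[G](FX) \subseteq G(FX+1).$$

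Next I verify that $\gamma(X)(z,y)$ lies in $\Delta[G\circ F](X)$, i.e., outside $G(FX)$. Because $y \notin \mathrm{Im}(Fj)$, the square with top $j \colon FX \to FX+1$, bottom $Fj \colon FX \to F(X+1)$, left identity, and right $[Fj,y]$ is a pullback in $\mathbf{Set}$. Tautness of $G$ makes the $G$-image a pullback, so by Lemma~\ref{Lem-SetDiff} the map $G[Fj,y]$ restricts to $\Delta[G](FX) \to \Delta[G\circ F](X)$. Naturality of $\gamma$ in $X$ follows from the evident naturality of the copairing $[Fj_X,y]$ in $X$ and $y$, together with the fact that the restrictions of $F(f+1)$ and $G(Ff+1)$ agree with $\Delta[F](f)$ and $\Delta[G](Ff)$.

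For monicity of $\gamma$, suppose $\gamma(z,y) = \gamma(z',y')$. If $y \neq y'$, the images of $[Fj,y]$ and $[Fj,y']$ inside $F(X+1)$ meet exactly in $\mathrm{Im}(Fj)$, and their pullback is the mono $Fj \colon FX \to F(X+1)$ on both sides via $j$. Tautness of $G$ preserves this pullback, forcing the common value $G[Fj,y](z) = G[Fj,y'](z')$ to factor through $G(Fj)$, which contradicts $z, z' \in \Delta[G](FX)$. Hence $y = y'$, and then $z = z'$ because $G[Fj,y]$ is itself a mono.

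Finally, for tautness of $\gamma$, let $m \colon X_0 \hookrightarrow X$ be a mono and suppose $(z,y) \in \Delta[G](FX) \times \Delta[F](X)$ satisfies $\gamma(X)(z,y) \in \Delta[G\circ F](X_0)$, viewed inside $\Delta[G\circ F](X)$ via $G(F(m+1))$. Tautness of $F$ identifies the pullback of $[Fj_X,y]$ along $F(m+1)$: it is $[Fj_{X_0}, y_0]$ when $y = F(m+1)(y_0)$ lifts to $F(X_0+1)$, and merely $Fj_{X_0}$ otherwise; in the lifting case $y_0$ automatically lies in $\Delta[F](X_0)$ because $y \notin FX$. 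Tautness of $G$ now preserves this pullback. In the non-lifting case $z$ would be forced to factor through $G(FX_0) \subseteq G(FX)$, contradicting $z \in \Delta[G](FX)$; so $y$ lifts, and pulling $z$ back yields $z_0 \in G(FX_0+1)$ with $G(Fm+1)(z_0) = z$. If $z_0 \in G(FX_0)$ then $z \in G(FX)$, a contradiction, so $z_0 \in \Delta[G](FX_0)$, furnishing the required lift $(z_0,y_0)$; uniqueness is automatic since all vertical and horizontal maps in the naturality square are monic. The main obstacle is tracking how the tautness of $F$ and of $G$ compose across these nested pullbacks — once the copairing $[Fj,y]$ is in hand, every remaining step is a careful application of Lemma~\ref{Lem-SetDiff}.
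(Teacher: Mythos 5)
Your construction is exactly the paper's: $\gamma(X)(z,y) = G[Fj_X,y](z)$, i.e.\ $G$ applied to the copairing $\phi_y = [Fj,y]$ and then restricted via Lemma~\ref{Lem-SetDiff}, with the target landing in $\Delta[G\circ F]$, monicity coming from disjointness of the images of the $\phi_y$'s, and tautness from inverse-image squares preserved by $F$ and $G$ — all as in the paper's proof. The argument is correct; if anything, your explicit case split on whether $y$ lifts along $F(m+1)$ makes the tautness verification slightly more complete than the paper's summand-by-summand reduction.
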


\begin{proof}

Let $ A $ be a set and take an element
$ x \in \Delta{[}F{]} (A) $. This gives a function
$$
\phi_x = [Fj_A, x] \colon F A + 1 \to F (A + 1)\rlap{\ ,}
$$
which is $ F j_A $ on the first summand and $ x $
on the second. As $ x $ is not in the image of
$ F j_A $, $ \phi_x $ is monic and
$$
\bfig
\square/ >->`=` >->` >->/<550,500>[FA`FA+1`FA`F(A+1);
j_{FA}``\phi_x`Fj_A]

\efig
$$
is a pullback. $ G $ is taut so
$$
\bfig
\square/ >->`=` >->` >->/<750,500>[GFA`G(FA+1)
`GFA`GF(A+1);
G(j_{FA})``G(\phi_x)`GF(j_A)]

\efig
$$
is also a pullback, and by Lemma~\ref{Lem-SetDiff},
$ G (\phi_x) $ restricts to $ \gamma_x $ giving
another pullback
$$
\bfig
\square/ >->` >->`>->` >->/<800,500>[\Delta{[}G{]}(FA)
`G(FA+1)`\Delta{[}G\circ F{]}(A)`GF(A+1)\rlap{\ .};
`\gamma_x`G(\phi_x)`]

\efig
$$
If we put all these $ \gamma_x $ together by taking
the coproduct of the top arrows we get our
$ \gamma = [\gamma_x]_x $ and
$$
\bfig
\square/ >->`>`>` >->/<1200,500>[\Delta{[}G{]}(FA) \times \Delta{[}F{]}A
`G(FA+1) \times \Delta{[}F{]}A`\Delta{[}G\circ F{]}(A)
`GF(A+1)\rlap{\ ,};
`{[}\gamma_x{]}_x`{[}G(\phi_x){]}_x`]

\efig
$$
also a pullback.

We have one such $ \gamma $ for each $ A $,
so we should write $ \gamma (A) = [\gamma_x (A)]_x $.
To check naturality of $ \gamma $ it is sufficient to
check the commutativity of the naturality square
on each injection, that is that the square labelled
(?) below commutes for each $ x \in \Delta{[}F{]}(A) $:
$$
\bfig

\node a(0,0)[\Delta{[}G\circ F{]}(A)]
\node b(1200,0)[\Delta{[}G\circ F{]}(B)]
\node c(2200,0)[GF(B+1)\rlap{\ ,}]

\node d(0,550)[\Delta{[}G{]}(FA)]
\node e(1200,550)[\Delta{[}G{]}(FB)]
\node f(2200,550)[G(FB+1)]

\node g(600,250)[(?)]
\node h(1700,250)[(1)]

\arrow|b|/>/[a`b;\Delta{[}G\circ F{]}(f)]
\arrow/ >->/[b`c;]
\arrow|a|/>/[d`e;\Delta{[}G{]}(Ff)]
\arrow/ >->/[e`f;]
\arrow|l|/>/[d`a;\gamma_x(A)]
\arrow|r|/>/[e`b;\gamma_y(B)]
\arrow|r|/>/[f`c;G(\phi_y)]

\efig
$$
this for an arbitrary function $ f \colon A \to B $
and $ y = F(f + 1)(x) $. Compare this with the
following diagram
$$
\bfig

\node a(0,0)[\Delta{[}G\circ F{]}(A)]
\node b(1200,0)[GF(A+1)]
\node c(2200,0)[GF(B+1)\rlap{\ .}]

\node d(0,550)[\Delta{[}G{]}(FA)]
\node e(1200,550)[G(FA+1)]
\node f(2200,550)[G(FB+1)]

\node g(600,250)[(2)]
\node h(1700,250)[(3)]

\arrow|b|/ >->/[a`b;]
\arrow|b|/>/[b`c;GF(f+1)]
\arrow|a|/ >->/[d`e;]
\arrow|a|/>/[e`f;G(Ff+1)]
\arrow|l|/>/[d`a;\gamma_x(A)]
\arrow|r|/>/[e`b;G(\phi_x)]
\arrow|r|/>/[f`c;G(\phi_y)]

\efig
$$
The composites of the two top arrows of
each diagram are equal by definition of the
functoriality of $ \Delta{[}G{]} $, and the same
holds for the bottom arrows but for
$ \Delta{[}G\circ F{]} $. (1) and (3) commute
by definition of $ \gamma_x $ and $ \gamma_y $
respectively. As the bottom arrow of (1) is
monic, (?) will commute if (3) does.

(3) is $ G $ of the diagram
$$
\bfig
\square<800,500>[FA+1`FB+1`F(A+1)`F(B+1);
Ff+1`\phi_x`\phi_y`F(f+1)]

\place(400,250)[(4)]

\efig
$$
which on injections is
$$
\bfig
\square/>` >->` >->`>/<800,500>[FA`FB`F(A+1)`F(B+1);
Ff`Fj_A`Fj_B`F(B+1)]

\place(400,250)[(5)]

\place(1300,250)[\mbox{and}]

\square(2000,0)/=`>`>`>/<800,500>[1`1`F(A+1)`F(B+1);
`x`y`F(f+1)]

\place(2400,250)[(6)]

\efig
$$
each of which commutes, the second by definition
of $ y $. So (?) commutes establishing naturality
of $ \gamma $.

If $ f $ is monic so is $ F (f+1) $ and then (6) is
a pullback. (5) is always a pullback by tautness
so in this case (4) is a pullback, so (3) is too,
and (1) and (2) are pullbacks by definition of
$ \gamma_x $ and $ \gamma_y $, so (?) will be
a pullback, showing that $ \gamma $ is taut.

To show that $ \gamma $ is monic, first recall
that for each $ x $, $ \gamma_x $ itself is monic
(being the restriction of $ G(\phi_x) $). So it is
only necessary to show that the $ \gamma_x $
are pairwise disjoint. Let $ x \neq x' $, then
$$
\bfig
\square/ >->` >->` >->` >->/<700,500>[FA`FA+1`FA+1`F(A+1);
j_{FA}`j_{FA}`\phi_x`\phi_{x'}]

\efig
$$
is a pullback and $ G $ of it is too. So we have
the following pullbacks
$$
\bfig

\node a(0,0)[\Delta{[}G{]}(FA)]
\node b(900,0)[G(FA+1)]
\node c(1900,0)[GF(A+1)\rlap{\ .}]

\node d(0,550)[0]
\node e(900,550)[GFA]
\node f(1900,550)[G(FA+1)]

\arrow/ >->/[a`b;]
\arrow|b|/ >->/[b`c;G(\phi_{x'})]
\arrow/ >->/[d`e;]
\arrow|a|/ >->/[e`f;G(j_{FA})]

\arrow/ >->/[d`a;]
\arrow|l|/ >->/[e`b;G(j_{FA})]
\arrow|r|/ >->/[f`c;G(\phi_x)]

\efig
$$
The bottom arrow can be written as $ \gamma_{x'} $
followed by the inclusion, so if we pull back in stages
we get
$$
\bfig
\node a(0,0)[\Delta{[}G{]}(FA)]
\node b(900,0)[\Delta{[}G\circ F{]}(A)]
\node c(1900,0)[GF(A+1)]

\node d(0,550)[0]
\node e(900,550)[\Delta{[}G{]}(FA)]
\node f(1900,550)[G(FA+1)]

\arrow|b|/ >->/[a`b;\gamma_{x'}]
\arrow/ >->/[b`c;]
\arrow/ >->/[d`e;]
\arrow/ >->/[e`f;]
\arrow/ >->/[d`a;]
\arrow|r|/ >->/[e`b;\gamma_x]
\arrow|r|/ >->/[f`c;G(\phi_x)]

\efig
$$
so $ \gamma_x $ and $ \gamma_{x'} $ are disjoint,
giving the desired result, that $ \gamma $ is monic.
\end{proof}

For $ x \in \Delta{[}F{]} A $ and $ y \in \Delta{[}G{]} (FA) $, we have
$$
\gamma(y, x) = G(\phi_x) (y)
$$
which in fact is defined for all $ x \in F(A+1) $ and
$ y \in G(FA+1) $ giving an element of $ GF(A+1) $.
This ``full $ \gamma $'' is neither taut nor monic.
The above proof shows, in part, that if $ x \notin FA $
and $ y \notin GFA $, then $ \gamma (y, x) \notin GFA $,
and with these restrictions we do get tautness and
monicity.

\ 

The chain rule transformation is natural in $ F $ and $ G $.

\begin{theorem}
\label{Thm-ChainRuleNat}

Let $ F, F', G, G' $ be taut functors and $ t \colon F \to F' $
$ u \colon G \to G' $ be taut transformations, then the
following diagram commutes
$$
\bfig
\square<950,500>[(\Delta{[}G{]} \circ F) \times \Delta{[}F{]}
`\Delta{[}G\circ F{]}
`(\Delta{[}G'{]} \circ F') \times \Delta{[}F'{]}
`\Delta (G' \circ F')\rlap{\ .};
\gamma_{G, F}`(\Delta u) \circ F \times \Delta t
`\Delta (u \circ t)`\gamma_{G', F'}]

\efig
$$

\end{theorem}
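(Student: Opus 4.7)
The plan is to reduce the naturality square to two simpler ones by factoring the horizontal composition of $u$ and $t$. Write this composition as the pasting $(u \circ F') \cdot (G \circ t)$ of two whiskerings; the left-hand vertical of the given square factors correspondingly, and the square itself decomposes as the pasting of two squares, one with $u$ held fixed at $\id_G$ (varying $F$ along $t$) and one with $t$ held fixed at $\id_{F'}$ (varying $G$ along $u$). By functoriality of $\Delta$ it is enough to verify naturality in these two special cases.

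In both cases I would argue pointwise using the explicit formula established in the proof of Theorem~\ref{Thm-ChainRule}: for $(y, x) \in \Delta{[}G{]}(FA) \times \Delta{[}F{]}(A)$, $\gamma_{G, F}(A)(y, x) = G(\phi_x)(y)$ where $\phi_x = [F j_A, x] \colon FA + 1 \to F(A+1)$. I would also use freely the content of Proposition~\ref{Prop-TautTransf}: $\Delta{[}t{]}(A)$ is the restriction of $t(A+1)$ and $\Delta{[}u{]}(X)$ is the restriction of $u(X+1)$.

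For the case $u = \id_G$, chasing an element $(y, x)$ around the square, the top-then-right path produces $G(t(A+1) \cdot \phi_x)(y)$, while the left-then-bottom path produces $G(\psi \cdot (t(A) + 1))(y)$, where $\psi = [F'j_A, \Delta{[}t{]}(A)(x)]$ is the $F'$-analogue of $\phi_x$ at the transported element. So commutativity of the square reduces to the identity $\psi \cdot (t(A) + 1) = t(A+1) \cdot \phi_x$ of maps $FA + 1 \to F'(A+1)$. On the first summand $FA$ this is naturality of $t$ at $j_A$; on the second summand it is just the definition $\Delta{[}t{]}(A)(x) = t(A+1)(x)$.

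For the case $t = \id_{F}$, the analogous chase reduces commutativity to the identity $u(F(A+1)) \cdot G(\phi_x) = G'(\phi_x) \cdot u(FA+1)$, which is simply naturality of $u$ at the morphism $\phi_x$. The main obstacle I anticipate is not any of these computations but the bookkeeping: keeping straight the several whiskerings and confirming at each step that the relevant restrictions do land inside the intended $\Delta$-subfunctors. This last point is however already guaranteed by the tautness of $t$ and $u$, which is precisely what made $\Delta{[}t{]}$ and $\Delta{[}u{]}$ well-defined in the first place. Once the two special cases are in hand, pasting them recovers the general square and completes the proof.
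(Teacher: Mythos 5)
Your proposal is correct and follows essentially the same route as the paper: decompose the horizontal composite into two whiskered squares, then check each on the $x^{th}$ injection using $\gamma(y,x)=G(\phi_x)(y)$, reducing one square to naturality of $u$ at $\phi_x$ and the other to the identity $\phi_{x'}\cdot(tA+1)=t(A+1)\cdot\phi_x$ (naturality of $t$ at $j_A$ plus the definition of $x'$). The only difference is the order of the two whiskerings (the paper varies $G$ along $u$ first, then $F$ along $t$), which is immaterial.
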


\begin{proof}

In fact, for any $ A $
$$
\bfig
\square<1150,500>[G(FA+1) \times F(A+1)
`GF (A+1)`G'(F' A+1) \times F'(A+1)
`G'F'(A+1);
\gamma_{G, F}
`u \circ (tA +1) \times t(A+1)`(u \circ t)(A+1)
`\gamma_{G', F'}]

\efig
$$
commutes, {\em a fortiori} its restriction to the
diagram of the statement. The $ u \circ t $ on
the right is the horizontal composition of natural
transformations and expands to the composite
on the right below. The $ u \circ (tA + 1) $ is also a
horizontal composition and can be written as on
the left here:
$$
\bfig

\node a(0,0)[G'(F' A+1)\times F' (A+1)]
\node b(1300,0)[G'F'(A+1)\rlap{\ .}]
\node c(0,500)[G'(FA+1) \times F(A+1)]
\node d(1300,500)[G'F(A+1)]
\node e(0,1000)[G(FA+1) \times F(A+1)]
\node f(1300,1000)[GF(A+1)]

\arrow|b|/>/[a`b;\gamma_{G',F'}]
\arrow|b|/>/[c`d;\gamma_{G',F}]
\arrow|a|/>/[e`f;\gamma_{G,F}]
\arrow|l|/>/[e`c;u(FA+1) \times F(A+1)]
\arrow|l|/>/[c`a;G'(tA+1) \times t(A+1)]
\arrow|r|/>/[f`d;uF(A+1)]
\arrow|r|/>/[d`b;G't(A+1)]

\efig
$$

For $ x \in F(A+1) $, the restriction of the top
square to the $ x^{th} $ injection is
$$
\bfig
\square<850,500>[G(FA+1)`GF(A+1)
`G'(FA+1)`G'F(A+1);
G(\phi_x)`u(FA+1)`uF(A+1)`G(\phi_x)]

\efig
$$
which commutes by naturality of $ u $.

The second diagram, restricted to the $ x^{th} $
injection is
$$
\bfig
\square<850,500>[G'(FA+1)`G'F(A+1)
`G'(F'A+1)`G'F'(A+1);
G'(\phi_x)`G'(tA+1)`G't(A+1)`G'(\phi_{x'})]

\efig
$$
where $ x' = t(A+1) (x) $. This diagram is $ G' $
of
$$
\bfig
\square<850,500>[FA+1`F(A+1)`F'A+1`F'(A+1);
{[}F_{j_A}, x{]}`tA+1`t(A+1)`{[}F'_{j_A}, x'{]}]

\efig
$$
which commutes, on the first summand by
naturality of $ t $ and on the second by
definition of $ x'$.
\end{proof}

We also have the following associativity and unit
laws for $ \gamma $.

\begin{theorem}
\label{Thm-ChainRuleAssoc}

For taut functors $ F, G, H $ we have the following
commutativities:

\noindent (1)
$$
\bfig
\square<1850,600>[(\Delta{[}H{]} \circ G \circ F) \times
(\Delta{[}G{]} \circ F) \times \Delta{[}F{]}
`(\Delta{[}H{]} \circ G \circ F) \times \Delta{[}G\circ F{]}
`(\Delta{[}H \circ G{]} \circ F) \times \Delta{[}F{]}
`\Delta{[}H\circ G\circ F{]}\rlap{\ ,};
\id \times \gamma_{G,F}
`\gamma_{H,G} \circ F \times \id
`\gamma_{H, G \circ F}
`\gamma_{H \circ G, F}]

\efig
$$

\noindent (2)
$$
\bfig
\square|alra|/>`=`=`>/<1000,500>[(\Delta {[}\Id{]} \circ F) \times \Delta{[}F{]}
`\Delta {[}\Id \circ F{]}`1 \times \Delta{[}F{]}`\Delta{[}F{]}\rlap{\ ,};
\gamma_{\Id,F}```\cong]

\efig
$$

\noindent (3)
$$
\bfig
\square|alra|/>`=`=`>/<1000,500>[(\Delta{[}F{]} \circ \Id) \times \Delta {[}\Id{]}
`\Delta {[}F \circ \Id{]}`\Delta{[}F{]} \times 1`\Delta{[}F{]}\rlap{\ .};
\gamma_{F, \Id}```\cong]

\efig
$$

\end{theorem}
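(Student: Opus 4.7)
The plan is to use the monicity of $\gamma$, established at the end of the proof of Theorem~\ref{Thm-ChainRule}, to reduce each of the three identities to a short check on representative elements by postcomposing with the defining inclusion into the large set $HGF(A+1)$ or $F(A+1)$. Both composites around each square land there through a mono, so equality of the composites follows from equality after postcomposition, and I can then just unpack the recipe $\gamma_{G,F}(y,x) = G(\phi_x)(y)$ with $\phi_x = [Fj_A, x] \colon FA+1 \to F(A+1)$.

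For the associativity law~(1), I would evaluate at a triple $(z,y,x)$ with $z \in \Delta[H](GFA)$, $y \in \Delta[G](FA)$, $x \in \Delta[F](A)$. The clockwise composite unpacks to
\[
\gamma_{H, G\circ F}\bigl(z,\, G(\phi_x)(y)\bigr) = H\bigl([GFj_A,\, G(\phi_x)(y)]\bigr)(z),
\]
while the counterclockwise composite unpacks to
\[
\gamma_{H\circ G, F}\bigl(H([Gj_{FA}, y])(z),\, x\bigr) = H\bigl(G(\phi_x) \circ [Gj_{FA}, y]\bigr)(z),
\]
where the last step uses functoriality of $H$. The whole square thus reduces to the equality of two maps $GFA + 1 \to GF(A+1)$,
\[
G(\phi_x) \circ [Gj_{FA}, y] = [GFj_A,\, G(\phi_x)(y)],
\]
which I would verify summand by summand: on the first summand both sides equal $G(\phi_x \circ j_{FA}) = G(Fj_A)$ directly from the definition $\phi_x = [Fj_A, x]$, and on the second summand both evaluate to $G(\phi_x)(y)$.

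For the unit laws~(2) and~(3), everything collapses because $\Delta[\Id](A) = (A+1) \setminus A$ is the singleton $\{\ast\}$, and the associated map $\phi_\ast = [\Id\,j_A, \ast] \colon A+1 \to A+1$ is literally the identity. Equation~(2) then reads $\gamma_{\Id, F}(\ast, x) = \Id(\phi_x)(\ast) = \phi_x(\ast) = x$, matching the canonical projection identification $1 \times \Delta[F] \cong \Delta[F]$; equation~(3) reads $\gamma_{F, \Id}(y, \ast) = F(\phi_\ast)(y) = F(\id_{A+1})(y) = y$, matching $\Delta[F] \times 1 \cong \Delta[F]$.

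The only real obstacle is notational bookkeeping in~(1): one must keep straight which instance of the ``glued successor'' $[-,-]$ is being used --- namely $\phi_x = [Fj_A, x]$ built inside $F$ versus $[Gj_{FA}, y]$ built inside $G$ --- and recognize that the compatibility equation between the two sides of the square is exactly the identity $\phi_x \circ j_{FA} = Fj_A$ already encoded in the definition of $\phi_x$. Once this is in hand, associativity drops out.
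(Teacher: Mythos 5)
Your proposal is correct and follows essentially the same route as the paper: for (1) both arguments evaluate at a triple of elements and reduce the square to the summand-wise identity $G(\phi_x)\circ[Gj_{FA},y]=[GFj_A,\,G(\phi_x)(y)]$ (the paper's $G(\phi_x)\,\phi_y=\phi_w$ with $w=G(\phi_x)(y)$), proved by functoriality of $G$ and the definition of $w$; for (2) and (3) both use that $\Delta[\Id]$ is the one-point functor, that $\phi_x(\ast)=x$, and that $\phi_\ast=\id_{A+1}$. The only cosmetic difference is your appeal to monicity of $\gamma$ at the outset, which is not really needed (only the injectivity of the subfunctor inclusion $\Delta[H\circ G\circ F]\hookrightarrow HGF\circ S$ is used, and the paper simply verifies the unrestricted square on the ambient sets), but this does not affect the argument.
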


\begin{proof}

(1) Let $ A $ be a set. We'll show that
$$
\bfig
\square<2000,500>[H(GFA+1) \times G(FA+1) \times F(A+1)
`H(GFA+1) \times GF(A+1)
`HG(FA+1) \times F(A+1)
`HGF(A+1);
\id \times \gamma_{G,F} (A)
`\gamma_{HG} (FA) \times \id
`\gamma_{H,GF} (A)
`\gamma_{HG,F} (A)]

\efig
$$
commutes. Evaluate this diagram at an element
$ (x, y, z) $ of the domain
$$
\bfig
\square/|->`|->``|->/<1500,500>[(z,y,x)
`(x, G(\phi_x)(y)) = (z, w)
`(H(\phi_y)(z), x)
`HG(\phi_x) H(\phi_y)(z) 
\begin{array}{c}
?\\
\vspace{-23pt}\\
=
\end{array}
 H(\phi_w)(z)\rlap{\ .};
```]

\morphism(1900,470)/|->/<0,-370>[`;]

\efig
$$
So we have to show that $ HG(\phi_x) H(\phi_y) =
H(\phi_w) $ for $ w = G(\phi_x)(y) $, and it is
sufficient to show that $ G(\phi_x) \phi_y = 
\phi_w $, i.e.~that
$$
\bfig
\qtriangle<800,450>[GFA+1`G(FA+1)`GF(A+1);
\phi_y`\phi_w`G(\phi_x)]

\efig
$$
commutes. Restricting to the summands we
have
$$
\bfig
\qtriangle<900,450>[GFA`G(FA+1)`GF(A+1);
Gj_{FA}`GF(j_A)`G{[}Fj_A, x{]}]

\qtriangle(1700,0)<800,450>[1`G(FA+1)`GF(A+1);
y`w`G(\phi_x)]

\efig
$$
each of which commutes, the first by functoriality
of $ G $, the second by definition of $ w $. This
proves (1).

In (2) (and (3)) we denote the identity functor on
$ {\bf Set} $, which we have been calling $ X $,
by $ \Id $. Also $ 1 $ denotes the constant functor
with value $ 1 $, i.e.~the terminal endofunctor.
Then $ \Delta [\Id] (X) = (X+1) \setminus X = 1 $,
i.e.~$ \Delta [\Id] = 1 $. To calculate
$ \gamma_{\Id, F} $, take $ x \in F(A+1) $ and
consider $ \gamma $ on the $ x^{th} $ summand:
$$
\Id (\phi_x) \colon \Id (FA+1) \to \Id F (A+1)
$$
i.e.~just $ \phi_x \colon FA+1 \to F(A+1) $. Then
$ y \in \Delta [\Id](FA) $ must be $ * $ and
$ \phi_x (*) = x $. So
$$
\gamma_{\id,F} (*, x) = x
$$
which is what (2) is asserting.

For (3) consider
$$
\gamma \colon F(\Id A +1) \times \Id (A+1) \to
F \Id (A+1)
$$
i.e.
$$
\gamma \colon F(A+1) \times (A+1) \to F(A+1)\rlap{\ .}
$$
For $ x \in A +1 $, $ \phi_x \colon A+1 \to A+1 $ is the
identity on $ A $ but $ \phi_x (*) = x $. If we take
$ x \in \Delta [\Id] (A) $, $ x $ must be $ * $ so
$ \phi_* = 1_{A+1} \colon A+1 \to A+1 $ and
$ \gamma_* = 1_{F(A+1)} $. This, when restricted to
$ \Delta{[}F{]}(A) \times 1 $, is what (3) says.
\end{proof}

Getting a comparison
$$
\gamma \colon \Delta{[}G{]} \circ F \times \Delta{[}F{]}
\to/ >->/<250> \Delta{[}G\circ F{]}
$$
in that direction is a bit surprising. Normally we would
expect a morphism {\em into} a product and, as
$ \Delta{[}G\circ F{]} $ is a kind of cokernel, a morphism
out of it. One might think that there is a comparison
in the reverse direction for which $ \gamma $ is a
splitting. Looking more carefully we see that it seems
unlikely because we would need a natural transformation
$$
\Delta{[}G\circ F{]} \to \Delta{[}F{]} \rlap{\ .}
$$
So from an element of $ GF (X+1) $ we would need to
construct an element of $ F (X+1) $ in a natural way.
Nothing comes to mind but it's a bit difficult to pin down precisely. 

Consider the following example which illustrates well
the nature of $ \gamma $. Let $ F(X) $ be arbitrary
(taut) and $ G(X) = X^3 $. Then $ \Delta{[}G{]} (X) =
3X^2 + 3X +1 $ so that
$$
\Delta{[}G{]} (FX) \times \Delta{[}F{]}(X) =
3 \Bigl(F(X)^2 \times \Delta{[}F{]}(X)\Bigr) + 3\Bigl(F(X) \times
\Delta{[}F{]}(X)\Bigr) + \Delta{[}F{]} (X) \rlap{\ .}
$$
On the other hand $ (G \circ F) (X) = F(X) \times F(X)
\times F(X) $ so we can use the product rule
(Theorem~\ref{Thm-ProdRule}) (three times) to get
$$
\Delta{[}G\circ F{]} (X) = 3\Bigl(F (X)^2 \times \Delta{[}F{]}X\Bigr) +
3\Bigl(F(X) \times (\Delta{[}F{]}(X))^2\Bigr) + \Bigl(\Delta{[}F{]}(X)\Bigr)^3 \rlap{\ .}
$$
Then $ \gamma $ is the identity on the first three of
the seven summands, and given by diagonals on
the remaining ones. As $ \gamma $ is component-wise
on the summands, any splitting would have to be too.
For the first three summands there is only one choice, but
for summands of the form $ F(X) \times \Delta{[}F{]}(X)
\to F(X) \times \Delta{[}F{]}(X)^2 $ we have the two
projections, and for the last summand there are three.
This gives us $ 2 \cdot 2 \cdot 2 \cdot 3 = 24 $
``canonical'' splittings (for $ \gamma $ at $ F $ and
$ G $). But there may be many more depending
on $ F $. The simplest possible $ \Delta{[}F{]}(X) $
is $ X + 1 $ for $ F(X) = X^{[2]} = X^2/S_2 $. Then
the component of $ \gamma $ on the last summand
$$
\Delta{[}F{]}(X) \to (\Delta{[}F{]}(X))^3\quad  =\quad  (X + 1 \to<250>
X^3 + 3X^2 + 3X +1)
$$
$$
\begin{array}{rl}
x & \to/|->/<250> (x, x, x)\\
{*} & \to/|->/<250> * \rlap{\ .}
\end{array}
$$
Now there are only three splittings of $ X \to X^3 $,
the projections (Yoneda) and only one for $ 1 \to 1 $,
but it's arbitrary for the six terms in the middle, giving
648 splittings, by our count (could be wrong, but
there are lots of them). But none is natural in $ G $.
Any permutation of $ 3 $, $ \sigma \in S_n $ gives
an automorphism of $ G $ which percolates down
to the same permutation on the $ X^3 $ in
$ (\Delta{[}F{]} (X))^3 $, so no one projection would be
invariant. The upshot is that there is no global
splitting of $ \gamma $ natural in $ F $ and $ G $.

It will be useful in Subsection \ref{SSec-Filt} if we express the difference
operator with its lax chain rule in tangent category
terms (see \cite{CocCru14} for definitions). Our tangent
space for $ {\bf Set} $ is $ P_1 \colon {\bf Set}
\times {\bf Set} \to {\bf Set} $. Given a taut functor
$ F \colon {\bf Set} \to {\bf Set} $, we define
\( D (F) \colon {\bf Set} \times {\bf Set} \to
{\bf Set} \times {\bf Set} \) by
$$
D (F) (A, B) = (FA, \Delta{[}F{]} (A) \times B) \rlap{\ .}
$$
Note that $ D (F) $ is ``linear'' in the second variable,
i.e.~$ D (F)(A,-) $ preserves colimits, and as such is
completely determined by $ D(F)(A,1) $.

The chain rule says that $ D $ is a monoidal functor
on taut endofunctors.

\begin{theorem}
\label{Thm-DMMonoidal}

Let $ {\bf End}_{Taut} ({\bf Set}) $ and
$ {\bf End}_{Taut} ({\bf Set} \times {\bf Set}) $
be the strict monoidal categories of taut
endofunctors on $ {\bf Set} $ and $ {\bf Set} \times
{\bf Set} $ respectively, with taut natural
transformations, and tensor given by composition.
Then $ D $ defines a monoidal functor
$$
{\bf End}_{Taut} ({\bf Set}) \to {\bf End}_{Taut}
({\bf Set} \times {\bf Set}) \rlap{\ .}
$$

\end{theorem}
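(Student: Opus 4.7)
The plan is to unpack what $D$ does on morphisms, then define the lax monoidal constraints using the chain rule transformation $\gamma$, and finally observe that the coherence axioms reduce to the previously established properties of $\gamma$. First, for a taut transformation $t \colon F \to G$, define $D(t) \colon D(F) \to D(G)$ by
$$
D(t)(A,B) = (t(A),\ \Delta[t](A) \times \id_B).
$$
That this is a taut natural transformation of endofunctors of ${\bf Set} \times {\bf Set}$ uses tautness of $t$ in the first coordinate, tautness of $\Delta[t]$ in the second (Proposition~\ref{Prop-TautTransf}), and the fact that a product of taut transformations is taut. Functoriality $D(u \circ t) = D(u) \circ D(t)$ and $D(\id_F) = \id_{D(F)}$ then reduce to the corresponding properties of $\Delta$.

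Next I would exhibit the lax monoidal structure. The unit constraint $\eta \colon \Id_{{\bf Set}\times{\bf Set}} \to D(\Id_{\bf Set})$ is the canonical isomorphism, since by Proposition~\ref{Prop-Identity} we have $\Delta[\Id] = 1$ and so $D(\Id)(A,B) = (A, 1 \times B) \cong (A,B)$. For the multiplication, compute
$$
D(G) \circ D(F)(A,B) = \bigl(GFA,\ \Delta[G](FA) \times \Delta[F](A) \times B\bigr),
$$
$$
D(G \circ F)(A,B) = \bigl(GFA,\ \Delta[G\circ F](A) \times B\bigr),
$$
and define $\mu_{G,F} \colon D(G) \circ D(F) \to D(G \circ F)$ to be the identity in the first coordinate and $\gamma_{G,F}(A) \times \id_B$ in the second. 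By Theorem~\ref{Thm-ChainRule} the transformation $\gamma_{G,F}$ is taut (and monic), so $\mu_{G,F}$ is taut as well, and its naturality in $F$ and $G$ is exactly Theorem~\ref{Thm-ChainRuleNat}.

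It remains to verify the three coherence axioms of a (lax) monoidal functor. The pentagon for $\mu$ applied to a triple of taut endofunctors $F, G, H$ unravels, on the second coordinate, to the equality expressed in part (1) of Theorem~\ref{Thm-ChainRuleAssoc}, while the two triangle identities for the unit $\eta$ are parts (2) and (3) of the same theorem; on the first coordinate all the diagrams commute trivially since $\mu$ is the identity there. This finishes the proof.

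The main obstacle is essentially bookkeeping: one must carefully track how the product decomposition $D(F)(A,B) = (FA, \Delta[F](A) \times B)$ interacts with composition of endofunctors of ${\bf Set} \times {\bf Set}$ and confirm that the monoidal coherence diagrams, after being projected to each coordinate, match the three diagrams of Theorem~\ref{Thm-ChainRuleAssoc}. Once that matching is set up, no new calculation is required, and it should be noted explicitly that ``monoidal'' here means lax monoidal, since $\gamma$ (hence $\mu$) is in general only a monic comparison, not an isomorphism.
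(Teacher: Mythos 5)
Your proposal is correct and is exactly the argument the paper intends (the paper states the theorem as a direct consequence of the chain rule results): $\gamma$ of Theorem~\ref{Thm-ChainRule} gives the lax multiplication, $\Delta[\Id]=1$ gives the unit constraint, naturality is Theorem~\ref{Thm-ChainRuleNat}, and the coherence axioms are parts (1)--(3) of Theorem~\ref{Thm-ChainRuleAssoc}, with ``monoidal'' indeed meaning lax monoidal since $\gamma$ is only a monic comparison. The only slips are cosmetic: the associativity coherence for a lax monoidal functor between strict monoidal categories is a square rather than a ``pentagon,'' and one should also note (routinely, via Propositions~\ref{Prop-DeltaTaut} and~\ref{Prop-LimTaut}) that $D(F)$ itself is a taut endofunctor of ${\bf Set}\times{\bf Set}$.
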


In the present context we might even go so far as to write
a general object of ${\bf Set} \times {\bf Set} $ as $ (A, \Delta A) $
where $ \Delta A $ is just another object, independent
of $ A $ but thought of as an increment in $ A $,
just like the $ d x $ in $ f(x) d x $. Then
$$
D(F) (A, \Delta A) = (FA, \Delta{[}F{]} (A) \times \Delta A) 
\rlap{\ .}
$$


\section{Differences for the special classes}
\label{Sec-SpecClDiff}

We give explicit formulas for the
difference operator on the various classes of
taut functors studied in Section~\ref{Sec-SpecCl}.

\subsection{Polynomials}
\label{SSec-PolyDiff}

Let $ F(X) = X^A $ for some set $ A $. An element
$ \phi \in \Delta{[}F{]}(X) = (X+1)^A \setminus X^A $
is a function $ \phi \colon A \to X+1 $ with $ * \in {\rm Im}
(\phi) $ ($ * $ is the unique element of $ 1 $). If
$ S = \{ a \in A | \phi(a) \neq * \} $ is the support of
$ \phi $, then $ \phi $ is uniquely determined by its
restriction to $ S $, and the condition $ * \in {\rm Im}
(\phi) $ is that $ S $ is a proper subset of $ A $. This
produces a bijection
$$
\Delta [X^A] \cong \sum_{S \subsetneq A} X^S
$$
where the coproduct is taken over all proper subsets
of $ A $. The bijection is natural as
$ (f+1)^A \colon (X+1)^A \to (Y+1)^A $ preserves
support for any $ f \colon X \to Y $.

A polynomial functor is a small coproduct of
powers
$$
P(X) = \sum_{i\,\in\,I} X^{A_i}
$$
for a family $ \langle A_i \rangle_{i\,\in\,I} $ of sets
and as $ \Delta $ commutes with coproducts we have
$$
\Delta [P] (X) = \sum_{S \subsetneq A_i} X^S
$$
where the coproduct is taken over all $ i \in I $ and
proper subsets $ S \subsetneq A_i $.

If $ A $ is a finite cardinal $ n $, we can group like
powers of $ X $ together to get
$$
\Delta [X^n] = \sum_{k = 0}^{n - 1} \binom{n}{k} X^k
$$
where $ \binom{n}{k} $ is,
as usual, the binomial coefficient.

This is all we need for the following result.

\begin{proposition}
\label{Prop-PolyDiff}

\begin{itemize}

	\item[(1)] If $ P(X) $ is a polynomial functor
	then so is $ \Delta [P](X) $
	
	\item[(2)] If $ P(X) $ is a power series functor, so is
	$ \Delta [P] (X) $
	
	\item[(3)] If $ P(X) $ is a finitary polynomial functor,
	so is $ \Delta [P] (X) $.

\end{itemize}

\end{proposition}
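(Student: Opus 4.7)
The plan is to read each part directly off the explicit formulas derived in the paragraphs preceding the proposition, in particular the natural bijection
\[
\Delta[X^A] \cong \sum_{S \subsetneq A} X^S
\]
together with the fact (already used repeatedly) that $\Delta$ commutes with coproducts.

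For part (1), I would simply observe that if $P(X) = \sum_{i \in I} X^{A_i}$ is a polynomial functor, then applying $\Delta$ termwise gives
\[
\Delta[P](X) \cong \sum_{i \in I} \sum_{S \subsetneq A_i} X^S,
\]
which is a coproduct of powers indexed by the set $\{(i, S) \mid i \in I,\ S \subsetneq A_i\}$, hence by definition again a polynomial functor. So (1) is literally the formula recorded just before the statement.

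For parts (2) and (3), the strategy is the same but with the reindexing collecting like powers of $X$. Starting from the special case
\[
\Delta[X^n] \cong \sum_{k=0}^{n-1} \binom{n}{k} X^k,
\]
and applying it to $P(X) = \sum_{n=0}^{\infty} C_n X^n$ (resp.\ to a finitary polynomial with $C_n = 0$ for $n > d$), I would interchange the two summations to obtain
\[
\Delta[P](X) \cong \sum_{k=0}^{\infty} \Bigl( \sum_{n > k} \binom{n}{k} C_n \Bigr) X^k,
\]
which is again a power series in the sense of Definition of (**), with new coefficient sets $C'_k = \sum_{n > k} \binom{n}{k} C_n$. In the finitary case, the inner sum is finite and $C'_k = 0$ for $k \geq d$, so the degree drops by one and the resulting functor is still finitary.

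There is really no serious obstacle here: the hard work (the natural bijection for $\Delta[X^A]$ and the commutation of $\Delta$ with coproducts, i.e.\ Theorem~\ref{Thm-ConflColim} applied to a discrete diagram) has already been done. The only points to verify are bookkeeping: that $\{(i,S) \mid i \in I, S \subsetneq A_i\}$ is a legitimate index set (it is, since each $A_i$ is a set), and that when collecting like powers in (2) and (3) the interchange of coproducts is legal (it is, since coproducts commute with coproducts). So the proof reduces to citing the preceding formula and, for (2) and (3), re-grouping summands.
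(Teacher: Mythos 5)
Your proposal is correct and is essentially identical to the paper's own argument: the paper likewise derives the proposition directly from the natural bijection $\Delta[X^A] \cong \sum_{S \subsetneq A} X^S$, the commutation of $\Delta$ with coproducts, and (for the finitary and power-series cases) regrouping like powers via $\Delta[X^n] \cong \sum_{k=0}^{n-1}\binom{n}{k}X^k$. The bookkeeping you spell out (the index set $\{(i,S)\}$ and the interchange of coproducts) is exactly what the paper leaves implicit when it says the preceding formulas are ``all we need.''
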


\subsection{Divided powers}
\label{SSec-DivPowDiff}

Let $ n $ be a positive integer. Then
$ X^{[n]} = X^n/S_n $ consists of equivalence
classes of $ n $-tuples $ [x_1 \dots x_n] $ with
$ [x_1 \dots x_n] = [y_1 \dots y_n] $ iff there is
a permutation $ \sigma \in S_n $ such that
$ y_i = x_{\sigma (i)} $ for all $ i $. So
$ \Delta [X^{[n]}] $ consists of such equivalence
classes with $ x_i \in X $ or $ x_i = * $ for all
$ i $ and at least one $ x_i = * $. Within each of
these equivalence classes there are those
$ n $-tuples with all the $ * $'s at the end. If
there are $ k $ $ x $'s for $ X $ (and $ n-k $ stars)
then the equivalence relation reduces to the
existence of $ \sigma \in S_k $ with $ y_i = x_{\sigma i} $.
This way we see that
$$
\Delta [X^{[n]}] \cong X^{[0]} + X^{[1]} + \dots + X^{[n-1]} .
$$

If $ FX $ is a divided power series
$$
FX = C_0 + C_1 X^{[1]} + C_2 X^{[2]} + \dots
= \sum_{n = 0}^\infty C_n X^{[n]}
$$
then

\begin{align*}
\Delta{[}F{]}(X) &= \sum_{i = 1}^\infty C_i 
+ \left(\sum_{i = 2}^\infty C_i\right) X^{[1]} +
\left(\sum_{i = 3}^\infty C_i \right) X^{[2]} + \dots\\
&= \sum_{n = 0}^\infty \left(\sum_{i = n+1}^\infty C_i\right) X^{[n]} .
\end{align*}

\begin{proposition}

If $ F(X) $ is a divided power series then so is
$ \Delta{[}F{]} (X) $.

\end{proposition}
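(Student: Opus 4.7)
The plan is to assemble this directly from two inputs already on the table: the formula $\Delta[X^{[n]}]\cong X^{[0]}+X^{[1]}+\dots+X^{[n-1]}$ established immediately before the proposition, and the fact that $\Delta$ commutes with coproducts (a special case of Theorem~\ref{Thm-ConflColim}, since a discrete category is confluent).

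First I would write $F$ as the coproduct $F(X)=\sum_{n\ge 0} C_n X^{[n]}$, where $C_n X^{[n]}$ means a copy of $X^{[n]}$ for each element of $C_n$. Each $X^{[n]}$ is taut (Proposition~\ref{Prop-DivPowSer}), so the coproduct is taut by Proposition~\ref{Prop-CoprodTaut}, and applying $\Delta$ distributes over the coproduct to give
\[
\Delta[F](X)\;\cong\;\sum_{n\ge 0} C_n\,\Delta[X^{[n]}](X)\;\cong\;\sum_{n\ge 0} C_n\bigl(X^{[0]}+X^{[1]}+\dots+X^{[n-1]}\bigr).
\]
The second step is the bookkeeping: for each fixed $k$, the summand $X^{[k]}$ appears once for every pair consisting of an index $n>k$ and an element of $C_n$, so collecting like powers gives
\[
\Delta[F](X)\;\cong\;\sum_{k\ge 0}\Bigl(\sum_{i>k} C_i\Bigr)X^{[k]}.
\]
Setting $D_k:=\sum_{i>k}C_i$, this exhibits $\Delta[F]$ as a divided power series with coefficient sets $D_k$, which is what we want.

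There is no real obstacle here: the only point needing a moment of care is that the rearrangement $\sum_n C_n\sum_{k<n}X^{[k]}=\sum_k(\sum_{i>k}C_i)X^{[k]}$ is legitimate because we are dealing with genuine disjoint unions of sets indexed by $\mathbb{N}\times\mathbb{N}$, so Fubini for coproducts applies without any convergence concerns. The conclusion then reads exactly as the explicit formula displayed just above the proposition, confirming that the class of divided power series functors is closed under $\Delta$.
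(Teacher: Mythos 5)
Your proof is correct and follows essentially the same route as the paper: the paper likewise combines the termwise formula $\Delta[X^{[n]}]\cong X^{[0]}+X^{[1]}+\dots+X^{[n-1]}$ with the fact that $\Delta$ commutes with coproducts and then regroups like powers, arriving at $\Delta[F](X)\cong\sum_{n\ge 0}\bigl(\sum_{i\ge n+1}C_i\bigr)X^{[n]}$. Your bookkeeping and the cited justifications (tautness of each summand, $\Delta$ preserving coproducts via the confluent-colimit theorem) match what the paper does.
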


$ \Delta [X^{[A]}] $ for infinite sets $ A $ is more
complicated. For example, for $ A = {\mathbb N} $,
we have for each $ n $, equivalence classes of
the form $ (x_0, x_1, \dots, x_{n-1}, *, *, *, \dots) $
giving an $ X^{[n]} $ summand, but there are also
equivalence classes of the form
$$
(*** \dots * x_n x_{n+1} x_{n+2} \dots), n > 0
$$
giving countably many $ X^{[{\mathbb N}]} $
summands. And there's even one more type of
equivalence class with infinitely many $ * $'s
and infinitely many $ x_i \in X $, which we can
represent as
$$
(x_0, *, x_2, *, x_4, *, \dots)
$$
This gives us an extra copy of $ X^{[{\mathbb N}]} $.
Thus
$$
\Delta [X^{[{\mathbb N}]}] \cong {\mathbb N} \times
X^{[{\mathbb N}]} + \sum_{n=0}^\infty
X^{[n]} \rlap{\ .}
$$

A more general approach will lead to a
better understanding. Let $ G $ be a group and
$ A $ a left $ G $-set. Then
$$
\Delta [X^A/G]
$$
can be described as follows. $ G $ acts on the set
$ P' A $ of proper subsets of $ A $ by application
of the action element-wise
$$
B \subsetneq A \quad \to/|->/<200>^g \quad g B =
\{gb\,|\,b \in B \} .
$$
Let $ S \subseteq P' A $ be a choice of
representative for each orbit of this action. 
Then we have:

\begin{proposition}
\label{Prop-QuoDiff}

$$
\Delta [X^A/G] \cong \sum_{B \in S} A^B/{\rm Stab}(B)
$$
where $ {\rm Stab} (B) = \{g \in G \,|\, g B = B\} $, the
stabilizer of $ B $.

\end{proposition}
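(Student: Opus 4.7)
The plan is to identify $\Delta[X^A]$ concretely as a coproduct of powers of $X$ indexed by the proper subsets of $A$, transport the right $G$-action across this identification, and then pass to the quotient using that $\Delta$ commutes with confluent colimits (Theorem~\ref{Thm-ConflColim}); note that any group viewed as a one-object category is confluent, since for $g_1,g_2 \in G$ one has $1 \cdot g_2 = (g_2 g_1^{-1}) \cdot g_1$.

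First I would verify, naturally in $X$, that
\[
\Delta[X^A] \cong \sum_{B \subsetneq A} X^B.
\]
Each $\phi \colon A \to X+1$ is uniquely determined by $B := \phi^{-1}(X) \subseteq A$ and the restriction $\phi|_B \colon B \to X$, which gives $(X+1)^A \cong \sum_{B \subseteq A} X^B$; the condition $\phi \in \Delta[X^A](X) = (X+1)^A \setminus X^A$, i.e.\ $\ast \in {\rm Im}\,\phi$, translates into $B$ being a \emph{proper} subset of $A$. Naturality in $X$ is clear since taking $\phi^{-1}(X)$ and the restriction commute with post-composition by $f + 1$ for any $f \colon X \to Y$.

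Next I would transport the right $G$-action on $X^A$, defined by $(\phi \cdot g)(a) = \phi(ga)$. A direct computation yields $(\phi \cdot g)^{-1}(X) = g^{-1}B$, and that the restriction of $\phi \cdot g$ to $g^{-1}B$ is the composite $g^{-1}B \to B \to X$ of multiplication by $g$ followed by $\phi|_B$. Thus under the identification above, $G$ permutes the summands via $B \mapsto g^{-1}B$ and acts on fibres by precomposition with the induced bijection $g^{-1}B \to B$. In particular $g$ fixes the $B$-summand precisely when $g \in {\rm Stab}(B)$, and the induced action of ${\rm Stab}(B)$ on $X^B$ is the usual right action arising from the restricted left ${\rm Stab}(B)$-set structure on $B$.

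Finally, each $g \in G$ acts by a taut natural automorphism of $X^A$ (the naturality squares for precomposition by a bijection of $A$ are automatically pullbacks for every monomorphism of $X$, since the horizontal arrows are bijections), so Theorem~\ref{Thm-ConflColim} applied to the one-object confluent diagram $G^{op} \to \Taut$ gives $\Delta[X^A/G] \cong \Delta[X^A]/G$. Combined with the first step this becomes $\bigl(\sum_{B \subsetneq A} X^B\bigr)/G$, and decomposing the index set of proper subsets into $G$-orbits with representatives $S$ reduces the quotient orbit-by-orbit: each orbit contributes $\bigl(\sum_{B' \in G \cdot B} X^{B'}\bigr)/G \cong X^B/{\rm Stab}(B)$ by the standard transitive-$G$-set/stabilizer identification, and summing over $B \in S$ yields the claimed formula. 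The only real delicacy is keeping left and right $G$-actions coordinated in the middle step; beyond that, the argument is essentially Burnside-style orbit decomposition lifted from $G$-sets to set-valued functors.
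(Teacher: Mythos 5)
Your argument is correct, but it is organized differently from the paper's. The paper proves the proposition by a direct element computation: an element of $\Delta [X^A/G](X)$ is exhibited as an equivalence class of partial functions $A \hookleftarrow B \to X$ with $B$ a proper subset, the relation $f \sim f'$ is unwound into the existence of $g \in G$ carrying one support onto the other (the pullback square in the paper's proof), and then each class is normalized so that its support is the chosen orbit representative $B \in S$, at which point the relation visibly reduces to ${\rm Stab}(B)$. You instead make the argument modular: you reuse the already established identification $\Delta[X^A] \cong \sum_{B \subsetneq A} X^B$, check that it is equivariant for the right $G$-action by precomposition, observe that each $g$ acts by a (taut, being invertible) natural automorphism so that Theorem~\ref{Thm-ConflColim} applies to the one-object confluent diagram and gives $\Delta[X^A/G] \cong \Delta[X^A]/G$, and finish with an orbit-by-orbit Burnside-style decomposition of the index set of proper subsets. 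Both routes rest on the same combinatorics (supports of functions into $X+1$, orbits and stabilizers of proper subsets); yours buys a cleaner separation of concerns and shows off the colimit-commutation machinery, while the paper's is self-contained and makes the equivalence relation on representatives completely explicit. Two small remarks: your justification that the $g$-automorphisms are taut is right in substance but slightly misstated --- the relevant point is that a commutative square in which one pair of parallel sides consists of bijections (the components of the natural isomorphism) is automatically a pullback; and note that the displayed formula in the statement should read $X^B/{\rm Stab}(B)$ rather than $A^B/{\rm Stab}(B)$, as your computation (and the paper's proof) confirms.
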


\begin{proof}

An element of $ \Delta [X^A/G] $ is an equivalence
class $ [f] $ of functions $ f \colon A \to X+1 $ with
$ * \in {\rm Im} (f) $. The equivalence relation is given by
$ f \sim f' $ iff there is $ g \in G $ with
$$
\bfig
\Dtriangle/>`>`<-/<500,300>[A`X+1`A;g\cdot (\ )`f`f']

\place(500,0)[.]

\efig
$$
A function into $ X+1 $ is equivalent to a partial
function
$$
A \to/<-</<200> \ B \to<200>^{\ov{f}} X
$$
and $ * \in {\rm Im} (f) $ is equivalent to $ B $
being a proper subset of $ A $. The equivalence
relation translates to the existence of $ g \in G $
such that
\begin{equation}\tag{*}
\bfig
\square/<-< `>`-->`<-< /[A`B`A`B';
`g \cdot (\ )`g \cdot (\ )`]

\place(250,250)[\framebox{Pb}]

\Dtriangle(500,0)/`>`<-/<400,250>[B`X`B';`\ov{f}`\ov{f}']

\efig
\end{equation}
and the square being a pullback is equivalent
to $ g (B) = B' $.

Any $ (B', \ov{f}') $ is equivalent to a $ (B, \ov{f}) $
with $ B \in S $ (there's a $ g \in G $ which maps
$ B' $ to $ B $, and defines $ \ov{f} $ by $ (*) $ above),
and for these $ (B, f) $ the equivalence relation
reduces to $ (*) $ with $ g \in {\rm Stab} (B) $.
\end{proof}

\begin{corollary}
\label{Cor-DivPowSeriesDiff}

The class of (generalized) divided power series is
closed under the difference operator $ \Delta $.

\end{corollary}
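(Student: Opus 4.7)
The plan is to reduce everything to a single summand and then apply Proposition~\ref{Prop-QuoDiff}. Suppose $F$ is a generalized divided power series
$$
F X = \sum_{i\,\in\,I} X^{A_i}/G_i ,
$$
with each $A_i$ a left $G_i$-set. Since coproducts are confluent colimits, Theorem~\ref{Thm-ConflColim} (or directly Proposition~\ref{Prop-Sum} iterated) gives
$$
\Delta{[}F{]}(X) \cong \sum_{i\,\in\,I} \Delta{[}X^{A_i}/G_i{]}.
$$
So it is enough to show that each $\Delta{[}X^{A}/G{]}$ is again a generalized divided power series.

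This is exactly the content of Proposition~\ref{Prop-QuoDiff}: choosing a set $S_i$ of representatives for the orbits of the $G_i$-action on the proper subsets $P'(A_i)$, we get
$$
\Delta{[}X^{A_i}/G_i{]} \cong \sum_{B\,\in\,S_i} X^B/\mathrm{Stab}(B),
$$
where each $\mathrm{Stab}(B) \leq G_i$ acts on $B$ by restriction of the $G_i$-action. Substituting back,
$$
\Delta{[}F{]}(X) \cong \sum_{i\,\in\,I}\ \sum_{B\,\in\,S_i} X^B/\mathrm{Stab}(B),
$$
which is a generalized divided power series indexed by the set $\coprod_{i\in I} S_i$, with group family $\langle \mathrm{Stab}(B)\rangle$ and exponent family $\langle B\rangle$.

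For the classical (finitary) case one specializes $A_i = n_i \in \mathbb{N}$ and $G_i = S_{n_i}$: the $S_n$-orbits of proper subsets of $n$ are classified by cardinality $k < n$, and $\mathrm{Stab}$ of a $k$-element subset is (isomorphic to) $S_k \times S_{n-k}$, which upon quotienting $X^k$ by the $S_k$-factor (the $S_{n-k}$ acts trivially on $X^k$) gives the formula displayed before the corollary. There is essentially no obstacle here beyond bookkeeping, since the heavy lifting was done in Proposition~\ref{Prop-QuoDiff} and in the confluent-colimit commutation theorem; the only thing to verify carefully is that the rewriting $\sum_{i}\sum_{B\in S_i}$ still fits the format of Definition~\ref{Def-GenAnFun}/the generalized divided power series, which it does because each $B$ carries a natural $\mathrm{Stab}(B)$-action.
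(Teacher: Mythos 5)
Your proof is correct and follows exactly the route the paper intends: the corollary is stated as an immediate consequence of Proposition~\ref{Prop-QuoDiff} together with the fact that $\Delta$ commutes with coproducts (Theorem~\ref{Thm-ConflColim}), and your re-indexing over $\coprod_i S_i$ with the restricted $\mathrm{Stab}(B)$-actions is precisely the needed bookkeeping. The finitary sanity check recovering $\Delta[X^{[n]}]\cong X^{[0]}+\dots+X^{[n-1]}$ is also right.
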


\subsection{Analytic functors}
\label{SSec-AnalyticDiff}

\begin{proposition}
\label{Prop-Analytic}

Let $ C $ be a left $ S_n $-set ($ n \in {\mathbb N} $). Then
there exist $ S_k $-sets $ C_k $, $ k = 0, 1, \dots, n - 1 $
such that
$$
\Delta [X^n \otimes_{S_n} C] \cong
\sum_{k = 0}^{n - 1} X^k \otimes_{S_k} C_k \rlap{\ .}
$$

\end{proposition}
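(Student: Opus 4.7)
The plan is to compute $\Delta[X^n \otimes_{S_n} C]$ directly from the coend description, stratify by the number of entries that are not $*$, and repackage each stratum as an analytic functor of a lower degree. By the explicit description given before Proposition~\ref{Prop-Analytic}, an element of $(X+1)^n \otimes_{S_n} C$ is an equivalence class $[\vec x; c]$ with $\vec x \in (X+1)^n$ and $c \in C$, under $(\vec x; c) \sim ((x_{\sigma 1}, \ldots, x_{\sigma n}); \sigma c)$ for $\sigma \in S_n$. The image of $X^n \otimes_{S_n} C$ inside $(X+1)^n \otimes_{S_n} C$ consists of classes with every $x_i \in X$, so $\Delta[X^n \otimes_{S_n} C](X)$ is represented by classes in which at least one entry is $*$.

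For such a class, let $k = |\{i : x_i \neq *\}|$, so $0 \le k \le n-1$. Using the $S_n$-action I choose a representative with $x_1, \ldots, x_k \in X$ and $x_{k+1} = \cdots = x_n = *$. The permutations of $\{1, \ldots, n\}$ that preserve this normalized shape are precisely those in the subgroup $S_k \times S_{n-k}$ (where $S_k$ permutes $\{1, \ldots, k\}$ and $S_{n-k}$ permutes $\{k+1, \ldots, n\}$), and these are exactly the residual symmetries that remain after normalization. Hence the $k$-th stratum is naturally
$$(X^k \times C)/(S_k \times S_{n-k}),$$
with $S_k \times S_{n-k}$ acting on $X^k$ through the first factor and on $C$ via the inclusion $S_k \times S_{n-k} \hookrightarrow S_n$. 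Since $S_k$ and $S_{n-k}$ commute inside $S_n$, the $S_k$-action on $C$ descends to the quotient $C_k := C/S_{n-k}$, making $C_k$ a left $S_k$-set. Taking the double quotient in stages yields
$$(X^k \times C)/(S_k \times S_{n-k}) \cong (X^k \times C_k)/S_k = X^k \otimes_{S_k} C_k.$$

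Summing over $k$, and using that $\Delta$ commutes with coproducts (Proposition~\ref{Prop-Sum}, also applied inside each stratum), gives the claimed isomorphism. Naturality in $X$ is automatic, since any function $X \to Y$ neither creates nor destroys $*$'s and hence preserves the stratification. The main obstacle is the bookkeeping in identifying the residual symmetry group: one must check carefully that normalizing the position of the $*$'s is possible and that the stabilizer of such a normalized shape is exactly $S_k \times S_{n-k}$, with the commuting-subgroup observation needed to descend the $S_k$-action to $C_k$. The edge case $k = 0$ is harmless (the $0$-stratum contributes the constant functor $C/S_n$), and $k = n$ is correctly excluded as it would reproduce the image of $X^n \otimes_{S_n} C$ itself.
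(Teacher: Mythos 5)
Your proposal is correct and takes essentially the same route as the paper: stratify $\Delta[X^n\otimes_{S_n}C]$ by the number of non-$*$ coordinates, note that any permutation relating two normalized representatives must preserve the star positions and hence lie in $S_k\times S_{n-k}$, and quotient in stages to get $C_k=C/S_{n-k}$ with the $S_k$-action induced by extending permutations by the identity on $\{k+1,\dots,n\}$. The paper simply verifies the resulting bijection element-by-element (well-definedness, injectivity, surjectivity, naturality); your citation of Proposition~\ref{Prop-Sum} is not really needed, since the coproduct decomposition follows directly from the invariance of the number of $*$'s under both the $S_n$-action and arbitrary functions $X\to Y$, which you do observe.
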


\begin{proof}

An element of $ \Delta [X^n \otimes_{S_n} C] =
\bigl((X+1)^n \otimes_{S_n} C\bigr) \setminus (X^n \otimes_{S_n} C) $
is an equivalence class $ [x_1, \dots , x_n; c] $
where $ c \in C $ and $ x_i \in X $ or $ x_i = * $
for all $ i $, with at least one $ * $. $ [x_1, \dots , x_n; c] =
[y_1, \dots , y_n; d] $ iff there exists $ \sigma \in S_n $
such that $ y_i = x_{\sigma i} $ for all $ i $ and
$ c = \sigma d $.

The number of $ * $'s is invariant under the action
of $ S_n $ so is an invariant of the equivalence
class, but it is also invariant under the functorial
action (i.e.~for functions $ f \colon X \to Y $),
so that $ \Delta [X^n \otimes_{S_n} C] $
decomposes into a coproduct
$$
\sum_{k = 0}^{n - 1} \Phi_k
$$
of endofunctors $ \Phi_k $, where $ k $ is the
number of $ x $'s that are {\em not} stars.

Let $ C_k $ be the set of equivalence classes
$ [c] $ for $ c \in C $ with $ c \sim d $ iff there is
$ \sigma \in S_n $ such that $ \sigma(i) = i $
for all $ i \leq k $ and $ c = \sigma d $. $ S_k $
acts on $ C_k $ by $ \tau [c] = [\ov{\tau} c] $ 
where $ \ov{\tau} $ is $ \tau $ extended to a
permutation on $ n $ by the identity,
i.e.
$$
\ov{\tau} (i) = \left\{ \begin{array}{ll}
\tau (i) & 1 \leq i \leq k\\
i & k < i \leq n  \rlap{\ .}
\end{array} \right.
$$
The action is well defined because such
an extension will commute with the $ \sigma $'s
above as they act on disjoint sets. So
$$
\tau [c] = [\ov{\tau} c] = [\ov{\tau} \sigma d] =
[\sigma \ov{\tau} d] = [\ov{\tau} d] \rlap{\ .}
$$

The claim is that $ \Phi_k \cong X^k \otimes_{S_k} C_k $.
Each equivalence class in $ \Phi_k $ has a representative
of the form $ [x_1, \dots  x_k, *, \dots, *; c] $ (several in fact).

We define
$$
\phi \colon \Phi_k \to X^k \otimes_{S_k} C_k
$$
$$
\phi [x_1 \dots x_k, *, \dots, *;c] =
[x_1, \dots x_k, [c]] \rlap{\ .}
$$
$ \phi $ is well defined. Indeed, let $ [x_1, \dots, x_k, *,
\dots, *; c] = [y_1 \dots y_k, *, \dots, *;d] $ so there
exists $ \sigma \in S_n $ such that $ c = \sigma d $,
$ y_i = x_{\sigma i} $, and the $ * $'s get permuted
among themselves. Let $ \tau \in S_k $ be $ \sigma $
restricted to $ \{ 1, \dots, k \} $, and $ \ov{\tau} $ the
extension of $ \tau $ to $ \{1, \dots, n\} $ by the
identity on $ i > k $. Then
$$
\tau [d] = [\ov{\tau} d] = [\ov{\tau} \sigma^{-1} c] = [c]
$$
because $ \ov{\tau} $ and $ \sigma $ agree on
$ i \leq k $, i.e.~$ \ov{\tau} \sigma^{-1} (i) = i $.
As we have $ y_i = x_{\sigma (i)} = x_{\tau(i)} $
for $ i \leq k $, we have
$$
[x_1, \dots , x_k; [c]] = [y_1, \dots, y_k; [d]]
$$
and $ \phi $ is well defined.

We show that $ \phi $ is one to one. Suppose $ [x_1, \dots, x_k; [c]]
= [y_1, \dots, y_k ; [d]] $, i.e.~there exists $ \tau \in S_k $
such that $ y_i = x_{\tau i} $ and $ \tau [d] = [c] $. Thus
$ [\ov{\tau} d] = [c] $ ($ \ov{\tau} $ as above) which
means there is a $ \sigma \in S_n $ such that
$ \sigma (i) = i $ for $ i \leq k $ and $ c = \sigma \ov{\tau} d $.
For $ i > k $, $ \sigma (i) > k $ so

$$
\begin{array}{ll}
\sigma \ov{\tau} (i)   = \tau (i) & i \leq k\\
\sigma \ov{\tau} (i)   > k & i > k 
\end{array}
$$
giving our permutation in $ S_n $ making
$$
[x_1 \dots x_n, *, \dots, *; c] =
[y_1, \dots y_k, *, \dots, *;d] \rlap{\ .}
$$

It is clear that $ \phi $ is onto and just as clearly natural,
and so is the required isomorphism.
\end{proof}

\begin{corollary}
\label{Cor-Analytic}

If $ C \colon {\bf Bij} \to {\bf Set} $ is a species and
$$
F X = \sum_{n =  0}^\infty X^n \otimes_{S_n} C(n)
$$
the corresponding analytic functor, then
$$
\Delta{[}F{]} (X) \cong \sum_{n = 0}^\infty X^n
\otimes_{S_n} \left(\sum_{l = n+1}^\infty C (l)_n\right)
\rlap{\ .}
$$

\end{corollary}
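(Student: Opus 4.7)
The plan is to derive the corollary from Proposition~\ref{Prop-Analytic} by pushing the difference operator through the infinite coproduct and then reorganizing the resulting double sum.

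First I would invoke the fact that $\Delta$ commutes with coproducts (a special case of Theorem~\ref{Thm-ConflColim}, since a discrete category is trivially confluent and each summand $X^l \otimes_{S_l} C(l)$ is taut as an analytic functor, hence the inclusions are taut). This gives
\[
\Delta{[}F{]}(X) \;\cong\; \sum_{l = 0}^\infty \Delta\bigl[X^l \otimes_{S_l} C(l)\bigr].
\]
Then, for each fixed $l$, Proposition~\ref{Prop-Analytic} (applied to the $S_l$-set $C(l)$) produces $S_n$-sets $C(l)_n$ for $n = 0, 1, \dots, l-1$ with
\[
\Delta\bigl[X^l \otimes_{S_l} C(l)\bigr] \;\cong\; \sum_{n=0}^{l-1} X^n \otimes_{S_n} C(l)_n.
\]
Here the notation $C(l)_n$ matches the $C_k$ constructed in the proof of that proposition: the set of equivalence classes $[c]$ with $c \in C(l)$ under the relation generated by permutations in $S_l$ fixing $\{1, \dots, n\}$ pointwise, with the residual $S_n$-action.

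Substituting and swapping the order of summation converts the double sum indexed by pairs $(l,n)$ with $0 \le n < l$ into one indexed by pairs $(n,l)$ with $l \ge n+1$:
\[
\Delta{[}F{]}(X) \;\cong\; \sum_{l=0}^\infty \sum_{n=0}^{l-1} X^n \otimes_{S_n} C(l)_n
\;\cong\; \sum_{n=0}^\infty \sum_{l = n+1}^\infty X^n \otimes_{S_n} C(l)_n.
\]
Finally, since $X^n \otimes_{S_n} (-)$ is a left adjoint (it is the left Kan extension / coend construction, hence preserves coproducts in the coefficient variable), I can factor $X^n \otimes_{S_n}$ out of the inner sum:
\[
\sum_{l = n+1}^\infty X^n \otimes_{S_n} C(l)_n \;\cong\; X^n \otimes_{S_n} \Bigl(\sum_{l = n+1}^\infty C(l)_n\Bigr),
\]
giving the stated formula.

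There is no real obstacle here; the only point requiring a moment's care is the distributivity in the last step, which follows because coends with a fixed first variable are colimits and so commute with coproducts, and the $S_n$-action on $\sum_{l} C(l)_n$ is simply the coproduct of the individual actions. The indexing manipulation is routine once Proposition~\ref{Prop-Analytic} is in hand.
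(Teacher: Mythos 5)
Your proposal is correct and follows the same route as the paper: apply $\Delta$ term-by-term through the coproduct, use Proposition~\ref{Prop-Analytic} on each summand, and regroup by powers of $X$ (the paper compresses your last two steps into ``grouping like powers of $X$ together''). Your extra remark justifying the distributivity of $X^n \otimes_{S_n} (-)$ over the coproduct of coefficients is a harmless elaboration of that same step.
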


\begin{proof}

By the proposition
$$
\Delta{[}F{]} (X) \cong \sum_{n = 0}^\infty
\left(\sum_{k = 0}^{n - 1} X^k \otimes_{S_k}
C(n)_k \right)
$$
and grouping like powers of $ X $ together we
get the desired result.
\end{proof}

The $ C_k $ above may seem mysterious,
at the very least a bit opaque. Passing to
permutations on infinite sets will force us into
a more conceptual presentation. Rather than
choose a proper subset of each cardinality as
we did in the case of $ [n] $ we consider them
all at once.

Let $ A $ be an arbitrary set and construct a
groupoid $ {\mathbb P} (A) $ ($ {\mathbb P} $
for proper powerset) whose objects are proper
subsets $ A_0 \subsetneq A $ and whose morphisms
are bijections $ \sigma $ of $ A $ preserving the
subset, i.e.
$$
A_0 \to B_0
\over{\sigma \colon A \to A \mbox{\quad s.t.\quad }
\sigma A_0 = B_0 \rlap{\ .}}
$$

For each set $ X $ we get a functor
$$
X^{(\ )} \colon {\mathbb P} (A)^{op} \to {\bf Set}
$$
with $ X^{A_0} $ the set of functions $ A_0 \to X $,
and for $ \sigma \colon A_0 \to B_0 $
$$
X^\sigma \colon X^{B_0} \to X^{A_0}
$$
$$
(g \colon B_0 \to X) \to/|->/<200>
(A_0 \to^{\sigma_0} B_0 \to^s X)
$$

$$
\bfig

\node a(0,0)[A]
\node b(500,0)[B_0]
\node c(800,250)[X]
\node d(0,500)[A]
\node e(500,500)[A_0]

\arrow/ >->/[b`a;]
\arrow/ >->/[e`d;]
\arrow|l|/>/[d`a;\sigma]
\arrow|l|/>/[e`b;\sigma_0]
\arrow|a|/>/[e`c;X^\sigma (g)]
\arrow|b|/>/[b`c;g]

\place(800,0)[.]

\efig
$$

Let $ C $ be a left $ S_A $-set. The action of
$ S_A $ on $ C $ restricts to a
functor
$$
\ov{C} \colon {\mathbb P} (A) \to {\bf Set}
$$
$$
\bfig
\square/|->`>`>`|->/[A_0`C`B_0`C\rlap{\ .};
`\sigma`\sigma \circ (\ )`]

\place(250,250)[\mapsto]

\efig
$$

\begin{proposition}

$$
\Delta [X^A \otimes_{S_A} C] \cong \int^{A_0 \in {\mathbb P}(A)}
X^{A_0} \times \ov{C} .
$$

\end{proposition}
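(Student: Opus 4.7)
The plan is to write both sides explicitly as sets of equivalence classes of triples and to check that the obvious ``extend by $*$'' map is a bijection natural in $X$.

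First I would unpack the coend. An element of $\int^{A_0 \in {\mathbb P}(A)} X^{A_0} \times \ov{C}$ is represented by a triple $(A_0, g, c)$ with $A_0 \subsetneq A$, $g \colon A_0 \to X$ and $c \in C$, and the equivalence relation is generated by
\[
(A_0, g \circ \sigma_0, c) \sim (B_0, g, \sigma c)
\]
for every $\sigma \colon A \to A$ in $S_A$ with $\sigma A_0 = B_0$, where $\sigma_0 = \sigma|_{A_0} \colon A_0 \to B_0$ and $g \colon B_0 \to X$. On the other side, by the description of $\Delta$ already used in Proposition~\ref{Prop-QuoDiff}, an element of $\Delta [X^A \otimes_{S_A} C]$ is an equivalence class $[f, c]$ for $f \colon A \to X + 1$ with $*$ in its image, where $[f, c] = [f \circ \sigma^{-1}, \sigma c]$ for all $\sigma \in S_A$.

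Next I would define the comparison both ways. In one direction, send $(A_0, g, c)$ to $[\tilde g, c]$, where $\tilde g \colon A \to X + 1$ agrees with $g$ on $A_0$ and equals $*$ on the complement. In the other direction, send $[f, c]$ to $(f^{-1}(X), f|_{f^{-1}(X)}, c)$; since $f$ takes the value $*$, the subset $f^{-1}(X)$ is proper. Well-definedness of the first map reduces to checking that the generating relation $(A_0, g \circ \sigma_0, c) \sim (B_0, g, \sigma c)$ is sent to equal elements: a direct computation shows that $\widetilde{g \circ \sigma_0} = \tilde g \circ \sigma$ on all of $A$, so $[\widetilde{g \circ \sigma_0}, c] = [\tilde g \circ \sigma, c] = [\tilde g, \sigma c]$. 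Well-definedness of the inverse direction is similar: if $[f, c] = [f \circ \sigma^{-1}, \sigma c]$, then $\sigma$ restricts to a $\mathbb{P}(A)$-morphism $f^{-1}(X) \to (f \circ \sigma^{-1})^{-1}(X)$ which witnesses the coend relation.

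The two assignments are visibly mutual inverses, and naturality in $X$ follows because post-composition with a function $X \to Y$ commutes both with the extension-by-$*$ construction and with restriction to the support $f^{-1}(X)$. The main care needed is in matching the two equivalence relations, i.e.\ checking that the $S_A$-action on $(X+1)^A \otimes_{S_A} C$ is exactly what the groupoid $\mathbb{P}(A)$ records once one remembers that a function $A \to X + 1$ is the same data as a partial function out of a subset $A_0 = f^{-1}(X)$, and the $S_A$-orbit of $f$ is controlled by how $S_A$ permutes those subsets, with the stabiliser of $A_0$ acting internally --- which is precisely the data packaged into the coend over $\mathbb{P}(A)$.
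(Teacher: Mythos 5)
Your proposal is correct and follows essentially the same route as the paper: identify an element of $\Delta [X^A \otimes_{S_A} C]$ with a class $[f\colon A \to X+1, c]$ where $*\in\mathrm{Im}(f)$, view $f$ as a partial function defined on the proper subset $f^{-1}(X)$, and observe that the $S_A$-equivalence translates exactly into the coend relation over ${\mathbb P}(A)$. You merely make the ``extend by $*$'' / ``restrict to support'' bijection and its naturality explicit, which the paper leaves implicit.
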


\begin{proof}

An element of $ \Delta [X^A \otimes_{S_A} C] $ is
an equivalence class of pairs $ [f \colon A \to X+1, c] $
for $ f $ a function with $ * $
in its image and $ c \in C $, with $ [f, c] = [g, d] $ iff
there is $ \sigma \in S_A $ with $ f = g \sigma $ and
$ d = \sigma c $. $ f \colon A \to X + 1 $ is a partial
map 
$
\bfig

\node a (0,0)[A]
\node b(400,0)[A_0]
\node c(800,0)[X]

\arrow/ >->/[b`a;]
\arrow|a|/>/[b`c;f_0]
\efig
$
with $ A_0 $ a proper subset, and $ f = g \sigma $
means
$$
\bfig

\node a(0,0)[A]
\node b(500,0)[B_0]
\node c(800,250)[X]
\node d(0,500)[A]
\node e(500,500)[A_0]

\arrow/ >->/[b`a;]
\arrow/ >->/[e`d;]
\arrow|l|/>/[d`a;\sigma]
\arrow|r|/>/[e`b;\sigma_0]
\arrow|a|/>/[e`c;f_0]
\arrow|b|/>/[b`c;g_0]

\place(800,0)[.]

\efig
$$
This is exactly a description of a general
element of $ \int^{A_0} X^{A_0} \times C $.
\end{proof}

We can simplify things by choosing a
skeleton of $ {\mathbb P}(A) $. $ A_0 \cong B_0 $
iff there exists $ \sigma \colon A \to A $ in $ S_A $
such that $ \sigma A_0 = B_0 $ which implies
that the cardinality of $ A_0 $ is equal to that of
$ B_0 $, $ \# A_0 = \# B_0 $, but the complements
of $ A_0 $ and $ B_0 $ also have the same
cardinality, $ \# A'_0 = \# B'_0 $. And this is sufficient
to have $ A_0 \cong B_0 $. Thus for every pair
of non-zero cardinals $ \kappa = (\kappa_1, \kappa_2) $
such that $ \kappa_1 + \kappa_2 = \# A $, we choose
a subset $ A_\kappa \subsetneq A $ with
$ \# A_\kappa = \kappa_i $ and $ \# A'_\kappa =
\kappa_2 $, and take the full subcategory
$ {\mathbb P}' (A) $ of $ {\mathbb P} (A) $
determined by these. $ {\mathbb P}' (A) $ is now
the coproduct of the groups $ S_{A_\kappa} \times
S_{A'_\kappa} $, which are the groups of automorphisms
of the $ A_k $. The coend is then the coproduct of the
coends over each of these groups, which is what
we've been writing as the tensor product. Thus
we have the following:

\begin{corollary}

$$
\Delta (X^A \otimes_{S_A} C) =
\sum_\kappa X^{A_\kappa} \otimes_{S_{A_\kappa} \times
S_{A'_\kappa}} C
$$
where the coproduct is taken over $ \kappa = (\kappa_1, \kappa_2) $
with $ \kappa_1, \kappa_2 > 0 $ and $ \kappa_1 + \kappa_2 = \# A $.

\end{corollary}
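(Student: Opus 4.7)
The plan is to apply the preceding proposition, which identifies $\Delta[X^A \otimes_{S_A} C]$ with the coend $\int^{A_0 \in {\mathbb P}(A)} X^{A_0} \times \ov{C}$, and then reduce this coend by passing to a skeleton of the groupoid ${\mathbb P}(A)$. So I would begin by noting that ${\mathbb P}(A)$ is a groupoid: every morphism comes from a $\sigma \in S_A$, which is invertible.

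Next, I would analyze the isomorphism relation in ${\mathbb P}(A)$. Two proper subsets $A_0, B_0 \subsetneq A$ are isomorphic iff some $\sigma \in S_A$ carries $A_0$ onto $B_0$; such a $\sigma$ restricts to bijections $A_0 \to B_0$ and $A_0' \to B_0'$ on complements, so necessarily $\# A_0 = \# B_0$ and $\# A_0' = \# B_0'$. Conversely, given matching cardinalities one picks bijections $A_0 \to B_0$ and $A_0' \to B_0'$ and glues them to obtain the desired $\sigma$. Hence isomorphism classes in ${\mathbb P}(A)$ are parametrized exactly by pairs of cardinals $\kappa = (\kappa_1,\kappa_2)$ with $\kappa_1 + \kappa_2 = \# A$ and (for properness) $\kappa_2 > 0$, and the corollary's restriction $\kappa_1 > 0$ simply drops the degenerate $A_0 = \emptyset$ summand $1 \otimes_{S_A} C$.

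For each such $\kappa$ I would choose a representative $A_\kappa \subsetneq A$ with $\# A_\kappa = \kappa_1$ and $\# A_\kappa' = \kappa_2$, and form the full subcategory ${\mathbb P}'(A)$ on these representatives. This is a skeleton of ${\mathbb P}(A)$, and because distinct $A_\kappa$ and $A_{\kappa'}$ are non-isomorphic the skeleton decomposes as the disjoint union of the one-object groupoids $\mathrm{Aut}(A_\kappa)$. A morphism $A_\kappa \to A_\kappa$ in ${\mathbb P}(A)$ is a $\sigma \in S_A$ with $\sigma A_\kappa = A_\kappa$, which forces $\sigma A_\kappa' = A_\kappa'$ as well, and so $\sigma$ decomposes uniquely into a pair of self-bijections on the two pieces. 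This gives $\mathrm{Aut}(A_\kappa) \cong S_{A_\kappa} \times S_{A_\kappa'}$.

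Finally I would use that coends are invariant under equivalences of categories and that a coend over a disjoint union of groupoids is the coproduct of the coends over each component; moreover a coend over a one-object group is exactly the tensor product over that group. Combining these observations yields
$$
\int^{A_0 \in {\mathbb P}(A)} X^{A_0} \times \ov{C} \;\cong\; \sum_\kappa X^{A_\kappa} \otimes_{S_{A_\kappa} \times S_{A_\kappa'}} C,
$$
where on each summand $S_{A_\kappa}$ acts on $X^{A_\kappa}$ by precomposition (from the functor $X^{(\ )}$) and $S_{A_\kappa} \times S_{A_\kappa'}$ acts on $C$ by restriction of the $S_A$-action along the evident subgroup inclusion. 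The main obstacle is the last piece of bookkeeping: one must check that, under the skeleton inclusion, the contravariant action of $\sigma \in S_{A_\kappa} \times S_{A_\kappa'} \subseteq S_A$ on $X^{A_\kappa}$ coming from ${\mathbb P}(A)$ is indeed precomposition with $\sigma|_{A_\kappa}$ and that the restricted $S_A$-action on $C$ is the one realized by $\ov{C}$ at $A_\kappa$. Once these identifications are made, the corollary is immediate.
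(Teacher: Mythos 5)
Your proposal is correct and follows the paper's own argument essentially verbatim: apply the preceding coend description of $\Delta[X^A \otimes_{S_A} C]$, pass to a skeleton of ${\mathbb P}(A)$ indexed by pairs of cardinals $\kappa$ with $\kappa_1+\kappa_2=\#A$, identify the automorphism group of the representative $A_\kappa$ as $S_{A_\kappa}\times S_{A'_\kappa}$, and decompose the coend as the coproduct of the tensor products over these groups. The only divergence is your explicit remark that the restriction $\kappa_1>0$ discards the $A_0=\emptyset$ class (whose contribution $1\otimes_{S_A}C \cong C/S_A$ is generally nonempty); the paper's proof silently indexes by non-zero pairs, so this loose corner is shared by both arguments rather than being a defect of yours.
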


This is still not in the form we would like. What we would
like is to express it in terms of $ \otimes $ over
symmetric groups. However
$$
X^{A_\kappa} \otimes_{S_{A_\kappa} \times
S_{A'_\kappa}} C \cong X^{A_\kappa} \otimes_{S_{A_\kappa}}
\left(S_{A_\kappa} \otimes_{S_{A_\kappa} \times
S_{A'_\kappa}} C\right)
$$
where the $ S_{A_\kappa} $ in the middle is given 
the left $ S_{A_\kappa} $ action by left multiplication
and the right $ S_{A_\kappa} \times
S_{A'_\kappa} $ action by right multiplication after
projecting onto $ S_{A_\kappa} $.

We can further analyze $ S_{A_\kappa} \otimes_{S_{A_\kappa} \times
S_{A'_\kappa}} C $. An element is an equivalence class
of pairs $ (\sigma, c) $ with $ \sigma \in S_{A_\kappa} $ and
$ c \in C $, with $ (\sigma, c) \sim (\tau, d) $ iff
there exist $ \rho \in S_{A_\kappa} $ and $ \rho' \in
S_{A'_\kappa} $ such that
$$
\tau = \sigma \rho \quad \mbox{and}\quad c= 
(\rho + \rho')d  \rlap{\ .}
$$
Each equivalence class has representatives
of the form $ (\id, c) $ and for these the equivalence
relation reduces to $ (\id, c) \sim (\id, d) $ iff
there exists $ \rho' \in S_{A'_\kappa} $ such that
$ c = (\id + \rho') d $. So
$$
S_{A_\kappa} \otimes_{S_{A_\kappa} \times S_{A'_\kappa}}
C \cong C_\kappa
$$
with $ C_\kappa= C/\sim $ where
$$
c \sim d \Leftrightarrow \exists \rho' \in S_{A_{\kappa'}} 
(c = (\id + \rho') d) \rlap{\ .}
$$ 

The upshot of this long discussion is that:

\begin{corollary}

$$
\Delta [X^A \otimes_{S_A} C)] =
\sum X^{A_\kappa} \otimes_{A_\kappa} C_\kappa
\rlap{\ ,}
$$
and $ \Delta $ of a generalized analytic functor is
again one.

\end{corollary}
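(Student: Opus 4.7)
The plan is to simply assemble the two ingredients already in hand. By the immediately preceding corollary we have
\[
\Delta [X^A \otimes_{S_A} C] \cong \sum_\kappa X^{A_\kappa} \otimes_{S_{A_\kappa} \times S_{A'_\kappa}} C,
\]
and by the subsequent discussion each summand rewrites as
\[
X^{A_\kappa} \otimes_{S_{A_\kappa} \times S_{A'_\kappa}} C \cong X^{A_\kappa} \otimes_{S_{A_\kappa}} \bigl( S_{A_\kappa} \otimes_{S_{A_\kappa} \times S_{A'_\kappa}} C \bigr) \cong X^{A_\kappa} \otimes_{S_{A_\kappa}} C_\kappa,
\]
where $C_\kappa = C/\!\sim$ is as defined above. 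Substituting gives the displayed formula.

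For the second assertion, recall that a generalized analytic functor has the form
\[
F X = \sum_{i \in I} X^{A_i} \otimes_{G_i} C_i,
\]
so by Proposition~\ref{Prop-Sum} (and its confluent-colimit extension, Theorem~\ref{Thm-ConflColim}) $\Delta$ distributes over the coproduct and it suffices to treat a single summand $X^A \otimes_G C$. The whole analysis leading to the previous corollary goes through verbatim with $G$ in place of $S_A$: one builds the groupoid $\mathbb{P}(A)$ of proper $G$-sub-objects of $A$ with morphisms the elements of $G$ preserving the subobject, obtains
\[
\Delta [X^A \otimes_G C] \cong \int^{A_0 \in \mathbb{P}(A)} X^{A_0} \times \ov{C},
\]
then picks a skeleton indexed by orbits $[A_\kappa]$ of $G$ on proper subsets, with stabilizers in place of $S_{A_\kappa} \times S_{A'_\kappa}$, and finally applies the same tensor-product manipulation. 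The result is again a coproduct of terms of the form $X^{A_\kappa} \otimes_{H_\kappa} C_\kappa$, i.e.\ another generalized analytic functor.

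The main obstacle, conceptually, is checking that the tensor-product identity
\[
X^{A_\kappa} \otimes_{S_{A_\kappa} \times S_{A'_\kappa}} C \cong X^{A_\kappa} \otimes_{S_{A_\kappa}} C_\kappa
\]
is natural in $X$ and compatible with the decomposition into orbits, but this is essentially the standard ``change of base'' for coends applied to the inclusion $S_{A_\kappa} \hookrightarrow S_{A_\kappa} \times S_{A'_\kappa}$, together with the explicit description of $S_{A_\kappa} \otimes_{S_{A_\kappa} \times S_{A'_\kappa}} C$ already carried out in the text. No new calculation is needed beyond gluing these pieces.
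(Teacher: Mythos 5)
Your proposal is correct and, for the displayed formula, it is exactly the paper's (implicit) argument: the corollary is stated as the upshot of the preceding discussion, namely the previous corollary's decomposition indexed by $\kappa$ together with the change-of-base identity $X^{A_\kappa} \otimes_{S_{A_\kappa} \times S_{A'_\kappa}} C \cong X^{A_\kappa} \otimes_{S_{A_\kappa}} C_\kappa$. For the closure statement the paper gives no explicit argument, and your route — distribute $\Delta$ over the coproduct and redo the support/orbit/stabilizer analysis for an arbitrary group $G$ acting on $A$ (orbit representatives of proper subsets, stabilizers in place of $S_{A_\kappa}\times S_{A'_\kappa}$) — is sound and parallels what the paper does for divided powers in Proposition~\ref{Prop-QuoDiff}; note also that no further tensor manipulation is really needed there, since $\sum_B X^{B}\otimes_{{\rm Stab}(B)}C$ is already of generalized analytic form. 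Two small remarks: your phrase ``proper $G$-sub-objects'' should just mean proper subsets $A_0 \subsetneq A$ with morphisms $A_0 \to B_0$ the elements $g \in G$ with $gA_0 = B_0$ (the cardinal-pair indexing by $\kappa$ is special to $S_A$ and rightly dropped); and an alternative shortcut is to reduce the general case to the $S_A$ case via $X^A \otimes_G C \cong X^A \otimes_{S_A}(S_A \otimes_G C)$, or to invoke the equivalence with generalized divided power series together with Corollary~\ref{Cor-DivPowSeriesDiff}.
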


\subsection{Reduced powers}
\label{SSec-RedPowDiff}

Let \( {\cal F} \) be a filter on \( A \). An element of
\( (X + 1)^{\cal F} \) can be identified with an
equivalence class of partial functions
\( A \to/<-< /<200> A_0 \to^f X \) with
\( (A \to/<-< /<200> A_0 \to^f X) \sim
(A \to/<-< /<200> B_0 \to^g X) \) if and only if
\[
\{ a \in A_0 \cap B_0 \ | \ f(a) = g(x) \} \cup
(A'_0 \cap B'_0) \in {\cal F}
\]
where \( A'_0 \) and \( B'_0 \) are the complements
of \( A_0 \) and \( B_0 \) respectively.

If we take \( X = 1 \), the \( f \) is redundant so that
an element of \( (1 + 1)^{\cal F} \) may be identified
with an equivalence class of subsets \( A_0 \subseteq A \)
with \( A_0 \sim B_0 \) if and only if
\[
(A_0 \cap B_0) \cup (A'_0 \cap B'_0) \in {\cal F} .
\]

The canonical map \( (X + 1)^{\cal F} \to (1 + 1)^{\cal F} \)
partitions \( (X + 1)^{\cal F} \) into a disjoint union indexed
by the equivalence classes of subsets \( [A_0] \),
\[
(X + 1)^{\cal F} \cong \prod_{[A_0]} (X + 1)^{\cal F}_{[A_0]} 
\]
where an element of \( (X + 1)^{\cal F}_{[A_0]} \)
corresponds to a partial function
\( A \to/<-< /<200> B_0 \to^g X \) if and only if
\( B_0 \sim A_0 \).

For any subset \( A_0 \subseteq A \) let
\[
{\cal F}_{A_0} = \{ A_1 \subseteq A_0 \ | \ A_1 \cup
A'_0 \in {\cal F} \} \ .
\]

\begin{proposition}

\( {\cal F}_{A_0} \) is a filter on \( A_0 \) and
\( (X + 1)^{\cal F}_{[A_0]} \cong X^{{\cal F}_{A_0}} \).

\end{proposition}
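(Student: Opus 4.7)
The plan has two parts: verify the filter axioms for $\mathcal{F}_{A_0}$, then construct mutually inverse bijections between $(X+1)^{\mathcal{F}}_{[A_0]}$ and $X^{\mathcal{F}_{A_0}}$.

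For the filter axioms, note that $A_0 \in \mathcal{F}_{A_0}$ because $A_0 \cup A_0' = A \in \mathcal{F}$. For binary intersections, if $A_1, A_2 \subseteq A_0$ with $A_i \cup A_0' \in \mathcal{F}$, then $(A_1 \cup A_0') \cap (A_2 \cup A_0') = (A_1 \cap A_2) \cup A_0' \in \mathcal{F}$, so $A_1 \cap A_2 \in \mathcal{F}_{A_0}$. Up-closure (inside $A_0$) is immediate from up-closure of $\mathcal{F}$.

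For the isomorphism, I will define a map $\Phi \colon (X+1)^{\mathcal{F}}_{[A_0]} \to X^{\mathcal{F}_{A_0}}$ by sending a representative partial function $A \hookleftarrow B_0 \xrightarrow{g} X$ with $B_0 \sim A_0$ to its restriction $A_0 \hookleftarrow B_0 \cap A_0 \xrightarrow{g|_{B_0 \cap A_0}} X$. The domain $B_0 \cap A_0$ lies in $\mathcal{F}_{A_0}$ because the hypothesis $(B_0 \cap A_0) \cup (B_0' \cap A_0') \in \mathcal{F}$ combined with the inclusion into $(B_0 \cap A_0) \cup A_0'$ yields $(B_0 \cap A_0) \cup A_0' \in \mathcal{F}$ by up-closure. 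For the inverse $\Psi$, any partial function $A_0 \hookleftarrow A_1 \xrightarrow{f} X$ with $A_1 \in \mathcal{F}_{A_0}$ may be viewed as a partial function on $A$ via the composite $A \hookleftarrow A_1 \xrightarrow{f} X$; the subset $A_1$ then satisfies $A_1 \sim A_0$ because $(A_1 \cap A_0) \cup (A_1' \cap A_0') = A_1 \cup A_0' \in \mathcal{F}$ (using $A_1 \subseteq A_0$), so $[A_1 \hookrightarrow A, f]$ really is an element of $(X+1)^{\mathcal{F}}_{[A_0]}$.

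The main obstacle, which is not really hard but needs care, is verifying that both $\Phi$ and $\Psi$ are well defined on equivalence classes and are mutual inverses. For $\Phi$, if $(B_0, g) \sim (C_0, h)$ in $(X+1)^{\mathcal{F}}$, i.e.\ $\{a \in B_0 \cap C_0 \mid g(a) = h(a)\} \cup (B_0' \cap C_0') \in \mathcal{F}$, I intersect with $A_0 \cup A_0' = A$ and with $A_0 \cup A_0' \in \mathcal{F}$ to obtain the equivalence of $g|_{B_0 \cap A_0}$ and $h|_{C_0 \cap A_0}$ in $X^{\mathcal{F}_{A_0}}$, using that $(B_0' \cap C_0') \cap A_0'$ lies in $(B_0 \cap A_0)' \cap (C_0 \cap A_0)' \cap A_0$-complement computed inside $A_0$ — the set theoretic bookkeeping will work out because we are just cutting all sets by $A_0$. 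Conversely for $\Psi$, the equivalence relation on $X^{\mathcal{F}_{A_0}}$ (with agreement sets lying in $\mathcal{F}_{A_0}$, and so containing $A_0'$ when interpreted as subsets of $A$) pulls back correctly to the $A_0$-part of the equivalence relation on $(X+1)^{\mathcal{F}}$.

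Finally, $\Psi \circ \Phi$ and $\Phi \circ \Psi$ are both identities: for $\Psi \Phi$, the partial function $(B_0, g)$ with $B_0 \sim A_0$ is equivalent in $(X+1)^{\mathcal{F}}$ to its restriction $(B_0 \cap A_0, g|_{B_0 \cap A_0})$ precisely because the agreement set contains $(B_0 \cap A_0) \cup (B_0' \cap A_0')$, which lies in $\mathcal{F}$ by hypothesis $B_0 \sim A_0$; and $\Phi \Psi = \mathrm{id}$ is immediate since the restriction of $f \colon A_1 \to X$ to $A_1 \cap A_0 = A_1$ is $f$ itself. This establishes the claimed isomorphism.
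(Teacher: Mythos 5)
Your construction is essentially right, but there is one step whose justification, as written, does not work, and it is precisely the step where the block hypothesis $B_0\sim A_0$ has to be used: the well-definedness of $\Phi$. Intersecting with $A_0\cup A_0'=A$ is a vacuous operation, and ``cutting all sets by $A_0$'' is not enough. Writing $E=\{a\in B_0\cap C_0\mid g(a)=h(a)\}$, what you need is $(E\cap A_0)\cup A_0'\in{\cal F}$, but the hypothesis only gives $E\cup(B_0'\cap C_0')\in{\cal F}$, and this set is in general \emph{not} contained in $(E\cap A_0)\cup A_0'$: points of $B_0'\cap C_0'\cap A_0$ obstruct the containment. (For instance $A=\{1,2\}$, ${\cal F}=\{A\}$, $A_0=\{1\}$, $B_0=C_0=\{2\}$: then $E\cup(B_0'\cap C_0')=A\in{\cal F}$ while $(E\cap A_0)\cup A_0'=\{2\}\notin{\cal F}$ --- of course here $B_0\not\sim A_0$, which is exactly the point: the purely formal manipulation cannot succeed without invoking $B_0\sim A_0$.) The repair is one line: intersect the ${\cal F}$-set $E\cup(B_0'\cap C_0')$ with the ${\cal F}$-set $(B_0\cap A_0)\cup(B_0'\cap A_0')$ witnessing $B_0\sim A_0$. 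A point of this intersection lying in $E$ lies in $B_0$, hence in $B_0\cap A_0\subseteq A_0$, so in $E\cap A_0$; a point lying in $B_0'\cap C_0'$ lies in $B_0'$, hence in $B_0'\cap A_0'\subseteq A_0'$. So the intersection is contained in $(E\cap A_0)\cup A_0'$, which is then in ${\cal F}$ by up-closure. (Your $\Psi$ direction is fine once phrased correctly: from $D\cup A_0'\in{\cal F}$ and $A_0'\subseteq A_1'\cap A_2'$, up-closure gives $D\cup(A_1'\cap A_2')\in{\cal F}$.)

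With that fixed, your argument is complete, and it takes a genuinely different route from the paper's. You use the partial-function (colimit) description of $X^{{\cal F}_{A_0}}$ on both sides and exhibit an explicit two-sided inverse; the paper instead maps total functions $A_0\to X$ into $(X+1)^{\cal F}$, gets well-definedness and injectivity from a single chain of equivalences, and proves surjectivity onto the block by extending a partial function arbitrarily, which forces a separate treatment of $X=\emptyset$ (where ${\cal F}_{A_0}$ may be improper). Your formulation buys uniformity --- no arbitrary extension and no case split, and it automatically uses the colimit reading of $X^{{\cal F}_{A_0}}$, which is the reading the statement needs in the degenerate case --- at the cost of the slightly heavier well-definedness check above, which is where your write-up slipped. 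The filter axioms, the membership $B_0\cap A_0\in{\cal F}_{A_0}$, the verification that $\Psi$ lands in the block $[A_0]$, and the two composites being identities are all correct.
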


\begin{proof}

\( {\cal F}_{A_0} \) is easily seen to be a filter. An
element of \( X^{{\cal F}_{A_0}} \) is an equivalence
class of functions \( [A_0 \to^t X] \).
\[
[A_0 \to^f X] = [A_0 \to^g X] \Longleftrightarrow
\{a \in A_0 \ | \ fa = ga\} \in {\cal F}_{A_0}
\]
\[
\Longleftrightarrow \{a \in A_0 \ | \ fa =ga \}
\cup A'_0 \in {\cal F}
\]
\[
\Longleftrightarrow [A \to/<-< /<200> A_0 \to^f X]
= [A \to/<-< / <200> A_0 \to^g X] \mbox{\ \ in\ \ }
(X + 1)^{\cal F} .
\]
So taking \( [A_0 \to^f X] \) in \( X^{\cal F}_{A_0} \)
to \( [A \to/<-< /<200> A_0 \to^f X] \) in
\( (X + 1)^{\cal F} \) is well-defined and one-to-one.
To see that it's onto, first assume that
\( X \neq \emptyset \), and take
\( [A \to/<-< /<200> B_0 \to^g X] \) in
\( (X + 1)^{\cal F}_{[A_0]} \) and define
\( f \colon A_0 \to X \) by
\[
f (a) = \left\{ \begin{array}{ll}
      g(a)  \mbox{\quad if \( a \in A_0 \cap B_0\)} & \\
      \mbox{arbitrary, otherwise} & 
            \end{array}
      \right.
      \]
Then
\[
\{a \in A_0 \cap B_0 \ | \ f(a) = g(a)\} \cup (A'_0 \cap B'_0)
\]
\[
= (A_0 \cap B_0) \cup (A'_0 \cap B'_0) \in {\cal F}
\]
because \( A_0 \sim B_0 \). If \( X = \emptyset \) then
\( (X + 1)^{\cal F}_{[A_0]} \) is also empty unless
\( [A_0] = [\emptyset] \), and \( X^{{\cal F}_{A_0}} \)
which is contained in \( (X + 1)^{\cal F}_{[A_0]} \) is
also empty. For \( [A_0] = [\emptyset] \), 
\( (X + 1)^{\cal F}_{[A_0]} \cong 1 \) and so is
\( X^{{\cal F}_{[A_0]}} \) as \( \emptyset \in
{\cal F}_{[A_0]} \).
\end{proof}

\begin{corollary}

 \( \Delta [X^{\cal F}] = \sum_{[A_0] \neq [A]} X^{{\cal F}_{A_0}} \)
 
 \noindent where the coproduct is taken over a set of
 representatives of the \( {\cal F} \)-equivalence classes
 of subsets \( A_0 \subseteq A \) with \( [A_0] \neq [A] \).

\end{corollary}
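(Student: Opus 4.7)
The plan is to assemble three results already in place: first, the definition $\Delta[X^{\cal F}](A) = (X+1)^{\cal F} \setminus X^{\cal F}$, where $X^{\cal F}$ is identified with its image under the taut mono induced by $j \colon X \to X+1$; second, the partition $(X+1)^{\cal F} \cong \sum_{[A_0]} (X+1)^{\cal F}_{[A_0]}$ indexed by the ${\cal F}$-class of the domain of a representing partial map; and third, the isomorphism $(X+1)^{\cal F}_{[A_0]} \cong X^{{\cal F}_{A_0}}$ from the preceding proposition. The only real task is to identify which summand corresponds to the image of $X^{\cal F}$.

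First I would determine when $[A_0] = [A]$: by the defining equivalence, this holds iff $(A_0 \cap A) \cup (A_0' \cap A') = A_0 \in {\cal F}$, i.e.\ iff $A_0$ is a large subset. Taking the representative $A_0 = A$ itself, one has $A' = \emptyset$, and hence ${\cal F}_A = \{A_1 \subseteq A \mid A_1 \cup \emptyset \in {\cal F}\} = {\cal F}$, so the summand indexed by $[A]$ is literally $X^{{\cal F}_A} = X^{\cal F}$. Moreover the canonical inclusion $X^{\cal F} \hookrightarrow (X+1)^{\cal F}$ sends a class $[f \colon A \to X]$ to the class of the total partial map with domain $A$ and function $jf \colon A \to X+1$, which sits in the $[A]$-component of the partition. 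So this inclusion is exactly the coproduct injection of the $[A]$-summand.

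Removing that summand then gives
\[
\Delta[X^{\cal F}] \;\cong\; \sum_{[A_0] \neq [A]} (X+1)^{\cal F}_{[A_0]} \;\cong\; \sum_{[A_0] \neq [A]} X^{{\cal F}_{A_0}},
\]
which is the claimed formula. Naturality in $X$ is automatic on both sides: the partition is induced by the natural surjection $(X+1)^{\cal F} \to (1+1)^{\cal F}$, and the isomorphisms of the previous proposition are obtained by restricting a partial function to its domain, hence are natural in $X$.

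The main obstacle --- slight but worth being careful about --- lies in matching the set-theoretic subtraction that defines $\Delta$ with the index-set formulation of the partition: one must verify that the mono $X^{\cal F} \hookrightarrow (X+1)^{\cal F}$ lands precisely in the $[A]$-summand, rather than being merely abstractly isomorphic to it, so that subtracting it leaves exactly the coproduct over the remaining classes. The second paragraph handles this by exhibiting, for each $[f] \in X^{\cal F}$, the explicit representative $(A, jf)$ whose class is $[A]$; then Lemma~\ref{Lem-SetDiff}, applied to the taut inclusion $X^{\cal F} \hookrightarrow (X+1)^{\cal F}$, ensures that this identification descends to the complement, giving the stated formula.
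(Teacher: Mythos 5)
Your argument is correct and is essentially the paper's own proof, which simply cites the decomposition $(X+1)^{\cal F} \cong \sum_{[A_0]} X^{{\cal F}_{A_0}}$ over all classes together with ${\cal F}_A = {\cal F}$. Your extra step, verifying that the canonical inclusion $X^{\cal F} \hookrightarrow (X+1)^{\cal F}$ lands exactly on the $[A]$-summand (rather than being merely abstractly isomorphic to it), is the point the paper leaves implicit, and you handle it correctly.
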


\begin{proof}

\( (X + 1)^{\cal F} \cong \sum X^{\cal F}_{A_0} \) over all
classes and \( {\cal F}_A = {\cal F} \).
\end{proof}

\begin{corollary}

If $ {\cal U} $ is an ultrafilter, then
$$
\Delta [X^{\cal U}] \cong 1 \rlap{\ .}
$$

\end{corollary}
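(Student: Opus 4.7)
The plan is to apply the preceding corollary and then exploit the defining dichotomy of an ultrafilter to collapse the indexing set of the coproduct to a single class.

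First I would unpack the equivalence relation on subsets of $A$. Recall that $A_0 \sim B_0$ iff $(A_0 \cap B_0) \cup (A_0' \cap B_0') \in \mathcal{U}$. Taking $B_0 = A$, we get $A_0 \sim A$ iff $A_0 \in \mathcal{U}$; and taking $B_0 = \emptyset$, we get $A_0 \sim \emptyset$ iff $A_0' \in \mathcal{U}$. Because $\mathcal{U}$ is an ultrafilter, every $A_0 \subseteq A$ satisfies exactly one of $A_0 \in \mathcal{U}$ or $A_0' \in \mathcal{U}$, so every subset is equivalent to either $A$ or $\emptyset$. Hence there are exactly two equivalence classes, $[A]$ and $[\emptyset]$.

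Next I would invoke the previous corollary,
\[
\Delta [X^{\mathcal{U}}] \cong \sum_{[A_0] \neq [A]} X^{\mathcal{U}_{A_0}} .
\]
Since the only equivalence class other than $[A]$ is $[\emptyset]$, this sum has a single summand, namely $X^{\mathcal{U}_{\emptyset}}$. So it remains to identify this one term.

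Finally I would compute $\mathcal{U}_{\emptyset}$ directly from its definition: $\mathcal{U}_{\emptyset} = \{ A_1 \subseteq \emptyset \mid A_1 \cup A \in \mathcal{U}\}$. The only candidate is $A_1 = \emptyset$, and $\emptyset \cup A = A \in \mathcal{U}$, so $\mathcal{U}_{\emptyset} = \{\emptyset\}$ is the trivial (improper) filter on $\emptyset$. By the colimit convention adopted in this section, $X^{\mathcal{U}_{\emptyset}} = \varinjlim_{B \in \{\emptyset\}} X^B = X^{\emptyset} = 1$. (Equivalently, reading it off from the proof of the previous proposition, $(X+1)^{\mathcal{U}}_{[\emptyset]}$ is the singleton class of the everywhere-$*$ partial function.) This gives $\Delta[X^{\mathcal{U}}] \cong 1$, as required. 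The only subtle point is keeping track of which convention is in force for the degenerate reduced power; once the filter on $\emptyset$ is identified as trivial, the rest is immediate.
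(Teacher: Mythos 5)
Your proof is correct and follows essentially the same route as the paper: use the ultrafilter dichotomy to show the only $\mathcal{U}$-equivalence classes of subsets are $[A]$ and $[\emptyset]$, then identify the single remaining summand $X^{\mathcal{U}_{\emptyset}}$ of the preceding corollary with $1$. Your explicit identification of $\mathcal{U}_{\emptyset}$ as the (improper) filter $\{\emptyset\}$ on $\emptyset$ and the appeal to the colimit convention is in fact a slightly cleaner rendering of the paper's closing step.
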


\begin{proof}

As \( {\cal U} \) is an ultrafilter, for any subset
\( A_0 \subseteq A \), either \( A_0 \in {\cal U} \)
or \( A'_0 \in {\cal U} \). If \( A_0 \in {\cal U} \)
then \( [A_0] = [A] \). If \( A'_0 \in {\cal U} \),
then \( [A_0] = [\emptyset] \) for
\[
(A_0 \cap \emptyset) \cup (A'_0 \cap \emptyset') =
A'_0 \in {\cal U} .
\]
Thus there are only two classes \( [A] \) and
\( [\emptyset] \) so \( \Delta [X^{\cal U}] =
X^{{\cal F}_\emptyset} \cong 1 \) as
\( {\cal F}_\emptyset = 2^A \).
\end{proof}

This gives an example of two functors, not
differing by a constant, with the same finite
difference, namely $ X $ and $ X^{\cal U} $.

We could define {\em reduced power series}
to be coproducts
$$
\sum_{i\,\in\,I} C_i \times X^{{\cal F}_i}
$$
and get that $ \Delta $ of such is again one.
Apart from $ \Delta [X^{\cal F}] $ we don't know
of any naturally arising examples.

Note that all filters $ {\cal F} $ on finite sets
are principal, i.e.~the set of all
subsets of $ A $ containing some fixed subset
$ A_0 $, namely the intersection of all elements
of $ {\cal F} $. Then $ X^{\cal F} \cong X^{A_0} $,
so reduced powers are an essentially infinite
phenomenon, infinite powers, so there would be
no corresponding thing in algebra or analysis.

\subsection{Monads}
\label{SSec-Filt}

All of the previous examples were variations on
the power series theme. The filter monad
$ {\mathbb F} $ is of a different nature. It was
central to Manes' paper introducing taut functors
\cite{Man02}. Not only is the functor $ {\mathbb F} $
taut but the unit $ \eta $ and multiplication $ \mu $
are taut transformations, i.e.~$ {\mathbb F} $ is
a taut monad.

Recall that $ {\mathbb F} (X) = \{{\cal F} \ |\  {\cal F} 
\mbox{\  is a filter on\ } X \} $. If $ f \colon X \to Y $
then we have $ {\mathbb F} (f) ({\cal F}) = \{Y_0 \subseteq Y
\ |\ f^{-1} Y_0 \in {\cal F}\} $. The unit $ \eta \colon
X \to {\mathbb F} (X) $ takes an element $ x $ to
the principal ultrafilter generated by $ x $,
i.e.~$\{X_0 \subseteq X \ |\ x \in X_0\}$. The
multiplication is a bit harder, and won't concern
us here.

\begin{proposition}

$$
\Delta [{\mathbb F}] = {\mathbb F} \rlap{\ .}
$$

\end{proposition}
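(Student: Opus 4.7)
The plan is to unpack $\Delta[\mathbb{F}](X) = \mathbb{F}(X+1) \setminus {\rm Im}\,\mathbb{F}(j)$ directly and exhibit a natural bijection with $\mathbb{F}(X)$. Writing $j \colon X \to X+1$ and $*$ for the adjoined point, the functorial formula $\mathbb{F}(j)(\mathcal{F}) = \{Z \subseteq X+1 \mid Z \cap X \in \mathcal{F}\}$ always produces a filter containing $X$, since $X \in \mathcal{F}$ automatically. Conversely, for any filter $\mathcal{G}$ on $X+1$ with $X \in \mathcal{G}$, setting $\mathcal{F} := \{Y \subseteq X \mid Y \in \mathcal{G}\}$ yields a filter on $X$ (closure under binary intersection is inherited; upward-closure within $X$ follows by intersecting arbitrary elements of $\mathcal{G}$ with $X \in \mathcal{G}$), and one readily checks $\mathbb{F}(j)(\mathcal{F}) = \mathcal{G}$. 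So ${\rm Im}\,\mathbb{F}(j) = \{\mathcal{G} \in \mathbb{F}(X+1) \mid X \in \mathcal{G}\}$, and hence
\[
\Delta[\mathbb{F}](X) = \{\mathcal{G} \in \mathbb{F}(X+1) \mid X \notin \mathcal{G}\};
\]
by upward-closure of $\mathcal{G}$ this coincides with the collection of filters on $X+1$ every element of which contains $*$.

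Next I would define the candidate isomorphism
\[
\Phi_X \colon \Delta[\mathbb{F}](X) \longrightarrow \mathbb{F}(X), \qquad \mathcal{G} \longmapsto \{Z \setminus \{*\} \mid Z \in \mathcal{G}\},
\]
with inverse $\mathcal{H} \mapsto \{Y \cup \{*\} \mid Y \in \mathcal{H}\}$. Both are immediate to verify as filter maps and as mutual inverses. For naturality, given $f \colon X \to Y$, note first that $\mathbb{F}(f+1)(\mathcal{G}) \in \Delta[\mathbb{F}](Y)$ whenever $\mathcal{G} \in \Delta[\mathbb{F}](X)$, because $(f+1)^{-1}(Y) = X \notin \mathcal{G}$, so $\Delta[\mathbb{F}](f)$ is simply $\mathbb{F}(f+1)$ restricted. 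A single line using $(f+1)^{-1}(W \cup \{*\}) = f^{-1}(W) \cup \{*\}$ then shows $\Phi_Y \circ \Delta[\mathbb{F}](f) = \mathbb{F}(f) \circ \Phi_X$. No step presents a real obstacle; the only thing requiring care is bookkeeping the adjoined point $*$ consistently throughout the construction.
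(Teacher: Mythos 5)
Your proof is correct and follows essentially the same route as the paper: both arguments identify the image of $\mathbb{F}(j)$ with the filters on $X+1$ containing $X$ (the paper's $\overline{\overline{\cal F}}$'s), so that $\Delta[{\mathbb F}](X)$ consists of the filters all of whose members contain $*$, which correspond bijectively to filters on $X$ by deleting/adjoining $*$ (the paper's $\overline{\cal F}$ construction). Your explicit naturality check, which the paper leaves implicit, is a welcome addition but not a different method.
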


\begin{proof}

Let $ {\cal F} $ be a filter on $ X + 1 $ and

\begin{align*}
{\cal F}_0 & = \{ X_0 \subseteq X\ |\ X_0 \in {\cal F} \},\\
{\cal F}_1 & = \{ X_1 \subseteq X \ |\ X_1 \cup\{*\} \in {\cal F}\}.
\end{align*}
$ {\cal F}_0 $ may be empty, but if not it's a filter
on $ X $ and $ {\cal F}_1 $ is always a filter on $ X $.
Clearly $ {\cal F}_0 \subseteq {\cal F}_1 $. If $ {\cal F}_0
\neq \emptyset $, then $ X \in {\cal F}_0 $ so $ X \in {\cal F} $.
Then for any $ X_1 \in {\cal F}_1 $, $ X_1 = X \cap
(X_1 \cup \{*\}) $ is in $ {\cal F} $ so $ X_1 \in {\cal F}_0 $ and
$ {\cal F}_0 = {\cal F}_1 $.

Conversely, given any filter $ {\cal F} $ on $ X $
we get two filters on $ X + 1 $
$$
\ov{\cal F} = \{X_0 \cup \{*\}\ |\ X_0 \in {\cal F}\}
$$
and
$$
\ov{\ov{\cal F}} = {\cal F} \cup \{X_0 \cup\{*\}\ |\ X_0 \in {\cal F}\}\rlap{\ .}
$$
Thus the filters on $ X + 1 $ fall into two disjoint classes,
the $ \ov{\cal F} $'s and the $ \ov{\ov{\cal F}} $'s, and the
$ \ov{\ov{\cal F}} $'s are precisely the images of $ \ov{\cal F} $'s
in $ {\mathbb F} (X) $ under the inclusion
$ {\mathbb F} (j) \colon F(X) \to F (X + 1) $. Consequently,
$ \Delta [{\mathbb F}](X) $ consists of all the $ \ov{\cal F} $'s
and is isomorphic to $ {\mathbb F} (X) $ itself.
\end{proof}

If $ {\mathbb F}' $ is the submonad of $ {\mathbb F} $
of proper filters then
for any filter $ {\cal F} $ on $ X $, $ \ov{\cal F} $ will
always be proper whereas $ \ov{\ov{\cal F}} $ will
only be proper when $ {\cal F} $ is. This gives the
following result.

\begin{proposition}

$$ \Delta [{\mathbb F}'] \cong {\mathbb F} \rlap{\ .}
$$

\end{proposition}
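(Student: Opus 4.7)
The plan is to piggyback on the previous proof, which established that every filter $\mathcal{G}$ on $X+1$ has a unique decomposition: either $\mathcal{G} = \bar{\mathcal{F}}$ or $\mathcal{G} = \bar{\bar{\mathcal{F}}}$ for a unique filter $\mathcal{F}$ on $X$ (recovered as $\mathcal{F}_0$ when nonempty and $\mathcal{F}_1$ always). The $\bar{\bar{\mathcal{F}}}$'s are the image under $\mathbb{F}(j)$, and the $\bar{\mathcal{F}}$'s form the complement, yielding $\Delta[\mathbb{F}](X) \cong \mathbb{F}(X)$. So I only need to track which of these filters are proper.

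First I would observe that $\bar{\mathcal{F}}$ is \emph{always} proper, regardless of whether $\mathcal{F}$ is: every element of $\bar{\mathcal{F}}$ contains $*$, hence is non-empty. On the other hand, $\bar{\bar{\mathcal{F}}} = \mathcal{F} \cup \{X_0 \cup\{*\} \mid X_0 \in \mathcal{F}\}$ contains $\emptyset$ if and only if $\mathcal{F}$ does, so $\bar{\bar{\mathcal{F}}}$ is proper precisely when $\mathcal{F}$ is.

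Next I would assemble these observations. The proper filters on $X+1$ are partitioned into the $\bar{\bar{\mathcal{F}}}$'s with $\mathcal{F}$ ranging over proper filters on $X$ (these are exactly the image of $\mathbb{F}'(j)\colon \mathbb{F}'(X) \to \mathbb{F}'(X+1)$) together with the $\bar{\mathcal{F}}$'s with $\mathcal{F}$ ranging over \emph{all} filters on $X$ (proper or not). Removing the image of $\mathbb{F}'(j)$ leaves exactly the $\bar{\mathcal{F}}$'s, parametrized by $\mathbb{F}(X)$. The assignment $\mathcal{F} \mapsto \bar{\mathcal{F}}$ is a bijection $\mathbb{F}(X) \to \Delta[\mathbb{F}'](X)$.

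For naturality in $X$, I would note that if $f\colon X \to Y$ then $\mathbb{F}(f+1)$ applied to $\bar{\mathcal{F}}$ (a filter on $X+1$) yields $\overline{\mathbb{F}(f)(\mathcal{F})}$, because a subset $Y_1 \cup \{*\} \subseteq Y+1$ satisfies $(f+1)^{-1}(Y_1 \cup\{*\}) = f^{-1}(Y_1) \cup \{*\} \in \bar{\mathcal{F}}$ iff $f^{-1}(Y_1) \in \mathcal{F}$ iff $Y_1 \in \mathbb{F}(f)(\mathcal{F})$. Thus the bijection commutes with the functorial action, giving the natural isomorphism $\Delta[\mathbb{F}'] \cong \mathbb{F}$. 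There is no real obstacle here beyond the bookkeeping of propriety; the work was already done in the previous proposition.
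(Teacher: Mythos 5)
Your proposal is correct and follows essentially the same route as the paper: the paper's own justification is precisely the observation that $\bar{\mathcal{F}}$ is always proper while $\bar{\bar{\mathcal{F}}}$ is proper exactly when $\mathcal{F}$ is, combined with the decomposition of filters on $X+1$ established for $\Delta[\mathbb{F}]$. You simply make explicit the resulting parametrization of $\Delta[\mathbb{F}'](X)$ by $\mathbb{F}(X)$ and the naturality check, which the paper leaves implicit.
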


The ultrafilter monad, usually called $ \beta $, is
also a submonad of $ {\mathbb F} $. An ultrafilter
$ {\cal U} $ on $ X + 1 $ will be of the form
$ \ov{\ov{\cal F}} $ only if $ {\cal F} $ is the powerset
$ PX $ and then $ {\cal U} $ will be $ \langle *\rangle $,
the principal ultrafilter determined by $ * $. On the
other hand $ \ov{\ov{\cal F}} $ will be an ultrafilter
iff $ {\cal F} $ is one on $ X $. It follows that
$ \beta (X+1) \cong \beta X + 1 $ and we get:

\begin{proposition}

$ \Delta [\beta] = 1 $.

\end{proposition}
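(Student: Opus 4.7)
The plan is to reuse the case-split established in the proof of $ \Delta [{\mathbb F}] \cong {\mathbb F} $ and simply intersect each of the two classes of filters on $ X + 1 $ with the set of ultrafilters. Recall that filters on $ X + 1 $ partition as the $ \ov{\ov{\cal F}} $'s (those lying in the image of $ {\mathbb F}(j) \colon {\mathbb F} (X)\ \to/>->/\ {\mathbb F} (X+1) $) together with the $ \ov{\cal F} $'s (those in which every element contains $ * $), and $ \Delta [{\mathbb F}] (X) $ picks out the second class. So I need to identify which $ \ov{\ov{\cal F}} $'s and which $ \ov{\cal F} $'s are ultrafilters.

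First I would check the $ \ov{\ov{\cal F}} $ side, i.e.~the part coming from $ \beta X $ already. As remarked in the paper, $ \ov{\ov{\cal F}} $ is an ultrafilter on $ X + 1 $ iff $ {\cal F} $ is an ultrafilter on $ X $; this is immediate because the two coincide on subsets of $ X $ and the value on any $ X_0 \cup \{*\} $ is forced by $ {\cal F} $. So the image of $ \beta (j) \colon \beta (X) \to \beta (X + 1) $ is exactly the set of $ \ov{\ov{\cal U}} $ with $ {\cal U} \in \beta X $.

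Next, the key step: showing that among the $ \ov{\cal F} $'s there is exactly one ultrafilter, namely the principal one $ \langle * \rangle $. Suppose $ \ov{\cal F} = \{ X_0 \cup \{*\} \mid X_0 \in {\cal F}\} $ is an ultrafilter on $ X + 1 $. Every element of $ \ov{\cal F} $ contains $ * $, so $ X \notin \ov{\cal F} $; being ultra, its complement $ \{*\} $ must lie in $ \ov{\cal F} $, which forces $ \emptyset \in {\cal F} $ and hence $ {\cal F} = PX $. Conversely $ \overline{PX} = \{X_0 \cup \{*\} \mid X_0 \subseteq X\} = \langle * \rangle $ is genuinely the principal ultrafilter generated by $ * $. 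So $ \beta (X + 1) \cong \beta (X) + 1 $, with the extra element being $ \langle * \rangle $.

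Combining, the new ultrafilters on $ X + 1 $ (those not in the image of $ \beta (j) $) form the singleton $ \{ \langle * \rangle \} $, naturally in $ X $. Hence $ \Delta [\beta] (X) \cong 1 $ for every $ X $. The only possible obstacle would be checking naturality of the identification, but this follows because the construction of $ \langle * \rangle $ uses only the distinguished point $ * \in 1 $ and no data from $ X $, so any $ f \colon X \to Y $ sends $ \langle * \rangle_{X+1} $ to $ \langle * \rangle_{Y+1} $ under $ \beta (f + 1) $.
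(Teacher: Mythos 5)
Your proof is correct and is essentially the paper's own argument: it classifies ultrafilters on $X+1$ via the $\ov{\cal F}$ / $\ov{\ov{\cal F}}$ dichotomy from the analysis of $\Delta[{\mathbb F}]$, finding that the only ultrafilter not in the image of $\beta(j)$ is the principal one $\langle * \rangle$, so $\beta(X+1)\cong \beta X + 1$ naturally. The paper also remarks on an even quicker route (that $\beta$ preserves finite coproducts and $\beta 1 = 1$), but your filled-in version of the filter-theoretic sketch is exactly what was intended.
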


In fact, as is well known and easy to prove, $ \beta $
preserves finite coproducts and $ \beta 1 = 1 $, from
which the proposition follows, without the analysis of
$ {\mathbb F} $.

The covariant powerset monad $ {\mathbb P} $ can
also be considered a submonad of $ {\mathbb F} $,
by the ``principal filter inclusion'',
$$
A \subseteq X \longmapsto \langle A \rangle =
\{X_0 \subseteq X\ |\ A \subseteq X_0 \} \rlap{\ .}
$$
For $ f \colon X \to Y $,
\begin{align*}
{\mathbb F} (f) (\langle A\rangle) & =
 \{Y_0 \subseteq Y\ |\ f^{-1} Y_0 \in \langle A\rangle\}\\
         & = \{Y_0 \subseteq Y\ |\ A \subseteq f^{-1} Y_0\}\\
         & = \{Y_0 \subseteq Y \ |\ fA \subseteq Y_0\}\\
         & = \langle fA\rangle \rlap{\ .}
\end{align*}
So we could analyze $ \Delta [{\mathbb P}] $ in terms
of $ {\cal F} $, but it is easier done directly, though
it's really the same thing.

A subset of $ X + 1 $ either contains $ * $ or not.
In the first case it is of the form $ X_1 \cup\{*\} $
for $ X_1 \subseteq X $ and in the second it's
just $ X_0 \subseteq X $, the image of $ X_0 $
under $ {\mathbb P} (j) $. This gives:

\begin{proposition}

$ \Delta [{\mathbb P}] \cong {\mathbb P} \rlap{\ .} $

\end{proposition}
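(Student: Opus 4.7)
The plan is to argue directly from the set-theoretic description of subsets of $X+1$, paralleling the analysis of the filter monad $\mathbb{F}$ but much more simply. Any subset $A\subseteq X+1$ either contains $*$ or does not. If $*\notin A$ then $A\subseteq X$, and $A$ is the image under $\mathbb{P}(j)$ of itself regarded as a subset of $X$. If $*\in A$, write $A=X_1\cup\{*\}$ with $X_1=A\cap X$; such an $A$ cannot lie in the image of $\mathbb{P}(j)$, since for $A_0\subseteq X$ we have $\mathbb{P}(j)(A_0)=j(A_0)\subseteq X$. So as a set,
\[
\Delta[\mathbb{P}](X) \;=\; \mathbb{P}(X+1)\setminus \mathrm{Im}\,\mathbb{P}(j) \;=\; \{\,X_1\cup\{*\}\mid X_1\subseteq X\,\}.
\]

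The next step is to define $\psi_X\colon \mathbb{P}(X)\to\Delta[\mathbb{P}](X)$ by $X_1\mapsto X_1\cup\{*\}$. This is manifestly a bijection for each $X$, with inverse $A\mapsto A\cap X=A\setminus\{*\}$.

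To see that $\psi$ is natural in $X$, take $f\colon X\to Y$. By definition $\Delta[\mathbb{P}](f)$ is the restriction of $\mathbb{P}(f+1)$ to the subfunctor $\Delta[\mathbb{P}]$, and $\mathbb{P}(f+1)(X_1\cup\{*\})=(f+1)(X_1)\cup (f+1)(\{*\})=f(X_1)\cup\{*\}$, which equals $\psi_Y(\mathbb{P}(f)(X_1))$. Thus $\psi\colon \mathbb{P}\to\Delta[\mathbb{P}]$ is a natural isomorphism.

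There is essentially no obstacle here: the only thing to watch is to cite Proposition~\ref{Prop-DeltaTaut} (or the notational convention following it) to justify identifying $\mathbb{P}(X)$ with its image in $\mathbb{P}(X+1)$ via the monic $\mathbb{P}(j)$, which is legitimate because $\mathbb{P}$ is taut (as already verified in Section~\ref{SSec-Mon}). Once this identification is in place, the argument is just the observation above about how subsets of $X+1$ split according to whether they contain the new point.
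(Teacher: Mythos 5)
Your proof is correct and follows essentially the same route as the paper: subsets of $X+1$ split according to whether they contain $*$, those not containing $*$ form the image of $\mathbb{P}(j)$, and those containing $*$ correspond bijectively and naturally to subsets of $X$. The paper's proof is just a terser version of this same observation, so nothing more is needed.
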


Although \( \Delta [{\mathbb F}] = \Delta [{\mathbb F'}] =
{\mathbb F} \), \( \Delta [{\mathbb P}] = {\mathbb P} \)
and \( \Delta [\beta] = 1 \) are all monads it's the
exception rather than the rule that \( \Delta \) of a
taut monad is again a monad. Note that, although
\( \Delta [\beta] = 1 \) is a monad, it is not a taut one
as the unit \( X \to 1 \) is not taut. Even for such a
basic monad as \( T X = M \times X \), for \( M \)
a monoid, its difference is the constant functor \( M \)
which may look like a monad but it's not (unless
\( M = 1 \)).

The reason is that for \( (T, \eta, \mu) \) a taut monad,
we have a pullback
\[
\bfig
\square/>`<-< `<-< `>/<650,500>[X + 1`(X + 1)`X`T(X)\rlap{\ ,};
\eta (X + 1)```\eta X]

\place(325,250)[\framebox{\scriptsize Pb}]

\efig
\]
so, in passing to the difference \( \Delta [T] (X) =
T (X + 1) \setminus T (X) \), we've removed the
unit.

However, as we saw in Theorem~\ref{Thm-DMMonoidal}, looking
at the difference operator as producing an endomorphism
of the ``tangent space'', we get a monoidal functor
\[
D \colon {\bf End}_{T\!aut} ({\bf Set}) \to {\bf End}_{T\!aut}
({\bf Set} \times {\bf Set})
\]
which does preserve monoids, i.e.~monads. So, for
a taut monad \( (T, \eta, \mu) \) on \( {\bf Set} \) we
get a taut monad \( D (T, \eta, \mu) \) on
\( {\bf Set} \times {\bf Set} \)
\[
\begin{array}{lll}
{\bf Set} \times {\bf Set} & \to/>/<200> & {\bf Set} \times {\bf Set}\\
(A, B)   & \to/|->/<200>  & (TA, \Delta [T](A) \times B)\rlap{\ .}
\end{array}
\]
The unit is
\[
(A, B) \to^{(\eta A, h A \times B)} (TA, \Delta [T](A) \times B)
\]
where \( h A \) is defined by
\[
\bfig
\square/>` >->` >->`>/<650,500>[1`\Delta {[T]} (A)`A +1`T(A+1);
h A`j_2``\eta (A+1)]

\efig
\]
which exists because \( \eta \) is taut. It is just
\( \Delta [\eta] (A) \).

The multiplication comes from the chain rule
transformation. It is
\[
(T^2 A, \Delta [T](TA) \times \Delta [T] (A) \times B)
\to^{(\mu A, m A \times B)}
(TA, \Delta [T] (A) \times B)
\]
where \( m A \) is the composite
\[
\Delta [T] (TA) \times \Delta [T] (A) \to/ >->/<200>^\gamma
\Delta [T^2] (A) \to^{\Delta[\mu](A)}
\Delta [T] (A) \rlap{\ .}
\]

The \( B \) looks just tacked on with nothing to do
with \( T \) or \( A \), but they are intertwined via
the associativity law for the monad
\( D (T, \eta, \mu) \). We won't look at this directly,
but it is apparent when we consider Eilenberg-Moore
algebras for it. An algebra is
\[
(\alpha, \beta) \colon (TA, \Delta [T] (A) \times B) \to (A, B)
\]
such that \( (A, \alpha) \) is a \( T \)-algebra and
\( \beta \) makes the following diagrams commute
\[
\bfig
\qtriangle/>`>`>/<800,450>[1 \times B`\Delta {[}T{]} (A) \times B`B;
h \times B`\cong`\beta]

\efig
\]
and
\[
\bfig
\square/>``>`>/<1600,1000>[\Delta {[}T{]} (TA) \times \Delta {[}T{]} (A) \times B
`\Delta {[}T{]}(A) \times B`\Delta {[}T{]} (A) \times B`B\rlap{\ .};
\Delta {[}T{]} (\alpha) \times B``\beta`\beta]

\morphism(0,1000)/>/<0,-500>[\Delta {[}T{]} (TA) \times \Delta {[}T{]} (A) \times B`
\Delta {[}T^2{]}(A) \times B;\gamma \times B]

\morphism(0,500)/>/<0,-500>[\Delta {[}T^2{]}(A) \times B`
\Delta {[}T{]} (A) \times B;\Delta {[}\mu{]} \times B]

\efig
\]
We see the \( \alpha \) appearing in the top row  in
the equations for \( \beta \).

\subsection{Dirichlet functors}
\label{SSec-DirichletDiff}

The analysis of $ {\mathbb P} $ easily generalizes to
the covariant exponentials $ L^X $.

\begin{proposition}

Let $ L $ be a sup-lattice, then
$$
\Delta [L^X] \cong L_* \times L^X
$$
where $ L_* = \{l \in L\ |\ l \neq \bot\} $.

\end{proposition}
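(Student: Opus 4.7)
The plan is to unfold the definitions and compute directly, exploiting the fact that the coproduct injection $j\colon X \to X+1$ has the simplest possible fibers.

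First I would compute explicitly the image of $L^X$ inside $L^{X+1}$. Since $L^{(-)}$ is defined by left Kan extension, for $\psi\colon X \to L$ the function $L^j(\psi)\colon X+1 \to L$ is given by $L^j(\psi)(y) = \bigvee_{j(x) = y} \psi(x)$. For $y \in X$ this is just $\psi(y)$ (as $j$ is injective), and for $y = *$ the sup is empty, so $L^j(\psi)(*) = \bot$. Hence the image of $L^j$ consists precisely of those $\phi\colon X+1 \to L$ with $\phi(*) = \bot$, and therefore
\[
\Delta[L^X](X) \;=\; L^{X+1} \setminus L^X \;=\; \{\phi\colon X+1 \to L \mid \phi(*) \neq \bot\}.
\]

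Next, the evident bijection $\phi \longleftrightarrow (\phi(*), \phi|_X)$ identifies this set with $L_* \times L^X$, by sending $\phi$ to the pair consisting of its value at $*$ (which lies in $L_*$ by the defining condition) and its restriction to $X$.

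It remains to check that this bijection is natural in $X$. For $f\colon X \to Y$, the functorial action $L^{f+1}\colon L^{X+1} \to L^{Y+1}$ sends $\phi$ to $L^{f+1}(\phi)(y) = \bigvee_{(f+1)(z) = y} \phi(z)$. For $y \in Y$ this is $\bigvee_{f(x) = y} \phi(x) = L^f(\phi|_X)(y)$, and for $y = *$ the fiber is just $\{*\}$, giving value $\phi(*)$. So under the identification the transition map is exactly $\id_{L_*} \times L^f$, yielding the desired natural isomorphism $\Delta[L^X] \cong L_* \times L^X$.

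There is no real obstacle here: the only subtle point is verifying that the fiber of $f+1$ over $*$ contains only $*$, which is why the $L_*$-coordinate is carried along untouched by functoriality and the product decomposition is natural. (Compare the monad case $\Delta[\mathbb{P}] \cong \mathbb{P}$ in Subsection~\ref{SSec-Filt}, which is essentially the case $L = 2$ and follows the same pattern.)
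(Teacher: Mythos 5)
Your proof is correct and follows essentially the same route as the paper: both use the Kan extension formula to see that the fiber over $*$ is untouched (so $L^{X+1}\cong L^X\times L$ naturally), that $L^j(\psi)(*)=\bot$ since the relevant sup is empty, and hence that the complement of the image of $L^j$ is $L_*\times L^X$. No gaps; your explicit naturality check just spells out what the paper leaves implicit.
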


\begin{proof}

An element of $ L^{(X+1)} $ is a function
$ X + 1 \to^{[\phi, l]} L $ where $ \phi \colon X \to L $
and $ l \in L $. If $ f \colon X \to Y $ and
$ L^f [\phi, l] = [\psi, m] \colon Y+1 \to L $,
then $ \psi (y) = \bigvee_{f x = y} \phi(x) $ and
$ m $ is the sup over all pre-images of $ * $,
i.e.~$ \{*\} $, so $ m = l $, i.e.~$ L^{(X+1)} \cong
L^X \times L $, as functors.

$ L^j (\phi) = [\phi, \bot] \colon X + 1 \to L $ where
$ \phi \colon X \to L $ and $ \bot $ is the bottom
element of $ L $, i.e.~the sup of the empty set,
$ j^{-1} \{*\} $. This gives the result.
\end{proof}

Recall that for $ l \in L $, a sup lattice, $ D(l) $
denotes the down-set of $ l $, $ \{l'\in L\ |\ l' \leq l\} $,
which is a sub sup lattice of $ L $.

\begin{proposition}

Let $ C_l = \{l' \in L \ |\ l \vee l' = \top \mbox{\ and\ }
 l' \neq \bot\} $. Then
 $$
 \Delta [L^{[X]}] \cong \sum_{l \in L} C_l \cdot
 D(l)^{[X]} \rlap{\ .}
 $$

\end{proposition}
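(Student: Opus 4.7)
The plan is to unpack what $L^{[X+1]}$ looks like, identify the image of the inclusion $L^{[X]} \hookrightarrow L^{[X+1]}$ inside it, and then stratify the complement by the value of $\bigvee_{x \in X} \phi(x)$.

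First I would describe an element of $L^{[X+1]}$ as a pair $[\phi, l']$ where $\phi \colon X \to L$ and $l' = f(*) \in L$, subject to the normalization condition $(\bigvee_{x \in X} \phi(x)) \vee l' = \top$. The inclusion $L^{[j]} \colon L^{[X]} \to L^{[X+1]}$ then sends $\phi$ (where $\bigvee_x \phi(x) = \top$) to $[\phi, \bot]$, and it is easily checked that $[\phi, l']$ lies in the image of $L^{[j]}$ if and only if $l' = \bot$ (forcing $\bigvee_x \phi(x) = \top$). Consequently the elements of $\Delta[L^{[X]}](X)$ are exactly the pairs $[\phi, l']$ with $l' \neq \bot$.

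Next I would stratify this set by the value $l := \bigvee_{x \in X} \phi(x) \in L$. Since each $\phi(x) \leq l$, the function $\phi$ factors uniquely through the down-set $D(l)$, and the condition $\bigvee_x \phi(x) = l$ exactly says $\phi \in D(l)^{[X]}$ (with $l$ playing the role of top in $D(l)$). The remaining constraint $l \vee l' = \top$ together with $l' \neq \bot$ is by definition the condition $l' \in C_l$. So $l'$ ranges over $C_l$ independently, giving the bijection
\[
\Delta[L^{[X]}](X) \;\cong\; \sum_{l \in L}\; C_l \times D(l)^{[X]}.
\]

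Finally I would verify naturality in $X$: for $f \colon X \to Y$, the map $L^{[f+1]}$ sends $[\phi, l'] \mapsto [L^{[f]}\phi,\, l']$, since $(f+1)^{-1}\{*\} = \{*\}$ and the sup over a singleton is $l'$. Because $L^{[f]}\phi$ and $\phi$ have the same sup $l$ (sup over fibres, then over $Y$, equals sup over $X$), the stratifying index $l$ and the parameter $l'$ are preserved, so the decomposition is natural and the required isomorphism of functors follows. The only mild obstacle is the routine check that $\phi \in D(l)^{[X]}$ really transports functorially under $L^{[f]}$ within the fixed summand indexed by $l$, which is immediate from this sup-preservation observation.
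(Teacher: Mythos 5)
Your proposal is correct and follows essentially the same route as the paper: identify $L^{[X+1]}$ with pairs $[\phi,l']$ satisfying $\bigl(\bigvee_x\phi(x)\bigr)\vee l'=\top$, note the image of $L^{[j]}$ is exactly the pairs with $l'=\bot$, and stratify the complement by $l=\bigvee_x\phi(x)$ so that $\phi\in D(l)^{[X]}$ and $l'\in C_l$. Your explicit naturality check is a welcome bit of extra care that the paper leaves implicit (it is handled in the proof of the preceding proposition on $L^X$), but it is not a different argument.
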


\begin{proof}

The proof is similar to that of the previous proposition.
An element of $ \Delta [L^{[X]}] $ is a function
$ [\phi, l'] \colon X+1 \to L $ for $ l' \neq \bot $ and
$ \bigvee_{x \in X} \phi (x) \vee l' = \top $. If we let
$ l = \bigvee_{x \in X} \phi (x) $ then
$ \phi \in D(l)^{[X]} $. And there's one such $ \phi $
for each $ l' \neq \bot $ and $ l \vee l' = \top $.
\end{proof}

\begin{corollary}

$ \Delta $ of a Dirichlet functor is a Dirichlet functor.

\end{corollary}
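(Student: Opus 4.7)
The plan is to reduce the statement directly to the preceding proposition together with the commutation of \(\Delta\) with coproducts. A Dirichlet functor is by definition a coproduct of normalized exponentials, so once we know \(\Delta\) commutes with such sums and we know what \(\Delta\) does to a single normalized exponential, the result should follow almost immediately.

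More concretely, I would start from \(F(X) = \sum_{i \in I} L_i^{[X]}\) with each \(L_i\) a sup-lattice. By Proposition~\ref{Prop-Sum} (or the more general Theorem~\ref{Thm-ConflColim} applied to the discrete, and hence confluent, shape \(I\)), we obtain
\[
\Delta[F](X) \;\cong\; \sum_{i \in I} \Delta\bigl[L_i^{[X]}\bigr].
\]
Then I would invoke the immediately preceding proposition, which gives
\[
\Delta\bigl[L_i^{[X]}\bigr] \;\cong\; \sum_{l \in L_i} C_{i,l} \cdot D_i(l)^{[X]},
\]
where \(C_{i,l} = \{ l' \in L_i \mid l \vee l' = \top \text{ and } l' \neq \bot\}\) and \(D_i(l)\) is the down-set of \(l\) in \(L_i\).

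Substituting and rewriting the set-indexed product \(C_{i,l} \cdot D_i(l)^{[X]}\) as the coproduct \(\sum_{c \in C_{i,l}} D_i(l)^{[X]}\), we arrive at
\[
\Delta[F](X) \;\cong\; \sum_{i \in I} \sum_{l \in L_i} \sum_{c \in C_{i,l}} D_i(l)^{[X]},
\]
which is a coproduct of normalized exponentials indexed by the set \(J = \{(i,l,c) \mid i \in I,\ l \in L_i,\ c \in C_{i,l}\}\), with lattices \(D_i(l)\). The only point to check is that each \(D_i(l)\) is itself a sup-lattice, which is clear since the down-set of any element in a sup-lattice is closed under arbitrary joins and has a top element \(l\). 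Hence \(\Delta[F]\) is again a Dirichlet functor, and there is essentially no obstacle beyond assembling the two ingredients.
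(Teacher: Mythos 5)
Your proposal is correct and follows essentially the same route the paper intends: commute $\Delta$ with the coproduct defining the Dirichlet functor, apply the preceding proposition to each normalized exponential $L_i^{[X]}$, and observe that each $D_i(l)$ is again a sup-lattice, so the result is a coproduct of normalized exponentials. The only hypothesis worth making explicit (which you implicitly satisfy) is that the summands are taut, so that the coproduct commutation for $\Delta$ applies; this holds since normalized exponentials are taut.
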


\begin{corollary}
\label{Cor-DeltaOrd}

(1) If $ \top $ is join-irreducible then
$$
\Delta [L^{[X]}] = L_* \cdot L^{[X]} +
\sum_{l \neq \top} D(l)^{[X]}\rlap{\ .}
$$
\ 

\noindent (2) $ \Delta [{\bf n}^{[X]}] = (n - 1) \cdot 
{\bf n}^{[X]} + ({\bf n - 1})^{[X]} +
({\bf n - 2})^{[X]} + \dots +
{\bf 2}^{[X]} + {\bf 1}^{[X]} \rlap{\ .} $

\

\noindent (3) $ \Delta [p_*^{[X]}] = \pi (p) \cdot
p_*^{[X]} + \sum q_*^{[X]} + 1 $, the coproduct
taken over all primes $ < p $.

\end{corollary}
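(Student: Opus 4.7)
My plan is to derive all three parts of Corollary~\ref{Cor-DeltaOrd} as direct specialisations of the immediately preceding Proposition, which gives
\[
\Delta [L^{[X]}] \cong \sum_{l \in L} C_l \cdot D(l)^{[X]}\qquad\text{with}\qquad C_l = \{\, l' \in L \mid l \vee l' = \top,\ l' \neq \bot \,\}.
\]
So the only real work in each part is to compute the sets $C_l$ under the extra hypothesis and to identify the down-sets $D(l)$.

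For (1), assume $\top$ is join-irreducible, so $l \vee l' = \top$ forces $l = \top$ or $l' = \top$. I split the indexing sum at $l = \top$ and $l \neq \top$. When $l = \top$, the condition $l \vee l' = \top$ is automatic and $C_\top = \{l' \mid l' \neq \bot\} = L_*$; since $D(\top) = L$, the $\top$-summand contributes $L_* \cdot L^{[X]}$. When $l \neq \top$, join-irreducibility forces $l' = \top$, so $C_l$ is a singleton and the $l$-summand contributes a single copy of $D(l)^{[X]}$. Summing these pieces gives the formula.

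For (2), I apply (1) with $L = {\bf n}$. Any totally ordered set has $\top$ join-irreducible since $l \vee l' = \max(l,l')$, so (1) applies. Then $L_*$ has cardinality $n-1$, contributing $(n-1)\cdot {\bf n}^{[X]}$; and for each $l \in \{0,1,\dots,n-2\}$ the down-set $D(l)$ is the chain ${\bf l+1}$. Reindexing by the cardinality $k = l+1$ running from $1$ to $n-1$ yields the stated sum ${\bf 1}^{[X]} + {\bf 2}^{[X]} + \cdots + ({\bf n-1})^{[X]}$.

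For (3), I apply (1) with $L = p_*$, again a chain so $\top$ is join-irreducible. The underlying set of $p_*$ is $\{1\} \cup \{\,q \text{ prime},\ q \leq p\,\}$, so $|p_*| = \pi(p)+1$ and $(p_*)_*$ has cardinality $\pi(p)$, accounting for the $\pi(p) \cdot p_*^{[X]}$ term. The elements $l \neq \top$ of $p_*$ are $1$ together with the primes $q < p$; for $l = q$ prime, $D(q) = q_*$, giving the $q_*^{[X]}$ summands, and for $l = 1$, $D(1) = {\bf 1}$, so $D(1)^{[X]}$ is the constant functor $1$. No step is really an obstacle here — the proof is just bookkeeping — the only subtle point is correctly identifying that in the chain $p_*$ every proper down-set coincides with some $q_*$ for $q$ a prime $\leq$ the predecessor of $p$ in $p_*$, or with $\{1\}$.
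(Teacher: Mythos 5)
Your proposal is correct and follows essentially the same route as the paper: specialize the preceding proposition $\Delta [L^{[X]}] \cong \sum_{l} C_l \cdot D(l)^{[X]}$, compute $C_\top = L_*$ and $C_l = \{\top\}$ for $l \neq \top$ under join-irreducibility, and then treat (2) and (3) as the chain cases $L = {\bf n}$ and $L = p_*$. The only (immaterial) difference is that you obtain (3) directly from (1) rather than, as the paper does, as a special case of (2); the bookkeeping of down-sets and cardinalities is the same either way.
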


\begin{proof}

(1) $ \top $ join-irreducible means $ l \vee l' =
\top \Rightarrow l = \top $ or $ l' = \top $, so
$$ C_\top = L_* = \{l'\ |\ l' \neq \bot \}  \mbox{\ \ and if\ \ }
 l \neq \top
 $$
 then $ C_l = \{ \top\} \cong 1 $.

\noindent (2) The top element of $ {\bf n} $ is
join-irreducible, so this is a special case of (1).

\noindent (3) This is a special case of (2), given
the definition of $ p_* $.
\end{proof}

\begin{proposition}

$ \Delta $ of a sequential Dirichlet functor is
again one.

\end{proposition}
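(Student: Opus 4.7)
The plan is to reduce to a single monomial $n_*^{[X]}$, use the prime factorization of $n$ to express $n_*^{[X]}$ as a finite product of exponentials of the form $p_*^{[X]}$, then apply the product rule and the explicit base case from Corollary \ref{Cor-DeltaOrd}(3).

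First, by Theorem \ref{Thm-ConflColim} (or directly by the coproduct formula following Propositions \ref{Prop-ConstMult} and \ref{Prop-Sum}), $\Delta$ commutes with coproducts, so
$$
\Delta\Bigl[\sum_{n=1}^\infty C_n\, n_*^{[X]}\Bigr] \cong \sum_{n=1}^\infty C_n\, \Delta[n_*^{[X]}].
$$
It therefore suffices to check that each $\Delta[n_*^{[X]}]$ is a sequential Dirichlet functor. Next, if $n = p_1^{\alpha_1}\cdots p_k^{\alpha_k}$, then by Definition \ref{Def-Star} we have $n_* \cong \prod_i (p_{i*})^{\alpha_i}$ as sup-lattices, and iterating Proposition \ref{Prop-NormExp}(3) gives
$$
n_*^{[X]} \;\cong\; \prod_i \bigl(p_{i*}^{[X]}\bigr)^{\alpha_i},
$$
expressing $n_*^{[X]}$ as a finite product of normalized prime exponentials.

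Then I would apply Theorem \ref{Thm-ProdRule} (iterated, or equivalently the finite-product version of the generalized product rule immediately after it) to expand $\Delta$ of this product as a finite coproduct whose summands are each products, over the factor indices, of either a factor $p_{i*}^{[X]}$ or its difference $\Delta[p_{i*}^{[X]}]$. By Corollary \ref{Cor-DeltaOrd}(3), each $\Delta[p_{i*}^{[X]}]$ is already a sequential Dirichlet functor (a finite coproduct of $q_*^{[X]}$ for primes $q \leq p_i$ and of $1 = 1_*^{[X]}$), and each plain factor $p_{i*}^{[X]}$ is trivially one. Hence every summand is a finite product of sequential Dirichlet functors, and Proposition \ref{Prop-SeqDir} (applied inductively) collapses such a product back into sequential Dirichlet form, using $r_*^{[X]} \times s_*^{[X]} \cong (rs)_*^{[X]}$.

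The only delicate point is simply the bookkeeping: one must verify that after substituting the formulas for each $\Delta[p_{i*}^{[X]}]$, distributing, and repeatedly invoking $r_*^{[X]} \times s_*^{[X]} \cong (rs)_*^{[X]}$, all resulting monomials really do have index of the form $m_*^{[X]}$ for a positive integer $m$. This is immediate from the multiplicativity of $n \mapsto n_*$ on products of primes built into Definition \ref{Def-Star}, but it is where the argument actually uses the specific shape of the $n_*$ construction rather than an arbitrary family of lattices.
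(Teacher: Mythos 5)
Your proposal is correct and follows essentially the same route as the paper: reduce to the monomials $n_*^{[X]}$ via commutation of $\Delta$ with coproducts, use Corollary~\ref{Cor-DeltaOrd}(3) as the base case for primes, and handle composite $n$ by factoring $n_*^{[X]}$ as a product of $p_{i*}^{[X]}$'s, applying the product rule, and collapsing with Proposition~\ref{Prop-SeqDir}. The only difference is that you make explicit the invocation of Theorem~\ref{Thm-ProdRule} and the multiplicativity bookkeeping, which the paper's proof leaves implicit.
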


\begin{proof}

By Corollary~\ref{Cor-DeltaOrd} (3) we know
that for a prime $ p $, $ \Delta p_*^{[X]} $ is
a sequential Dirichlet functor. For a composite
$ n $, $ n_* $ is a cartesian product
of $ p_* $'s so by Proposition~\ref{Prop-SeqDir},
$ \Delta n_*^{[X]} $ is also a
sequential Dirichlet functor. Finally, $ \Delta $
of a sequential Dirichlet functor is a coproduct
of $ \Delta n_*^{[X]} $'s so is
again one.
\end{proof}


\section{A Newton summation formula}
\setcounter{subsection}{1}

For a function $ f \colon {\mathbb R} \to {\mathbb R} $, the
difference operator $ \Delta $
$$
\Delta [f] (x) = f (x + 1) - f (x)
$$
can be iterated giving discrete versions of  higher
derivatives. Thus
$$
\Delta^2 [f] (x) = f(x + 2) - 2 f(x + 1) + f(x)
$$
$$
\Delta^3 [f] (x) = f (x + 3) - 3 f (x + 2) + 3 f(x + 1) - f (x)
$$
and so on.

The Newton series is a discrete analog of Taylor series,
trying to recover $ f $ or some reasonable approximation
of $ f $, from specific values $ \Delta^k [f] (a) $. It takes
the form
\begin{eqnarray}
g (x) &  = & \sum^\infty_{n = 0} {{\Delta^n [f](a)}\over {n!}} (x - a)^{\downarrow n}\\
 & = & \sum^\infty_{n = 0} \binom{x - a}{n}
  \Delta^n [f](a)
\end{eqnarray}
where $ (x - a)^{\downarrow n} $ is the {\em falling power}
$$
(x - a) (x - a - 1) (x - a - 2) \dots (x - a - n + 1)
$$
and $ \binom{x - a}{n} $ is the ``binomial coefficient''
$$
{(x - a) (x - a - 1) \dots (x - a - n + 1)}\rlap{\ \ .}\over
{n!}
$$

The formulas (1) and (2) look sufficiently combinatorial
to suggest that there may be a similar formula for
taut endofunctors of $ {\bf Set} $, and indeed there is.
That is what we develop in this section.

This section has significant overlap with \cite{SzaZaw15}
but it is hard to isolate what precisely. Their objective was
completely different from ours, concentrating on monads,
theories, and operads. But crucial points here appear at
various points in their work. They clearly recognized the
importance of the category $ {\bf Surj} $ of finite cardinals
and surjections (there called $ {\mathbb S} $) and its
relation to soft analytic functors (called semi-analytic).
Their Theorem~2.2 is basically Part~I of our main
theorem below. In particular the $ A [n] $ of their
Lemma~2.5 is $ \Delta^n [F] (0) $ by our
Corollary~\ref{Cor-DeltaK}.

Neither of the formulas (1) or (2) generalize directly
to functors, even for $ a = 0 $. We need more structure
on the higher differences $ \Delta^n [F] $, and to
study this we need a better understanding of them.

The suggestive notation $ \Delta [F] (X) = F (X + 1) \setminus
F(X) $ is too suggestive here. Blindly applying the
difference twice would give
\begin{eqnarray}
\Delta^2 [F] (X) & = & (F(X+2)\setminus F(X+1))\setminus
(F(X+1) \setminus F(X))\nonumber\\
 & = & F(X+2) \setminus F(X+1)\nonumber
\end{eqnarray}
which is definitely wrong, as can easily be seen by
taking $ F (X) = X^2 $. We must be mindful of which
injection we are complementing.

To track the injections, let's take two different one-point
sets $ 1_a = \{a\} $ as $ 1_b = \{b\} $ with corresponding
difference operators
\begin{eqnarray}
\Delta_a [F] (X) & = & F(X + 1_a) \setminus F(X)\nonumber\\
\Delta_b [F] (X) & = & F (X + 1_b) \setminus F(X)\nonumber\rlap{\ .}
\end{eqnarray}
Thus we have
\begin{eqnarray}
\Delta_a [\Delta_b [F]] (X) & = & \Delta_b [F] (X+1_a) \setminus \Delta_b[F] (X)\nonumber\\
 & = & \Bigl(F(X+1_a +1_b)\setminus F(X+1_a)\Bigr) \setminus \Bigl(F(X+1_b)\setminus F(X)\Bigr) \nonumber\\
 & = & F (X+1_a +1_b) \setminus \Bigl(F(X+1_a) \cup F (X+1_b)\Bigr)\nonumber
\end{eqnarray}
as $ F (X) \subseteq F (X + 1_a) $.

This leads to the following. Let $ S_A \colon {\bf Set} \to {\bf Set} $
be the translation functor $ S_A (X) = X + A $, which is obviously
taut. For any taut functor $ F \colon {\bf Set} \to {\bf Set} $ and
any set $ X $, let $ D_A [F] (X) $ be the subset of $ F(X+A) $
consisting of those $ a \in F(X+A) $ not in the image of
$ F(X+A_0) \to/ >->/<200> F (X+A) $ for any proper subset 
$ A_0 \subsetneq A $. As there will be much talk of proper
subsets in what follows, we will use an arrow with a double tail
$ A_0 \to/ >>->/<200> A $ to indicate a monomorphism which
is not an isomorphism.

\begin{proposition}
\label{Prop-DATaut}

$ D_A [F] $ is a taut subfunctor of $ FS_A $.

\end{proposition}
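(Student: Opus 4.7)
The plan is to imitate the proof of Proposition~\ref{Prop-DeltaTaut}, replacing the single image $F(X) \hookrightarrow F(X+1)$ by the union $U(X) := \bigcup_{A_0 \subsetneq A} F(X+A_0) \subseteq F(X+A)$, where each $F(X+A_0)$ is identified with its image under the mono $F(X+A_0 \hookrightarrow X+A)$ (a mono because $F$ preserves monos, by Proposition~\ref{Prop-Taut}(1)). Then $D_A[F](X) = F(X+A) \setminus U(X)$, and the goal is to use Lemma~\ref{Lem-SetDiff} to conclude that, for every $f \colon X \to Y$, $F(f+A)$ restricts to a map $D_A[F](X) \to D_A[F](Y)$ with a pullback square.

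First I would check that for any $f \colon X \to Y$ and any proper $A_0 \subsetneq A$, the evident square with corners $X+A_0,\ X+A,\ Y+A_0,\ Y+A$ --- with horizontal arrows $X+m$ and $Y+m$ for the inclusion $m \colon A_0 \hookrightarrow A$, and vertical arrows $f+A_0$ and $f+A$ --- is an inverse image diagram in ${\bf Set}$. This is a brief case split on which summand an element lies in. Tautness of $F$ then turns this into a pullback of sets, pointwise in $A_0$.

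Next I would take the union over all proper $A_0 \subsetneq A$. Since preimages along a fixed function commute with arbitrary unions of subsets,
\[
F(f+A)^{-1}\bigl(U(Y)\bigr) \;=\; \bigcup_{A_0 \subsetneq A} F(f+A)^{-1}\bigl(F(Y+A_0)\bigr) \;=\; \bigcup_{A_0 \subsetneq A} F(X+A_0) \;=\; U(X),
\]
so the square with vertices $U(X),\ F(X+A),\ U(Y),\ F(Y+A)$ is a pullback. Applying Lemma~\ref{Lem-SetDiff} to this square delivers the restriction of $F(f+A)$ to a function $D_A[F](X) \to D_A[F](Y)$, simultaneously making $D_A[F]$ a subfunctor of $FS_A$ and exhibiting the complementary square as a pullback. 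Hence the inclusion $D_A[F] \hookrightarrow FS_A$ has all its naturality squares cartesian, so is in particular taut. Since $S_A$ is evidently taut and therefore so is the composite $FS_A$ (Proposition~\ref{Prop-Taut}(2)), Proposition~\ref{Prop-Taut}(6) finally yields tautness of $D_A[F]$.

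There is no genuine obstacle beyond what was already present in the $A = 1$ case; the only delicate point is the commutation of preimage with arbitrary union, and that is automatic in ${\bf Set}$. Everything else is bookkeeping that runs in parallel with the proofs of Proposition~\ref{Prop-DeltaTaut} and Corollary~\ref{Cor-Sum}.
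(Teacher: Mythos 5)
Your argument is correct. It differs from the paper's proof mainly in how tautness is concluded at the end: you run the proof of Proposition~\ref{Prop-DeltaTaut} verbatim with the union $U(X)=\bigcup_{A_0\subsetneq A}F(X+A_0)$ in place of the single image, use that preimage along $F(f+A)$ commutes with this union to see that the square on $U$ is a pullback, then invoke Lemma~\ref{Lem-SetDiff} to get that the complementary inclusion $D_A[F]\hookrightarrow FS_A$ is cartesian, and finish with Proposition~\ref{Prop-Taut}, part (6). The paper establishes the subfunctor property by the same pointwise pullback squares (the contrapositive of your preimage computation, one proper $A_0$ at a time), but then concludes tautness by a different device: since $U$ is itself a subfunctor, $D_A[F]$ is a \emph{complemented} subfunctor, so $FS_A$ decomposes as the coproduct $D_A[F]+U$, and Proposition~\ref{Prop-CoprodTaut} (a coproduct of set-valued functors is taut iff each summand is) gives tautness of $D_A[F]$ immediately. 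Both routes rest on exactly the same tautness squares for $F$ applied to the inverse image diagrams $X+A_0\subseteq X+A$ over $Y+A_0\subseteq Y+A$; yours has the mild advantage of exhibiting explicitly that the inclusion $D_A[F]\hookrightarrow FS_A$ is cartesian (hence taut), in parallel with the $A=1$ case, while the paper's coproduct argument is slightly shorter and highlights that $D_A[F]$ is a complemented summand of $FS_A$.
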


\begin{proof}

First we show it's a subfunctor. Let $ f \colon X \to Y $
and $ a \in FS_A (X) = F (X + A) $. Then $ F (f+A) (a) 
\in F(Y+A) $. Let $ A_0 \to/ >>->/<200> A $ be a proper
subset of $ A $. Then as
$$
\bfig
\square/ >->`>`>` >->/<550,500>[X+A_0`X+A`Y+A_0`Y+A;
`f+A_0`f+A`]

\efig
$$
is a pullback and as $ F $ is taut
$$
\bfig
\square/ >->`>`>` >->/<700,500>[F(X+A_0)`F(X+A)`F(Y+A_0)`F(Y+A);
`F(f + A_0)`F(f+A)`]

\efig
$$
is also a pullback. Thus if $ F(f+A)(a) $ were in
$ F(Y+A_0) $, $ a $ would be in $ F(X+A_0) $. So
if $ a \in D_A [F](X) $, $ F(f+A)(a) $ is in
$ D_A[F](Y) $, i.e.~$ D_A [F] $ is a subfunctor of
$ FS_A $.

$ D_A [F](X) $ is defined to be the complement of
$$
\bigcup F (X+A_0) \subseteq F (X+A) \rlap{ ,}
$$
the union taken over all proper subsets
$ A \to/ >>->/<200> A $. Thus $ D_A [F] $ is a
complemented subfunctor of $ F S_A $ and
by Proposition~\ref{Prop-CoprodTaut} is taut.
\end{proof}

\begin{proposition}

$ D_A [D_B [F]] \cong D_{A+B} [F] \rlap{ .} $

\end{proposition}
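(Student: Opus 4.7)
The plan is to view both $D_A[D_B[F]]$ and $D_{A+B}[F]$ as subfunctors of $F \circ S_{A+B}$, identified via the associator $(X+A)+B \cong X+(A+B)$, and verify they coincide pointwise. Since the identification uses only the associator, which is natural in $X$, naturality of the resulting iso is automatic, so the whole task reduces to proving a subset equality in $F(X+A+B)$ at each $X$.

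Unpacking definitions, $z \in D_A[D_B[F]](X)$ iff (i) $z$ is not in the image of $F(X+A+B_0) \hookrightarrow F(X+A+B)$ for any $B_0 \subsetneq B$ (this expresses that $z \in D_B[F](X+A)$), and (ii) $z$ is not in the image of $D_B[F](X+A_0) \hookrightarrow D_B[F](X+A)$ for any $A_0 \subsetneq A$. Meanwhile $z \in D_{A+B}[F](X)$ iff $z$ is not in the image of $F(X+C) \hookrightarrow F(X+A+B)$ for any proper $C \subsetneq A+B$.

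For the forward inclusion $D_{A+B}[F] \subseteq D_A[D_B[F]]$, applying the defining condition to subsets of the form $A+B_0$ with $B_0 \subsetneq B$ yields (i); and subsets of the form $A_0+B$ with $A_0 \subsetneq A$ yield something stronger than (ii), since $D_B[F](X+A_0) \subseteq F(X+A_0+B)$. For the reverse inclusion, take $z \in D_A[D_B[F]](X)$ and a proper $C \subsetneq A+B$, and decompose $C = C_A + C_B$ with $C_A \subseteq A$, $C_B \subseteq B$. If $C_B \subsetneq B$, then $F(X+C) \subseteq F(X+A+C_B)$ and (i) forbids $z \in F(X+C)$. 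Otherwise $C_B = B$ and $C_A \subsetneq A$; should $z = F(\iota)(w)$ with $w \in F(X+C_A+B)$, then $w$ must lie in $D_B[F](X+C_A)$---for if $w$ came from some $F(X+C_A+B_0)$ with $B_0 \subsetneq B$, then $z$ would lie in $F(X+A+B_0)$, contradicting (i)---but then $z$ is in the image of $D_B[F](X+C_A) \hookrightarrow D_B[F](X+A)$, contradicting (ii).

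The argument is essentially combinatorial bookkeeping, and the only categorical input is that $D_B[F]$ is a genuine subfunctor of $F \circ S_B$ (Proposition~\ref{Prop-DATaut}), so that the image of $D_B[F](X+A_0)$ in $D_B[F](X+A)$ is just the restriction of the image of $F(X+A_0+B)$ in $F(X+A+B)$. The main obstacle, modest as it is, lies in tracking the decomposition of proper subsets of $A+B$ into independent proper-versus-full pieces on each side; once that case analysis is organized, everything reduces to the two routine verifications above.
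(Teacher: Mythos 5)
Your proof is correct and follows essentially the same route as the paper's: both identify everything as subsets of $F(X+A+B)$, reduce the ``not in any $D_B[F](X+A_0)$'' condition to ``not in any $F(X+A_0+B)$'' by using condition (i) to rule out the alternative, and conclude via the observation that any proper $C\subsetneq A+B$ decomposes as $C_A+C_B$ and hence sits inside some $A_0+B$ or $A+B_0$. The only cosmetic difference is that you organize the argument as two inclusions rather than a chain of equivalences.
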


\begin{proof}

An element $ a \in D_A [D_B[F]] (X) $ is an
element $ a \in D_B[F] (X+A) $ which is not in
any $ D_B [F] (X+A_0) $ for a proper subset
$ A_0 \subsetneq A $. The first condition means
that $ a \in F(X+A+B) $ but not in any
$ F(X+A+B_0) $ for a proper subset $ B_0 \subseteq B $.
The second condition ($ (a \notin D_B [F] (X+A_0) $)
means that $ a $ is not in $ F(X+A_0+B) $ or there
exists a proper subset $ B_0 \subsetneq B $ with
$ a \in F (A+A_0+B_0) $. But this last condition is
impossible, for if $ a $ were in $ F(X+A_0+B_0) $
it would be in $ F(X+A+B_0) $ which it is not. The
conclusion is that $ a \in D_A[D_B [F] (X) $ if and
only if $ a \in F(X+A+B) $ but $ a \notin F(X+A+B_0) $
and $ a \notin F (X+A_0+B) $. This is equivalent to
$ a \in F(X+A+B) $ and $ a \notin F(X+C) $ for any
proper subset $ C \subsetneq A+B $ because such
a $ C $ would be contained in either a $ A_0 +B $
or a $ A + B_0 $. The result follows.
\end{proof}

\begin{corollary}
\label{Cor-DeltaK}

For any finite cardinal $ n $
$$
\Delta^n [F] = D_n [F]
$$
$$
\Delta^n [F] (X) = \{a \in F(X+n) | a \notin F(X+n_0) 
\mbox{\ \ for any\ \ } n_0 \to/ >>->/<200> n \} .
$$

\end{corollary}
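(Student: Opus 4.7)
The plan is a direct induction on the finite cardinal $n$, using the preceding proposition as the sole inductive tool. The base case $n = 0$ is degenerate: $D_0[F](X) = F(X+0) = F(X)$ with no proper subsets to exclude, while $\Delta^0[F] = F$ by convention. More illuminating is $n = 1$, where the only proper subset of $1$ is $\emptyset$, so
\[
D_1[F](X) = F(X+1) \setminus F(X+0) = F(X+1) \setminus F(X) = \Delta[F](X),
\]
recovering the original definition of $\Delta$.

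For the inductive step, assume $\Delta^n[F] = D_n[F]$. Then
\[
\Delta^{n+1}[F] = \Delta[\Delta^n[F]] = D_1[\Delta^n[F]] = D_1[D_n[F]] \cong D_{1+n}[F] = D_{n+1}[F],
\]
where the middle isomorphism is an instance of the previous proposition with $A = 1$ and $B = n$. The second formula is then just unfolding the definition of $D_n[F](X)$ given just before Proposition~\ref{Prop-DATaut}.

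I expect no real obstacle here; the content has been absorbed into the preceding proposition $D_A[D_B[F]] \cong D_{A+B}[F]$, which already does the combinatorial work of reconciling the iterated complements with the single complement over all proper subsets of $A+B$. The only thing worth being careful about is the identification $\Delta[F] = D_1[F]$, i.e.\ ensuring that our convention for $\Delta$ truly matches $D_A$ applied to a one-element set; this is immediate from the fact that the unique proper subset of $1$ is the empty set, which contributes $F(X+0) = F(X)$. No questions about tautness arise at this stage, as Proposition~\ref{Prop-DATaut} guarantees that each $D_n[F]$ (and hence each $\Delta^n[F]$) is a taut subfunctor of $FS_n$, so the inductive hypothesis remains within the class of functors for which the previous proposition applies.
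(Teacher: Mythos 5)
Your proof is correct and matches the paper's argument: the paper likewise notes $\Delta^0[F]=D_0[F]$ and $\Delta^1[F]=D_1[F]$ by definition and then invokes the proposition $D_A[D_B[F]]\cong D_{A+B}[F]$ inductively. Your extra remark that tautness of $\Delta^n[F]$ keeps the induction within the scope of that proposition is a sensible precaution, though the paper leaves it implicit.
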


\begin{proof}

$ \Delta^0 [F] = F = D_0 [F] $ and $ \Delta^1 [F]
= \Delta F = D_1 [F] $ by definition. The result now 
follows from the previous proposition by induction.
\end{proof}

We are hoping to recover $ F $ from the sequence
$ \Delta^n [F](0) $ via some version of the Newton
series, at least for polynomial functors. From the
above description of $ \Delta^n [F] (0) $, it is clear
that $ S_k $ acts on $ \Delta^n[F](0) $ so we get a
symmetric sequence (species) and a corresponding
analytic functor
$$
\sum^\infty_{n = 0} X^n \otimes_{S_n} \Delta^n [F](0) .
$$
This looks promising but it doesn't give $ F $ even
for polynomials. For example $ F (X) = X^n $ is
connected but the above analytic functor is not.
It's defined as a sum of (non trivial) functors. We
need something a bit tighter. In fact, not only do
bijections act on the $ \Delta^n [F](a) $ but
epimorphisms do too, in the appropriate sense of
course.

Let $ {\bf Surj} $ be the category of finite cardinals
with surjections as morphisms.

Although it will not be used below, it may be of
interest to note that \( {\bf Surj} \) is the free
symmetric monoidal category generated by a
commutative semigroup.

\begin{proposition}
\label{Prop-FunSurj}

For any taut functor $ F \colon {\bf Set} \to {\bf Set} $
and set $ A $, the family $ \langle \Delta^n [F](A)\rangle_n $
is the object part of a functor
$$
{\bf Surj} \to {\bf Set} .
$$

\end{proposition}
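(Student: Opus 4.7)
The plan is to specify how a surjection acts and then check the action lands where it should, with functoriality being automatic.

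First, I would define the action on morphisms. Given a surjection $\sigma \colon n \twoheadrightarrow m$ in $\mathbf{Surj}$, form $A + \sigma \colon A + n \to A + m$ and apply $F$ to obtain $F(A+\sigma) \colon F(A+n) \to F(A+m)$. The claim is that this restricts along the inclusions $\Delta^n[F](A) \hookrightarrow F(A+n)$ and $\Delta^m[F](A) \hookrightarrow F(A+m)$, using the description of $\Delta^n[F](A)$ from Corollary~\ref{Cor-DeltaK} as the set of elements of $F(A+n)$ not lying in $F(A+n_0)$ for any proper $n_0 \subsetneq n$.

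Next I would verify the restriction. Fix $a \in \Delta^n[F](A)$ and, for contradiction, suppose $F(A+\sigma)(a) \in F(A+m_0)$ for some proper subset $m_0 \subsetneq m$. The inclusion $A+m_0 \hookrightarrow A+m$ pulls back along $A+\sigma$ to $A+\sigma^{-1}(m_0) \hookrightarrow A+n$, since coproducts commute with pullbacks in $\mathbf{Set}$ and inverse images are computed componentwise. Because $F$ is taut, this pullback is preserved, so from $F(A+\sigma)(a) \in F(A+m_0)$ we conclude $a \in F(A+\sigma^{-1}(m_0))$. Since $\sigma$ is surjective and $m_0 \subsetneq m$, the preimage $\sigma^{-1}(m_0)$ is a proper subset of $n$ (if it were all of $n$, then $\sigma(n) \subseteq m_0$, contradicting surjectivity). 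This contradicts $a \in \Delta^n[F](A)$. Hence $F(A+\sigma)(a) \in \Delta^m[F](A)$, giving the desired restriction $\Delta^n[F](\sigma) \colon \Delta^n[F](A) \to \Delta^m[F](A)$.

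Functoriality is then inherited: $\Delta^n[F](\id_n)$ is the restriction of $F(\id_{A+n}) = \id_{F(A+n)}$, hence the identity, and for composable surjections $\sigma, \tau$ the equality $\Delta^n[F](\tau \sigma) = \Delta^n[F](\tau) \circ \Delta^n[F](\sigma)$ follows from $F((A+\tau)(A+\sigma)) = F(A+\tau)F(A+\sigma)$. The only real content is the tautness argument in the previous paragraph; the main (minor) obstacle is simply keeping the coproduct bookkeeping straight, namely that pulling back $A+m_0 \hookrightarrow A+m$ along $A+\sigma$ really produces $A+\sigma^{-1}(m_0)$ and that this is properly contained in $A+n$.
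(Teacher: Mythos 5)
Your proposal is correct and follows essentially the same route as the paper: the action of a surjection $\sigma$ is the restriction of $F(A+\sigma)$, and the key step is exactly the paper's tautness argument, pulling back the proper inclusion $A+m_0 \rightarrowtail A+m$ along $A+\sigma$ to the proper inclusion $A+\sigma^{-1}(m_0) \rightarrowtail A+n$ and using that $F$ preserves this inverse image. Your explicit check of functoriality and of the coproduct bookkeeping is a harmless elaboration of what the paper leaves implicit.
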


\begin{proof}

Let $ f \colon m \to/->>/<200> n $ be a surjection and
$ a \in \Delta^m [F](A) $, so $ a \in F (A+m) $ but
$ a \notin F(A+m_0) $ for any $ m_0 \to/ >>->/<200> m $.
Then $ F(A+f) (a) \in F (A+n) $ and we have to show
that it's not in any $ F(A+n_0) $ for $ n_0 \to/ >>->/<200> n $.
Suppose it were. Take the pullback
$$
\bfig
\square/ >>->`>`->>` >>->/[m_0\ `n`n_0\ `m\rlap{ .};
``f`]

\place(250,250)[\framebox{\scriptsize Pb}]

\efig
$$
As usual $ m_0 \to/ >->/<200> m $ is monic, but it
is also proper because $ f $ is surjective. $ F $
being taut produces a pullback
$$
\bfig
\square/ >->`>`>` >->/<700,500>[F(A+m_0)`F(A+m)`F(A+n_0)`F(A+n);
``F(A+f)`]

\place(350,250)[\framebox{\scriptsize Pb}]

\efig
$$
so if $ F (A+f) (a) $ were in $ F (A+n_0) $, $ a $ would be
in $ F(A+m_0) $ contrary to the definition of $ a $.
\end{proof}

Furthermore, the construction $ F \leadsto \langle \Delta^n [F]
(A) \rangle_n $ is itself functorial. If $ {\bf Taut} $ is the category
of taut endofunctors of $ {\bf Set} $ with taut natural
transformations as morphisms, then we have the following.

\begin{proposition}
\label{Prop-FunTaut}

For any set $ A $, taking iterated differences produces
a functor
$$
\Delta^*_A \colon {\bf Taut} \to {\bf Set}^{\bf Surj} \rlap{ .}
$$

\end{proposition}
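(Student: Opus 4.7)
The plan is to define $\Delta^*_A$ on objects by $\Delta^*_A(F) = \langle \Delta^n[F](A)\rangle_n$, with the action on surjections given by Proposition~\ref{Prop-FunSurj}, and on morphisms by iterating the difference operator on taut transformations.

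First I would handle the action on morphisms. Given a taut natural transformation $t \colon F \to G$, Proposition~\ref{Prop-TautTransf} already produces a taut natural transformation $\Delta[t] \colon \Delta[F] \to \Delta[G]$, and by induction we obtain taut natural transformations $\Delta^n[t] \colon \Delta^n[F] \to \Delta^n[G]$ for every $n \geq 0$. Evaluating at $A$ gives a family of functions $\Delta^n[t](A) \colon \Delta^n[F](A) \to \Delta^n[G](A)$, which are the candidate components of $\Delta^*_A(t)$.

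The key step is to verify that this family is natural with respect to surjections $f \colon m \to/->>/<200> n$ in ${\bf Surj}$, i.e.\ that the square
$$
\bfig
\square<1100,500>[\Delta^m[F](A)`\Delta^n[F](A)`\Delta^m[G](A)`\Delta^n[G](A);
F(A+f)|`\Delta^m[t](A)`\Delta^n[t](A)`G(A+f)|]
\efig
$$
commutes, where the horizontal arrows are the restrictions provided by Proposition~\ref{Prop-FunSurj} and the notation $|$ denotes this restriction. Since $\Delta^m[F](A)\subseteq F(A+m)$, $\Delta^m[G](A)\subseteq G(A+m)$, and analogously for $n$, the square arises by restriction from the naturality square of $t$ applied to the arrow $A+f \colon A+m \to A+n$, which commutes by naturality of $t$. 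The restrictions are well-defined because, by Proposition~\ref{Prop-DATaut} and Corollary~\ref{Cor-DeltaK}, $\Delta^n[F]$ and $\Delta^n[G]$ are genuine subfunctors and the arrows in question were already shown (Proposition~\ref{Prop-FunSurj} for the horizontal sides, and the construction of $\Delta[t]$ via Lemma~\ref{Lem-SetDiff} for the vertical ones) to restrict appropriately. So $\Delta^*_A(t)$ is a well-defined natural transformation in $\mathbf{Set}^{\bf Surj}$.

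Finally, functoriality in $F$ is routine: $\Delta^*_A(\id_F) = \id$ because each $\Delta^n$ and evaluation at $A$ preserve identities, and $\Delta^*_A(u \cdot t) = \Delta^*_A(u) \cdot \Delta^*_A(t)$ follows from the analogous property of $\Delta$ on taut transformations (implicit in the construction of Proposition~\ref{Prop-TautTransf}, where $\Delta[t]$ is defined as a restriction of $tS$ and such restrictions compose). The main subtlety, and the only step requiring care, is checking that the naturality square above really is the restriction of the naturality square of $t$ at $A+f$; once one unwinds the definitions of the two horizontal restrictions and of $\Delta^n[t](A)$ as a restriction of $t(A+n)$, this is immediate.
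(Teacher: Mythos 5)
Your proposal is correct and follows essentially the same route as the paper: define $\Delta^*_A(F)(n)=\Delta^n[F](A)$ via Proposition~\ref{Prop-FunSurj}, obtain $\Delta^n[t]$ by iterating Proposition~\ref{Prop-TautTransf} as a restriction of $tS^n$, and observe that naturality with respect to surjections is just the restriction of the naturality square of $t$ at $A+f$, with functoriality in $t$ following because everything is a restriction of $tS^n$. No gaps; this matches the paper's argument.
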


\begin{proof}

For $ F \colon {\bf Set} \to {\bf Set} $ a taut functor,
let $ \Delta^*_A (F) $ be given by
$$
\Delta^*_A (F) (n) = \Delta^n [F] (A)
$$
the functor of the previous proposition. Let
$ t \colon F \to G $ be a taut transformation. By applying
Proposition~\ref{Prop-TautTransf} iteratively we see
that $ t S^n \colon FS^n \to GS^n $ restricts to a taut
transformation
$$
\bfig
\square/ >->`>`>` >->/[\Delta^n{[}F{]}`FS^n`\Delta^n{[}G{]}`GS^n\rlap{ .};
`\Delta^n {[}t{]}`tS^n`]

\efig
$$
This gives the formula for $ \Delta^*_A (t) $, namely
$ \Delta^*_A (t) (A) = t(A+n) $. So naturality of
$ \Delta^*_A (F) (m) $ in $ n $ is simply
$$
\bfig
\square<800,500>[F(A+m)`F(A+n)`G(A+m)`G(A+n);
F(A+f)`t(A+m)`t(A+n)`G(A+f)]

\efig
$$
for any surjection $ f \colon m \to/->>/<200> n $,
which is just naturality of $ t $ itself.

Functoriality of $ \Delta^*_A  [t] $ in $ t $ follows
from the fact that it is a restriction of $ t S^n $.
\end{proof}

Below we will be only concerned with \( \Delta^*_A \)
for \( A = 0 \) and write \( \Delta^* \) for \( \Delta^*_0 \).

\begin{definition}
\label{Def-Soft}

Let us call a functor $ G \colon {\bf Surj} \to {\bf Set} $
a {\em soft species} (``soft'' because the structures
can be compressed). A soft species produces a
{\em soft analytic functor}, the {\em Newton sum}
of $ G $.
$$
\widetilde{G} (X) = \int^{n \in {\bf Surj}} X^n \times G (n) ,
$$
i.e.~, left Kan extension of $ G $ along the inclusion $ J $
of $ {\bf Surj} $ into $ {\bf Set} $
$$
\bfig
\Vtriangle/ >->`>`>/<400,500>[{\bf Surj}`{\bf Set}`{\bf Set};
J`G`{\scriptsize {\rm Lan}}_J G = \widetilde{G}]

\morphism(320,300)/=>/<150,0>[`;]

\place(800,0)[.]

\efig
$$

\end{definition}

This is similar to the definition of analytic functor which is
the Kan extension along the inclusion of $ {\bf Bij} $ into
$ {\bf Set} $. For a species $ F \colon {\bf Bij}  \to {\bf Set} $
we can take the Kan extension of $ F $ in steps
$$
\bfig
\Vtriangle/`>`>/<600,700>[{\bf Bij}`{\bf Set}`{\bf Set};
`F`{\scriptsize {\rm Lan}}_{JI} F]

\morphism(0,700)/ >->/<600,0>[{\bf Bij}`{\bf Surj};I]

\morphism(600,700)/ >->/<600,0>[{\bf Surj}`{\bf Set};J]

\morphism(600,700)|r|/>/<0,-700>[{\bf Surj}`{\bf Set};{\scriptsize {\rm Lan}}_I F]

\efig
$$
so that every analytic functor is soft analytic: every
species can be softened.

It may be of interest to note that the soft species
associated to $ F $, $ \mbox{Lan}_I F $ is given by
$$
({\rm Lan}_I F) (n) = \sum_{m \to/->>/<120> n} F(m) /_\sim
$$
where the sum is taken over all surjections
$ f \colon m \to/->>/<200> n $, and the equivalence
relation is the quotient of $ F (m) $ by the subgroup
of $ S_m $ consisting of all elements preserving
$ f $
$$
\bfig
\Vtriangle/>`->>`->>/<350,450>[m`m`n;\sigma`f`f]

\place(500,0)[.]

\efig
$$

We can analyze the definition of $ \widetilde{G} $
further to connect it to analytic functors and
underline the similarity with formulas (1) and (2)
for Newton series. From the definition, $ \widetilde{G} (X) $
consists of equivalence classes $ [a \in G(n), f \colon n \to X] $
of triples $ (n, a, f) $ where the equivalence relation
is generated by the relation $ (n, a, f) \sim (m, b, g) $
if there exists a surjection $ \sigma \colon n \to/->>/<200> m $
such that $ f = g \sigma $ and $ b = G(\sigma)(a) $
$$
\bfig
\place(0,0)[b]

\place(150,0)[\in]

\place(350,0)[G(m)]

\place(0,400)[a]

\place(150,400)[\in]

\place(350,400)[G(n)]

\morphism(0,400)/|->/<0,-300>[`;]

\morphism(350,350)|r|/>/<0,-280>[`;G(\sigma)]

\square(800,0)|arrb|/>`->>`=`>/<400,400>[n`X`m`X\rlap{ .};f`\sigma``g]

\efig
$$
So the equivalence relation is expressed in terms of
a zigzag path of surjections and elements of $ G $
satisfying conditions as above. We will see presently
that we can assume that the $ f \colon n \to X $ is
monic, $ \sigma $ a bijection and the zigzag paths have
length one, i.e.~no zigzag at all. Everything we need
can be expressed in these terms.

\begin{proposition}
\label{Prop-GTilde}

\begin{itemize}

	\item[(1)] Every equivalence class in $ \widetilde{G} (X) $ contains
	a representation in which $ f $ is monic.
	
	\item[(2)] Two elements $ (n, a, f) $ and $ (m, b, g) $ with $ f $
	and $ g $ monic are equivalent if and only if $ m = n $ and
	there is a bijection $ \sigma \colon n \to m $ with $ f = g \sigma $
	and $ b = G(\sigma)(a) $.
	
	\item[(3)] For a morphism $ \phi \colon X \to Y $,
	$ \widetilde{G} (\phi) [n, a, f] = [m, b, g] $ where $ g $ is the
	image of $ \phi f $ and $ b $ comes from $ a $ as  in
	$$
\bfig
\place(0,0)[b]

\place(150,0)[\in]

\place(350,0)[G(m)]

\place(0,400)[a]

\place(150,400)[\in]

\place(350,400)[G(n)]

\morphism(0,400)/|->/<0,-300>[`;]

\morphism(350,350)|r|/>/<0,-280>[`;G(e)]

\square(800,0)|arrb|/ >->`->>`>` >->/<400,400>[n`X`m`Y\rlap{ .};f`e`\phi`g]

\efig
$$

	\item[(4)] If $ \phi \colon X \to/ >->/<200> Y $ is monic, then so is
	$ \widetilde{G} (\phi) $ and an element $ [b, m, g] $ in $ \widetilde{G}(Y) $
	is in the image of $ \widetilde{G} (\phi) $ if and only if $ g $ factors
	through $ \phi $
	$$
	\bfig
	\dtriangle/<--` >->`>/[X`m`Y\rlap{ .};`\phi`g]
	
	\efig
	$$

\end{itemize}

\end{proposition}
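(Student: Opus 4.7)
My plan is to treat the four parts in order, with part (2) as the technical heart of the proof. The key tool throughout is the explicit description of the coend $\widetilde{G}(X) = \int^n G(n) \times X^n$, whose generating relation is $(n, a, g\sigma) \sim (m, G(\sigma)(a), g)$ for every surjection $\sigma \colon n \to m$ in $\mathbf{Surj}$, every $a \in G(n)$, and every $g \colon m \to X$.

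For part (1), I would factor $f \colon n \to X$ as $f = f_1 \sigma$ with $\sigma \colon n \twoheadrightarrow m$ surjective and $f_1 \colon m \to X$ monic; the generating relation immediately gives $(n, a, f) \sim (m, G(\sigma)(a), f_1)$. For part (2), the first observation is that under an elementary move $(n, a, g\sigma) \sim (m, G(\sigma)(a), g)$ the image in $X$ of the ``$f$''-component is preserved, since $\sigma$ is surjective. Hence the image is an invariant of the whole equivalence class. So if $(n, a, f) \sim (m, b, g)$ with $f, g$ both monic, then $n = m = |\mathrm{image}(f)|$ and there is a unique bijection $\sigma_0 \colon n \to m$ with $g\sigma_0 = f$; what remains is to show $b = G(\sigma_0)(a)$.

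The main technical step, which I expect to be the principal obstacle, is the following confluence lemma: whenever $(n, a, f) \sim (m, b, g)$, there exist $(p, c, h)$ together with surjections $\sigma \colon n \to p$ and $\tau \colon m \to p$ satisfying $f = h\sigma$, $g = h\tau$, and $c = G(\sigma)(a) = G(\tau)(b)$. I would prove this by induction on zigzag length, the only delicate case being the span step where two consecutive elementary moves share a common source. Here I would use that $\mathbf{Surj}$ has pushouts (pushouts of surjections in $\mathbf{Set}$ are again surjections, so $\mathbf{Surj}$ is confluent in the sense of Lemma~\ref{Lem-Confl}) to close up the span into a cospan, checking along the way that the required $h$ on the pushout is well-defined because the equality of the two $X$-coordinates forces the necessary compatibility condition. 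Part (2) then follows instantly: the surjections $\sigma, \tau$ must be bijections when $f, g$ are monic, and $\tau^{-1}\sigma$ provides the required bijection.

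Part (3) is a routine unwinding: $\widetilde{G}(\phi)[n, a, f] = [n, a, \phi f]$, and image-factoring $\phi f = g e$ with $e$ surjective and $g$ monic yields $[m, G(e)(a), g]$ by the elementary move. Part (4) follows by combining parts (2) and (3). The ``if'' direction is direct: if $g = \phi h$ with $h$ monic, then $\widetilde{G}(\phi)[m, b, h] = [m, b, g]$. For the ``only if'' direction, given a monic $[m, b, g]$ in the image of $\widetilde{G}(\phi)$, part (3) produces a canonical monic representative whose $g$-part manifestly factors through $\phi$, and part (2) identifies this with the given $g$ via a bijection, transporting the factorization. Injectivity of $\widetilde{G}(\phi)$ is similar: two classes with equal image under $\widetilde{G}(\phi)$ have monic representatives comparable by a bijection in $\widetilde{G}(Y)$ via part (2), which lifts uniquely (since $\phi$ is monic) to a bijection in $\widetilde{G}(X)$ establishing their equality.
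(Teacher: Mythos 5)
Your proposal is correct, and parts (1) and (3) coincide with the paper's treatment, but your route through the crucial part (2) is genuinely different. The paper never isolates a confluence lemma: it image-factors both ends of each single generating relation $(n,a,g\sigma)\sim(m,G(\sigma)(a),g)$, observes that the induced map $\bar{\sigma}$ between the two image objects is a bijection compatible with $G$, and so converts an arbitrary zigzag into a zigzag of bijections between monic representatives, which collapses to a single bijection once the backward steps are inverted. You instead prove that the system of triples $(n,a,f)$ with moves indexed by surjections is confluent: the pushout in $\mathbf{Set}$ of two surjections out of $n$ has surjective legs, the map $h$ on the pushout exists because both composites into $X$ equal $f$, and --- a point you should state explicitly, though it is immediate --- $G(\tau_{1})(b_{1})=G(\tau_{2})(b_{2})$ by functoriality applied to the commuting pushout square; monicity of $f$ and $g$ then forces the cospan legs to be bijections, and uniqueness of the bijection $\sigma_{0}$ with $g\sigma_{0}=f$ identifies it with $\tau^{-1}\sigma$. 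Your argument buys a reusable normal form (equivalence is always witnessed by one cospan, in the spirit of Lemma~\ref{Lem-Confl}) at the cost of the pushout bookkeeping, while the paper's normalization is shorter and stays entirely inside monic representatives. In (4) you also replace the paper's argument that $\widetilde{G}(\phi)$ is monic (split monos are preserved, with a separate $X=\emptyset$ case) by cancelling the mono $\phi$ against the bijection supplied by (2), which treats the empty case uniformly; both versions are sound.
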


\begin{proof}

(1) For any equivalence class $ [n, a, f] $ take the image
factorization of $ f $, and since the quotient of a finite
cardinal is again one, we get
$$
\bfig
\place(-150,0)[\ov{a} = G(e)(a)]

\place(150,0)[\in]

\place(350,0)[G(\ov{n})]

\place(0,400)[a]

\place(150,400)[\in]

\place(350,400)[G(n)]

\morphism(0,400)/|->/<0,-300>[`;]

\morphism(350,350)|r|/>/<0,-280>[`;G(e)]

\square(800,0)|arrb|/>`->>`=` >->/<400,400>[n`X`\ov{n}`X;f`e``\ov{f}]

\efig
$$
so that $ [n, a, f] = [\ov{n}, \ov{a}, \ov{f}] $.

(2) Suppose elements $ (n, a, f) $ and $ (m, b, g) $ of
the same equivalence class are related by a single
epimorphism $ \sigma \colon n \to/->>/<200> m $
$$
\bfig
\place(0,0)[b]

\place(150,0)[\in]

\place(350,0)[G(m)]

\place(0,400)[a]

\place(150,400)[\in]

\place(350,400)[G(n)]

\morphism(0,400)/|->/<0,-300>[`;]

\morphism(350,350)|r|/>/<0,-280>[`;G(\sigma)]

\square(800,0)|arrb|/>`->>`=`>/<400,400>[n`X`m`X\rlap{ .};f`\sigma``g]

\efig
$$
Factor $ f $ and $ g $, giving
$$
\bfig
\square/`->>`=`/<1000,500>[n`X`m`X;`\sigma``]

\morphism(0,0)|b|/->>/<500,0>[m`\ov{m};e']

\morphism(500,0)|b|/ >->/<500,0>[\ov{m}`X\rlap{ .};\ov{g}]

\morphism(0,500)|a|/->>/<500,0>[n`\ov{n};e]

\morphism(500,500)|a|/ >->/<500,0>[\ov{n}`X;\ov{f}]

\morphism(500,500)|r|/>/<0,-500>[\ov{n}`\ov{m};\ov{\sigma}]

\efig
$$
We see that $ \ov{\sigma} $ is both one-one and onto,
so a bijection.  Let $ \ov{a} = G(e)(a) $ and $ \ov{b} =
G(e')(b) $. Then $ G(\ov{\sigma})(\ov{a}) = \ov{b} $. So
any zigzag path relating $ (n, a, f) $ to $ (m, b, g) $
$$
\bfig
\node a(0,0)[b]
\node b(150,0)[\in]
\node c(350,0)[G(m)]
\node d(0,300)[]
\node e(0,380)[\vdots]
\node f(350,300)[]
\node g(0,800)[a_2]
\node h(150,800)[\in]
\node i(350,800)[G(n_2)]
\node j(0,1200)[a_1]
\node k(150,1200)[\in]
\node l(350,1200)[G(n_1)]
\node m(0,1600)[a]
\node n(150,1600)[\in]
\node o(350,1600)[G(n)]
\node p(350,380)[\vdots]

\node a'(800,0)[m]
\node b'(1300,0)[X]
\node c'(800,800)[n_2]
\node d'(1300,800)[X]
\node d''(800,300)[]
\node f''(1300,300)[]
\node e'(800,1200)[n_1]
\node f'(1300,1200)[X]
\node g'(800,1600)[n]
\node h'(1300,1600)[X]
\node i'(800,380)[\vdots]
\node j'(1300,380)[\vdots]

\arrow/|->/[a`d;]
\arrow/>/[c`f;]
\arrow/|->/[g`e;]
\arrow/|->/[g`j;]
\arrow/>/[i`l;]
\arrow/|->/[m`j;]
\arrow/>/[o`l;]
\arrow/->>/[a'`d'';]
\arrow/=/[b'`f'';]
\arrow/->>/[c'`i';]
\arrow/=/[d'`j';]
\arrow|l|/->>/[c'`e';\sigma_2]
\arrow/=/[d'`f';]
\arrow/->>/[g'`e';\sigma_1]
\arrow/=/[h'`f';]

\arrow|b|/ >->/[a'`b';g]
\arrow|a|/>/[c'`d';f_2]
\arrow|a|/>/[e'`f';f_1]
\arrow|a|/ >->/[g'`h';f]
\arrow/>/[i`p;]

\efig
$$
can be replaced with one in which all the $ f_i $
are monic and all the $ \sigma_i $ bijections.
Replacing the backward $ \sigma $'s by their
inverses gives a path of length $ 1 $.

(3) For a morphism $ \phi \colon X \to Y $, 
$ \widetilde{G} (\phi) $ is given by composition,
$ \widetilde{G} (\phi) [n, a, f] = [n, a, \phi f] $ and
if we want to express this with a monic $ g $ we
factor $ \phi f $,
$$
\bfig
\node a(0,0)[b]
\node b(150,0)[\in]
\node c(350,0)[G(m)]
\node d(700,0)[m]
\node e(1200,0)[Y]
\node f(0,400)[a]
\node g(150,400)[\in]
\node h(350,400)[(n)]
\node i(700,400)[n]
\node j(1200,400)[X] 

\arrow/|->/[f`a;]
\arrow|r|/>/[h`c;G(e)]
\arrow|r|/->>/[i`d;e]
\arrow|r|/>/[j`e;\phi]
\arrow|b|/ >->/[d`e;g]
\arrow|a|/ >->/[i`j;f]

\efig
$$
and $ [n, a, \phi f] = [m, b, g] $.

(4) A monomorphism in $ {\bf Set} $, $ \phi \colon X \to/ >->/<200> Y $
is split, unless $ X = \emptyset \neq Y $, and will be automatically
preserved. In case $ X = \emptyset $, an element of
$ \widetilde{G} (\emptyset) $ is of the form
$ [a~\in~G(\emptyset), \emptyset \to/ >->/<200> \ \emptyset] $ and
$ \widetilde{G} (\phi) $ will take it to $ [a \in G(\emptyset), 
\emptyset \to/ >->/<200> Y] $.
This is also monic because the equivalence relation is
trivial.

For the second part of the statement, the ``if'' is
trivial. So suppose we have an element $ [m, b, g] $,
with $ g $ monic, in the image of $ \widetilde{G} (\phi) $
so that there is $ [n, a, f] $, with $ f $ monic, in
$ \widetilde{G} (\phi) $ with $ [m, b, g] = [n, a, \phi f] $.
Then by (2) there is a bijection $ \sigma $ such that
$$
\bfig
\node a(0,0)[b]
\node b(150,0)[\in]
\node c(350,0)[G(m)]
\node d(700,0)[m]
\node e(1300,0)[Y]
\node f(0,400)[a]
\node g(150,400)[\in]
\node h(350,400)[G(n)]
\node i(700,400)[n]
\node j(1000,400)[X]
\node k(1300,400)[Y] 

\arrow/|->/[a`f;]
\arrow/>/[c`h;]
\arrow|r|/>/[d`i;\sigma]
\arrow/=/[e`k;]
\arrow|b|/ >->/[d`e;g]
\arrow|a|/ >->/[i`j;f]
\arrow|a|/ >->/[j`k;\phi]

\efig
$$
and there is our factorization. 
\end{proof}

So we see that $ \widetilde{G} $ on objects is given
by
$$
\widetilde{G} (X) = \sum^\infty_{n = 0} G(n) \times {\rm Mono}
(n, X)/S_n
$$
which, if we take $ G = \Delta^* [F] $, looks a lot like
formula (2) for the Newton sum. It also looks like the
formula for an analytic functor. But $ \widetilde{G} $ is
not the coproduct of functors that this might suggest.
First of all $ {\rm Mono} (n, X) $ is not functorial in $ X $,
and secondly $ \widetilde{G} $ is a coend over $ {\bf Surj} $,
so surjections have to be accounted for. The point is that
the extra structure that $ G $ has compensates for
the shortfalls of $ {\rm Mono} (n, X) $.

Power series functors
$$
\sum^\infty_{n = 0} C_n X^n
$$
are analytic so soft analytic too but we can calculate
directly what the corresponding soft species is. We can
view such a polynomial functor as a left Kan extension
$$
\bfig
\Vtriangle/>`>`>/<350,450>[{\mathbb N}`{\bf Set}`{\bf Set};
K`C_{(\ )}`\sum C_n X^n]

\morphism(300,300)/=>/<150,0>[`;]

\efig
$$
where $ {\mathbb N} $ is the discrete category whose
objects are natural numbers, and $ K $ takes the number
$ n $ and sends it to the set of cardinality $ n $
$$
K (n) = \{1, 2, \dots, n\} .
$$
The corresponding soft species $ G $ is given by
$$
G (n) = \sum_{m \to/->>/ n} C_m \rlap{ ,}
$$
one summand for each surjection $ m \to/->>/ n $.

\begin{proposition}
\label{Prop-SoftTaut}

\begin{itemize}

	\item[(1)] For a soft species $ G $, the corresponding
	soft analytic functor $ \widetilde{G} $ is taut.
	
	\item[(2)] For a morphism of soft species $ t \colon G \to H $,
	i.e.~a natural transformation, the corresponding
	$ \tilde{t} \colon \widetilde{G} \to \widetilde{H} $ is taut.
	
	\item[(3)] $ \tilde{(\ )} $ defines a functor $ {\bf SoftSp} \to
	{\bf Taut} $, from the category of soft species $ {\bf Set}^{\bf Surj} $
	to the category of taut endofunctors of $ {\bf Set} $ and taut
	natural transformations.\end{itemize}

\end{proposition}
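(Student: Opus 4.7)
The plan is to prove all three parts by direct computation from the explicit description of $\widetilde{G}$ and $\tilde{t}$ via monic representatives afforded by Proposition~\ref{Prop-GTilde}, rather than working with the coend formally or invoking the general confluent-colimit machinery of Section~\ref{SSec-ColimTaut}.

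For part (1), I would start with an inverse image diagram with $A_0 = f^{-1}(B_0)$ for some $f \colon A \to B$ and monos $A_0 \hookrightarrow A$, $B_0 \hookrightarrow B$. An element of $\widetilde{G}(A)$ can, by Proposition~\ref{Prop-GTilde}(1), be taken as $[n, a, g]$ with $g \colon n \hookrightarrow A$ monic. By part~(3), $\widetilde{G}(f)$ sends it to $[m, b, g']$, where $g'$ is the monic factor of $fg$ and $b = G(e)(a)$ for the epi part $e$ of the image factorization. By part~(4), this image lies in $\widetilde{G}(B_0)$ exactly when $g'$ factors through $B_0 \hookrightarrow B$, equivalently when $fg$ factors through $B_0$. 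The pullback condition $A_0 = f^{-1}(B_0)$ forces $g$ itself to factor through $A_0$, and then (4) again places $[n, a, g]$ in $\widetilde{G}(A_0)$. This is exactly the pullback property required for tautness of $\widetilde{G}$.

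For part (2), the key observation is that $\tilde{t}$ acts on monic representatives by $\tilde{t}(X)[n, a, g] = [n, t(n)(a), g]$, leaving both $n$ and the monic $g$ unchanged. Thus, given a mono $\phi \colon X_0 \hookrightarrow X$ and a class $[n, a, g] \in \widetilde{G}(X)$ with $g$ monic and $\tilde{t}(X)[n, a, g] \in \widetilde{H}(X_0)$, Proposition~\ref{Prop-GTilde}(4) applied to $\widetilde{H}$ says $g$ factors through $\phi$, and the same (4) applied to $\widetilde{G}$ then places $[n, a, g]$ in $\widetilde{G}(X_0)$. This yields the required pullback square.

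Part (3) is routine: functoriality of $\widetilde{(\ )}$ follows from the universal property of left Kan extension, and can also be read off from the explicit action on monic representatives, since identities act as identities and $\widetilde{u \circ t} = \tilde{u} \circ \tilde{t}$ reduces to the statement that composing the $G$-component transformations respects composition. Realistically, the main obstacle in this proposition was already dispatched in Proposition~\ref{Prop-GTilde} itself, where the correct formulas for $\widetilde{G}$ on arrows and the image-factorization description of the monic case were established; once those are available, tautness becomes a mechanical consequence of the pullback property of image factorization in $\mathbf{Set}$.
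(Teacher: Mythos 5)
Your proposal is correct and follows essentially the same route as the paper: tautness of $\widetilde{G}$ and of $\tilde{t}$ is reduced, via the monic-representative description in Proposition~\ref{Prop-GTilde} (parts (3) and (4)), to a factorization through the inverse image, using uniqueness of image factorizations and the pullback property of $A_0 = f^{-1}(B_0)$ in $ {\bf Set} $, with functoriality of $\widetilde{(\ )}$ left to the Kan extension. The only differences are cosmetic: for (2) you argue directly that $\tilde{t}$ fixes the monic leg and apply \ref{Prop-GTilde}(4) twice, whereas the paper also spells out well-definedness and naturality of $\tilde{t}$ explicitly (checks you delegate, legitimately, to functoriality of left Kan extension).
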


\begin{proof}

(1) Let 
$$
\bfig
\square/ >->`>`>` >->/[X_0`X`Y_0`Y;
i`\phi_0`\phi`j]

\place(250,250)[\framebox{\scriptsize Pb}]

\efig
$$
be an inverse image diagram. We want to show that
$$
\bfig
\square/ >->`>`>` >->/[\widetilde{G}(X_0)`\widetilde{G}(X)
`\widetilde{G}(Y_0)`\widetilde{G}(Y);
`\widetilde{G}(\phi_0)`\widetilde{G}(\phi)`]

\efig
$$
is a pullback. Take $ [a \in G(a), n \to/ >->/^f X] $ in
$ \widetilde{G}(X) $. 
\begin{align*}
\widetilde{G}(\phi) [a, n, f] & = [a \in G(n), n \to/ >->/ X \to^f Y]\\
      & = [G(e)(a) \in G(m), m \to/ >->/^g Y]
\end{align*}
where $ n \to/->>/^e m \to/ >->/^g Y $ is the image
factorization of $ \phi f $. Suppose there is
$ [b \in G(p), p \to/ >->/^h Y_0] $ in $ \widetilde{G}(Y_0) $
such that
$$
[b \in G(p), p \to/ >->/^h Y_0 \to/ >->/ Y] =
[G(e)(a) \in G(m), m \to/ >->/^g Y] .
$$
Then by \ref{Prop-GTilde} (2), there exists a bijection
$ \sigma \colon m \to p $ such that
$$
\bfig
\node a(0,0)[b]
\node b(150,0)[\in]
\node c(350,0)[G(p)]
\node d(700,0)[p]
\node e(1000,0)[Y_0]
\node f(1300,0)[Y]
\node g(0,400)[G(e)(a)\quad]
\node h(150,400)[\in]
\node i(350,400)[G(m)]
\node j(700,400)[m]
\node k(1300,400)[Y] 

\arrow/|->/[g`a;]
\arrow|r|/>/[i`c;G(\sigma)]
\arrow|l|/>/[j`d;\sigma]
\arrow/=/[k`f;]
\arrow|b|/ >->/[d`e;h]
\arrow|b|/ >->/[e`f;j]
\arrow|a|/ >->/[j`k;g]

\efig
$$
i.e.~$ b = G(\sigma) G(e)(a) $. Consider the diagram
$$
\bfig
\node a(300,0)[Y_0]
\node b(800,0)[Y\rlap{ .}]
\node c(0,250)[p]
\node d(300,500)[X_0]
\node e(800,500)[X]
\node f(0,500)[m]
\node g(0,800)[n]

\arrow|a|/{@{ >->}@/^1.2em/}/[g`e;f]
\arrow|r|/-->/[g`d;k]
\arrow|a|/ >->/[d`e;i]
\arrow|b|/ >->/[c`a;h]
\arrow|b|/ >->/[a`b;j]
\arrow|l|/->>/[g`f;e]
\arrow|l|/>/[f`c;\sigma]
\arrow|l|/>/[d`a;\phi_0]
\arrow|r|/>/[e`b;\phi]

\place(550,250)[\framebox{\scriptsize Pb}]

\efig
$$
$ j h \sigma e = g e = \phi f $ so there exists  $ k $
as above, and $ [n, a, f] $ is in $ \widetilde{G}(X_0) $.
So $ \widetilde{G} $ is taut.

(2) Let $ t \colon G \to H $ be a natural transformation.
Then $ \tilde{t} \colon \widetilde{G} \to \widetilde{H} $
is defined as follows
$$
\tilde{t} (X) \colon \widetilde{G} (X) \to \widetilde{H} (X)
$$
$$
[a \in G(n), n \to/ >->/ X] \longmapsto [t(n)(a) \in
H(n), n \to/ >->/ X] .
$$

We have several things to prove:

(i) Well-defined: If $ [a \in G(n), n \to/ >->/ X] =
[a' \in G(n'), n' \to/ >->/ X] $ then there is a bijection
$ \sigma \colon n \to n' $ such that
$$
\bfig
\node a(0,0)[a']
\node b(150,0)[\in]
\node c(350,0)[G(n')]
\node d(700,0)[n']
\node e(1200,0)[X]
\node f(0,400)[a]
\node g(150,400)[\in]
\node h(350,400)[G(n)]
\node i(700,400)[n]
\node j(1200,400)[X] 

\arrow/|->/[f`a;]
\arrow|r|/>/[h`c;G(\sigma)]
\arrow|r|/>/[i`d;\sigma]
\arrow|r|/=/[j`e;]
\arrow|b|/ >->/[d`e;]
\arrow|a|/ >->/[i`j;]

\place(560,400)[,]
\place(560,0)[,]

\efig
$$
i.e.~$ a' = G(\sigma)a $. So $ H(\sigma) t(n)(a) =
t(n')G(\sigma)a = t(n')a' $.

(ii) Natural:
$$
\bfig
\square<600,500>[\widetilde{G}(X)`\widetilde{H}(X)`\widetilde{G}(Y)`\widetilde{H}(Y);
\tilde{t}(x)`\widetilde{G}(\phi)`\widetilde{H}(\phi)`\tilde{t}(Y)]

\place(300,250)[?]

\morphism(1000,500)|r|/>/<0,-500>[X`Y;\phi]

\efig
$$
Choose an element $ [a \in G(n), \alpha \colon n \to/ >->/ X] $ of
$ \widetilde{G}(X) $ and chase it around the diagram
$$
\bfig
\square/|->`|->`|->`|->/<800,500>[{[}a, \alpha{]}`{[}t(n) (a), \alpha{]}
`{[}a, \phi \alpha{]}`{[}t(n)(a), \phi \alpha{]}\rlap{ .};```]

\efig
$$
$ \tilde{t} $ is indeed natural.

(iii) Taut: Let
$$
\bfig
\square/ >->`>`>` >->/[X_0`X`Y_0`Y;`\phi_0`\phi`]

\place(250,250)[\framebox{\scriptsize Pb}]

\efig
$$
be an inverse image diagram and consider
\begin{equation}\tag{*}
\bfig
\square/ >->`>`>` >->/[\widetilde{G} X_0`\widetilde{G} X
`\widetilde{G} Y_0`\widetilde{G} Y\rlap{ .};
`\widetilde{G} (\phi_0)`\widetilde{G} \phi`]

\efig
\end{equation}
If $ \widetilde{G} (\phi) [a \in G(n), n \to/ >->/ X] \in
\widetilde{G} Y_0 $, then it is $ [G(e)(a) \in G(m),
m \to/ >->/^g Y] $
where $ n \to^e m \to/ >->/^g Y $ is the image
factorization of $ \phi f $. By \ref{Prop-GTilde} (4),
$ \widetilde{G} (\phi) [a, n] $ is in $ \widetilde{G} Y_0 $
iff $ g \colon m \to/ >->/ Y $ factors through $ Y_0 $
$$
\bfig
\Vtriangle/ >->` >->`<-< /<300,400>[m`Y`Y_0\rlap{ .};g`g_0`]

\efig
$$
In the diagram
$$
\bfig
\node a(300,0)[Y_0]
\node b(800,0)[Y\rlap{ .}]
\node d(300,500)[X_0]
\node e(800,500)[X]
\node f(0,400)[m]
\node g(0,900)[n]

\arrow|l|/ >->/[f`a;g_0]
\arrow|r|/-->/[g`d;f_0]
\arrow|a|/>/[g`f;e]
\arrow/ >->/[d`e;]
\arrow/ >->/[a`b;]
\arrow|l|/>/[d`a;\phi_0]
\arrow|r|/>/[e`b;\phi]
\arrow|a|/>/[g`e;f]

\place(550,250)[\framebox{\scriptsize Pb}]

\efig
$$
the outside commutes, so there exists
$ f_0 \colon n \to X_0 $, as above, i.e.~$ f $
factors through $ X_0 $, and so
$ [a \in G(n), m \to/ >->/ X] \in \widetilde{G} (X_0) $.
Therefore $ (*) $ is a pullback, and $ \tilde{t} $ is
taut.

(3) $ \widetilde{(\  )} $ is automatically functorial
$ {\bf Set}^{\bf Surj} \to {\bf Set}^{\bf Set} $ because
it is Kan extension. The only question is whether
it takes its values in $ {\bf Taut} $, and that's what
was proved in (1) and (2).
\end{proof}

We are now ready for our main theorem which
might be dubbed ``The Fundamental Theorem of
Functorial Difference Calculus''. Part~I says
``Summing a soft species and then taking differences
gives the original soft species'' and Part~II says
``Taking differences of a taut functor and then summing
produces a best approximation by a soft analytic
functor''.

\begin{theorem}[Newton summation]

\noindent I. For a soft species $ G \colon {\bf Surj} \to {\bf Set} $
we have a natural isomorphism
$$
G \to^\cong \Delta^* [\widetilde{G}]  .
$$

\noindent II.  $ \widetilde{(\  )}  \colon {\bf SoftSp} \to {\bf Taut} $ is
left adjoint to $ \Delta^* \colon {\bf Taut} \to {\bf SoftSp} $. 

\end{theorem}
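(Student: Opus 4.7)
Both parts rest on the explicit description of $\tilde{G}(X)$ in Proposition~\ref{Prop-GTilde} together with the characterization of $\Delta^n[F](X)$ in Corollary~\ref{Cor-DeltaK}. For Part~I, I first compute $\Delta^n[\tilde{G}](0)$ directly: by Corollary~\ref{Cor-DeltaK} its elements are those $[a\in G(m), f\colon m \to/ >->/<100> n]$ in $\tilde{G}(n)$ which, for every proper subset $n_0 \to/ >>->/<100> n$, are not in the image of $\tilde{G}(n_0) \to/ >->/<100> \tilde{G}(n)$. By Proposition~\ref{Prop-GTilde}(4) this image consists of precisely those classes whose monic representative factors through $n_0$. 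Hence the non-factorization condition forces the monic $f\colon m \to/ >->/<100> n$ to hit every element of $n$, so $f$ is a bijection; by part (2) of the same proposition each class then has a \emph{unique} representative of the form $[a,n,\id_n]$. This yields a bijection $\eta_G(n)\colon G(n) \to \Delta^n[\tilde{G}](0)$ sending $a \mapsto [a,n,\id_n]$. Naturality in a surjection $\sigma\colon n \to/->>/<100> k$ is immediate from part~(3) of Proposition~\ref{Prop-GTilde}: the image factorization of $\sigma\circ \id_n$ is $n \to/->>/<100>^\sigma k \to/>/<100>^{\id} k$, so $\tilde{G}(\sigma)[a,n,\id]=[G(\sigma)(a),k,\id]$, which corresponds under the bijection to $G(\sigma)(a)$ as required.

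\textbf{Part II.} I will use $\eta_G$ from Part~I as the unit and verify the universal property directly. Given a natural transformation $\alpha\colon G \to \Delta^*[F]$ of soft species, I propose to define a transformation $\widehat{\alpha}\colon \tilde{G}\to F$ on monic representatives by
\[
\widehat{\alpha}(X)[a,n,f] \;=\; F(f)\bigl(\alpha(n)(a)\bigr),
\]
where $\alpha(n)(a)\in \Delta^n[F](0)\subseteq F(n)$. Well-definedness reduces, by Proposition~\ref{Prop-GTilde}(2), to the equation $F(f'\sigma)(\alpha(n)(a))=F(f')F(\sigma)(\alpha(n)(a))$ for $\sigma$ a bijection, combined with $G(\sigma)$-naturality of $\alpha$. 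Naturality of $\widehat{\alpha}$ in $X$ is a short chase using Proposition~\ref{Prop-GTilde}(3) together with the fact that on $\Delta^n[F](0)$ the map $\Delta^*[F](e)$ for a surjection $e$ is a restriction of $F(e)$. Uniqueness follows because every element of $\tilde{G}(X)$ has the form $\tilde{G}(f)[a,n,\id_n]$, and the requirement $\Delta^*[\widehat{\alpha}]\circ \eta_G = \alpha$ pins down $\widehat{\alpha}(n)[a,n,\id_n]=\alpha(n)(a)$.

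\textbf{The main obstacle: tautness of $\widehat{\alpha}$.} Given a mono $\phi\colon X\ \to/>->/ \ Y$ and an element $[a,m,g]\in \tilde{G}(Y)$ with $\widehat{\alpha}(Y)[a,m,g]=F(g)(\alpha(m)(a))\in F(X)$, I must show $g$ factors through $X$. Since $F$ is taut, the pullback of $F(X)\to/>->/<100> F(Y)$ along $F(g)$ is $F(m_0)$ where $m_0 = g^{-1}(X)\subseteq m$. Hence the hypothesis becomes $\alpha(m)(a)\in F(m_0)$. But $\alpha(m)(a)\in \Delta^m[F](0)$ means by Corollary~\ref{Cor-DeltaK} that it lies in no $F(m_0')$ for a proper $m_0' \to/ >>->/<100> m$; so $m_0=m$ and $g$ factors through $X$. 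This is the crucial point where the ``not in any proper image'' defining property of higher differences is used—without it the left adjoint $\widetilde{(\ )}$ would not land in taut transformations.

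\textbf{Closing the adjunction.} Finally I will check that $\alpha \mapsto \widehat{\alpha}$ is natural in both $G$ and $F$, and that $\beta \mapsto \Delta^*[\beta]\circ \eta_G$ is its two-sided inverse, both of which are routine once the preceding verifications are in hand. Together with Part~I this establishes $\widetilde{(\ )} \dashv \Delta^*$ with $\eta_G$ an isomorphism, as claimed.
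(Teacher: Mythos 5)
Your proposal is correct and follows essentially the same route as the paper: Part I is the same identification of $\Delta^n[\widetilde{G}](0)$ with $G(n)$ via Proposition~\ref{Prop-GTilde} and Corollary~\ref{Cor-DeltaK}, and in Part II your formula $\widehat{\alpha}(X)[a,n,f]=F(f)(\alpha(n)(a))$ together with the inverse-image/pullback argument for tautness is exactly the paper's. The only cosmetic difference is that the paper gets the bijection of natural transformations from the Kan-extension property of $\widetilde{G}$ and then characterizes tautness of $t$ as the corresponding $u$ factoring through $\Delta^*[F]\subseteq FJ$, whereas you verify the universal property of the unit $\eta_G$ directly (using functoriality of $\Delta^*$ on taut transformations for the inverse assignment), which amounts to the same verifications.
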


\begin{proof}

I. Using Proposition~\ref{Prop-GTilde} (1) and
Corollary~\ref{Cor-DeltaK}, we see that an element
of $ \Delta^n [\widetilde{G}] (0) $ is an equivalence
class
$$
[a \in G(k), k \to/ >->/ n]_k
$$
which is not equal to any
$$
[b \in G(k_0), k_0 \to/ >->/ n_0 \to/ >>->/ n]_{k_0}
$$
for a proper mono $ n_0 \to/ >>->/ n $. If $ k \to/ >->/ n $
were a proper mono, we would have
$$
\bfig
\node a(0,0)[a]
\node b(150,0)[\in]
\node c(350,0)[G(k)]
\node d(700,0)[k]
\node e(820,0)[=]
\node f(1200,0)[k \to/ >>->/ n\rlap{ ,}\quad\quad]
\node g(0,400)[a]
\node h(150,400)[\in]
\node i(350,400)[G(k)]
\node j(700,400)[k]
\node k(1200,400)[n] 
\arrow/|->/[g`a;]
\arrow|r|/>/[i`c;]
\arrow|r|/=/[d`j;]
\arrow/=/[f`k;]
\arrow/ >->/[j`k;]

\efig
$$
so $ [a \in G(n), k \to/ >->/ n] $ would not be in
$ \Delta^n [\widetilde{G}] (0) $. It follows that the
elements of $ \Delta^n [\widetilde{G}] (0) $ are of
the form
$$
[a \in G(n), \sigma \colon n \to n]
$$
for $ \sigma $ a bijection. Each such class has a
canonical representative with $ \sigma = \id $,
and for these the equivalence relation is
equality, i.e.~$ \Delta^n [\widetilde{G}] (0) $ is
in bijection with $ G(n) $ itself.

The natural transformation which gives this
bijection is
$$
\begin{array}{rcl}
\eta (n) \colon G(n)  & \to/>/<250> & \Delta^n [\widetilde{G}](0)\\
a   & \to/|->/<250>  & [a \in G(n), \id \colon n \to n] \rlap{ .}
\end{array}
$$

II. Let $ G \colon {\bf Surj} \to {\bf Set} $ be a
soft species and $ F \colon {\bf Set} \to {\bf Set} $
be a taut functor. As $ \widetilde{G} $ is the left
Kan extension of $ G $ along the inclusion
$ J \colon {\bf Surj} \to {\bf Set} $, we already have
 a natural bijection of {\em natural transformations}
 $$
 \widetilde{G} \to^t F \over G \to_u FJ \rlap{ .}
 $$
 Note that $ \Delta^* [F] $ is a subfunctor of $ FJ $.
 We will show that in the above bijection $ t $ is
 taut if and only if $ u $ factors through
 $ \Delta^* (F) \to/ >->/ FJ $.
 
 First assume $ t $ is taut. By Kan extension theory,
 the $ u $ corresponding to $ t $ is given by
 $$
 u (n) \colon G(n) \to F(n)
 $$
 $$
 u (n) (a) = t (n) [a \in G(n), \id \colon n \to/ >->/ n] \rlap{ .}
 $$

We want to show that $ u (n) (a) $ is in fact an
element of $ \Delta^n [F] (0) $, i.e.~$ u (n) (a) $ is not
in any $ F (n_0) \to/ >->/ F (n) $ for a proper mono
$ f \colon n_0 \to/ >>->/ n $. Suppose it were, so that
we would have $ x \in F (n_0) $ such that
$ F(f) (x) = u (n) (a) $. As $ t $ is taut, we have the
pullback
$$
\bfig
\square/ >->`>`>` >->/<600,500>[\widetilde{G}(n_0)`\widetilde{G}(n)
`F(n_0)`F(n);
\widetilde{G}(f)`t(n_0)`t(n)`F(f)]

\place(300,250)[\framebox{\scriptsize Pb}]

\efig
$$
and so there is an element $ [b \in G(m),
m \to/ >->/ n_0] $ in $ \widetilde{G}(n_0) $
such that $ t(n_0) $ of which is $ x $
and $ \widetilde{G}(f) [b \in G(m), m \to/ >->/ n_0] =
[a \in G(n), \id \colon n \to/ >->/ n] $. This last
equation means that there is a bijection
$ \sigma \colon n \to m $ with
$$
\bfig
\node a(0,0)[b]
\node b(150,0)[\in]
\node c(350,0)[G(m)]
\node d(0,450)[a]
\node e(150,450)[\in]
\node f(350,450)[G(n)]

\arrow/|->/[d`a;]
\arrow|r|/>/[f`c;G(\sigma)]

\square(700,0)|arlb|/>`>`=`/<800,450>[n`n`m`n;\id`\sigma``]
\morphism(700,0)/ >->/<400,0>[m`n_0;]
\morphism(1100,0)/ >->/<400,0>[n_0`n;]

\efig
$$
which implies that $ n_0 \to/ >->/ n $ is an iso,
i.e.~is not proper. This contradicts our choice of $ n_0 $,
so $ u (n) (a) $ is indeed an element of
 $ \Delta^n [F](0) $, and $ u $ factors through
 $ \Delta^* [F] \to/ >->/ F J $.
 
 Now, let $ u \colon G \to F J $ factor through
 $ \Delta^* [F] $. The corresponding $ t \colon
 \widetilde{G} \to F $ is given by
 $$
 t (X) \colon \widetilde{G} (X) \to F (X)
 $$
 $$
 [a \in G(n), n \to/ >->/^f X] \longmapsto 
 F (f) u (n) (a)
 $$
 and we want to show that it's taut. To this end,
 let $ \phi \colon Y \to/ >->/ X $ be a monomorphism.
 We will show that
 $$
 \bfig
 \square/ >->`>`>` >->/<550,500>[\widetilde{G} (Y)`\widetilde{G}(X)`FY`FX;
 G(\phi)`t(Y)`t(X)`F\phi]
 
 \efig
 $$
 is a pullback. Take $ [b \in G(n), g \colon n \to/ >->/ X] $
 in $ \widetilde{G} (X) $ such that there is $ y \in FY $ with
\begin{align*}
F(\phi)(y) & = t (X) [n, b, g]\nonumber\\
& = F(g) u (n)(a)\rlap{ .} \tag{*}
\end{align*}
We want to show that $ g $ factors through $ \phi $. Take
the pullback
$$
\bfig
\square/ >->` >->` >->` >->/[m`n`Y`X;\psi`f`g`\phi]

\place(250,250)[\framebox{\scriptsize Pb}]

\efig
$$
and, as $ F $ is taut, we get another pullback
$$
\bfig
\square/ >->` >->` >->` >->/[F(m)`F(n)`F(Y)`F(X)\rlap{ .};
F(\psi)`F(f)`F(g)`F(\phi)]

\place(250,250)[\framebox{\scriptsize Pb}]

\efig
$$
Because of $ (*) $, there is $ x \in F(m) $ such that
$ F(f) (x) = y $ and $ F(\psi) (x) = u (n) (a) $. Since
$ u (n) (a) $ is in $ \Delta^n [F] (0) $, $ \psi $ cannot
be proper so that $ g $ will factor through $ \phi $ and
$ [b \in G(n), s \colon n \to/ >->/ X] $ is in
$ \widetilde{G} (Y) $.
 \end{proof}

 \begin{corollary}
 
 If $ F \colon {\bf Set} \to {\bf Set} $ is soft analytic, e.g.~power
 series or analytic, then its Newton series converges to it
 $$
 F(X) \cong \int^{n \, \in\, {\rm Surj}} X^n \times \Delta^n [F](0)\rlap{ .}
 $$
 \end{corollary}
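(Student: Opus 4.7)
The plan is to derive this corollary as a formal consequence of the Newton summation theorem, leaning specifically on the fact (Part~I) that the unit of the adjunction $\widetilde{(\ )} \dashv \Delta^*$ is a natural isomorphism. Intuitively, the content is: since the left adjoint has invertible unit, it is fully faithful, so the counit is automatically invertible on any object in the essential image of $\widetilde{(\ )}$, which by definition is precisely the class of soft analytic functors.

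Concretely, since $F$ is soft analytic, I would first pick a soft species $G \colon {\bf Surj} \to {\bf Set}$ together with an isomorphism $F \cong \widetilde{G}$. Applying $\Delta^*$ and then invoking Part~I yields a chain of natural isomorphisms $G \cong \Delta^*[\widetilde{G}] \cong \Delta^*[F]$. Applying $\widetilde{(\ )}$ in turn produces
$$
\widetilde{\Delta^*[F]}\ \cong\ \widetilde{G}\ \cong\ F.
$$
Unpacking the defining coend $\widetilde{H}(X) = \int^{n \in {\bf Surj}} X^n \times H(n)$ with $H = \Delta^*[F]$ gives exactly the formula in the statement, since by definition $\Delta^*[F](n) = \Delta^n[F](0)$.

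The only point I would take care to spell out is that the isomorphism one obtains in this way is genuinely natural in $F$, namely it is the counit $\varepsilon_F \colon \widetilde{\Delta^*[F]} \to F$ of the adjunction. For this, I would use the triangle identities: the composite $\widetilde{\Delta^*[F]} \to^{\widetilde{\eta^{-1}_G}} \widetilde{G} \cong F$ agrees with $\varepsilon_F$ (up to the chosen iso $F \cong \widetilde{G}$), and all constituents are isos. Since nothing beyond the previously established adjunction and the invertibility of $\eta$ is required, there is no real obstacle here; the argument is essentially the standard categorical fact that a left adjoint with invertible unit is fully faithful with counit invertible on its essential image, applied to $\widetilde{(\ )} \dashv \Delta^*$.
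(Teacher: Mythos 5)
Your argument is correct and is essentially the paper's intended one: the corollary is stated as an immediate consequence of the Newton summation theorem, obtained exactly as you do by writing $F \cong \widetilde{G}$, using Part~I to get $G \cong \Delta^*[\widetilde{G}] \cong \Delta^*[F]$, and applying $\widetilde{(\ )}$ to recover $F \cong \widetilde{\Delta^*[F]}$, which is the displayed coend. The extra remarks about the counit and triangle identities are fine but not needed for the statement, since only the existence of the isomorphism is asserted.
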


 Note that what we are calling power series are
 functors of the form
 $$
 F(X) = \sum^\infty_{n\,=\, 0} C_n X^n
 $$
 and only involve finite powers of $ X $. For
 polynomial functors as in Definition~\ref{Def-Poly}
 involving arbitrary powers of $ X $, the corollary
 doesn't hold. We merely get a comparison. For
 example, for an infinite set $ A $, the Newton series
 of $ F (X) = X^A $ converges to the functor
 $$
 G(X) =\{ f \colon A \to X\  | \mbox{\  the image of $ f $ is finite}\}.
 $$

\end{document}